\documentclass[10pt]{amsart}
\usepackage{latexsym}
\usepackage{amsmath}
\usepackage{amssymb}
\usepackage{mathrsfs}
\usepackage{graphicx}
\usepackage{color}
\usepackage{pgfpages}
\usepackage{ifthen}
\usepackage{leftidx,tensor}
\usepackage[T1]{fontenc}
\usepackage[latin1]{inputenc}
\usepackage{mathtools}
\usepackage{comment}
\usepackage{dsfont}
\usepackage[nocompress]{cite}

\usepackage{tikz,pgfplots}
\usepackage{tikz-cd}
\pgfplotsset{compat=1.18}
\usetikzlibrary{arrows.meta}
\usetikzlibrary[math]

\usepackage[shortlabels]{enumitem}
\usepackage{aliascnt}
\usepackage[bookmarks=true,pdfstartview=FitH, pdfborder={0 0 0}, colorlinks=true,citecolor=red, linkcolor=blue]{hyperref}
\usepackage{bbm}
\usepackage{nicefrac}

\usepackage{graphicx}
\usepackage{subcaption}

\theoremstyle{plain}
\newtheorem{thm}{Theorem}[section]
\newaliascnt{cor}{thm}
\newaliascnt{prop}{thm}
\newaliascnt{lem}{thm}
\newtheorem{cor}[cor]{Corollary}
\newtheorem{prop}[prop]{Proposition}
\newtheorem{lem}[lem]{Lemma}
\aliascntresetthe{cor}
\aliascntresetthe{prop}
\aliascntresetthe{lem} 
\theoremstyle{definition}
\newaliascnt{defn}{thm}
\newaliascnt{asu}{thm}
\newaliascnt{con}{thm}
\newtheorem{defn}[defn]{Definition}

\aliascntresetthe{defn}
\aliascntresetthe{asu}
\aliascntresetthe{con}
\newcounter{stp}
\newcounter{stpi}
\newcounter{stpci}
\newcounter{stpiii}

\theoremstyle{remark}
\newaliascnt{rem}{thm}
\newaliascnt{exa}{thm}
\newaliascnt{masu}{thm}
\newaliascnt{nota}{thm}
\newaliascnt{sett}{thm}
\newtheorem{rem}[rem]{Remark}

\aliascntresetthe{rem}
\aliascntresetthe{exa}
\aliascntresetthe{masu}
\aliascntresetthe{nota}
\aliascntresetthe{sett}
\numberwithin{equation}{section}

\setlist[enumerate]{font = \normalfont}

\newcommand{\F}{\mathbb{F}}
\newcommand{\E}{\mathbb{E}}
\newcommand{\R}{\mathbb{R}}
\newcommand{\C}{\mathbb{C}}
\newcommand{\B}{\mathbb{B}}
\newcommand{\N}{\mathbb{N}}
\newcommand{\bP}{\mathbb{P}}

\newcommand{\G}{\mathbb{G}}

\newcommand{\bA}{\mathbb{A}}

\newcommand{\rW}{\mathrm{W}} 
\newcommand{\rL}{\mathrm{L}}

\newcommand{\rH}{\mathrm{H}}

\newcommand{\rC}{\mathrm{C}}
\newcommand{\rB}{\mathrm{B}}

\newcommand{\rD}{\mathrm{D}}

\newcommand{\rX}{\mathrm{X}}

\newcommand{\rI}{\mathrm{I}}
\newcommand{\rII}{\mathrm{II}}

\newcommand{\rd}{\mathrm{d}}
\newcommand{\srd}{\,\mathrm{d}}

\newcommand{\mre}{\mathrm{e}}

\newcommand{\mrm}{\mathrm{m}}

\newcommand{\rp}{\mathrm{p}}
\newcommand{\per}{\mathrm{per}}

\newcommand{\cA}{\mathcal{A}}
\newcommand{\cS}{\mathcal{S}}
\newcommand{\cF}{\mathcal{F}}

\newcommand{\cO}{\mathcal{O}}
\newcommand{\cB}{\mathcal{B}}
\newcommand{\cL}{\mathcal{L}}

\newcommand{\cR}{\mathcal{R}}

\newcommand{\cV}{\mathcal{V}}
\newcommand{\cI}{\mathcal{I}}

\newcommand{\eps}{\varepsilon}
\newcommand{\del}{\partial}

\DeclareMathOperator{\tr}{tr}
\DeclareMathOperator{\diag}{diag}
\DeclareMathOperator{\Id}{Id}
\DeclareMathOperator{\mdiv}{div}
\DeclareMathOperator{\Rep}{Re}

\newcommand{\tin}{\enspace \text{in} \enspace}
\newcommand{\ton}{\enspace \text{on} \enspace}
\newcommand{\tfor}{\enspace \text{for} \enspace}
\newcommand{\tforall}{\enspace \text{for all} \enspace}
\newcommand{\tand}{\enspace \text{and} \enspace}

\newcommand{\twhere}{\enspace \text{where} \enspace}
\newcommand{\twith}{\enspace \text{with} \enspace}

\newcommand{\qbar}{\overline{q}}
\newcommand{\qbr}{[q]}
\newcommand{\sbar}{\overline{s}}
\newcommand{\sbr}{[s]}
\newcommand{\btau}{\boldsymbol{\tau}}
\newcommand{\bn}{\boldsymbol{n}}
\newcommand{\bu}{\boldsymbol{u}}
\newcommand{\bv}{\boldsymbol{v}}
\newcommand{\bx}{\boldsymbol{x}}
\newcommand{\by}{\boldsymbol{y}}
\newcommand{\bff}{\boldsymbol{f}}
\newcommand{\bg}{\boldsymbol{g}}
\newcommand{\ba}{\boldsymbol{a}}
\newcommand{\bsigma}{\boldsymbol{\sigma}}
\newcommand{\bnu}{\boldsymbol{\nu}}
\newcommand{\bI}{\boldsymbol{I}}
\newcommand{\bD}{\boldsymbol{D}}
\newcommand{\bF}{\boldsymbol{F}}

\newcommand{\bfe}{\textbf{e}}
\newcommand{\oOmega}{\overline{\Omega}}
\newcommand{\oGamma}{\overline{\Gamma}}
\newcommand{\sol}{\mathrm{sol}}
\newcommand{\test}{\mathrm{test}}
\newcommand{\sd}{\mathrm{sd}}

\begin{document}

\title[Fluid-poroelastic structure interaction with a fully averaged plate]
{A Fully Averaged Poroelastic Kirchhoff Plate Interacting with an Incompressible, Viscous Fluid: Analysis and Numerical Simulation}

\author{Felix Brandt}
\address{Department of Mathematics, University of California at Berkeley, Berkeley, 94720, CA, USA.}
\email{fbrandt@berkeley.edu}
\author{Sun\v{c}ica \v{C}ani\'c}
\address{Department of Mathematics, University of California at Berkeley, Berkeley, 94720, CA, USA.}
\email{canics@berkeley.edu}
\author{Andrew Scharf}
\address{Department of Mathematics, University of California at Berkeley, Berkeley, 94720, CA, USA.}
\email{andrew\_scharf@berkeley.edu}
\author{Josip Tamba\v{c}a}
\address{Department of Mathematics, Faculty of Science, University of Zagreb, Zagreb, Croatia.}
\email{tambaca@math.hr}
\subjclass[2020]{74F10, 76S05, 74L15, 74K20}
\keywords{Poroelastic plate, fluid-poroelastic structure interaction, finite-energy weak solutions, global strong well-posedness, long-term dynamics, numerical simulations}

\begin{abstract}
We study a new fully averaged poroelastic Kirchhoff plate model coupled with the flow of an incompressible, viscous fluid governed by the time-dependent Stokes equations. 
The fully averaged formulation offers several advantages over the classical Biot poroelastic plate model: both the elastodynamic and pressure equations are posed on a codimension-one interface, the resulting numerical schemes are simpler to implement and computationally more efficient, and the coupling with an incompressible viscous fluid is more natural than in the full Biot plate setting.
In this work we analyze a linearly coupled fluid--structure interaction problem with kinematic and dynamic interface conditions enforcing continuity of normal velocities, the Beavers--Joseph--Saffman slip in the tangential velocities, and balance of forces between the fluid and the poroelastic structure. 
We first establish the existence of weak solutions to the coupled FSI system using energy methods, and then prove global-in-time existence of a unique strong solution to a regularized version of the problem using sectoriality of the associated spatial operator and maximal $\rL^p$-regularity of the resulting Cauchy problem. 
For data with exponential decay, we prove exponential decay of solutions. 
Finally, we develop a finite element method for the numerical approximation of the coupled system and show that the fully averaged Kirchhoff poroelastic plate model coupled with the time-dependent Stokes equations provides an excellent approximation of the full Biot-Stokes system in the thin-structure regime. The main advantage of this model lies in the remarkably simple implementation of the numerical scheme, stemming from the fact that the poroelastic plate equations constitute a surface model bounding a bulk fluid domain.

These results provide a rigorous analytical and computational framework for the study of coupled fluid--poroelastic structure interactions involving thin poroelastic interfaces modeled by the fully averaged Kirchhoff poroelastic plate equations.
\end{abstract}

\maketitle

\section{Introduction}

In this work, we introduce and analyze a new model for fluid--poroelastic structure interaction (FPSI) describing the coupling between an incompressible, viscous fluid and a thin Kirchhoff poroelastic plate, recently derived by the authors in \cite{SCTB:26}. 
Such configurations arise naturally in applications involving permeable membranes and biological tissues, where fluid flow interacts with deformable porous structures.
In the {design of bioartificial organs}, these models are used to characterize the behavior of thin, semi-permeable membranes that must withstand physiological fluid pressures while allowing for the transport of solutes \cite{Bociu2016, Bukac2016, Miotto2021}. 
Within the broader scope of {biomedical engineering}, such models are critical for simulating the mechanics of vascular grafts, the dynamics of the blood-brain barrier, and the filtration efficiency of the kidneys \cite{Bociu2021, Cou:04, Sho:00}. 
Furthermore, these interactions are prevalent in {other branches of science} and technology, ranging from the engineering of soft actuators in robotics to the analysis of energy-absorbing poroelastic cladding in naval architecture and the study of sub-surface fluid flow through fractured geological thin-beds \cite{Biot1941, Terzaghi1943}.

The fully averaged Kirchhoff poroelastic plate model considered in this work was recently derived in \cite{SCTB:26} from the classical Biot poroelastic plate model, originally introduced by Biot \cite{Biot:55} and later rigorously justified in \cite{MCM:15}.
A notable drawback of the Biot poroelastic plate model in \cite{Biot:55} is that it is posed on domains of mixed dimensions. 
The elastodynamics of the plate is defined on a codimension-one manifold (the middle surface of the plate), whereas the pressure equation describing fluid permeation through the plate is defined on a codimension-zero three-dimensional domain. 
This dimensional mismatch introduces significant challenges, both in the numerical simulation of FSI problems involving Biot poroelastic plates~\cite{SBC:25} and in their mathematical analysis \cite{BCMW:21}.

The fully averaged poroelastic Kirchhoff plate model introduced in \cite{SCTB:26} overcomes this difficulty by averaging the pressure equation across the thickness of the plate in a manner that preserves the underlying energy structure of the original Biot plate model. 
The pressure equation becomes an equation for the jump in the pressure across the middle surface of the plate, coupled with a ``closure'' equation for the transverse averaged pressure. 
The resulting reduced model admits an energy functional consistent with that of the three-dimensional Biot system and is naturally suited for coupling with an external fluid flow. 

In this fully averaged formulation derived in \cite{SCTB:26}, both the elastodynamics of the plate and the jump in the pressure equation are defined on the same codimension-one domain, which corresponds to the middle surface of the plate. 
This dimensional consistency eliminates the mixed-dimensional coupling present in the classical Biot plate model and leads to a system that is significantly more tractable for both mathematical analysis and numerical approximation.
In particular, the model derived in \cite{SCTB:26} provides a framework that can be coupled to incompressible, viscous fluid flow in a straightforward and energetically consistent manner. 

An important feature of the averaged poroelastic plate model is that filtration occurs predominantly in the normal direction, as proved in \cite{MCM:15} for the Biot poroelastic plate proposed in \cite{Biot:55}, which makes it particularly suitable for FSI problems.
This contrasts a poroelastic plate model studied in \cite{Ruiz2026, Iliev2016} where filtration in the poroelastic plate (through Darcy's law) occurs in the tangent plane to the middle surface of the plate, and the apparent fluid pressure in that system has the physical meaning of the first moment of the pressure across the thickness of the plate. 

In this manuscript, we couple the fully averaged poroelastic plate model derived in \cite{SCTB:26} with the flow of an incompressible, viscous fluid governed by the time-dependent Stokes equations. 
The two systems are linearly coupled across a fixed interface via kinematic and dynamic conditions. 
They enforce normal velocity continuity and tangential slip described by the Beavers--Joseph--Saffman condition (kinematic condition), and the balance of forces (dynamic condition). 
This model is formally presented in \autoref{sec:FPSI problem}.

In \autoref{sec:weak sols}, we prove the existence of a weak solution to the proposed coupled FPSI problem, where by weak solution, we mean a solution belonging to a finite--energy space. 
To prove the existence of weak solutions, we follow an approach similar to that in \cite{BCMW:21}. The proof is constructive. 
We semidiscretize the problem in time and show that for each $\Delta t > 0$, the corresponding approximate weak formulation admits a unique solution after an appropriate rescaling of the test functions.
We then establish uniform in $\Delta t$ \emph{a priori} energy estimates for the approximate solutions, which allow us to extract subsequences converging in suitable weak and weak* topologies to a weak solution of the coupled problem. 

In \autoref{sec:strong sols}, we establish one of the key contributions of this manuscript, namely, the existence of a unique global-in-time strong solution to a regularized version of the FPSI problem. 
This regularization incorporates viscoelastic terms within the Kirchhoff poroelastic plate's elastodynamics and an elliptic regularization term in the equation for the jump in the fluid pressure across the poroelastic plate middle surface. 

We reformulate the regularized coupled FPSI problem as an abstract Cauchy problem and establish global well-posedness by proving that the associated spatial operator is sectorial and generates an analytic semigroup. 
This approach allows us to exploit maximal $\rL^p$-regularity and semigroup theory in a systematic way.

A central difficulty in this analysis stems from the structure of the coupled system. 
The problem is not parabolic in a straightforward sense: 
the elastodynamics of the plate, the pressure variables, and the fluid velocity are coupled through interface conditions, and several of these coupling terms appear as lower-order boundary contributions -- particularly challenging is the handling of the Beavers--Joseph--Saffman coupling condition. 
As a result, the associated operator matrix is not diagonally dominant in an obvious way, and classical perturbation results cannot be applied directly.

To overcome these difficulties, we treat the lower-order boundary terms by working in suitable extrapolation spaces and by employing a perturbation framework based on Neumann operators associated with the corresponding stationary problems with inhomogeneous boundary data. 
This approach allows us to isolate and control the most delicate coupling mechanisms, in particular those involving the fluid velocity and the pressure jump. 

Building on this reduction, we then apply the theory of diagonally dominant operator matrices (see, e.g., \cite{Tre:08, AH:23}) to progressively incorporate the remaining components of the system, including the averaged pressure and the plate elastodynamics.
As a consequence, we establish the sectoriality of the full operator matrix and deduce the existence and uniqueness of a global-in-time strong solution to the regularized FPSI problem in an appropriate Hilbert space.

Finally, in \autoref{sec:numerics}, we develop a finite element method (FEM) for the proposed (nonregularized) system introduced in \autoref{sec:FPSI problem}, and demonstrate that the resulting FPSI solutions provide an accurate approximation of the full  Stokes--Biot coupled problem in the small structural thickness regime. 
Additionally, we show that as the thickness of the poroelastic medium decreases, the differences between the two models vanish and their solutions become indistinguishable.
Moreover, the propagation speed of the traveling wave generated by the inlet/outlet pressure data agrees remarkably well with the classical Moens--Korteweg pulse wave velocity.
Therefore, we show that the reduced model retains all the essential features of the full system while significantly lowering the computational cost.

\section{Literature review}

The classical theory of poroelasticity providing a continuum description of fluid flow through deformable porous media was first introduced by Biot~\cite{Biot:55,Biot1941}. 
Alternative approaches derive poroelastic models from pore-scale fluid--structure interaction via homogenization techniques; see, for example, \cite{Aur:97, FM:03, MW:12} and the monograph \cite{Cou:04}. 
Their importance in many applications, including biological applications such as tissue perfusion, filtration, and transport has been addressed in many works, see, for example, \cite{BBMBNG:19, BGSW:16, BW:21, BMW:22, BMW:23, CTGMHR:06, CGHPST:14, HAvCR:92, VGBS:18}.
Mathematically, the well-posedness of such models has been studied, e.g., in \cite{Sho:00, BMTSM:05, Biotwell6, Sho:05,Biotwell3, BMTSM:05, Biotwell2} and the references therein. 

The focus of the current paper is on an FPSI problem involving thin poroelastic structures. 
A major step toward the derivation and analysis of thin, reduced lower-dimensional poroelastic models was obtained by Marciniak-Czochra and Mikeli\'c in \cite{MCM:15} and by Mikeli\'c and Tamba\v{c}a in \cite{MT:16}, where poroelastic plate and shell models were rigorously derived from three-dimensional Biot systems. 
The work in \cite{MCM:15,MT:16} provides a key foundation for the derivation of the reduced fully averaged Kirchhoff plate model considered in the present paper.

The fully averaged Kirchhoff plate model in this work is coupled to the flow of a viscous, incompressible fluid modeled by the time-dependent Stokes equations. 
The interaction between flows of viscous, incompressible fluids modeled by the Stokes and Navier--Stokes equations, and poroelastic media has been studied in a number of works; 
see \cite{AENY:19, BCMW:21, AGW:25, AW:25, Ces:17, RN:20, Sho:05, BGM23, Biotwell1, Biotwell2, BMTSM:05, Sho:00, Biotwell5, Biotwell6, BGSW:16, BW:21, BMW:22, BMW:23, NonlinearFPSI_CRM23, KCM:24, SingularLimit,BCM:25}. 
Most of these results are very recent. 
In particular, a subset of those results addresses FPSI problems with multilayered poroelastic structures \cite{NonlinearFPSI_CRM23, KCM:24, SingularLimit,BCMW:21,BCM:25} consisting of a thick, bulk Biot model and a Stokes or Navier--Stokes flow, coupled over an interface which has inertia and poroelastic properties, modeled by the Biot poroelastic plate model, discussed in \cite{MCM:15,MT:16}.
The presence of the poroelastic plate interface is motivated by various biological applications, including the recent advances in the design of bioartificial organs and vascular devices, see \cite{FluidsCanic, YifanDES}.
Mathematically, the thin interface with mass and poroelastic properties, regularizes the coupled problem in the sense that it provides sufficient regularity of the weak solution to make sense of the traces of the unknown functions along the interface.
None of these works consider an FPSI problem involving the newly derived fully averaged Kirchhoff poroelastic plate model. 

One of the goals of the present work is to prove the existence of weak solutions to the FPSI problem with the fully averaged poroelastic plate model. 
Among the available results on weak solution well-posedness, the work in \cite{BCMW:21} is most closely related to our constructive proof of the existence of weak solutions.
While the underlying ideas share similarities, a key distinction is that \cite{BCMW:21} addresses Stokes flow interacting with multilayered poroelastic structures, whereas in the present work we consider a single thin poroelastic structure modeled by a fully averaged Kirchhoff plate, which introduces both conceptual and analytical differences.

Another goal of the present work is to address the well-posedness of strong solutions to the proposed model. 
This is one of the major result of the current manuscript.
In contrast to weak solution theories, results addressing strong-solution well-posedness for FPSI systems are comparatively scarce. 
To the best of our knowledge, only two works treat this problem in a rigorous manner, namely \cite{AGW:25} and \cite{BHR:25}.
While none of these contributions considers a fully averaged Kirchhoff plate coupled with the time-dependent Stokes system, they provide important benchmarks for comparison.

In \cite{AGW:25}, weak and strong solutions are established within a semigroup framework based on a mixed variational formulation and the Lumer--Phillips theorem. 
In contrast, our analysis provides an explicit characterization of the operator domain, which enables us to establish sectoriality and, consequently, maximal $\mathrm{L}^p$-regularity.
Moreover, while \cite{AGW:25} focuses on finite time intervals, our framework allows for global-in-time analysis and the study of asymptotic behavior. 
Another important distinction lies in the regularity setting: 
our solutions belong to the maximal $\mathrm{L}^p$-regularity class and admit lower-regularity initial data via interpolation spaces, whereas \cite{AGW:25} requires initial data in the operator domain. 
Finally, our approach extends naturally to $\mathrm{L}^q$ spaces, which is advantageous for nonlinear extensions.

Finally, in \cite{BHR:25}, strong solutions are obtained for a Navier--Stokes fluid coupled with a three-dimensional porous medium via boundary perturbation arguments and an operator matrix formulation.
A key difference is that \cite{BHR:25} models only the evolution of pore pressure, whereas our formulation includes the full elastodynamics of a Kirchhoff plate. 
In addition, the pressure variables in \cite{BHR:25} are defined in a three-dimensional domain, while in our fully averaged model the relevant quantities, such as the pressure jump and averaged pressure, are defined directly on the two-dimensional interface. 
This dimensional reduction leads to additional analytical challenges, including a more delicate spectral analysis due to the potential lack of compactness of the resolvent and the need to handle inhomogeneous interface terms.

We conclude this literature review with a brief discussion of numerical methods for FPSI systems and their relation to the problem studied in this work. 
Numerical simulation of FPSI systems has seen substantial progress over the past fifteen years; see, for example, \cite{AKYZ:18, BQQ:09, BYZ:15, BYZ:17, KBM:25, OB:20, PB:25, SBC:25, Biotwell3, BYZZ:15}. However, with the exception of \cite{SBC:25}, none of these works focuses on FPSI involving thin poroelastic structures. In \cite{SBC:25}, a partitioned solver was developed for an FPSI problem coupling Stokes flow with a multilayered poroelastic structure consisting of a bulk Biot poroelastic medium coupled to a thin Biot poroelastic plate at the interface.

In the present manuscript, we develop a monolithic finite element method (FEM) solver for the Stokes--Kirchhoff poroelastic plate system (without the regularizations introduced for the analysis of strong solutions), with the primary goal of validating the proposed model and assessing its accuracy.
To this end, we compare our model with the full Stokes--Biot coupled problem in the thin-structure regime. The full Stokes--Biot problem is simulated using the numerical methods developed in \cite{BYZZ:15}. Our numerical simulations show excellent comparison between the two models.

In the context of numerical simulations, we also mention the recent works \cite{Iliev2016, KRB:25}, which study quasistatic problems for a different fully averaged poroelastic plate model. 
The approaches and results presented in \cite{Iliev2016, KRB:25} differ substantially from the Kirchhoff-type model considered here. In addition to the differences in the reduced equations themselves, the models studied in \cite{Iliev2016, KRB:25} are quasistatic in nature and are formulated in terms of the pressure moment. 
To the best of our knowledge, their coupling with Stokes flow has not been investigated, possibly due to difficulties associated with formulating the coupled problem in an energetically stable manner.

\medskip

Before we proceed, we summarize the main contributions of this work, 
which introduces and analyzes a new FPSI model based on a fully averaged poroelastic plate formulation:
\begin{itemize}
    \item \textbf{Modeling:} We derive a new Stokes-Kirchhoff poroelastic plate coupled problem based on 
    a recently derived fully averaged Kirchhoff poroelastic plate model \cite{SCTB:26}, preserving the essential coupling mechanisms and energy structure of the full Stokes--Biot problem.
    \item \textbf{Coupling structure:} The model incorporates physically relevant Beavers--Joseph--Saffman interface conditions and accounts for both the averaged pore pressure and the pressure jump across the structure, leading to a novel coupling between fluid and plate dynamics.
    \item \textbf{Analytical framework:} We prove the existence of both weak and strong solutions. 
    In the context of weak solutions we use ideas similar to those developed in \cite{BCMW:21}. 
    In the context of strong solutions, we formulate the coupled FPSI system in operator form with a carefully characterized domain, allowing us to establish sectoriality of the associated operator and, consequently, maximal $\mathrm{L}^p$-regularity.
    \item \textbf{Well-posedness and regularity:} 
    As a result, we obtain strong solutions in a maximal $\mathrm{L}^p$-regularity setting, which provides a robust framework for studying nonlinear extensions, which is left to future study, and long-time behavior.
    \item \textbf{Numerical comparison:} We develop a numerical solver to compare the full Biot model coupled with Stokes flow in the thin-structure regime, and demonstrate that the proposed FPSI model based on a fully averaged poroelastic Kirchhoff plate provides an excellent approximation of the full system.
    \item \textbf{Perspective:} The reduced nature of the model makes it particularly suitable for both analytical investigations and numerical simulation, offering a promising approach for studying thin poroelastic structures in complex fluid environments.
\end{itemize}

\section{The model problem}\label{sec:FPSI problem}

In this section we state the coupled FSI problem describing the interaction between the fully averaged poroelastic plate, introduced in \cite{SCTB:26}, and the flow of an incompressible, viscous fluid modeled by the time-dependent Stokes equations. 

\medskip

\noindent{{\bfseries The fully averaged poroelastic plate model.}}
We begin by describing the original poroelastic plate model derived by Biot \cite{Biot1941,Biot:55} and rigorously justified by Marciniak-Czochra and Mikeli\'{c} \cite{MCM:15}. 
The model captures the flow of a fluid through a thin poroelastic plate of thickness $H$ in terms of the transverse (vertical)  displacement $w$ of the plate's middle surface, the pore pressure $q$, and the transverse filtration velocity $u_p$.

Biot's formulation describes the elastodynamics of the thin plate coupled with fluid flow through the porous matrix via the following system of three equations:
\begin{itemize}
    \item The elastodynamics of thin plate, see \eqref{eq:thin1} below, defined on the reference middle surface of the plate $\Gamma$ (a codimension-one surface):
    \begin{equation*}
        \Gamma = \left\{ (x,y,z) : x\in (0,L), \ y \in (0,L), \  z = 0 \right\}; 
    \end{equation*}
    \item Conservation of mass of the seeping fluid, see \eqref{eq:thin2} below,  written in terms of fluid pressure $q$, defined on the full codimension-zero domain $\Omega_p$:
    \begin{equation}\label{Omegap}
        \Omega_p = \left\{ (x,y,z) : x\in (0,L), \ y \in (0,L), \ z \in \Bigl(-\frac{H}{2},\frac{H}{2}\Bigr) \right\} = \Gamma \times \Bigl(-\frac{H}{2},\frac{H}{2}\Bigr);
    \end{equation}
    \item Darcy's law, see \eqref{eq:thin3} below, which associates the fluid velocity to the pressure gradient, also defined on $\Omega_p$.
\end{itemize}
For $T > 0$, the Biot poroelastic plate system is:
\begin{subequations}\label{eq:thin}
    \begin{align} 
        H\rho_{p} \partial_{t}^2  w+H^3D \Delta_{\boldsymbol{\tau}}^2  w +H\gamma^p w+H \alpha^{p} \Delta_{\boldsymbol{\tau}} \overline{z q}
        &=F_{p}   &&\tin (0,T) \times \Gamma, \label{eq:thin1}\\
        \partial_{t}(c_{0}^{p}  q-\alpha^{p} z \Delta_{\boldsymbol{\tau}}\tilde w)+\nabla_{\boldsymbol{n}}u_p 
        &= G_p &&\tin (0,T) \times \Omega_p, \label{eq:thin2}\\
        (\kappa^p)^{-1}u_p 
        &= -\nabla_{\boldsymbol{n}} q &&\tin (0,T) \times \Omega_p. \label{eq:thin3}
    \end{align}
\end{subequations}
Here $\kappa^p > 0$ is a permeability coefficient, $F_p$ and $G_p$ are forcing terms, $\rho_p$ is the plate area density, $c_0^p$ is the storage coefficient, $\alpha^p$ is the Biot-Willis parameter representing the strength of fluid-solid coupling,~$D$ is the bending stiffness of the plate, and $\gamma^p$ is a 2D spring coefficient. 
Moreover, $\bn$ represents the unit vector normal to the plate  perpendicular to $\Gamma$, aligned with the $z$ axis so that $\bn = \bfe_3$, and $\btau_i$, $i = 1,2$, denote the tangential unit vectors, i.e., $\btau_i = \bfe_i$.
In this context, we also remark that $\Delta_{\boldsymbol{\tau}}$ denotes the horizontal Laplacian.
Moreover, the bar notation in \eqref{eq:thin1} represents the vertical average, see also \eqref{eq:qbr & qbar} for a precise definition in the context of the pressure variables.
Since the displacement and pressure are defined on domains of different dimensions, in \eqref{eq:thin2} we use $\tilde w$ to denote the extension of $w$ from $\Gamma$ to~$\Omega_p$, taken to be constant in the normal direction.

The fact that the displacement and fluid pressure are defined on domains of different dimensions is a significant drawback of the Biot poroelastic plate model, both from analytical and numerical perspectives. 
This motivates the development of a fully averaged poroelastic plate model, as proposed in \cite{SCTB:26}.

While several averaging procedures can be applied to the Biot plate model, not all lead to formulations that are energy-consistent with the full Biot model, or readily coupled with the Stokes equations governing free fluid flow. 
The averaged model introduced in \cite{SCTB:26} preserves the energy structure of the full Biot system, satisfying the same type of energy estimate, and allows for a natural coupling with the Stokes equations, as we demonstrate below.
The resulting model is formulated in terms of the displacement of the plate's middle surface $w$, the jump in the fluid pressure $[q]$ across the middle surface $\Gamma$, and the averaged fluid pressure $\bar q$, defined by:
\begin{equation}\label{eq:qbr & qbar}
    [q] = q^+ - q^- \tand \qbar = \frac{1}{H}\int_{-H/2}^{H/2} q \srd z.
\end{equation}
Here $q^+$ and $q^-$ denote the right and left limits of $q$, representing its values on the top and bottom faces of the surface $\Gamma$, respectively. 
For a sketch of the geometric setup, we also refer to \autoref{fig:domain FPSI problem}.
Before we state the fully averaged poroelastic plate model, we introduce the notation for the velocity of the plate's middle surface displacement:
\[
    v = \partial_t w.
\]

The fully averaged plate model from \cite{SCTB:26} reads as follows: find $w, v, [q]$, and $\qbar$ such that
\begin{equation}\label{eq:fully averaged plate model}
    \left\{
    \begin{aligned}
        H \rho_p \del_t v + H^3 D \Delta_{\btau}^2 w + H \gamma^p w + H^2 \frac{\alpha^p}{12} \Delta_{\btau} [q]
        &= F_p, &&\tin (0,T) \times \Gamma,\\
        H \frac{c_0^p}{12} \del_t [q] - H^2 \frac{\alpha^p}{12} \Delta_{\btau} v + \frac{4 \kappa^p}{H} [q] + \frac{6 \kappa^p}{H} \qbar
        &= -u_p^- + \frac{6 \kappa^p}{H} q^+, &&\tin (0,T) \times \Gamma,\\
        H c_0^p \del_t \qbar + \frac{12 \kappa^p}{H} \qbar + \frac{6 \kappa^p}{H} [q]
        &= \frac{12 \kappa^p}{H} q^+, &&\tin (0,T) \times \Gamma,
    \end{aligned}
    \right.
\end{equation}
where $F_p$ is the total load onto the structure, given by the jump in the surface force density acting on the top and bottom faces of the plate:
\[
    F_p = F^+ - F^-,
\]
and $u_p^-$ and $q^+$ are the boundary data prescribed at the bottom and top faces of the plate surface:
\begin{equation*}
    \begin{aligned}
        u_p^- &\text{ is the fluid velocity prescribed at the bottom face;}\\
        q^+ &\text{ is the fluid pressure prescribed at the top face.}
    \end{aligned}
\end{equation*}
Notice that all three equations in \eqref{eq:fully averaged plate model} are defined on the middle surface $\Gamma$ of the poroelastic plate. 
The parameters in the problem and their typical values are presented in \autoref{tab:param} in \autoref{sec:numerics}.

\medskip

\noindent{{\bfseries The time-dependent Stokes equations.}}
The averaged plate model is coupled to the flow of an incompressible, viscous fluid described by the time-dependent Stokes equations:
\begin{equation}\label{Stokes}
    \left\{
    \begin{aligned}
        \rho_f \del_t \bu - \nabla \cdot \bsigma_f(\bu,\pi)
        &= \bF_f, &&\tin (0,T) \times \Omega_f ,\\
        \nabla \cdot \bu
        &= 0, &&\tin (0,T) \times \Omega_f ,
    \end{aligned}
    \right.
\end{equation}
where $\bu$ is fluid velocity, $\pi$ the pressure, $\bF_f$ denotes external force, and $\bsigma_f(\bu,\pi)$ the Cauchy stress tensor. 
We will be assuming that the fluid is Newtonian so that $\bsigma_f(\bu,\pi) = -\pi \bI + 2 \mu_f \bD(\bu)$, where $\mu_f > 0$ denotes the fluid viscosity.
The equations are posed on a cylindrical domain in $\mathbb{R}^3$ given by
\begin{equation*}
    \Omega_f = \{ (x,y,z) : x\in (0,L), \ y \in (0,L), \ z \in (-R, 0) \} = \Gamma \times (-R,0). 
\end{equation*}
In the analysis part of the manuscript, we will often assume unit domains, i.e., $L = R = 1$.

\medskip
\noindent{{\bfseries Coupling conditions.}}
The fluid equations \eqref{Stokes} and the fully averaged poroelastic plate equations~\eqref{eq:fully averaged plate model} are linearly coupled; that is, the coupling conditions are imposed along a fixed interface $\Gamma$, which corresponds to the middle surface of the plate, see \autoref{fig:domain FPSI problem}.
To state the coupling conditions we recall that~$\bn$ denotes the unit normal to $\Gamma$, and $\btau_i$, $i = 1,2$ are two linearly independent, unit tangent vectors to $\Gamma$. 
Since the FSI problem considered here is linearly coupled, the unit normal and tangent vectors can be identified with the standard basis  vectors $\bn = {\bf e}_3, \btau_i =  {\bf e}_i$, for $i = 1,2$.
We impose two sets of coupling conditions: the kinematic and dynamic coupling conditions. 

\medskip
\noindent{{\bfseries Kinematic coupling conditions.}}
The kinematic coupling conditions specify the behavior of fluid velocities at the interface $\Gamma$:
\begin{itemize}
    \item Continuity of normal components of fluid velocities relative to the interface motion:
    \begin{equation}\label{ContVel}
        u_p^- = \left. \bu \right|_{\Gamma} \cdot \bn - v.
    \end{equation}
    \item Slip in the tangential component of fluid velocity at the interface, known as the Beavers--Joseph--Saffman condition:
    \begin{equation}\label{BJS}
        \left.\bsigma_f(\bu,\pi)\right|_{\Gamma} \bn  \cdot \btau_i = - \beta \left. \bu \right|_{\Gamma} \cdot \btau_i, \tfor i =1,2,
    \end{equation}
    where $\beta > 0$ is the friction parameter (slip rate), and we recall the tangent vectors $\btau_i$, i = 1,2, to $\Gamma$.
\end{itemize}

\medskip
\noindent{{\bfseries Dynamic coupling conditions.}}
The dynamic coupling conditions describe balance of forces at the interface $\Gamma$:
\begin{itemize}
    \item Continuity of normal components of normal fluid stress (traction) at $\Gamma$ (also known as ``continuity of pressures''):
    \begin{equation}\label{ContP}
        \left.\bsigma_f(\bu,\pi)\right|_{\Gamma}  \bn  \cdot \bn = - q^- = \qbr - q^+.
    \end{equation}
    \item The force driving the elastodynamics of the plate is given by the jump in the normal components of normal stress across the interface (jump in traction): 
    \begin{equation}\label{Dynamic2}
        F_p =  -\left. \bsigma_f(\bu,\pi)\right|_{\Gamma} \bn  \cdot \bn - q^+.
    \end{equation}
    By taking into account the continuity of pressure condition \eqref{ContP}, one gets:
    \begin{equation}\label{Fp}
        F_p = - (q^+ - q^-) = - [q].
    \end{equation}
\end{itemize}

\begin{figure}[h!]
\centering
    \begin{subfigure}[b]{0.48\textwidth}
        \centering
        \begin{tikzpicture}[>=Stealth, line join=round, line cap=round, scale=0.85, every node/.style={scale=0.85}]
            \def\XD{4.5} \def\YD{2.5} \def\ZD{2.5} \def\ang{40} \def\scale{0.5}
            \newcommand{\xyz}[3]{({#1 + #2*\scale*cos(\ang)}, {#3 + #2*\scale*sin(\ang)})}

            \draw[gray!60, fill=gray!5, opacity=0.5] \xyz{0}{0}{-\ZD} -- \xyz{\XD}{0}{-\ZD} -- \xyz{\XD}{\YD}{-\ZD} -- \xyz{0}{\YD}{-\ZD} -- cycle;
            \draw[gray!60, fill=gray!15, opacity=0.5] \xyz{0}{\YD}{-\ZD} -- \xyz{\XD}{\YD}{-\ZD} -- \xyz{\XD}{\YD}{0} -- \xyz{0}{\YD}{0} -- cycle;
            \draw[gray!60, fill=gray!15, opacity=0.5] \xyz{0}{0}{-\ZD} -- \xyz{0}{\YD}{-\ZD} -- \xyz{0}{\YD}{0} -- \xyz{0}{0}{0} -- cycle;
            \node[black] at \xyz{\XD/2}{\YD/2}{-\ZD/2} {\Large $\Omega_f$};

            \draw[black, fill=gray!40, opacity=0.9, line width=1.5pt] \xyz{0}{0}{0} -- \xyz{\XD}{0}{0} -- \xyz{\XD}{\YD}{0} -- \xyz{0}{\YD}{0} -- cycle;

            \draw[gray!60, fill=gray!20, opacity=0.3] \xyz{0}{0}{-\ZD} -- \xyz{\XD}{0}{-\ZD} -- \xyz{\XD}{0}{0} -- \xyz{0}{0}{0} -- cycle;
            \draw[gray!60, fill=gray!20, opacity=0.3] \xyz{\XD}{0}{-\ZD} -- \xyz{\XD}{\YD}{-\ZD} -- \xyz{\XD}{\YD}{0} -- \xyz{\XD}{0}{0} -- cycle;

            \node (Lplus) at \xyz{0.25*\XD}{0.1}{1.2} {$\Gamma_+$};
            \draw[thick, ->] \xyz{0.25*\XD}{0.1}{1.0} -- \xyz{0.25*\XD}{0.1}{0.05};
            \node (Lminus) at \xyz{0.25*\XD}{0.1}{-1.2} {$\Gamma_-$};
            \draw[thick, ->] \xyz{0.25*\XD}{0.1}{-1.0} -- \xyz{0.25*\XD}{0.1}{-0.05};
            \node (gamma_label) at \xyz{\XD+1.0}{\YD/2}{0.3} {$\Gamma$};
            \draw[thick, ->] (gamma_label) -- \xyz{\XD+0.1}{\YD/2}{0};

            \begin{scope}[shift={(\xyz{-1.2}{0}{-1.5})}]
                \draw[thick, ->] (0,0) -- (0.8,0) node[right, scale=0.7] {$\boldsymbol{\tau}_1$};
                \draw[thick, ->] (0,0) -- (0,0.8) node[above, scale=0.7] {$\boldsymbol{n}$};
                \draw[thick, ->] (0,0) -- ({cos(\ang)*0.4}, {sin(\ang)*0.4}) node[right, scale=0.7] {$\boldsymbol{\tau}_2$};
            \end{scope}
        \end{tikzpicture}
        \caption{FPSI with the fully averaged model.}
    \end{subfigure}
    \hfill
    \begin{subfigure}[b]{0.48\textwidth}
        \centering
        \begin{tikzpicture}[>=Stealth, line join=round, line cap=round, scale=0.85, every node/.style={scale=0.85}]
            \def\XD{4.5} \def\YD{2.5} \def\ZD{2.5} \def\H{0.8} \def\Hhalf{0.4} \def\ang{40} \def\scale{0.5}
            \newcommand{\xyz}[3]{({#1 + #2*\scale*cos(\ang)}, {#3 + #2*\scale*sin(\ang)})}

            \draw[gray!60, fill=gray!5, opacity=0.5] \xyz{0}{0}{-\ZD-\Hhalf} -- \xyz{\XD}{0}{-\ZD-\Hhalf} -- \xyz{\XD}{\YD}{-\ZD-\Hhalf} -- \xyz{0}{\YD}{-\ZD-\Hhalf} -- cycle;
            \draw[gray!60, fill=gray!15, opacity=0.5] \xyz{0}{\YD}{-\ZD-\Hhalf} -- \xyz{\XD}{\YD}{-\ZD-\Hhalf} -- \xyz{\XD}{\YD}{-\Hhalf} -- \xyz{0}{\YD}{-\Hhalf} -- cycle;
            \draw[gray!60, fill=gray!15, opacity=0.5] \xyz{0}{0}{-\ZD-\Hhalf} -- \xyz{0}{\YD}{-\ZD-\Hhalf} -- \xyz{0}{\YD}{-\Hhalf} -- \xyz{0}{0}{-\Hhalf} -- cycle;
            \node[black] at \xyz{\XD/2}{\YD/2}{-\ZD/2-\Hhalf} {\Large $\Omega_f$};

            \draw[black, fill=gray!20, opacity=0.8] \xyz{0}{\YD}{-\Hhalf} -- \xyz{\XD}{\YD}{-\Hhalf} -- \xyz{\XD}{\YD}{\Hhalf} -- \xyz{0}{\YD}{\Hhalf} -- cycle;
            \draw[black, fill=gray!20, opacity=0.8] \xyz{0}{0}{-\Hhalf} -- \xyz{0}{\YD}{-\Hhalf} -- \xyz{0}{\YD}{\Hhalf} -- \xyz{0}{0}{\Hhalf} -- cycle;
            \draw[black, dashed, opacity=0.5] \xyz{0}{0}{0} -- \xyz{\XD}{0}{0} -- \xyz{\XD}{\YD}{0} -- \xyz{0}{0}{0};
            \draw[black, dashed, opacity=0.5] \xyz{\XD}{0}{0} -- \xyz{\XD}{\YD}{0} -- \xyz{0}{\YD}{0};
            \draw[black, fill=gray!40, opacity=0.9, line width=1.2pt] \xyz{0}{0}{\Hhalf} -- \xyz{\XD}{0}{\Hhalf} -- \xyz{\XD}{\YD}{\Hhalf} -- \xyz{0}{\YD}{\Hhalf} -- cycle;
            \draw[black, fill=gray!30, opacity=0.8] \xyz{\XD}{0}{-\Hhalf} -- \xyz{\XD}{\YD}{-\Hhalf} -- \xyz{\XD}{\YD}{\Hhalf} -- \xyz{\XD}{0}{\Hhalf} -- cycle;
            \draw[black, fill=gray!35, opacity=0.8] \xyz{0}{0}{-\Hhalf} -- \xyz{\XD}{0}{-\Hhalf} -- \xyz{\XD}{0}{\Hhalf} -- \xyz{0}{0}{\Hhalf} -- cycle;
            \node[black] at \xyz{0.5*\XD}{0.5*\YD}{0.25} {\Large $\Omega_p$};

            \draw[gray!60, fill=gray!20, opacity=0.3] \xyz{0}{0}{-\ZD-\Hhalf} -- \xyz{\XD}{0}{-\ZD-\Hhalf} -- \xyz{\XD}{0}{-\Hhalf} -- \xyz{0}{0}{-\Hhalf} -- cycle;
            \draw[gray!60, fill=gray!20, opacity=0.3] \xyz{\XD}{0}{-\ZD-\Hhalf} -- \xyz{\XD}{\YD}{-\ZD-\Hhalf} -- \xyz{\XD}{\YD}{-\Hhalf} -- \xyz{\XD}{0}{-\Hhalf} -- cycle;

            \node (Lplus) at \xyz{0.25*\XD}{0.1}{1.6} {$\Gamma_+$};
            \draw[thick, ->] \xyz{0.25*\XD}{0.1}{1.4} -- \xyz{0.25*\XD}{0.1}{\Hhalf+0.05};
            \node (Lminus) at \xyz{0.25*\XD}{0.1}{-1.6} {$\Gamma_-$};
            \draw[thick, ->] \xyz{0.25*\XD}{0.1}{-1.4} -- \xyz{0.25*\XD}{0.1}{-\Hhalf-0.05};
            \node (gamma_label) at \xyz{\XD+1.0}{\YD/2}{0.3} {$\Gamma$};
            \draw[thick, ->] (gamma_label) -- \xyz{\XD+0.1}{\YD/2}{0};

        \end{tikzpicture}
        \caption{FPSI with the half-averaged model.}
    \end{subfigure}

\caption{Comparison between the FPSI problems involving the fully averaged (left) and half-averaged (right) poroelastic plate models. 
In the fully averaged case, the plate is reduced to a codimension-one interface $\Gamma$, whereas in the half-averaged case, the plate retains its physical thickness $H$, defining a volumetric domain $\Omega_p$.}
\label{fig:domain FPSI problem}
\end{figure}

\medskip
\noindent{{\bfseries Initial and boundary conditions.}}
We supplement the coupled problem \eqref{eq:fully averaged plate model}, \eqref{Stokes}, \eqref{ContVel}, \eqref{BJS}, \eqref{ContP}, and \eqref{Fp} with the following initial and boundary conditions: 

\medskip
\noindent{{\bfseries Initial conditions:}}
\begin{equation}\label{init}
w|_{t = 0} = w_0, \ \partial_t w|_{t = 0} = v_0, \ 
[q]|_{t = 0} = [q]_0, \ \bar{q}|_{t = 0} = \bar{q}_0, \
\bu|_{t = 0} = \bu_0.
\end{equation}

\medskip

\noindent{{\bfseries Boundary conditions:}}
\begin{enumerate}
    \item{On $\partial\Gamma$, we assume the clamped plate boundary conditions:}
    \begin{equation}\label{clamped}
        w = \del_{\bnu} w = 0  \ton \del \Gamma, 
    \end{equation}
    where $\bnu$ is the outer normal to $\partial\Gamma$.
    \item On the outer (top) side of the plate $\Gamma^+$, we prescribe the pressure $q^+$ (drained boundary condition):
    \begin{equation}\label{top boundary}
        q|_{\Gamma^+}  = q^+.
    \end{equation}
    \item On the side boundaries of the fluid domain $\Gamma_{f}^s$, where $x = 0,L$ and $y = 0,L$, we prescribe periodic boundary conditions for the fluid velocity and pressure: for all $t \in (0,T)$
    \begin{equation}\label{side}
        \begin{array}{l}
            \bu(t,x,0,z) = \bu (t,x,L,z), \tforall x \in (0,L) \tand \bu(t,0,y,z) = \bu(t,L,y,z) \tforall y \in (0,L),\\
            \pi(t,x,0,z) = \pi (t,x,L,z), \tforall x \in (0,L) \tand \pi(t,0,y,z) = \pi(t,L,y,z) \tforall y \in (0,L).
        \end{array}
    \end{equation}
    \item On the bottom boundary $\Gamma_f^b = \Gamma \times \{-R\}$ of the fluid domain, the no-slip boundary conditions is prescribed:
\begin{equation}\label{bottom}
    \bu = 0, \ton (0,T) \times \Gamma_f^b.
\end{equation}
\end{enumerate}

\medskip

\noindent{{\bfseries The coupled problem after conditions \eqref{ContVel} and \eqref{Fp} have been taken into account.}}
We note that we can enforce the kinematic coupling condition \eqref{ContVel} and the dynamic coupling condition~\eqref{Fp} in a strong sense by using those conditions as the boundary conditions in the differential form~\eqref{eq:fully averaged plate model} of the fully averaged plate model.
After rewriting the plate equations using these coupling conditions, and in the absence of external forces on the fluid, the resulting plate and fluid equations read
\begin{equation}\label{eq:FPSI probl}
    \left\{
    \begin{aligned}
        H \rho_p \del_t v + H^3 D \Delta_{\btau}^2 w + H \gamma^p w + H^2 \frac{\alpha^p}{12} \Delta_{\btau} [q] + \qbr
        &= 0, &&\tin (0,T) \times \Gamma,\\
        H \frac{c_0^p}{12} \del_t [q] - H^2 \frac{\alpha^p}{12} \Delta_{\btau} v + \frac{4 \kappa^p}{H} [q] + \frac{6 \kappa^p}{H} \qbar + \left. \bu \right|_{\Gamma} \cdot \bn - v
        &= \frac{6}{H}\kappa^p q^+, &&\tin (0,T) \times \Gamma,\\
        H c_0^p \del_t \qbar + \frac{12 \kappa^p}{H} \qbar + \frac{6 \kappa^p}{H} [q]
        &= {\frac{12}{H}\kappa^p q^+}, &&\tin (0,T) \times \Gamma,\\
        \rho_f \del_t \bu - \nabla \cdot \bsigma_f(\bu,\pi)
        &= 0, &&\tin (0,T) \times \Omega_f,\\
        \nabla \cdot \bu
        &= 0, &&\tin (0,T) \times \Omega_f.
    \end{aligned}
    \right.
\end{equation}
This problem is supplemented with the traction coupling condition 
\begin{equation}\label{DynCouplingTraction}
    \bsigma_f(\bu,\pi)\bn = (\qbr-q^+) \bn - \beta \sum_{i=1}^2 (\bu|_\Gamma \cdot \btau_i) \btau_i,
\end{equation}
and the initial and boundary conditions \eqref{init}--\eqref{bottom}.

This will be the foundation for the derivation of the energy estimate and weak formulation, discussed in the next section.

\section{Weak solutions}\label{sec:weak sols}

In this section, we discuss the existence of finite-energy weak solutions to the linearly coupled FPSI problem \eqref{eq:fully averaged plate model}--\eqref{bottom}.
This weak formulation will be the basis for the design of the finite element method numerical scheme, discussed in \autoref{sec:numerics}.

\subsection{Formal energy estimate and weak formulation}
\

We begin with a formal energy estimate, which motivates the choice of the finite-energy weak solution spaces.
To derive this estimate and to formulate the notion of weak solutions, we introduce the following notation. 
For a domain $\cO \subset \R^d$, $d = 2,3$, we denote by $((\cdot,\cdot))_{\cO}$ the inner product in $\rL^2(0,T;\rL^2(\cO))$, and by $(\cdot,\cdot)_{\cO}$ the inner product in $\rL^2(\cO)$.

\medskip
{\noindent{\bfseries Energy estimate.}}
Testing the equation for the elastodynamics of the plate \eqref{eq:FPSI probl}$_1$ by $v = w_t$, the equation for the jump in the pressure \eqref{eq:FPSI probl}$_2$ by $\qbr$, the equation for the average of the pressure \eqref{eq:FPSI probl}$_3$ by $\qbar$, and the fluid equation \eqref{eq:FPSI probl}$_4$ by $\bu$, integrating by parts, and making use of the boundary and coupling conditions specified in \eqref{ContVel}--\eqref{bottom}, one obtains:
\begin{equation}\label{EnergyEquality}
    \begin{aligned}
        &\quad \frac{1}{2}\frac{\rd}{\rd t}\biggl(H \rho_p \| w_t \|_{\rL^2(\Gamma)}^2 + H \gamma^p \| w \|_{\rL^2(\Gamma)}^2 + H^3 D \| \Delta_{\btau} w \|_{\rL^2(\Gamma)}^2\\
        &\qquad + c_0^p \Bigl(H \| \qbar \|_{\rL^2(\Gamma)}^2 + \frac{H}{12} \| \qbr \|_{\rL^2(\Gamma)}^2\Bigr) + \rho_f \| \bu \|_{\rL^2(\Omega_f)}^2\biggr)\\
        &\quad + \biggl(\frac{\kappa^p}{H} \| \qbr \|_{\rL^2(\Gamma)}^2 + \frac{12 \kappa^p}{H} \Bigl\| \qbar + \frac{1}{2} \qbr \Bigr\|_{\rL^2(\Gamma)}^2\biggr) + 2 \mu_f \| \bD(\bu) \|_{\rL^2(\Omega_f)}^2 + \beta \| \bu \cdot \btau \|_{\rL^2(\Gamma)}^2\\
        &=  \frac{6}{H} \kappa^p ((q^+,\qbr))_\Gamma 
        + \frac{12}{H}\kappa^p((q^+,\qbar))_\Gamma 
        {- ((q^+,\bu\cdot \bn))_\Gamma.}
    \end{aligned}
\end{equation}
We estimate the right-hand side of \eqref{EnergyEquality} using an absorption argument. 
For the first two terms, Young's inequality yields that
\begin{equation}\label{eq:absorption qbr & qbar}
    \begin{aligned}
        \frac{6}{H} \kappa^p (q^+,\qbr)_\Gamma + \frac{12}{H}\kappa^p(q^+,\qbar)_\Gamma
        &= \frac{12}{H}\kappa^p\left(q^+,\qbar + \frac{1}{2}\qbr\right)_\Gamma\\
        &\le \frac{6 \kappa^p}{H} \| q^+ \|_{\rL^2(\Gamma)}^2 + \frac{6 \kappa^p}{H} \ \left\| \qbar + \frac{1}{2} \qbr \right\|_{\rL^2(\Gamma)}^2.
    \end{aligned}
\end{equation}
The second term can then be absorbed into the left-hand side.

To estimate the term $-(q^+,\bu\cdot \bn)_\Gamma$, we combine Young's inequality with the trace theorem, Poincar\'{e}'s inequality, and Korn's inequality. 
This yields the existence of constants $C_1(\delta)$ and $C_2 > 0$, depending only on the fluid domain $\Omega_f$, such that
\begin{equation}\label{eq:absorption fluid bdry term}
    - (q^+,\bu\cdot \bn)_\Gamma \le C_1(\delta) \cdot \| q^+ \|_{\rL^2(\Gamma)}^2 + \delta \cdot \| \bu\cdot \bn \|_{\rL^2(\Gamma)}^2 \le C_1(\delta) \cdot \| q^+ \|_{\rL^2(\Gamma)}^2 + C_2 \delta \cdot \| \bD(\bu) \|_{\rL^2(\Gamma)}^2.
\end{equation}
Thus, choosing $\delta \coloneqq \frac{\mu_f}{C_2}$ in \eqref{eq:absorption fluid bdry term}, we obtain that
\begin{equation}\label{eq:absorption fluid vel}
     - (q^+,\bu\cdot \bn)_\Gamma \le C_1 \cdot \| q^+ \|_{\rL^2(\Gamma)}^2 + \mu_f \cdot \| \bD(\bu) \|_{\rL^2(\Gamma)}^2.
\end{equation}
By combining the above estimates one obtains the following energy inequality: 
\begin{equation}\label{EnergyInequality}
    \frac{\rd}{\rd t} \left({\mathcal{E}}_k(t) +  {\mathcal{E}}_p(t)\right) + {\mathcal{D}}(t) \le C_\mathrm{data}(t),
\end{equation}
where $C_\mathrm{data}(t) >0$ depends only on the data in the problem, 
and ${\mathcal{E}}_k(t)$, ${\mathcal{E}}_p(t)$, and ${\mathcal{D}}(t)$ are given by:
\begin{itemize}
    \item Kinetic energy:
    \begin{equation*}
        {\mathcal{E}}_k(t) \coloneqq \frac{1}{2}\left(\rho_f \| \bu \|_{\rL^2(\Omega_f)}^2 +  \rho_p H \| w_t \|_{\rL^2(\Gamma)}^2  \right).
    \end{equation*}
    \item Potential energy:
    \begin{equation*}
        {\mathcal{E}}_p(t) \coloneqq \frac{1}{2}\left(\gamma^p H \| w \|_{\rL^2(\Gamma)}^2 +  D H^3 \| \Delta_{\btau} w \|_{\rL^2(\Gamma)}^2 + c_0^p \Bigl(H \| \qbar \|_{\rL^2(\Gamma)}^2 + \frac{H}{12} \| \qbr \|_{\rL^2(\Gamma)}^2\Bigr)  \right).
    \end{equation*}
    \item Dissipation:
    \begin{equation*}
        {\mathcal{D}}(t) \coloneqq \biggl(\frac{\kappa^p}{H} \| \qbr \|_{\rL^2(\Gamma)}^2 + \frac{6 \kappa^p}{H} \Bigl\| \qbar + \frac{1}{2} \qbr \Bigr\|_{\rL^2(\Gamma)}^2\biggr) + \mu_f \| \bD(\bu) \|_{\rL^2(\Omega_f)}^2 + \beta \| \bu \cdot \btau \|_{\rL^2(\Gamma)}^2.
    \end{equation*}
\end{itemize}

\medskip
\noindent{{\bfseries Weak formulation.}}
Based on the above energy estimate, we introduce the following finite-energy weak solution spaces.
As in \autoref{sec:FPSI problem}, we consider $T > 0$ in the sequel.

The solution space for the {\bf{plate displacement}} $w$ is given by
\begin{equation*}
    \cV_w \coloneqq  \rL^\infty(0,T;\rH_0^2(\Gamma)) \cap \rW^{1,\infty}(0,T;\rL^2(\Gamma)),
\end{equation*}
where $\rH_0^2(\Gamma) \coloneqq \overline{\rC_c^\infty(\Gamma)}^{\| \cdot \|_{\rH^2(\Gamma)}}$ captures the clamped boundary conditions.

The solution spaces for the {\bf{jump in the pressure}} $q$, and the {\bf{average pressure}} $\bar{q}$ are given by
\begin{equation*}
    \cV_{\qbr} \coloneqq \rL^\infty(0,T;\rL^2(\Gamma)) \tand \cV_{\qbar} \coloneqq \rL^\infty(0,T;\rL^2(\Gamma)).
\end{equation*}
The solution space for the {\bf{free fluid velocity}} is defined by
\begin{equation*}
    \cV_f \coloneqq \rL^\infty(0,T;H) \cap \rL^2(0,T;V),
\end{equation*}
where the spaces $H$ and $V$ are given by:
\begin{equation*}
    \begin{aligned}
        H 
        &\coloneqq \left\{\bu \in \rL^2(\Omega_f)^3 : \nabla \cdot \bu = 0, \enspace \left.(\bu \cdot \bn)\right|_{z = -R} = 0 \right\} \tand\\
        V
        &\coloneqq \left\{\bu \in \rH^1(\Omega_f)^3 : \nabla \cdot \bu = 0, \enspace \left. \bu \right|_{z = -R} = 0, \enspace \bu \text{ periodic on } \Gamma_f^s\right\}.
    \end{aligned}
\end{equation*}
This leads to the {\bf{weak solution space}}
\begin{equation*}
    \cV_\sol \coloneqq \cV_w 
    \times \cV_{\qbr} \times \cV_{\qbar} \times \cV_f.
\end{equation*}
Accordingly, we define the test space by
\begin{equation*}
    \cV_\test \coloneqq \rC_c^1\bigl([0,T);\cV_w 
    \times \cV_{\qbr} \times \cV_{\qbar} \times \cV_f\bigr).
\end{equation*}

The weak formulation is obtained by testing the first four equations in \eqref{eq:FPSI probl} with the test functions $(\varphi,\sbr,\sbar,\bv)$ corresponding to the variables $(w,\qbr,\qbar,\bu)$.
Integration by parts, together with the use of the coupling and boundary conditions, yields the weak formulation. 

Two integrals require special attention.
First, in equation \eqref{eq:FPSI probl}$_2$, after multiplication by the test function $\sbr$, the second term can be treated by recalling that $v = \partial_t w$ and using $\sbr \in \rH_0^2(\Gamma)$ with compact support on $[0,T)$, to obtain:
\begin{equation*}
    \begin{aligned}
        -\int_0^T\int_\Gamma H^2 \frac{\alpha^p}{12}\Delta_{\btau} v \sbr \srd S \srd t
        &= -H^2 \frac{\alpha^p}{12}\int_0^T\int_\Gamma \del_t (\Delta_{\btau} w \sbr) - \Delta_{\btau} w \del_t \sbr \srd S \srd t\\
        &= H^2 \frac{\alpha^p}{12} ((\Delta_{\btau} w, \del_t \sbr))_\Gamma + H^2 \frac{\alpha^p}{12} \left.(\Delta_{\btau} w,\sbr)_\Gamma \right|_{t = 0}.
    \end{aligned}
\end{equation*}

The second important integral is the fluid Cauchy stress term appearing in equation \eqref{eq:FPSI probl}$_4$:
\begin{equation*}
    - \int_0^T\int_{\Omega_f} \nabla \cdot \bsigma_f(\bu,\pi) \cdot \bv \srd \bx \srd t = \mu_f \int_0^T \int_{\Omega_f} \bD(\bu) : \bD(\bv) \srd \bx \srd t - \int_0^T\int_{\del \Omega_f} \bsigma_f(\bu,\pi) \bn \cdot \bv \srd S \srd t.
\end{equation*}
For the term involving the boundary, we use that $\bv = 0$ at the lower fluid boundary $\Gamma_f^b$, as well as the periodic boundary conditions at the side boundary $\Gamma_f^s$.
We further invoke the Beavers--Joseph--Saffman condition \eqref{BJS}, the continuity of normal velocity components \eqref{ContP} as well as the relation $q^- = q^+ - \qbr$ to obtain:
\begin{equation*}
    \begin{aligned}
        &\quad \int_0^T\int_{\del \Omega_f} \bsigma_f(\bu,\pi) \bn \cdot \bv \srd S \srd t
         =  \int_0^T \int_{\Gamma} \bsigma_f(\bu,\pi) \bn \cdot \bv \srd S \srd t\\
        &= \sum_{i = 1}^2\int_0^T\int_{\Gamma} \bsigma_f(\bu,\pi) \bn \cdot \btau_i (\bv \cdot \btau_i) \srd S \srd t
        + \int_0^T \int_{\Gamma} \bsigma_f(\bu,\pi) \bn \cdot \bn (\bv \cdot \bn) \srd S\srd t\\
        &= - \beta \sum_{i = 1}^2 \int_0^T\int_\Gamma \bu \cdot \btau_i (\bv \cdot \btau_i) \srd S \srd t
        + \int_0^T \int_\Gamma (\qbr - q^+) (\bv \cdot \bn) \srd S\srd t \srd t
        \\
        &= - \beta \int_0^T \int_\Gamma (\bu \cdot \btau) (\bv \cdot \btau) \srd S \srd t
          + \int_0^T \int_\Gamma (\qbr - q^+) (\bv \cdot \bn) \srd S \srd t,
    \end{aligned}
\end{equation*}
where we used $\btau$ to abbreviate the sum of the tangential vectors $\btau_1$ and $\btau_2$.

\begin{defn}\label{def:weak form cont-in-time}
We say that $(w,\qbr,\qbar,\bu) \in \cV_{\sol}$ is a weak solution of \eqref{eq:fully averaged plate model}--\eqref{bottom} if, for all test functions $(\varphi,\sbr,\sbar,\bv) \in \cV_{\test}$, the following holds:
\begin{equation}\label{WeakFormulation}
    \cI_{w}(\varphi) + \cI_{[q]}([s]) + \cI_{\qbar}(\sbar) + \cI_{\bu}(\bv) = \cI_{0}(\varphi,\sbr,\sbar,\bv) + \cI_{\mathrm{data}}([s],\sbar,\bv),
\end{equation}
where:
\begin{equation*}
    \begin{aligned}
        &\cI_{w}(\varphi)\coloneqq
        -H \rho_p ((\del_t w,\del_t \varphi))_\Gamma + H^3 D ((\Delta_{\btau} w,\Delta_{\btau} \varphi))_\Gamma + H \gamma^p ((w,\varphi))_\Gamma + H^2 \frac{\alpha^p}{12}((\qbr,\Delta_{\btau} \varphi))_\Gamma + ((\qbr,\varphi))_\Gamma,\\
        &\cI_{[q]}([s]) \coloneqq - H \frac{c_0^p}{12}((\qbr,\del_t \sbr))_\Gamma + H^2 \frac{\alpha^p}{12}((\Delta_{\btau} w,\del_t \sbr))_\Gamma + \frac{4 \kappa^p}{H}((\qbr,\sbr))_\Gamma + \frac{6 \kappa^p}{H}((\qbar,\sbr))_\Gamma\\
        & \qquad\qquad \ \ + ((\left.\bu\right|_\Gamma \cdot \bn,\sbr))_\Gamma + ((w,\del_t\sbr))_\Gamma,\\
        &\cI_{\qbar}(\sbar) \coloneqq -H c_0^p ((\qbar,\del_t \sbar))_\Gamma + \frac{12 \kappa^p}{H}((\qbar,\sbar))_\Gamma + \frac{6 \kappa^p}{H}((\qbr,\sbar))_\Gamma,\\
        &\cI_{\bu}(\bv) \coloneqq - \rho_f ((\bu,\del_t \bv))_{\Omega_f} + 2 \mu_f ((\bD(\bu),\bD(\bv)))_{\Omega_f} + \beta ((\bu \cdot \btau,\bv \cdot \btau))_\Gamma - ((\qbr,\bv \cdot \bn))_\Gamma,\\
        &\cI_{0}(\varphi,\sbr,\sbar,\bv) \coloneqq H \rho_p \left.(v,\varphi)_\Gamma\right|_{t=0} + H \frac{c_0^p}{12}\left.(\qbr,\sbr)_\Gamma\right|_{t = 0} - H^2 \frac{\alpha^p}{12}\left.(\Delta_{\btau} w,\sbr)_\Gamma\right|_{t=0} - \left.(w,\sbr)_\Gamma\right|_{t=0}\\
        &\qquad\qquad \qquad \ + H c_0^p \left.(\qbar,\sbar)_\Gamma\right|_{t=0} + \rho_f \left.(\bu,\bv)_{\Omega_f}\right|_{t=0}, \tand\\
        &\cI_{\mathrm{data}}([s],\sbar,\bv) \coloneqq 6 \kappa^p ((q^+,\sbr))_\Gamma + \frac{12}{H}\kappa^p((q^+,\sbar))_\Gamma - ((q^+,\bv\cdot \bn))_\Gamma,
    \end{aligned}
\end{equation*}
with
\begin{equation*}
    w(0,\cdot) = w_0, \enspace \del_t w(0,\cdot) = v_0, \enspace \qbr(0,\cdot) = \qbr_0, \enspace \qbar(0,\cdot) = \qbar_0, \tand \bu(0,\cdot) = \bu_0.
\end{equation*}
\end{defn}

To prove the existence of weak solutions, we follow an approach similar to that in \cite{BCMW:21}. We construct approximate solutions via a time semidiscretization of the problem and show that the corresponding approximate weak formulation admits a unique solution after an appropriate rescaling of the test functions. 

We then establish uniform \emph{a priori} energy estimates for the approximate solutions, which allow us to extract subsequences converging to a weak solution of the coupled problem in suitable weak and weak* topologies. We outline the main ideas below, omitting details that closely follow those in \cite{BCMW:21}.

\subsection{Construction of approximate solutions}
\

We semidiscretize the problem in time by partitioning the interval $[0,T]$ into $N$ subintervals of equal length $\Delta t = \frac{T}{N} > 0$, with $N \in \mathbb{N}$.
For a fixed $N$, we define a sequence of approximate solutions $(w_N^n,\qbr_N^n,\qbar_N^n,\bu_N^n)$ for $n = 0,\dots,N$.

To simplify notation, we omit the subscript ``$N$'' in what follows.
We introduce discrete time derivatives based on the implicit Euler scheme, namely,
\begin{equation}\label{DiscreteTimeDer}
    \Dot{w}^{n+1} = \frac{w^{n+1}-w^n}{\Delta t}.
\end{equation}
The solution space for the discretized problem is
\begin{equation*}
    \cV_\sd = \rH_0^2(\Gamma) 
    \times \rL^2(\Gamma) \times \rL^2(\Gamma) \times V, 
\end{equation*}
and we equip this space with the norm
\begin{equation*}
    \| (w,\qbr,\qbar,\bu) \|_{\sd}^2 \coloneqq \| \Delta w \|_{\rL^2(\Gamma)}^2 + \| \qbr \|_{\rL^2(\Gamma)}^2 + \| \qbar \|_{\rL^2(\Gamma)}^2 + \| \bu \|_{\rH^1(\Omega_f)}^2.
\end{equation*}

The following lemma can be obtained in a straightforward way (assuming the clamped boundary conditions for $w$).

\begin{lem}
The space $(\cV_\sd,\|\cdot\|_{\sd})$ is a Hilbert space, and the norm $\|\cdot\|_{\sd}$ is equivalent to the standard norm on $\rH_0^2(\Gamma) \times \rL^2(\Gamma) \times \rL^2(\Gamma) \times V$.
\end{lem}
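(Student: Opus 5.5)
The plan is to reduce the statement to two norm equivalences, one on each nontrivial factor, and then deduce completeness. The standard norm on the product is the sum of $\| w \|_{\rH^2(\Gamma)}^2$, $\| \qbr \|_{\rL^2(\Gamma)}^2$, $\| \qbar \|_{\rL^2(\Gamma)}^2$ and $\| \bu \|_{\rH^1(\Omega_f)}^2$. The two $\rL^2(\Gamma)$ components appear identically in $\|\cdot\|_\sd$, and so does the $\bu$ component, since $\| \bu \|_{\rH^1(\Omega_f)}^2$ enters unchanged. It therefore suffices to show two things: that $V$ is a closed subspace of $\rH^1(\Omega_f)^3$, and that $\| \Delta w \|_{\rL^2(\Gamma)}$ is a norm on $\rH_0^2(\Gamma)$ equivalent to $\| w \|_{\rH^2(\Gamma)}$. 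Once the norms are equivalent, completeness transfers from the standard norm. The Hilbert structure comes from the inner product obtained by polarizing $\|\cdot\|_\sd$, namely $(\Delta w_1,\Delta w_2) + (\qbr_1,\qbr_2) + (\qbar_1,\qbar_2) + (\bu_1,\bu_2)_{\rH^1}$, which is clearly bilinear, symmetric and positive semidefinite.

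\textbf{Closedness of $V$.} Each defining constraint is preserved under $\rH^1$-limits. The divergence is a bounded map $\rH^1(\Omega_f)^3 \to \rL^2(\Omega_f)$, so $\nabla \cdot \bu = 0$ passes to the limit. The trace on $\Gamma_f^b$ and the traces on opposite faces of $\Gamma_f^s$ are bounded maps into $\rL^2$ of the respective faces. Hence the no-slip condition and the periodicity condition, the latter being the equality of traces on opposite faces, are preserved. So $V$ is the intersection of kernels of bounded operators and is closed.

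\textbf{Equivalence on $\rH_0^2(\Gamma)$.} One direction is trivial: $\| \Delta w \|_{\rL^2} \le C \| w \|_{\rH^2}$. For the converse, take $w \in \rC_c^\infty(\Gamma)$ and integrate by parts twice, with no boundary terms. This gives
\[
\| \Delta w \|_{\rL^2(\Gamma)}^2 = \sum_{i,j=1}^2 \| \partial_i \partial_j w \|_{\rL^2(\Gamma)}^2 .
\]
Next, Poincar\'e's inequality applies to $w$ and to each $\partial_i w$, all of which lie in $\rH_0^1(\Gamma)$. It yields $\| w \|_{\rL^2} \le C \| \nabla w \|_{\rL^2} \le C' \| D^2 w \|_{\rL^2}$, hence $\| w \|_{\rH^2} \le C \| \Delta w \|_{\rL^2}$. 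Both sides are continuous in the $\rH^2$ norm, so by density of $\rC_c^\infty(\Gamma)$ in $\rH_0^2(\Gamma)$ the inequality extends to all of $\rH_0^2(\Gamma)$. In particular $\|\cdot\|_\sd$ is definite, and therefore an inner-product norm.

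\textbf{Main obstacle.} The only genuinely nontrivial step is the lower bound $\| w \|_{\rH^2} \le C \| \Delta w \|_{\rL^2}$. It relies on the clamped boundary conditions, used through the density of $\rC_c^\infty(\Gamma)$. The integration-by-parts identity needs no regularity of $\partial\Gamma$, even though $\Gamma$ is only a square, precisely because it is verified on compactly supported functions.
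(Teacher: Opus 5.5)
Your argument is correct and is exactly the routine verification the paper has in mind: the paper gives no proof beyond remarking that the lemma is straightforward given the clamped boundary conditions. Your identity $\|\Delta w\|_{\rL^2(\Gamma)}^2=\sum_{i,j}\|\partial_i\partial_j w\|_{\rL^2(\Gamma)}^2$ on $\rC_c^\infty(\Gamma)$, combined with Poincar\'e's inequality and density, is precisely where that assumption enters, and the closedness of $V$ in $\rH^1(\Omega_f)^3$ (via the bounded divergence and trace maps) correctly supplies completeness of the fluid factor.
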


At each time step $t^{n} = n [\Delta t]$, for $n = 1,\dots,N$, we ``solve'' the weak formulation of an approximate problem, and look for a solution that is obtained in an appropriate limit of approximate solutions as $N \to \infty$. 

The approximate problem is obtained by semidiscretizing the time derivatives as in \eqref{DiscreteTimeDer} to obtain the following weak formulation:
\begin{equation}\label{eq:first weak form semidiscr form}
    \begin{aligned}
        &\quad H \rho_p (\Dot{w}^{n+1} - \Dot{w}^n,\varphi)_\Gamma + H^3 D [\Delta t] (\Delta_{\btau} w^{n+1},\Delta_{\btau} \varphi)_\Gamma + H \gamma^p [\Delta t] (w^{n+1},\varphi)_\Gamma\\
        &\quad + H^2 \frac{\alpha^p}{12} [\Delta t] (\qbr^{n+1},\Delta_{\btau} \varphi)_\Gamma + [\Delta t] (\qbr^{n+1},\varphi)_\Gamma + H \frac{c_0^p}{12}(\qbr^{n+1} - \qbr^n,\sbr)_\Gamma\\
        &\quad - H^2 \frac{\alpha^p}{12} [\Delta t] (\Delta_{\btau} \Dot{w}^{n+1},\sbr)_\Gamma + \frac{4 \kappa^p}{H} [\Delta t] (\qbr^{n+1},\sbr)_\Gamma + \frac{6 \kappa^p}{H} [\Delta t](\qbar^{n+1},\sbr)_\Gamma\\
        &\quad + [\Delta t] (\left.\bu^{n+1}\right|_\Gamma \cdot \bn,\sbr)_\Gamma - [\Delta t](\Dot{w}^{n+1},\sbr)_\Gamma + H c_0^p (\qbar^{n+1} - \qbar^n,\sbar)_\Gamma+ \frac{12 \kappa^p}{H}[\Delta t] (\qbar^{n+1},\sbar)_\Gamma\\
        &\quad + \frac{6 \kappa^p}{H}[\Delta t] (\qbr^{n+1},\sbar)_\Gamma + \rho_f (\bu^{n+1} - \bu^n,\bv)_{\Omega_f} + 2 \mu_f [\Delta t](\bD(\bu^{n+1}),\bD(\bv))_{\Omega_f}\\
        &\quad + \beta [\Delta t](\bu^{n+1} \cdot \btau,\bv \cdot \bv)_\Gamma - [\Delta t](\qbr^{n+1},\bv \cdot \bn)_\Gamma\\
        &= \frac{6 \kappa^p}{H} [\Delta t]((q^+)^{n+1},\sbr)_\Gamma + \frac{12 \kappa^p}{H} [\Delta t]((q^+)^{n+1},\sbar)_\Gamma - [\Delta t]((q^+)^{n+1},\bv\cdot \bn)_\Gamma, 
    \end{aligned}
\end{equation} 
for all $(\varphi,\sbr,\sbar,\bv) \in \cV_\sd$.
However, as in \cite{BCMW:21}, due to the mixed scaling of the problem where a hyperbolic elastodynamics equation is coupled to the remaining nonhyperbolic system of PDEs, the resulting weak formulation does not have a coercive structure. 
As in \cite{BCMW:21}, we get around this difficulty by rescaling the test functions for the nonhyperbolic part of the problem, namely, for the jump in the pressure, the average of the pressure, and the fluid velocity:
\begin{equation*}
    \sbr \mapsto [\Delta t] \sbr, \enspace \sbar \mapsto [\Delta t] \sbar \tand \bv \mapsto [\Delta t] \bv.
\end{equation*}
This modification does not affect the admissibility of the test space.
After explicitly writing out the finite difference approximations of the time derivatives, the resulting weak formulation can be written as follows.

\begin{defn}\label{def:weak form rescaled}
We say that $(w^{n+1},\qbr^{n+1},\qbar^{n+1},\bu^{n+1}) \in \cV_\sd$ is a weak solution to the semidiscretized linear problem \eqref{eq:fully averaged plate model}--\eqref{bottom} if for every test function $(\varphi,[\Delta t]\sbr,[\Delta t]\sbar,[\Delta t]\bv) \in \cV_\sd$, the following holds 
\begin{equation}\label{eq:weak form rescaled}
    \begin{aligned}
        &\quad \cI_{w^{n+1}}(\varphi)  + \cI_{[q]^{n+1}}([\Delta t][s])  + \cI_{\qbar^{n+1}}([\Delta t]\sbar)  + \cI_{\bu^{n+1}}([\Delta t]\bv) \\ 
        &=\cI_{n}(\varphi, [\Delta t]\sbr,[\Delta t]\sbar,[\Delta t] \bv) + \cI_{\mathrm{data}}([\Delta t][s],[\Delta t]\sbar,[\Delta t]\bv),
    \end{aligned}
\end{equation}
where
\begin{equation}\label{eq:Integrals1}
    \begin{aligned}
        &\cI_{w^{n+1}}(\varphi)\coloneqq
         H \rho_p (w^{n+1},\varphi)_\Gamma + H^3 D [\Delta t]^2 (\Delta_{\btau} w^{n+1},\Delta_{\btau} \varphi)_\Gamma 
        + H \gamma^p [\Delta t]^2 (w^{n+1},\varphi)_\Gamma\\
        &\qquad \qquad+ [\Delta t]^2 (\qbr^{n+1},\varphi)_\Gamma,\\
        &\cI_{[q]^{n+1}}([\Delta t][s]) \coloneqq H \frac{c_0^p}{12} [\Delta t] (\qbr^{n+1},[\Delta t]\sbr)_\Gamma - H^2 \frac{\alpha^p}{12} [\Delta t] (\Delta_{\btau} w^{n+1},[\Delta t]\sbr)_\Gamma\\
        &\qquad \qquad + \frac{4 \kappa^p}{H} [\Delta t]^2 (\qbr^{n+1},[\Delta t]\sbr)_\Gamma +\frac{6 \kappa^p}{H} [\Delta t]^2(\qbar^{n+1},[\Delta t]\sbr)_\Gamma\\
        &\qquad \qquad + [\Delta t]^2 (\left.\bu^{n+1}\right|_\Gamma \cdot \bn,[\Delta t]\sbr)_\Gamma - [\Delta t] (w^{n+1},[\Delta t]\sbr)_\Gamma,\\
        &\cI_{\qbar^{n+1}}([\Delta t]\sbar) \coloneqq
         H c_0^p [\Delta t] (\qbar^{n+1},[\Delta t]\sbar)_\Gamma + \frac{12 \kappa^p}{H} [\Delta t]^2 (\qbar^{n+1},[\Delta t]\sbar)_\Gamma + \frac{6 \kappa^p}{H} [\Delta t]^2 (\qbr^{n+1},[\Delta t]\sbar)_\Gamma,\\
        &\cI_{\bu^{n+1}}([\Delta t]\bv) \coloneqq
         \rho_f [\Delta t] (\bu^{n+1},[\Delta t]\bv)_{\Omega_f}
        + 2 \mu_f [\Delta t]^2 (\bD(\bu^{n+1}),\bD([\Delta t]\bv))_{\Omega_f}\\
        &\qquad \qquad + \beta [\Delta t]^2 (\bu^{n+1} \cdot \btau,[\Delta t]\bv \cdot \btau)_\Gamma  - [\Delta t]^2 (\qbr^{n+1},[\Delta t] \bv \cdot \bn)_\Gamma,
    \end{aligned}
\end{equation}
and
\begin{equation}\label{eq:Integrals2}
    \begin{aligned}
        &\cI_{n}(\varphi, [\Delta t]\sbr,[\Delta t]\sbar,[\Delta t] \bv)\coloneqq H \rho_p (w^{n},\varphi)_\Gamma + H \rho_p [\Delta t] (\Dot{w}^{n},\varphi)_\Gamma + H \frac{c_0^p}{12} [\Delta t] (\qbr^{n},[\Delta t]\sbr)_\Gamma\\
        &\qquad \qquad  + H^2 \frac{\alpha^p}{12} [\Delta t] (\Delta_{\btau} w^{n},[\Delta t]\sbr)_\Gamma
        - [\Delta t] (w^{n},[\Delta t]\sbr)_\Gamma + H c_0^p [\Delta t] (\qbar^{n},[\Delta t]\sbar)_\Gamma\\
        &\qquad \qquad + \rho_f [\Delta t] (\bu^{n},[\Delta t]\bv)_{\Omega_f}, \tand\\
        &\cI_{\mathrm{data}}([\Delta t]\sbr,[\Delta t]\sbar,[\Delta t] \bv) \coloneqq \frac{6}{H} \kappa^p [\Delta t]^2 ((q^+)^{n+1},[\Delta t]\sbr)_\Gamma+ \frac{12}{H} \kappa^p [\Delta t]^2 ((q^+)^{n+1},[\Delta t]\sbar)_\Gamma\\
        &\qquad \qquad -  [\Delta t]^2 ((q^+)^{n+1},[\Delta t]\bv \cdot \bn)_\Gamma,
    \end{aligned}
\end{equation}
with the initial data specified in \eqref{init}.
\end{defn}

We first show that a weak solution of the semidiscretized problem specified in \autoref{def:weak form rescaled}, exists. 
To this end, we introduce a bilinear form $a_{n+1}(\cdot,\cdot) \colon \cV_\sd \times \cV_\sd \to \R$ associated with the left-hand side of~\eqref{eq:weak form rescaled}, as well as a linear functional $F_n(\cdot) \colon \cV_\sd \to \R$ representing the right-hand side. 
We then verify that the hypotheses of the Lax--Milgram lemma are satisfied, which yields the existence of a weak solution.

Let $a_{n+1}$ be the bilinear form defined by:
\begin{equation*}
    a_{n+1}((w^{n+1},\qbr^{n+1},\qbar^{n+1},\bu^{n+1}),(\varphi,\sbr,\sbar,\bv)) \coloneqq \cI_{w^{n+1}}(\varphi) + \cI_{[q]^{n+1}}([\Delta t][s]) + \cI_{\qbar^{n+1}}([\Delta t]\sbar)  + \cI_{\bu^{n+1}}([\Delta t]\bv),
\end{equation*}
and let $F_n(\varphi,\sbr,\sbar,\bv)$ be defined as follows:
\begin{equation*}
    F_n(\varphi,\sbr,\sbar,\bv) \coloneqq \cI_{n}(\varphi, [\Delta t]\sbr,[\Delta t]\sbar,[\Delta t] \bv) + \cI_{\mathrm{data}}([\Delta t][s],[\Delta t]\sbar,[\Delta t]\bv),
\end{equation*}
where $\cI_{w^{n+1}}, \cI_{[q]^{n+1}}, \cI_{\qbar^{n+1}}$, $\cI_{\bu^{n+1}}$, $\cI_{n}$, and $\cI_{\mathrm{data}}$ have been introduced in \eqref{eq:Integrals1} and \eqref{eq:Integrals2}.

\begin{lem}
The bilinear form $a_{n+1}(\cdot,\cdot) \colon \cV_\sd \times \cV_\sd \to \R$ is continuous and coercive, and for the linear form $F_n(\cdot) \colon \cV_\sd \to \R$, it holds that $F_n \in \cV_\sd'$.    
\end{lem}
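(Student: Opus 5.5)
The plan is to apply the Lax--Milgram lemma on $(\cV_\sd,\|\cdot\|_\sd)$, which is a Hilbert space by the preceding lemma. So I need three things: continuity of $a_{n+1}$, coercivity of $a_{n+1}$, and boundedness of $F_n$. Throughout, $\Delta t>0$ and $n$ are fixed, so all constants may depend on $\Delta t$. Every integral defining $a_{n+1}$ is an $\rL^2(\Gamma)$ or $\rL^2(\Omega_f)$ pairing of the following quantities:
\begin{itemize}
\item $w$ and $\Delta_{\btau} w$;
\item $\qbr$ and $\qbar$;
\item $\bu$, $\bD(\bu)$, and the traces $\bu|_\Gamma\cdot\bn$ and $\bu|_\Gamma\cdot\btau$.
\end{itemize}

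\emph{Continuity.} I will apply Cauchy--Schwarz term by term. Three facts then give $|a_{n+1}(X,Y)|\le C(\Delta t)\|X\|_\sd\|Y\|_\sd$:
\begin{itemize}
\item on $\rH_0^2(\Gamma)$, both $\|w\|_{\rL^2}$ and $\|\Delta_{\btau}w\|_{\rL^2}$ are controlled by $\|\Delta w\|_{\rL^2(\Gamma)}$ (clamped conditions, Poincar\'e, and the equivalence in the previous lemma);
\item the trace theorem gives $\|\bu|_\Gamma\|_{\rL^2(\Gamma)}\le C\|\bu\|_{\rH^1(\Omega_f)}$;
\item trivially, $\|\bD(\bu)\|_{\rL^2}\le\|\bu\|_{\rH^1}$.
\end{itemize}
The same estimates show $F_n\in\cV_\sd'$. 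The previous iterate $(w^n,\dot w^n,\qbr^n,\qbar^n,\bu^n)$ is a fixed element of $\cV_\sd$, so each term of $\cI_n$ is bounded by $C\|Y\|_\sd$. Assuming $(q^+)^{n+1}\in\rL^2(\Gamma)$, each term of $\cI_{\mathrm{data}}$ is also bounded by $C\|Y\|_\sd$, using the trace estimate for $(q^+,\bv\cdot\bn)_\Gamma$.

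\emph{Coercivity.} This is the main step. I will evaluate $a_{n+1}(X,X)$ with $X=(w,\qbr,\qbar,\bu)$ and exploit the structure inherited from the energy identity \eqref{EnergyEquality}. The cross terms pair off as follows.
\begin{itemize}
\item The terms $[\Delta t]^2(\qbr,w)_\Gamma$ from $\cI_{w^{n+1}}$ and $-[\Delta t]^2(w,\qbr)_\Gamma$ from $\cI_{[q]^{n+1}}$ cancel.
\item The Biot coupling term $-H^2\frac{\alpha^p}{12}[\Delta t]^2(\Delta_{\btau}w,\qbr)_\Gamma$ should cancel against its counterpart $H^2\frac{\alpha^p}{12}[\Delta t]^2(\qbr,\Delta_{\btau}\varphi)_\Gamma$ in $\cI_{w^{n+1}}$. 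That counterpart comes from the semidiscrete form \eqref{eq:first weak form semidiscr form} and must be kept in the rescaled form; the rescaled formula \eqref{eq:Integrals1} appears to have dropped it.
\item The fluid--pressure terms $[\Delta t]^3(\bu|_\Gamma\cdot\bn,\qbr)_\Gamma$ and $-[\Delta t]^3(\qbr,\bu\cdot\bn)_\Gamma$ cancel.
\item The two copies of $\frac{6\kappa^p}{H}[\Delta t]^3(\qbar,\qbr)_\Gamma$ combine with the diagonal pressure terms into
\[
\frac{\kappa^p}{H}[\Delta t]^3\Bigl(\|\qbr\|^2_{\rL^2(\Gamma)}+12\bigl\|\qbar+\tfrac12\qbr\bigr\|^2_{\rL^2(\Gamma)}\Bigr)\ge0 .
\]
\end{itemize}
After these cancellations,
\[
a_{n+1}(X,X)\ge H\rho_p\|w\|^2+H^3D[\Delta t]^2\|\Delta_{\btau}w\|^2+H\frac{c_0^p}{12}[\Delta t]^2\|\qbr\|^2+Hc_0^p[\Delta t]^2\|\qbar\|^2+\rho_f[\Delta t]^2\|\bu\|^2+2\mu_f[\Delta t]^3\|\bD(\bu)\|^2 .
\]
I will then use Korn's inequality on $V$ (valid thanks to the no-slip condition on $\Gamma_f^b$) to bound $\|\bu\|_{\rH^1}^2\le C\|\bD(\bu)\|^2$. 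Together with $\|\Delta_{\btau}w\|=\|\Delta w\|$, this gives $a_{n+1}(X,X)\ge c(\Delta t)\|X\|_\sd^2$, and Lax--Milgram concludes.

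The main obstacle is the $\alpha^p$ term. Without its antisymmetric counterpart, Young's inequality would only absorb it under a parameter restriction of the type $(H^2\alpha^p/12)^2\lesssim H^3D\cdot Hc_0^p$. So I will check carefully that the rescaling preserves the skew structure of that pairing; if the term is genuinely absent, I will treat it by Young's inequality under such a smallness assumption.
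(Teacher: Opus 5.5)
Your proposal is correct and is essentially the paper's argument. You test with $X$ itself, let the coupling terms cancel pairwise, and combine the $\kappa^p$ terms into $\frac{\kappa^p}{H}[\Delta t]^3(\|\qbr\|^2+12\|\qbar+\frac12\qbr\|^2)$; Korn/Poincar\'e then gives coercivity with a constant of order $\min(1,[\Delta t]^2,[\Delta t]^3)$, and Cauchy--Schwarz plus the trace theorem give continuity of $a_{n+1}$ and $F_n\in\cV_\sd'$. You are also right that $H^2\frac{\alpha^p}{12}[\Delta t]^2(\qbr^{n+1},\Delta_{\btau}\varphi)_\Gamma$ was dropped from \eqref{eq:Integrals1} by a typo. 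The paper's own expansion of $a_{n+1}(X,X)$ has no $\alpha^p$ term, so it relies on exactly the skew cancellation you describe, and your Young's-inequality fallback is unnecessary.
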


\begin{proof}
In order to show the coercivity of the bilinear form $a_{n+1}(\cdot,\cdot)$ on $\cV_\sd \times \cV_\sd$, we take $(\varphi,\sbr,\sbar,\bv) = (w^{n+1},\qbr^{n+1},\qbar^{n+1},\bu^{n+1})$.
Using the resulting cancellations together with Korn's and Poincar\'e's inequalities for the fluid velocity, we find that
\begin{equation*}
    \begin{aligned}
        &\quad a_{n+1}((w^{n+1},\qbr^{n+1},\qbar^{n+1},\bu^{n+1}),(w^{n+1},\qbr^{n+1},\qbar^{n+1},\bu^{n+1}))\\
        &= H \rho_p \| w^{n+1} \|_{\rL^2(\Gamma)}^2 + H^3 D [\Delta t]^2 \| \Delta_{\btau} w^{n+1} \|_{\rL^2(\Gamma)}^2 + H \gamma^p [\Delta t]^2 \| w^{n+1} \|_{\rL^2(\Gamma)}^2 + H \frac{c_0^p}{12} [\Delta t]^2 \| \qbr^{n+1} \|_{\rL^2(\Gamma)}^2\\
        &\quad +\frac{\kappa^p}{H} [\Delta t]^3 \| \qbr^{n+1} \|_{\rL^2(\Gamma)}^2 + \frac{12 \kappa^p}{H} [\Delta t]^3 \left\| \qbar^{n+1} + \frac{1}{2} \qbr^{n+1} \right\|_{\rL^2(\Gamma)}^2 + H c_0^p [\Delta t]^2 \| \qbar^{n+1} \|_{\rL^2(\Gamma)}^2\\
        &\quad + \rho_f [\Delta t]^2 \| \bu^{n+1} \|_{\rL^2(\Omega_f)}^2 + 2 \mu_f [\Delta t]^3 \| \bD(\bu^{n+1}) \|_{\rL^2(\Omega_f}^2 + \beta [\Delta t]^3 \| \bu^{n+1} \cdot \btau \|_{\rL^2(\Gamma)}^2\\
        &\gtrsim \min(1,[\Delta t]^2) \cdot \| w^{n+1} \|_{\rH^2(\Gamma)}^2 + \min([\Delta t]^2,[\Delta t]^3) \cdot \| \qbr^{n+1} \|_{\rL^2(\Gamma)}^2\\
        &\quad + [\Delta t]^2 \cdot \| \qbar^{n+1} \|_{\rL^2(\Gamma)}^2 + \min([\Delta t]^2,[\Delta t]^3) \cdot \| \bu^{n+1} \|_{\rH^1(\Omega_f)}^2\\
        &\gtrsim \min(1,[\Delta t]^2,[\Delta t]^3) \cdot \| (w^{n+1},\qbr^{n+1},\qbar^{n+1},\bu^{n+1}) \|_{\sd}^2.
    \end{aligned}
\end{equation*}
The continuity of $a_n(\cdot,\cdot)$ readily follows.
With regard to the continuity of $F_n(\cdot)$, we estimate
\begin{equation*}
    \begin{aligned}
        \| F_n \|_{\cV_\sd'}
        &\lesssim \| w^n \|_{\rL^2(\Gamma)} + [\Delta t] \| \Dot{w}^n \|_ {\rL^2(\Gamma)} + [\Delta t]^3 \cdot \| (q^+)^{n+1} \|_{\rL^2(\Gamma)}\\
        &\quad + [\Delta t]^2 \bigl(\| w^n \|_{\rH^2(\Gamma)} + \| \qbr^n \|_{\rL^2(\Gamma)}^2 + \| \qbar^n \|_{\rL^2(\Gamma)} + \| \bu^n \|_{\rL^2(\Omega_f)}\bigr),
    \end{aligned}
\end{equation*}
finishing the proof.
\end{proof}

The existence of a unique weak solution to the semidiscretized problem follows 
immediately from the Lax--Milgram lemma applied to the problem
\begin{equation}\label{ApproximateProblem}
    a_{n+1}((w^{n+1},\qbr^{n+1},\qbar^{n+1},\bu^{n+1}),(\varphi,\sbr,\sbar,\bv)) = F_n((\varphi,\sbr,\sbar,\bv)), \tforall (\varphi,\sbr,\sbar,\bv) \in \cV_{\sd}.
\end{equation}

\begin{lem}\label{prop:ex of weak sols semidiscr probl}
The rescaled, semidiscretized linear problem introduced in \autoref{def:weak form rescaled} has a unique weak solution.
\end{lem}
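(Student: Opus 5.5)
The plan is to apply the Lax--Milgram lemma on the Hilbert space $(\cV_\sd,\|\cdot\|_\sd)$ at each time step, and to close the argument by induction on $n$. The ingredients are the two preceding lemmas: $\cV_\sd$ is a Hilbert space whose norm is equivalent to the standard one, and $a_{n+1}$ is continuous and coercive on $\cV_\sd\times\cV_\sd$ with $F_n\in\cV_\sd'$.

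First, the induction. For $n=0$, the data $(w^0,\dot w^0,\qbr^0,\qbar^0,\bu^0)$ come from the initial conditions \eqref{init}. We assume $w_0\in\rH_0^2(\Gamma)$, $v_0,\qbr_0,\qbar_0\in\rL^2(\Gamma)$ and $\bu_0\in H$, and we assume the sampled pressure data $(q^+)^{n+1}\in\rL^2(\Gamma)$. We also fix $\dot w^0:=v_0$ as the convention. Suppose now that $(w^k,\qbr^k,\qbar^k,\bu^k)\in\cV_\sd$ has been constructed for all $k\le n$.

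Then $\dot w^n=(w^n-w^{n-1})/\Delta t\in\rH_0^2(\Gamma)\subset\rL^2(\Gamma)$. By the estimate on $\|F_n\|_{\cV_\sd'}$ from the preceding lemma, $F_n$ is bounded. One check is needed here: the term $(\Delta_{\btau}w^n,[\Delta t]\sbr)_\Gamma$ only needs $w^n\in\rH^2(\Gamma)$, and $(\bu^n,\bv)_{\Omega_f}$ only needs $\bu^n\in\rL^2$, so $F_n$ is bounded even when $\bu^0$ lies only in $H$.

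Next, the solve. Lax--Milgram gives a unique $(w^{n+1},\qbr^{n+1},\qbar^{n+1},\bu^{n+1})\in\cV_\sd$ satisfying \eqref{ApproximateProblem}.

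It then remains to check that this solution is exactly a weak solution in the sense of \autoref{def:weak form rescaled}. The key point is that the rescaling $(\sbr,\sbar,\bv)\mapsto[\Delta t](\sbr,\sbar,\bv)$ is a bijection of $\cV_\sd$ onto itself, because $\Delta t>0$ is fixed. Hence "for all $(\varphi,[\Delta t]\sbr,[\Delta t]\sbar,[\Delta t]\bv)\in\cV_\sd$" is equivalent to "for all $(\varphi,\sbr,\sbar,\bv)\in\cV_\sd$". In addition, $a_{n+1}$ and $F_n$ were defined precisely as the left- and right-hand sides of \eqref{eq:weak form rescaled}, so \eqref{eq:weak form rescaled} and \eqref{ApproximateProblem} are the same problem. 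Existence therefore transfers directly.

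Uniqueness also transfers: the difference of two solutions solves the homogeneous problem with $F_n=0$. Coercivity then gives $\min(1,[\Delta t]^2,[\Delta t]^3)\|\cdot\|_\sd^2\le 0$, so the difference vanishes. Iterating over $n=0,\dots,N-1$ produces the whole sequence.

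The main obstacle is the coercivity already established, namely the cancellation of the off-diagonal coupling terms when testing with the solution itself. We rely on the preceding lemma for that.

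Two further points need care. The first is that the sign and scaling of the cross terms in \eqref{eq:Integrals1} are consistent with that lemma's computation. Examples are $-H^2\frac{\alpha^p}{12}[\Delta t](\Delta_{\btau}w^{n+1},[\Delta t]\sbr)$ against the corresponding plate term, and $[\Delta t]^2(\qbr^{n+1},\varphi)$ against $-[\Delta t](w^{n+1},[\Delta t]\sbr)$. If an exact cancellation fails because of differing powers of $\Delta t$, I would absorb the leftover cross term by Young's inequality, using the dominant $H\rho_p\|w^{n+1}\|^2$ and $[\Delta t]^2$-weighted pressure terms. Since $\Delta t$ is fixed, only a positive coercivity constant is needed, not a uniform one.

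The second point is the constant $\min(1,[\Delta t]^2,[\Delta t]^3)$. It is positive for each fixed $N$, which is all Lax--Milgram requires. Uniform-in-$\Delta t$ control is deferred to the subsequent energy estimates.
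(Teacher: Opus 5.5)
Your proposal is correct and is essentially the paper's argument: the paper invokes the Lax--Milgram lemma for \eqref{ApproximateProblem}, using the preceding lemma (continuity and coercivity of $a_{n+1}$, and $F_n\in\cV_\sd'$); it leaves implicit both the induction over $n$ and the fact that the test-function rescaling is a bijection. The cross terms you flag cancel exactly, because each pair carries the same power of $\Delta t$, so the Young's-inequality fallback is unnecessary. (The plate term $H^2\frac{\alpha^p}{12}[\Delta t]^2(\qbr^{n+1},\Delta_{\btau}\varphi)_\Gamma$ arises from \eqref{eq:first weak form semidiscr form} after multiplication by $\Delta t$; it was only dropped from the displayed $\cI_{w^{n+1}}$ by a typo.)
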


For each fixed $\Delta t > 0$, the approximate sequence, defined as the solution of \eqref{ApproximateProblem}, is given at the discrete time levels $t^n$, $n=0,\dots,N$. 
At each time level, these approximations depend only on the spatial variables $(x,y,z)$.

To obtain approximate functions defined on the entire time interval $(0,T)$, we extend these discrete approximations by piecewise constant extrapolation in time. 
This yields the approximate functions $(w^{[N]},\qbr^{[N]},\qbar^{[N]},\bu^{[N]})$ defined for every $(t,x,y,z)$:
\begin{equation}\label{eq:approx sols}
    (w^{[N]},\qbr^{[N]},\qbar^{[N]},\bu^{[N]}) = (w_N^n,\qbr_N^n,\qbar_N^n,\bu_N^n) \tfor t \in (t^{n-1},t^n] \tand n = 1,\dots,N.
\end{equation}
Additionally, for a piecewise constant function $f^{[N]}$, we define the time derivative $\Dot{f}^{[N]}$ via the backward difference quotient
\begin{equation*}
    \Dot{f}^{[N]}(t) = \frac{f_N^n - f_N^{n-1}}{[\Delta t]}, \tfor t \in (t^{n-1},t^n].
\end{equation*}
Our goal is to show that there exists a subsequence, still denoted by
$(w^{[N]},\qbr^{[N]},\qbar^{[N]},\bu^{[N]})$, that converges, as $N \to \infty$ (i.e., $\Delta t \to 0$), to a weak solution of the original coupled problem,  namely, to a function that satisfies the weak formulation given in \autoref{def:weak form cont-in-time}.

To this end, we observe that for each $N \in \N$ (or, equivalently, for each $\Delta t > 0$), the approximate functions $(w^{[N]},\qbr^{[N]},\qbar^{[N]},\bu^{[N]})$ satisfy the following weak formulation:

\begin{prop}\label{prop:weak form approx sols}
The approximate solutions $(w^{[N]},\qbr^{[N]},\qbar^{[N]},\bu^{[N]})$ defined by  \eqref{eq:approx sols}, satisfy the following weak formulation: 
\begin{equation*}
    \begin{aligned}
        &\quad H \rho_p ((\Ddot{w}^{[N]},\varphi))_\Gamma + H^3 D ((\Delta_{\btau} w^{[N]},\Delta_{\btau} \varphi))_\Gamma + H \gamma^p ((w^{[N]},\varphi))_\Gamma + H^2 \frac{\alpha^p}{12} ((\qbr^{[N]},\Delta_{\btau} \varphi))_\Gamma\\
        &\quad + ((\qbr^{[N]},\varphi))_\Gamma + H \frac{c_0^p}{12} ((\Dot{\qbr}^{[N]},\sbr))_\Gamma - H^2 \frac{\alpha^p}{12} ((\Delta_{\btau} \Dot{w}^{[N]},\sbr))_\Gamma + \frac{4 \kappa^p}{H} ((\qbr^{[N]},\sbr))_\Gamma +\frac{6 \kappa^p}{H}((\qbar^{[N]},\sbr))_\Gamma\\
        &\quad + ((\left.\bu^{[N]}\right|_\Gamma \cdot \bn,\sbr))_\Gamma - ((\Dot{w}^{[N]},\sbr))_\Gamma + H c_0^p ((\Dot{\qbar}^{[N]},\sbar))_\Gamma + \frac{12 \kappa^p}{H} ((\qbar^{[N]},\sbar))_\Gamma + \frac{6 \kappa^p}{H} ((\qbr^{[N]},\sbar))_\Gamma\\
        &\quad + \rho_f ((\Dot{\bu}^{[N]},\bv))_{\Omega_f} + 2 \mu_f ((\bD(\bu^{[N]}),\bD(\bv)))_{\Omega_f} + \beta ((\bu^{[N]} \cdot \btau,\bv \cdot \btau))_\Gamma - ((\qbr^{[N]},\bv \cdot \bn))_\Gamma\\
        &= \frac{6}{H} \kappa^p (((q^+)^{[N]},\sbr))_\Gamma + \frac{12}{H} \kappa^p (((q^+)^{[N]},\sbar))_\Gamma - (((q^+)^{[N]},\bv\cdot \bn))_\Gamma, \tforall (\varphi,\sbr,\sbar,\bv) \in \cV_\test.
    \end{aligned}
\end{equation*}
\end{prop}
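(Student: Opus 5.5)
The plan is to start from the semidiscrete problem \eqref{eq:first weak form semidiscr form}, which is equivalent to \eqref{eq:weak form rescaled} after undoing the rescaling. By \autoref{prop:ex of weak sols semidiscr probl}, at every time level $n$ the iterates $(w^{n+1},\qbr^{n+1},\qbar^{n+1},\bu^{n+1})$ satisfy \eqref{eq:first weak form semidiscr form} for all $(\varphi,\sbr,\sbar,\bv)\in\cV_\sd$. This follows because the rescaling $\sbr\mapsto[\Delta t]\sbr$, etc., is a bijection of $\cV_\sd$ for fixed $\Delta t>0$, so dividing back by $[\Delta t]$ recovers the unscaled identity.

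Next I divide the identity at level $n$ (indexed as $n-1\to n$) by $[\Delta t]$. The differences then become difference quotients:
\begin{itemize}
\item the plate inertia term gives $(\Dot w^{n}-\Dot w^{n-1})/[\Delta t]$, which I denote $\Ddot w^{[N]}$ on $(t^{n-1},t^n]$, that is, the backward difference quotient of $\Dot w^{[N]}$;
\item the remaining increments $\qbr^n-\qbr^{n-1}$, $\qbar^n-\qbar^{n-1}$ and $\bu^n-\bu^{n-1}$ become $\Dot{\qbr}^{[N]}$, $\Dot{\qbar}^{[N]}$ and $\Dot\bu^{[N]}$.
\end{itemize}
The terms carrying a factor $[\Delta t]$ become the pointwise values of the piecewise constant functions on $(t^{n-1},t^n]$. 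I also interpret $(q^+)^{[N]}$ as the piecewise constant interpolant of $q^+$ at the nodes, for instance via its time averages on each subinterval; that is how $(q^+)^{n+1}$ is defined.

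With a time-dependent test function $(\varphi,\sbr,\sbar,\bv)\in\cV_\test$, I apply the pointwise-in-time identity with the test function $(\varphi(t),\sbr(t),\sbar(t),\bv(t))$ for each $t\in(t^{n-1},t^n]$. For a.e. $t$ this lies in $\cV_\sd$; here I need $\sbr,\sbar\in\rL^2(\Gamma)$ and $\bv(t)\in V$, which holds for test functions in the stated space. Because all unknowns are constant on the subinterval, the identity holds pointwise in $t$. I then integrate over $(t^{n-1},t^n]$ and sum over $n=1,\dots,N$. Each term becomes a space-time inner product $((\cdot,\cdot))$, which gives exactly the stated formulation. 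For example, $\int_{t^{n-1}}^{t^n}(\qbr^n,\Delta_{\btau}\varphi(t))_\Gamma\,\rd t$ summed over $n$ equals $((\qbr^{[N]},\Delta_{\btau}\varphi))_\Gamma$. The coupling terms $((\bu^{[N]}|_\Gamma\cdot\bn,\sbr))_\Gamma$, $-((\Dot w^{[N]},\sbr))_\Gamma$ and $-H^2\frac{\alpha^p}{12}((\Delta_{\btau}\Dot w^{[N]},\sbr))_\Gamma$ arise in the same way.

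The main point requiring care is bookkeeping rather than analysis. I must make sure the index shift is consistent: the identity at step $n-1\to n$ lives on $(t^{n-1},t^n]$, and the extrapolation convention \eqref{eq:approx sols} assigns the value at $t^n$ to that interval. The first interval also uses $\Dot w^0=v_0$, so that $\Ddot w^{[N]}$ is well defined there. I also need to confirm that measurability and integrability of $t\mapsto(\cdot,\varphi(t))$ follow from $\cV_\test\subset \rC^1$ in time. No compactness or uniform estimate is needed at this stage, since the proposition is an exact algebraic consequence of the discrete scheme.
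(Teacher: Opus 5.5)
Your proposal is correct and is essentially the argument the paper leaves implicit, since it states the proposition without proof. The proposition is just the semidiscrete scheme \eqref{eq:first weak form semidiscr form} divided by $[\Delta t]$, read as a pointwise-in-time identity for the piecewise constant interpolants on $(t^{n-1},t^n]$, tested with $(\varphi(t),\sbr(t),\sbar(t),\bv(t))$, integrated and summed over $n$; your handling of the index shift, the convention $\Dot{w}^0 = v_0$ on the first interval, and the equivalence with the rescaled formulation covers exactly the bookkeeping the paper does not spell out.
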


This provides the foundation for passing to the limit as $N \to \infty$ along convergent subsequences and showing that the limiting functions satisfy the weak formulation of the original coupled problem.

In the next section, we establish that the sequence $(w^{[N]},\qbr^{[N]},\qbar^{[N]},\bu^{[N]})$, $N \in \mathbb{N}$, is uniformly bounded (independently of $N$ or $\Delta t$) in appropriate norms, which will allow us to extract convergent subsequences.

\subsection{Proof of existence of weak solutions}
\

We start by showing the following uniform a priori estimates for the sequence of approximate solutions $(w^{[N]},\qbr^{[N]},\qbar^{[N]},\bu^{[N]})$, $N \in \mathbb{N}$.

\begin{lem}\label{lem:a priori ests - 1}
There is a constant $C > 0$ independent of $N$, such that the following a priori estimate holds for approximate solutions $(w^{[N]},\qbr^{[N]},\qbar^{[N]},\bu^{[N]})$ defined in \eqref{eq:approx sols}:
\begin{equation*}
    \begin{aligned}
        &H \rho_p \| \Dot{w}^{[N]} \|_{\rL^\infty(0,T;\rL^2(\Gamma))}^2 + H \frac{c_0^p}{12} \| \qbr^{[N]} \|_{\rL^\infty(0,T;\rL^2(\Gamma))}^2 + H c_0^p \| \qbar^{[N]} \|_{\rL^\infty(0,T;\rL^2(\Gamma))}^2\\
        &+ \rho_f \| \bu^{[N]} \|_{\rL^\infty(0,T;\rL^2(\Omega_f))}^2 + H^3 D \| \Delta_{\btau} w^{[N]} \|_{\rL^\infty(0,T;\rL^2(\Gamma))}^2 + H \gamma^p \| w^{[N]} \|_{\rL^\infty(0,T;\rL^2(\Gamma))}^2 \le C,\\
        &\frac{\kappa^p}{H} \| \qbr^{[N]} \|_{\rL^2(0,T;\rL^2(\Gamma))}^2 + \frac{6 \kappa^p}{H} \left\| \qbar^{[N]} + \frac{1}{2} \qbr^{[N]} \right\|_{\rL^2(0,T;\rL^2(\Gamma))}^2\\
        &\enspace + \mu_f \| \bD(\bu^{[N]}) \|_{\rL^2(0,T;\rL^2(\Omega_f))}^2 + \beta \| \bu^{[N]} \cdot \btau \|_{\rL^2(0,T;\rL^2(\Gamma))}^2 \le C.
    \end{aligned}
\end{equation*}
\end{lem}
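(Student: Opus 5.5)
The plan is to derive a discrete energy identity by testing the (unrescaled) semidiscrete formulation \eqref{eq:first weak form semidiscr form} with the discrete analogues of the multipliers used for \eqref{EnergyEquality}. Concretely, take $\varphi = \Dot{w}^{n+1}$, $\sbr = \qbr^{n+1}$, $\sbar = \qbar^{n+1}$, and $\bv = \bu^{n+1}$. This choice is admissible because \autoref{prop:ex of weak sols semidiscr probl} gives a solution in $\cV_\sd$, and the rescaling of the test functions by $[\Delta t]$ is undone by dividing by $[\Delta t]$.

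The mixed terms should then cancel exactly as in the continuous case.
\begin{itemize}
\item The coupling $H^2\frac{\alpha^p}{12}[\Delta t](\qbr^{n+1},\Delta_{\btau}\Dot{w}^{n+1})_\Gamma$ cancels against $-H^2\frac{\alpha^p}{12}[\Delta t](\Delta_{\btau}\Dot{w}^{n+1},\qbr^{n+1})_\Gamma$.
\item $[\Delta t](\qbr^{n+1},\Dot{w}^{n+1})_\Gamma$ cancels against $-[\Delta t](\Dot{w}^{n+1},\qbr^{n+1})_\Gamma$.
\item $[\Delta t](\bu^{n+1}\cdot\bn,\qbr^{n+1})_\Gamma$ cancels against $-[\Delta t](\qbr^{n+1},\bu^{n+1}\cdot\bn)_\Gamma$.
\end{itemize}

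For the time-difference terms, use the algebraic identity
\[
(a-b,a) = \tfrac12\|a\|^2 - \tfrac12\|b\|^2 + \tfrac12\|a-b\|^2 .
\]
Apply it to $H\rho_p(\Dot{w}^{n+1}-\Dot{w}^n,\Dot{w}^{n+1})$, to the $\qbr$, $\qbar$ and $\bu$ differences, and to the stiffness terms $H^3D[\Delta t](\Delta_{\btau} w^{n+1},\Delta_{\btau}\Dot{w}^{n+1})$ and $H\gamma^p[\Delta t](w^{n+1},\Dot{w}^{n+1})$, written with $[\Delta t]\Dot{w}^{n+1} = w^{n+1}-w^n$.

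Define the discrete energy
\[
E^{n} = \tfrac12\Bigl(H\rho_p\|\Dot{w}^{n}\|^2 + H^3D\|\Delta_{\btau} w^n\|^2 + H\gamma^p\|w^n\|^2 + H\tfrac{c_0^p}{12}\|\qbr^n\|^2 + Hc_0^p\|\qbar^n\|^2 + \rho_f\|\bu^n\|^2\Bigr).
\]
This yields
\[
E^{n+1} - E^n + (\text{nonnegative numerical dissipation}) + [\Delta t]\,\mathcal{D}^{n+1} = [\Delta t]\,(\text{data terms at } n+1).
\]
Here the pressure dissipation is rewritten via the completed square
\[
\tfrac{4\kappa^p}{H}\|\qbr\|^2 + \tfrac{12\kappa^p}{H}(\qbar,\qbr) + \tfrac{12\kappa^p}{H}\|\qbar\|^2 = \tfrac{\kappa^p}{H}\|\qbr\|^2 + \tfrac{12\kappa^p}{H}\bigl\|\qbar+\tfrac12\qbr\bigr\|^2 ,
\]
exactly as in \eqref{EnergyEquality}.

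The data terms are handled as in \eqref{eq:absorption qbr & qbar} and \eqref{eq:absorption fluid vel}. Young's inequality absorbs half of the $\|\qbar^{n+1}+\frac12\qbr^{n+1}\|^2$ term. The trace, Poincar\'e and Korn inequalities on $V$ (using no-slip at $z=-R$) control $\|\bu^{n+1}\cdot\bn\|_{\rL^2(\Gamma)}$ by $\|\bD(\bu^{n+1})\|_{\rL^2(\Omega_f)}$, so half of $2\mu_f\|\bD(\bu^{n+1})\|^2$ absorbs that term. The leftover is $C[\Delta t]\|(q^+)^{n+1}\|_{\rL^2(\Gamma)}^2$.

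Summing over $n=0,\dots,m-1$ telescopes the energy and gives
\[
E^m + \sum_{n=1}^{m}[\Delta t]\,\mathcal{D}^{n} \le E^0 + C\sum_{n=1}^{m}[\Delta t]\,\|(q^+)^{n}\|^2 .
\]
The right-hand side is bounded by $E^0 + C\|q^+\|_{\rL^2(0,T;\rL^2(\Gamma))}^2$, assuming the discretization $(q^+)^n$ is chosen as the local time averages of $q^+$. Taking the maximum over $m$ gives the $\rL^\infty$ bounds. Rewriting the sums through the piecewise constant functions \eqref{eq:approx sols} gives the $\rL^2(0,T)$ bounds, which is exactly the claimed estimate. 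Note that $\Dot{w}^{[N]}$ at level $n$ is $\Dot{w}^n$, matching $E^n$.

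The main obstacle I expect is the initialization of $\Dot{w}^0$ and $E^0$. Here I would set $w^{-1} = w_0 - [\Delta t]\,v_0$, so that $\Dot{w}^0 = v_0$, and then $E^0$ is determined by the initial data alone. A second point to check is that testing the $\varphi$-equation with $\Dot{w}^{n+1}\in\rH_0^2(\Gamma)$ and the $\sbr$-equation with $\qbr^{n+1}$ is legitimate. The term $(\Delta_{\btau}\Dot{w}^{n+1},\qbr^{n+1})_\Gamma$ only requires $\qbr^{n+1}\in\rL^2(\Gamma)$, so it is well defined and the cancellation is purely algebraic. This yields a constant $C$ independent of $N$.
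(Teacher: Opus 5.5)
Your proposal is correct and follows the same route as the paper. Both test \eqref{eq:first weak form semidiscr form} with $(\Dot{w}^{n+1},\qbr^{n+1},\qbar^{n+1},\bu^{n+1})$, use the exact cancellations of the coupling terms and the absorption arguments of \eqref{eq:absorption qbr & qbar} and \eqref{eq:absorption fluid vel}, and then telescope. Your version is slightly more careful than the paper's sketch: you keep coefficient exactly $1$ in front of $E^n$ in the one-step estimate (the paper writes $\lesssim$), which is what makes the summed bound independent of $N$, and you make explicit the choices $\Dot{w}^0 = v_0$ and $(q^+)^n$ as local time averages.
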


\begin{proof}
We test the semidiscretized weak formulation \eqref{eq:first weak form semidiscr form} by the test function $(\Dot{w}^{n+1},\qbr^{n+1},\qbar^{n+1},\bu^{n+1})$.
Making use of the resulting cancellations, invoking the difference quotient $\Dot{w}^{n+1}$, and using absorption arguments similar to those in \eqref{eq:absorption qbr & qbar} and \eqref{eq:absorption fluid vel} in the continuous case, we find that
\begin{equation*}
    \begin{aligned}
        &\quad H \rho_p \| \Dot{w}^{n+1} \|_{\rL^2(\Gamma)}^2 + H^3 D \| \Delta_{\btau} w^{n+1} \|_{\rL^2(\Gamma)}^2 + H \gamma^p \| w^{n+1} \|_{\rL^2(\Gamma)}^2 + H \frac{c_0^p}{12} \| \qbr^{n+1} \|_{\rL^2(\Gamma)}^2 + H c_0^p \| \qbar^{n+1} \|_{\rL^2(\Gamma)}^2\\
        &\quad + \rho_f \| \bu^{n+1} \|_{\rL^2(\Omega_f)}^2 + [\Delta t]\Bigl(\frac{\kappa^p}{H} \| \qbr^{n+1} \|_{\rL^2(\Gamma)}^2 + \frac{6 \kappa^p}{H} \left\| \qbar^{n+1} + \frac{1}{2}\qbr^{n+1}\right\|_{\rL^2(\Gamma)}^2\\
        &\qquad + \mu_f \| \bD(\bu^{n+1}) \|_{\rL^2(\Omega_f)}^2 + \beta \| \bu^{n+1} \cdot \btau \|_{\rL^2(\Gamma)}^2\Bigr)\\
        &\lesssim H \rho_p \| \Dot{w}^n \|_{\rL^2(\Gamma)}^2 + H^3 D \| \Delta_{\btau} w^n \|_{\rL^2(\Gamma)}^2 + H \gamma^p \| w^n \|_{\rL^2(\Gamma)}^2 + H \frac{c_0^p}{12} \| \qbr^n \|_{\rL^2(\Gamma)}^2 + H c_0^p \| \qbar^n \|_{\rL^2(\Gamma)}^2\\
        &\quad + \rho_f \| \bu^n \|_{\rL^2(\Omega_f)}^2 + [\Delta t] \Bigl(\frac{6}{H} \kappa^p + C_1\Bigr) \| (q^+)^{n+1} \|_{\rL^2(\Gamma)}^2,
    \end{aligned}
\end{equation*}
where $C_1$ is the constant appearing in \eqref{eq:absorption fluid vel}. 
Adding up all the terms, we get
\begin{equation*}
    \begin{aligned}
        & \quad H \rho_p \| \Dot{w}^{N+1} \|_{\rL^2(\Gamma)}^2 + H^3 D \| \Delta_{\btau} w^{N+1} \|_{\rL^2(\Gamma)}^2 + H \gamma^p \| w^{N+1} \|_{\rL^2(\Gamma)}^2 + H \frac{c_0^p}{12} \| \qbr^{N+1} \|_{\rL^2(\Gamma)}^2 + H c_0^p \| \qbar^{N+1} \|_{\rL^2(\Gamma)}^2\\
        &\quad + \rho_f \| \bu^{N+1} \|_{\rL^2(\Omega_f)}^2 + [\Delta t] \sum_{n=0}^N \Bigl[\frac{\kappa^p}{H} \| \qbr^{n+1} \|_{\rL^2(\Gamma)}^2 + \frac{6 \kappa^p}{H} \left\| \qbar^{n+1} + \frac{1}{2}\qbr^{n+1}\right\|_{\rL^2(\Gamma)}^2\\
        &\qquad + \mu_f \| \bD(\bu^{n+1}) \|_{\rL^2(\Omega_f)}^2 + \beta \| \bu^{n+1} \cdot \btau \|_{\rL^2(\Gamma)}^2\Bigr]\\
        &\lesssim H \rho_p \| \Dot{w}^0 \|_{\rL^2(\Gamma)}^2 + H^3 D \| \Delta_{\btau} w^0 \|_{\rL^2(\Gamma)}^2 + H \gamma^p \| w^0 \|_{\rL^2(\Gamma)}^2 + H \frac{c_0^p}{12} \| \qbr^0 \|_{\rL^2(\Gamma)}^2 + H c_0^p \| \qbar^0 \|_{\rL^2(\Gamma)}^2\\
        &\quad + \rho_f \| \bu^0 \|_{\rL^2(\Omega_f)}^2 + \Bigl(\frac{6}{H} \kappa^p + C_1\Bigr)[\Delta t] \sum_{n=0}^N \| (q^+)^{n+1} \|_{\rL^2(\Gamma)}^2.
    \end{aligned}
\end{equation*}
The a priori estimates stated in the lemma then follow immediately.
\end{proof}

Before we can pass to the limit $N \to \infty$ we need a dual estimate of $\Ddot{w}^{[N]}$ in order to deal with the term~$\dot{w}^{[N]}$. 
Additionally, we need a dual estimates of the fluid content $(H \frac{c_0^p}{12} \Dot{\qbr}^{[N]} - H^2 \frac{\alpha^p}{12} \Delta_{\btau} \Dot{w}^{[N]})$ appearing in the second equations for the jump in the pressure. 
Indeed, the following estimates hold:

\begin{lem}\label{lem:a priori ests - 2}
There exists a constant $C > 0$ independent of $N$ such that
\begin{equation*}
    \left\| H \rho_p \Ddot{w}^{[N]} \right\|_{\rH_0^2(\Gamma)'} \le C \tand \left\| H \frac{c_0^p}{12} \Dot{\qbr}^{[N]} - H^2 \frac{\alpha^p}{12} \Delta_{\btau} \Dot{w}^{[N]} \right\|_{\rL^2(\Gamma)'} \le C.
\end{equation*}
\end{lem}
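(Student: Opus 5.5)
The plan is a duality argument based on the semidiscrete equations, tested one equation at a time. The norms in the statement are to be read uniformly in time: pointwise for each $t \in (0,T)$, or in $\rL^2(0,T;\cdot)$, which is what the compactness step actually needs. Fix a time level $n$ and recall that $\Ddot{w}^{[N]}$ on $(t^{n-1},t^n]$ is the difference quotient $(\Dot{w}^{n}-\Dot{w}^{n-1})/[\Delta t]$. Choose in \eqref{eq:first weak form semidiscr form} the test function $(\varphi,0,0,0)$ with $\varphi \in \rH_0^2(\Gamma)$ and divide by $[\Delta t]$. This isolates the plate equation:
\begin{equation*}
    H \rho_p (\Ddot{w}^{n+1},\varphi)_\Gamma = -H^3 D (\Delta_{\btau} w^{n+1},\Delta_{\btau}\varphi)_\Gamma - H\gamma^p (w^{n+1},\varphi)_\Gamma - H^2\frac{\alpha^p}{12}(\qbr^{n+1},\Delta_{\btau}\varphi)_\Gamma - (\qbr^{n+1},\varphi)_\Gamma .
\end{equation*}
Each term on the right is bounded by Cauchy--Schwarz against $\|\varphi\|_{\rH^2(\Gamma)}$. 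The factors multiplying it are $\|\Delta_{\btau} w^{n+1}\|_{\rL^2}$, $\|w^{n+1}\|_{\rL^2}$ and $\|\qbr^{n+1}\|_{\rL^2}$, and \autoref{lem:a priori ests - 1} bounds all of these uniformly in $n$ and $N$. Taking the supremum over $\|\varphi\|_{\rH_0^2}\le 1$ gives the first bound in $\rL^\infty(0,T;\rH_0^2(\Gamma)')$, hence also in $\rL^2$.

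For the fluid content I would test with $(0,\sbr,0,0)$, $\sbr\in\rL^2(\Gamma)$, and divide by $[\Delta t]$. The combination $H\frac{c_0^p}{12}\Dot{\qbr}^{n+1} - H^2\frac{\alpha^p}{12}\Delta_{\btau}\Dot{w}^{n+1}$ paired with $\sbr$ then equals
\begin{equation*}
    -\frac{4\kappa^p}{H}(\qbr^{n+1},\sbr)_\Gamma - \frac{6\kappa^p}{H}(\qbar^{n+1},\sbr)_\Gamma - (\bu^{n+1}|_\Gamma\cdot\bn,\sbr)_\Gamma + (\Dot{w}^{n+1},\sbr)_\Gamma + \frac{6\kappa^p}{H}((q^+)^{n+1},\sbr)_\Gamma .
\end{equation*}
The $\qbr$, $\qbar$ and $\Dot{w}$ terms are bounded in $\rL^\infty(0,T;\rL^2(\Gamma))$ by \autoref{lem:a priori ests - 1}, and the data term is bounded by the assumed regularity of $q^+$.

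The main obstacle is the trace term $\bu^{n+1}|_\Gamma\cdot\bn$. The energy estimate only gives $\bu^{[N]}\in\rL^\infty(0,T;\rL^2)$, and the trace needs $\rH^1$ control, which is available only through the dissipation $\mu_f\|\bD(\bu^{[N]})\|^2_{\rL^2\rL^2}$. I would combine the trace theorem with the Korn and Poincaré inequalities (using $\bu=0$ on $\Gamma_f^b$ and periodicity), exactly as in \eqref{eq:absorption fluid bdry term}. This gives $\|\bu^{[N]}|_\Gamma\cdot\bn\|_{\rL^2(0,T;\rL^2(\Gamma))}\lesssim\|\bD(\bu^{[N]})\|_{\rL^2(0,T;\rL^2(\Omega_f))}\le C$. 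Consequently the second bound holds in $\rL^2(0,T;\rL^2(\Gamma)')$, which is the sense in which I would state it. This is enough for the Aubin--Lions type compactness and for passing to the limit in Proposition~\ref{prop:weak form approx sols}. Pointwise in $t$ it holds only with an $N$-dependent constant, so I would remark that the uniform estimate is understood in $\rL^2$ in time.
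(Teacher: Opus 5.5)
Your argument is the same as the paper's: test \eqref{eq:first weak form semidiscr form} with $(\varphi,0,0,0)$ and $(0,\sbr,0,0)$, divide by $[\Delta t]$, and bound each right-hand side term using \autoref{lem:a priori ests - 1}, with trace, Korn and Poincar\'e inequalities for $\bu^{n+1}|_\Gamma\cdot\bn$. Your caveat is correct and sharper than the paper: the trace term is controlled only through the dissipation, so the second bound holds in $\rL^2(0,T;\rL^2(\Gamma)')$ rather than uniformly in $t$, while the first does hold in $\rL^\infty(0,T;\rH_0^2(\Gamma)')$; the paper instead asserts an $\rL^2(\Gamma)$ bound on $\bu|_\Gamma$ and later uses weak* convergence in $\rL^\infty(0,T;\rL^2(\Gamma))$ without justification.
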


\begin{proof}
In order to obtain the first estimate, we insert the test function $(\varphi,0,0,0)$ into \eqref{eq:first weak form semidiscr form} and find that
\begin{equation*}
    \begin{aligned}
        &\quad H \rho_p (\Dot{w}^{n+1} - \Dot{w}^n,\varphi)_\Gamma + H^3 D [\Delta t] (\Delta_{\btau} w^{n+1},\Delta_{\btau} \varphi)_\Gamma\\
        &\quad + H \gamma^p [\Delta t] (w^{n+1},\varphi)_\Gamma + H^2 \frac{\alpha^p}{12} [\Delta t] (\qbr^{n+1},\Delta_{\btau} \varphi)_\Gamma + [\Delta t] (\qbr^{n+1},\varphi)_\Gamma = 0.
    \end{aligned}
\end{equation*}
Thus, invoking the definition of $\Ddot{w}^{n+1}$, and dividing by $[\Delta t]$, we obtain
\begin{equation*}
    H \rho_p (\Ddot{w}^{n+1},\varphi)_\Gamma = -H^3 D (\Delta_{\btau} w^{n+1},\Delta_{\btau} \varphi)_\Gamma - H \gamma^p (w^{n+1},\varphi)_\Gamma - H^2 \frac{\alpha^p}{12} (\qbr^{n+1},\Delta_{\btau} \varphi)_\Gamma - (\qbr^{n+1},\varphi)_\Gamma.
\end{equation*}
Integrating with respect to time from $t^n = n [\Delta t]$ to $t^{n+1} = (n+1) [\Delta t]$, summing over $n = 0,\dots,N-1$, and making use of $\varphi \in \rH_0^2(\Gamma)$, together with the a priori estimates from \autoref{lem:a priori ests - 1}, we conclude that there exists a constant $C>0$ independent of $N$ such that 
$$\left\| H \rho_p \Ddot{w}^{[N]} \right\|_{\rH_0^2(\Gamma)'} \le C.$$

For the second estimate, we use the test function $(0,\sbr,0,0)$ in \eqref{eq:first weak form semidiscr form} to obtain
\begin{equation*}
    \begin{aligned}
        &\quad H \frac{c_0^p}{12}(\qbr^{n+1} - \qbr^n,\sbr)_\Gamma - H^2 \frac{\alpha^p}{12} [\Delta t] (\Delta_{\btau} \Dot{w}^{n+1},\sbr)_\Gamma + \frac{4 \kappa^p}{H} [\Delta t] (\qbr^{n+1},\sbr)_\Gamma\\
        &\quad + \frac{6 \kappa^p}{H} [\Delta t](\qbar^{n+1},\sbr)_\Gamma + [\Delta t] (\left.\bu^{n+1}\right|_\Gamma \cdot \bn,\sbr)_\Gamma - [\Delta t](\Dot{w}^{n+1},\sbr)_\Gamma = 6 \kappa^p [\Delta t]((q^+)^{n+1},\sbr)_\Gamma.
    \end{aligned}
\end{equation*}
Inserting the difference quotient for $\Dot{\qbr}^{n+1}$, and dividing the resulting equation by $[\Delta t]$, we infer that
\begin{equation*}
    \begin{aligned}
        &\quad \left(H \frac{c_0^p}{12}\Dot{\qbr}^{n+1} - H^2 \frac{\alpha^p}{12} \Delta_{\btau} \Dot{w}^{n+1},\sbr\right)\\
        &= -\frac{4 \kappa^p}{H} (\qbr^{n+1},\sbr)_\Gamma - \frac{6 \kappa^p}{H}(\qbar^{n+1},\sbr)_\Gamma - (\left.\bu^{n+1}\right|_\Gamma \cdot \bn,\sbr)_\Gamma +(\Dot{w}^{n+1},\sbr)_\Gamma + 6 \kappa^p ((q^+)^{n+1},\sbr)_\Gamma.
    \end{aligned}
\end{equation*}
Proceeding in the same way as above, and noting that the uniform estimates especially imply a bound on $\left.\bu \right|_\Gamma$ in $\rL^2(\Gamma)$ thanks to mapping properties of the trace, we complete the proof of the second estimate.
\end{proof}

These uniform estimates allow us to deduce the existence of subsequences of approximate solutions that converge weakly and weakly* in appropriate spaces. 
In particular, we have the following lemma.

\begin{lem}\label{convergence}
There exist convergent subsequences, still denoted by $((w^{[N]},\qbr^{[N]},\qbar^{[N]},\bu^{[N]}))_{N \in \N}$, such that the following convergence results hold:
\begin{equation*}
    \begin{aligned}
        (w^{[N]},\qbr^{[N]},\qbar^{[N]},\bu^{[N]}) 
        &\rightharpoonup (w,\qbr,\qbar,\bu) &&\text{ weakly in } \cV_\sol,\\
        H \rho_p \Ddot{w}^{[N]} 
        &\overset{*}{\rightharpoonup} H \rho_p \Ddot{w} &&\text{weakly* in } \rL^\infty(0,T;\rH_0^2(\Gamma)) \tand\\
        H \frac{c_0^p}{12} \Dot{\qbr}^{[N]} - H^2 \frac{\alpha^p}{12} \Delta_{\btau} \Dot{w}^{[N]}
        &\overset{*}{\rightharpoonup} H \frac{c_0^p}{12} \Dot{\qbr} - H^2 \frac{\alpha^p}{12} \Delta_{\btau} \Dot{w} &&\text{weakly* in } \rL^\infty(0,T;\rL^2(\Gamma)).
    \end{aligned}
\end{equation*}
\end{lem}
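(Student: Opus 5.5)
The argument is a compactness argument built on the uniform bounds of \autoref{lem:a priori ests - 1} and \autoref{lem:a priori ests - 2}, together with the Banach--Alaoglu theorem in duals of separable Banach spaces and identification of limits in the sense of distributions.

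\textbf{Step 1: weak and weak* limits of the primary unknowns.} By \autoref{lem:a priori ests - 1}, $w^{[N]}$ and $\Delta_{\btau} w^{[N]}$ are bounded in $\rL^\infty(0,T;\rL^2(\Gamma))$. By the equivalence of norms in the first lemma of this section, $w^{[N]}$ is therefore bounded in $\rL^\infty(0,T;\rH_0^2(\Gamma))$. The same lemma gives that $\Dot w^{[N]}$ is bounded in $\rL^\infty(0,T;\rL^2(\Gamma))$, and that $\qbr^{[N]}$, $\qbar^{[N]}$ are bounded in $\rL^\infty(0,T;\rL^2(\Gamma))$.

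For the fluid, $\bu^{[N]}$ is bounded in $\rL^\infty(0,T;H)$. Korn's and Poincar\'e's inequalities, using the no-slip condition on $\Gamma_f^b$, turn the bound on $\bD(\bu^{[N]})$ into a bound in $\rL^2(0,T;V)$.

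All these spaces are duals of separable spaces (or Hilbert spaces in the $\rL^2$ case). Banach--Alaoglu plus a diagonal extraction therefore gives one common subsequence with weak* limits $w$, $\qbr$, $\qbar$ and $\bu$, and a weak* limit $\Dot w^{[N]}\overset{*}{\rightharpoonup} v$. Since $\cV_\sol$ is a product of such spaces, this is exactly the first claimed convergence, understood in the weak* sense where $\rL^\infty$ is involved.

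\textbf{Step 2: identification of time derivatives.} Here I need $v=\del_t w$, which is delicate because $w^{[N]}$ is piecewise constant. Following \cite{BCMW:21}, I introduce the piecewise linear interpolant $\tilde w^{[N]}$. Its time derivative is exactly $\Dot w^{[N]}$, and
\[
\|\tilde w^{[N]}-w^{[N]}\|_{\rL^2(0,T;\rL^2(\Gamma))}\lesssim \Delta t\,\|\Dot w^{[N]}\|_{\rL^2(0,T;\rL^2(\Gamma))}\to0 .
\]
So both sequences have the same limit $w$. Testing $\del_t\tilde w^{[N]}$ against $\psi\in \rC_c^\infty(0,T)\otimes \rH_0^2(\Gamma)$ and passing to the limit gives $v=\del_t w$ in the distributional sense.

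The same reasoning applied to $\Dot w^{[N]}$ and its piecewise linear interpolant, combined with the dual bound of \autoref{lem:a priori ests - 2}, gives $H\rho_p\Ddot w^{[N]}$ weak* convergent, with limit $H\rho_p\Ddot w$. The limit space is the dual space $\rL^\infty(0,T;\rH_0^2(\Gamma)')$ in which the bound actually holds; I read the statement in this sense.

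For the fluid content, $\zeta^{[N]}:=H\frac{c_0^p}{12}\qbr^{[N]}-H^2\frac{\alpha^p}{12}\Delta_{\btau} w^{[N]}$ is bounded in $\rL^\infty(0,T;\rL^2(\Gamma))$. Its discrete derivative is also bounded there by \autoref{lem:a priori ests - 2}. Extracting once more, and identifying the limit through piecewise linear interpolants and linearity, gives the limit $H\frac{c_0^p}{12}\Dot\qbr-H^2\frac{\alpha^p}{12}\Delta_{\btau}\Dot w$. Note that this identifies only the combination $\zeta$; the individual time derivatives are not claimed to converge.

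\textbf{Main obstacle.} The delicate point is Step 2: making sure the limits of difference quotients coincide with derivatives of the limits. This requires controlling the shift between the piecewise constant functions and their time translates, and interpreting the dual-space bounds as uniform-in-time bounds, since \autoref{lem:a priori ests - 2} is stated without an explicit time norm.

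I would handle this by rereading the proof of \autoref{lem:a priori ests - 2} to get the pointwise-in-$n$ bound $\sup_n\|\Ddot w^{n+1}\|_{\rH_0^2(\Gamma)'}\le C$. Its right-hand side involves only quantities bounded in $\rL^\infty$ in time by \autoref{lem:a priori ests - 1}. The same holds for the fluid-content bound, with the trace estimate for $\bu$ used only in $\rL^2$ in time, so that bound is $\rL^2$ in time. It suffices for weak convergence in $\rL^2(0,T;\rL^2(\Gamma))$, which is all that is needed afterwards to pass to the limit in \autoref{prop:weak form approx sols}.
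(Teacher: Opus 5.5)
Your proposal is correct and follows the same route as the paper. The paper gives no separate argument: it extracts weak/weak* convergent subsequences from the uniform bounds of \autoref{lem:a priori ests - 1} and \autoref{lem:a priori ests - 2}, using Korn and Poincar\'e for the fluid, and your piecewise linear interpolants supply the identification of the limits that the paper leaves implicit.

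Your adjustments to the statement are corrections, not gaps. Convergence can only be weak* where $\rL^\infty$ appears, and the bound for $H\rho_p\Ddot{w}^{[N]}$ lives in $\rL^\infty(0,T;\rH_0^2(\Gamma)')$. For the fluid content, the trace of $\bu^{[N]}$ on $\Gamma$ is controlled only in $\rL^2(0,T;\rL^2(\Gamma))$, so the stated $\rL^\infty$-in-time convergence is not supported by the paper's estimates. Weak convergence in $\rL^2(0,T;\rL^2(\Gamma))$, which you do prove, is all that is needed to pass to the limit in \autoref{prop:weak form approx sols}.
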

We remark that the weak* convergence of the fluid component follows after using Korn's and Poincar\'e's inequalities.

It is now straightforward to show that the limits of convergent approximate subsequences satisfy the weak formulation of the original problem by taking the limit as $N\to\infty$ in the approximate weak formulation stated in \autoref{prop:weak form approx sols}.
More precisely, we have the following main result on the existence of weak solutions to our coupled FPSI problem.

\begin{thm}[{\bf{Existence of weak solutions}}]\label{thm:ex of weak solutions}
Let $T > 0$, and assume that the initial data satisfy $w_0 \in \rH_0^2(\Gamma)$, $v_0 \in \rL^2(\Gamma)$, $\qbr_0 \in \rL^2(\Gamma)$, $\qbar_0 \in \rL^2(\Gamma)$, and $\bu_0 \in \rL^2(\Omega_f)^3$.
Then there exists a weak solution to \eqref{eq:fully averaged plate model}--\eqref{bottom} on $(0,T)$ in the sense of \autoref{def:weak form cont-in-time}.
\end{thm}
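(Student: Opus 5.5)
The plan is to pass to the limit $N\to\infty$ in the approximate weak formulation of \autoref{prop:weak form approx sols}, using the uniform bounds of \autoref{lem:a priori ests - 1} and \autoref{lem:a priori ests - 2} and the convergences of \autoref{convergence}. Before that, the semidiscrete scheme must be started. The initial data $(w_0,v_0,\qbr_0,\qbar_0,\bu_0)$ set $w^0=w_0$, $\Dot w^0=v_0$, $\qbr^0=\qbr_0$, $\qbar^0=\qbar_0$, $\bu^0=\bu_0$. Since $\bu_0$ is only in $\rL^2$, I would regularize it by a sequence in $V$ converging in $H$ (and project onto $H$ if necessary). Similarly, $q^+$ is replaced by its time averages $(q^+)^{n+1}$ over $(t^n,t^{n+1})$. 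Then \autoref{prop:ex of weak sols semidiscr probl} yields the iterates, and the right-hand side of the estimate in \autoref{lem:a priori ests - 1} is bounded by $\|q^+\|_{\rL^2(0,T;\rL^2(\Gamma))}^2$ plus the data norms, uniformly in $N$.

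In the approximate formulation I would move all discrete time derivatives onto the test functions via discrete summation by parts. For $(\varphi,\sbr,\sbar,\bv)\in\cV_\test$, I introduce the shifted/averaged test functions and rewrite, for instance, $((\Dot{\qbr}^{[N]},\sbr))_\Gamma$ as $-((\qbr^{[N]}(\cdot-\Delta t),\del_t\sbr))_\Gamma$ up to a boundary term at $t=0$ producing $(\qbr_0,\sbr(0))_\Gamma$ and an $O(\Delta t)$ remainder. The same applies to $\Ddot w^{[N]}$ (twice, yielding $-((\Dot w^{[N]},\del_t\varphi))$ and the initial term $(v_0,\varphi(0))$), to $\Dot{\qbar}^{[N]}$, to $\Dot\bu^{[N]}$, and to the combined fluid-content term $H\frac{c_0^p}{12}\Dot{\qbr}^{[N]}-H^2\frac{\alpha^p}{12}\Delta_{\btau}\Dot w^{[N]}-\Dot w^{[N]}$. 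This matches exactly the structure of $\cI_{[q]}$ and $\cI_0$, with the $(w,\del_t\sbr)$ and $(\Delta_\btau w,\del_t\sbr)$ terms. The shifted piecewise-constant functions and the piecewise-linear interpolants share the weak limits of the originals, because their differences are $O(\Delta t)$ times quantities bounded by \autoref{lem:a priori ests - 1}; for example, $\|w^{[N]}-\tilde w^{[N]}\|_{\rL^2\rL^2}\le \Delta t\|\Dot w^{[N]}\|_{\rL^2\rL^2}$. Identifying the limit of $\Dot w^{[N]}$ with $\del_t w$ uses the same argument.

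Every term is linear in the unknowns, so weak(-$*$) convergence in $\cV_\sol$ suffices to pass to the limit in all bulk terms. This includes the coupling terms $((\bu^{[N]}|_\Gamma\cdot\bn,\sbr))_\Gamma$, $((\qbr^{[N]},\bv\cdot\bn))_\Gamma$ and the BJS term, since the trace map $\rL^2(0,T;V)\to \rL^2(0,T;\rL^2(\Gamma))$ is continuous and hence weakly continuous. The data terms converge strongly because $(q^+)^{[N]}\to q^+$ in $\rL^2(0,T;\rL^2(\Gamma))$. I would also check the energy-space membership $w\in \rW^{1,\infty}(0,T;\rL^2)$ by weak-$*$ lower semicontinuity.

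The main obstacle I expect is the bookkeeping of the initial terms and the mixed term $H^2\frac{\alpha^p}{12}\Delta_\btau\Dot w$. Only the combined fluid content is controlled in time (\autoref{lem:a priori ests - 2}), and $\sbr$ is only in $\rL^2$, not in $\rH^2_0$. The summation by parts must therefore be done on the combination, with the boundary term $(H\frac{c_0^p}{12}\qbr_0 - H^2\frac{\alpha^p}{12}\Delta_\btau w_0 - w_0,\sbr(0))_\Gamma$ appearing as in $\cI_0$. This boundary term is well defined because $w_0\in\rH^2_0(\Gamma)$. Since no nonlinear terms arise, no compactness (Aubin--Lions) is needed, and the weak limit is a weak solution in the sense of \autoref{def:weak form cont-in-time}.
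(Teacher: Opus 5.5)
Your proposal is correct and follows the paper's route: Rothe semidiscretization, the uniform bounds of \autoref{lem:a priori ests - 1} and \autoref{lem:a priori ests - 2}, weak(-$*$) compactness, and passage to the limit in the approximate formulation of \autoref{prop:weak form approx sols}, with your discrete summation by parts and bookkeeping of the initial terms in $\cI_0$ supplying the details the paper calls straightforward. One small correction: $\Delta t\,\dot{\qbr}^{[N]}$ (and likewise $\Delta t\,\dot{\bu}^{[N]}$) is not controlled by \autoref{lem:a priori ests - 1}, so instead justify that time-shifted unknowns have the same weak-$*$ limits by moving the shift onto the smooth test function; this also shows that each term, including $-H^2\frac{\alpha^p}{12}\Delta_{\btau}\dot{w}^{[N]}$, can be summed by parts separately because $\Delta_{\btau} w^{[N]}$ is bounded in $\rL^\infty(0,T;\rL^2(\Gamma))$, so working on the combined fluid content is not actually necessary.
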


\begin{rem}
We conclude this section with a brief remark on the uniqueness of weak solutions. 
Even in the linear setting, uniqueness is not immediate.
The main difficulty is that the energy estimates derived above for solutions constructed by our method do not necessarily extend to an arbitrary weak solution.

However, as in \cite[Sec.~5.3.2]{BCMW:21}, uniqueness can be established under an additional assumption. 
More precisely, if a weak solution satisfies the following energy identity:
\[
    H \rho_p (\partial_{t}^2 w,\partial_t w)_\Gamma + H^3 D (\Delta^2_{\boldsymbol{\tau}} w, \partial_t w)_\Gamma = \frac{1}{2}\frac{\rd}{\rd t} \left(H \rho_p \| \partial_t w \|^2_{L^2(\Gamma)} + H^3 D \| \Delta_{\boldsymbol{\tau}} w \|^2_{L^2(\Gamma)}\right),
\]
then uniqueness of weak solutions to the linear problem follows.
\end{rem}

In the next section, we establish the existence and uniqueness of strong solutions to \eqref{eq:fully averaged plate model}--\eqref{bottom}, after introducing a regularization consisting of a viscoelastic term in the plate elastodynamics equation and an elliptic regularization in the pressure jump equation.

\section{Strong solutions to a regularized problem}\label{sec:strong sols}

In this section, we prove the existence of a unique strong solution to a regularized version of the coupled FPSI problem \eqref{eq:fully averaged plate model}--\eqref{bottom}.
The regularization is achieved by incorporating viscoelastic terms into the poroelastic Kirchhoff plate equation \eqref{eq:fully averaged plate model}$_1$ and applying elliptic regularization to the pressure jump equation \eqref{eq:fully averaged plate model}$_2$.

The origin of these terms is twofold. 
First, the biharmonic term \( \varepsilon_1 \Delta^2 \partial_t w \) represents intrinsic damping within the solid matrix -- while classical poroelasticity models assume a purely elastic skeleton, real materials exhibit internal friction. 
By incorporating a Kelvin--Voigt damping mechanism at the three-dimensional level, this biharmonic contribution arises naturally in the asymptotic dimension reduction to a plate model, thereby capturing energy dissipation within the solid phase.

Second, the elliptic regularization in the fluid equation $\eps_2\Delta_{\btau} \qbr$ reflects the anisotropic nature of fluid filtration: tangential flow is typically of much smaller magnitude than transversal flow. 
Although this effect vanishes in the formal asymptotic limit leading to the reduced model, we retain the regularizing term at the level of the analysis. 
Its inclusion ensures mathematical stability of the system and provides the additional regularity needed to establish well-posedness and derive robust analytical results.

To simplify notation in this section we will be assuming that all the coefficients in the model are equal to $1$:
\begin{equation*}
    H = \rho_p = D = \alpha^p = c_0^p = \kappa^p \equiv 1,
\end{equation*}
as well as the domain parameters:
\begin{equation*}
    L = R = 1,
\end{equation*}
while we keep $\gamma^p$ in front of the dissipative term $w$ unchanged, leaving open the possibility of taking this coefficient to be equal to zero in future works. 

Written as a first-order system in time, and using $\eps_1>0$ as well as $\eps_2>0$ to denote the coefficients in front of the regularizing terms, the regularized FPSI problem reads:
\begin{equation}\label{eq:FPSI Cauchy problem}
    \left\{
    \begin{aligned}
        \del_t w
        &= v, &&\tin (0,T) \times \Gamma,\\
        \del_t v + \Delta_{\btau}^2 w + \gamma^p w + \eps_1\Delta_{\btau}^2 v + \frac{1}{12} \Delta_{\btau} \qbr + \qbr
        &= 0, &&\tin (0,T) \times \Gamma,\\
        \del_t \qbr - \Delta_{\btau} v -12 v - \eps_2\Delta_{\btau} \qbr + 48 \qbr + 72 \qbar + \left.12\bu \right|_{\Gamma} \cdot \bn
        &= 72 q^+, &&\tin (0,T) \times \Gamma,\\
        \del_t \qbar  + 6 \qbr + 12 \qbar 
        &= 12 q^+, &&\tin (0,T) \times \Gamma,\\
        \del_t \bu - \nabla \cdot \bsigma_f(\bu,\pi)
        &= 0, &&\tin (0,T) \times \Omega_f,\\
        \nabla \cdot \bu 
        &= 0, &&\tin (0,T) \times \Omega_f.
    \end{aligned}
    \right.
\end{equation}
The problem is supplemented with the coupling conditions
\eqref{ContVel}--\eqref{Fp}, the initial conditions
\eqref{init}, and the following boundary conditions:
\begin{itemize}
    \item {Periodic boundary conditions}:
    \begin{equation}\label{NewBC}
        \begin{aligned}
            (w,v,\qbr,\qbar)(x,0)
            &= (w,v,\qbr,\qbar)(x,1), &&\tfor x \in (0,1),\\
            (w,v,\qbr,\qbar)(0,y)
            &= (w,v,\qbr,\qbar)(1,y), &&\tfor y \in (0,1),\\
            (\bu,\pi)(x,0,z)
            &= (\bu,\pi)(x,1,z), &&\tfor x \in (0,1), \ z \in (-1,0),\\
            (\bu,\pi)(0,y,z)
            &= (\bu,\pi)(1,y,z), &&\tfor y \in (0,1), \ z \in (-1,0).
        \end{aligned}
    \end{equation}
    \item {Prescribed fluid pressure on the top surface of the plate:}
    \begin{equation*}
        q|_{\Gamma^+} = q^+.
    \end{equation*}
    \item {No-slip condition at the bottom of the fluid domain:}
    \begin{equation*}
        \bu|_{\Gamma_f^b} = 0.
    \end{equation*}
\end{itemize}
We note that the two coupling conditions \eqref{BJS} and \eqref{Dynamic2} (along with \eqref{Fp}) can be written together as
\begin{equation}\label{NewCoupling}
    \bsigma_f(\bu,\pi) \bn = (\qbr - q^+) \bn - \beta \sum_{i=1}^2 (\bu \cdot \btau_i) \btau_i.
\end{equation}
This is the form that will be used throughout the rest of this section.

For the reader's convenience, we recall the initial conditions specified in \eqref{init}:
\begin{equation}\label{init2}
    w|_{t = 0} = w_0, \ \partial_t w|_{t = 0} = v_0, \ [q]|_{t = 0} = [q]_0, \ \bar{q}|_{t = 0} = \bar{q}_0, \ \bu|_{t = 0} = \bu_0.
\end{equation}

We prove here the global-in-time existence of a unique strong solution by employing an operator-theoretic framework applied to the first-order evolution system
\begin{equation}\label{A system}
    \begin{aligned}
        u'(t) - Au(t)
        &= f(t), \tfor t \in (0,T),\\
        u(0)
        &= u_0,
    \end{aligned}
\end{equation}
and by proving sectoriality of the operator $A$. 
On Hilbert spaces $\rX$, this property is sufficient to ensure maximal $\rL^p$-regularity for the Cauchy problem \eqref{A system}.
Maximal $\rL^p$-regularity will guarantee existence of a unique solution 
\begin{equation*}
    u \in \rW^{1,p}(0,T;\rX) \cap \rL^p(0,T;\rD(A)),
\end{equation*}
for $f \in \rL^p(0,T;\rX)$, $1 < p < \infty$, where $\rD(A)\subset \rX$ denotes the domain of the operator $A$. 

One reason for adopting the sectoriality and maximal $\rL^p$-regularity framework to establish the existence and uniqueness of solutions to the regularized problem is that it provides a natural setting for extensions to nonlinear problems and the use of fixed-point arguments. 
A primary direction for future work is to extend the methods developed here to FPSI problems with nonlinear coupling.

Notice that we imposed \emph{periodic boundary conditions} on the plate variables $w$, $v$, $\qbr$, and $\qbar$, instead of the \emph{clamped boundary conditions} for the plate. 
This choice was made to avoid technical difficulties in the analysis of strong solutions. 
The use of clamped or Neumann-type boundary conditions on Lipschitz domains such as $\Gamma = (0,1)^2$ introduces significant technical difficulties; see, for example, \cite{Tol:18} for a discussion of elliptic systems on Lipschitz domains.

Therefore, in this section, we will be working with the following three additional assumptions:
\begin{enumerate}
    \item {\em Viscoelasticity} of the poroelastic plate, described by the term with the coefficient $\varepsilon_1$.
    \item {\em Elliptic regularization} of the jump in the pressure evolution, described by the term with the coefficient $\varepsilon_2$. 
    \item {\em Periodic boundary conditions} for $w$, $v$, $\qbr$, and $\qbar$.
\end{enumerate}
The viscoelastic damping term is introduced to render the plate elastodynamics problem fully parabolic. 
The study of the corresponding inviscid limit $\varepsilon_1 \to 0$ remains an interesting open problem. 
In a related setting of fluid--beam interaction without viscous damping, local-in-time existence of strong solutions has been established recently in \cite{GHL:19}.

With regard to the pressure jump $\qbr$, a viscous regularization is incorporated to avoid the difficulties associated with an off-diagonal dominant operator matrix. 
The analysis of such operator matrices is generally delicate; see, for example, the monograph \cite{Tre:08} for further details.

The aim of this section is to show the existence of a unique global-in-time strong solution to the Cauchy problem \eqref{eq:FPSI Cauchy problem}--\eqref{init2}.

\medskip
\noindent{{\bfseries Outline of the proof strategy.}}
Our goal is to reformulate the regularized coupled FPSI problem \eqref{eq:FPSI Cauchy problem}--\eqref{init2} as an abstract Cauchy problem and to establish its global strong well-posedness by proving that the associated spatial operator is sectorial and generates an analytic semigroup. 
This approach allows us to exploit maximal $\rL^p$-regularity and semigroup theory in a systematic way.

A central difficulty in this analysis stems from the structure of the coupled system. 
The problem is not parabolic in a straightforward sense: 
the elastodynamics of the plate, the pressure variables, and the fluid velocity are coupled through interface conditions, and several of these coupling terms appear as lower-order boundary contributions. 
As a result, the associated operator matrix is not diagonally dominant in an obvious way, and it does not have a diagonal domain, i.e., the domain cannot be written as the product of the domains of the jump in the pressure and the fluid velocity.
Thus, classical perturbation results cannot be applied directly.

To overcome this, the proof is organized into several steps, each addressing a specific structural difficulty.

{\bf Step 1.} We first assume that the boundary condition for the pressure prescribed on the top surface of the plate, $q^+$, is homogeneous, i.e., $q^+=0$. The inhomogeneous data $q^+$ will be handled at the end of the proof by a lifting argument.

{\bf Step 2.} For the problem with homogeneous boundary condition $q^+=0$, we begin by isolating the subproblem involving the Stokes fluid flow coupled to the pressure jump equation, which captures the most delicate part of the coupling. This subsystem is studied on the ground space
\[
    \rL^2(\Gamma) \times \rL_\sigma^2(\Omega_f),
\]
where $\rL_\sigma^2(\Omega_f)$ denotes the space of solenoidal vector fields in $\rL^2(\Omega_f)$.
The space $\rL_\sigma^2(\Omega_f)$ will be introduced in~\eqref{Projection}.
We first consider an interface condition that only captures the highest-order boundary term on the left-hand side of \eqref{NewCoupling}, while the right-hand side of \eqref{NewCoupling} is set equal to zero in a first step.
In this simplified setting, we prove sectoriality of the operator, which provides a baseline result for the fluid component of the fluid--structure interaction problem.

{\bf Step 3.} In the next step, we return to the full coupling at the interface for the fluid--pressure jump subsystem, now imposing the coupling conditions \eqref{NewCoupling} with $q^+ = 0$. 
At this stage, the main difficulty becomes apparent: the coupling enters through boundary terms involving traces of the fluid velocity and stresses. 
This gives rise to a coupled problem where the boundary terms are of mixed orders, and the domain of the associated operator is not diagonal.
Consequently, the coupling cannot be treated as a standard bounded perturbation of the previously analyzed operator.

To overcome this issue, we exploit the fractional scale.
This allows us to represent the operator to the full problem with $q^+ = 0$ as a perturbation of the extrapolated operator with homogeneous boundary conditions.
At this stage, the fact that the trace of the fluid velocity is a bounded operator from a suitable interpolation space into $\rL^2(\Gamma)$ also comes into play.

For the aforementioned representation of the operator, we also invoke a lifting operator associated with the stationary Stokes problem with inhomogeneous Neumann boundary conditions at the interface $\Gamma$. 
This operator maps boundary data into divergence-free velocity fields in the fluid domain and allows us to represent the interface coupling as a volume term in the extended (extrapolated) space. 
In this way, the boundary contributions are ``lifted'' into the bulk, where they can be handled using the operator framework.

Combining these two ingredients, we can reinterpret the coupling conditions as lower-order perturbations of the fluid operator, acting between appropriate fractional spaces. 
This reformulation places the problem within the scope of the abstract perturbation results recalled in \autoref{sec:appendix}, and allows us to retain sectoriality of the underlying operator.

{\bf Step 4.} Having established the sectoriality of the fluid--pressure jump subsystem, we next extend the analysis to incorporate the averaged pressure variable $\qbar$. 
At the level of the equations, $\qbar$ is not independent, but is coupled to the pressure jump $\qbr$ through a first-order (in time) relation without spatial derivatives. 
In particular, this coupling is of lower order compared to the principal operators governing the fluid and $\qbr$, but the resulting operator matrix does not directly have block structure.

To handle this, we introduce a suitable similarity transformation that combines $\qbar$ and $\qbr$ in such a way that the dominant part of the operator becomes block-diagonal, while the remaining coupling terms appear as bounded (or relatively bounded) perturbations. 
This transformation is invertible and acts continuously on the underlying state space, so it preserves key properties such as closedness, spectrum, and sectoriality of the operator.

In the transformed variables, the evolution equation for $\qbar$ takes the form of a lower-order ODE-type component, which does not contribute to the principal part of the operator. 
As a result, the extended system can be viewed as a perturbation of the already analyzed fluid--pressure jump subsystem. 
The coupling between $\qbar$ and the remaining variables now enters only through bounded operators, and therefore fits naturally into the perturbation framework mentioned earlier.

Applying the abstract perturbation results, we conclude that the inclusion of $\qbar$ does not affect sectoriality (up to a shift), and hence the extended operator governing the fluid, $\qbr$, and $\qbar$ remains sectorial with the same spectral angle.

{\bf Step 5.} 
In a further step, we incorporate the viscoelastic plate dynamics and combine it with the previously analyzed fluid subsystem into a unified block operator matrix formulation. 
At this point, the system consists of two components of different analytical character: 
a parabolic (Stokes-type) fluid subsystem coupled with pressure variables, and a fourth-order (in space) viscoelastic plate equation with additional damping terms. 
These components interact through interface conditions involving traces of the fluid velocity and stresses, as well as the plate displacement and its time derivative.

To capture this structure, we introduce a product space of the form
\[
    \rX = \rX_{\mathrm{plate}} \times \rX_{\mathrm{fluid}},
\]
where $\rX_{\mathrm{plate}}$ encodes the displacement and velocity variables of the plate, and $\rX_{\mathrm{fluid}}$ corresponds to the fluid--pressure subsystem analyzed in the previous steps. 
The full system can then be written as an abstract evolution equation governed by an operator matrix of the form
\[
    \cA = 
    \begin{pmatrix}
        A_{\mathrm{plate}} & B \\
        C & A_{\mathrm{fluid}}
    \end{pmatrix},
\]
where:
\begin{itemize}
    \item $A_{\mathrm{plate}}$ represents the viscoelastic plate operator, including the biharmonic term and damping contributions, and is sectorial on $\rX_{\mathrm{plate}}$,
    \item $A_{\mathrm{fluid}}$ is the sectorial operator associated with the fluid--pressure subsystem,
    \item $B$ and $C$ encode the coupling of the elastodynamics of the plate and the jump in the pressure.
\end{itemize}

The key point is that the diagonal operators $A_{\mathrm{plate}}$ and $A_{\mathrm{fluid}}$ capture the principal dynamics and are both sectorial, while the off-diagonal operators $B$ and $C$ are of lower order. 
More precisely, due to the ellipticity of the plate operator, the coupling operators map between fractional domain spaces and satisfy estimates of the form required in \autoref{def:diag dom block op matrix}. 
In particular, $B$ and $C$ are relatively bounded with respect to the diagonal operators, with sufficiently small relative bounds.

As a consequence, the full operator $\cA$ is diagonally dominant in the sense of \autoref{def:diag dom block op matrix}. 
We can therefore invoke the abstract results for such operator matrices to conclude that $\cA$ is sectorial on~$\rX$, possibly after adding a sufficiently large shift. 
This step is crucial, as it allows us to transfer the sectoriality of the uncoupled subsystems to the fully coupled fluid--structure interaction problem.

{\bf Step 6.} 
Finally, we remove the shift introduced in the previous step and establish exponential stability by analyzing the spectrum of the full coupled operator. 
More precisely, after having shown that the shifted operator $\cA + \omega$ is sectorial, we study the location of the spectrum of $-\cA$ and prove that it lies strictly in the open left half-plane, i.e.,
\[
    \sigma(-\cA) \subset \{ \lambda \in \C : \Rep \lambda < 0 \}.
\]
This step relies on a careful analysis of both the point spectrum and the essential spectrum. 
The dissipative structure of the system, encoded in the energy identities, ensures that eigenvalues cannot lie on the imaginary axis, while a successive reduction argument inspired by \cite{BMR:26} in the context of a multilayered FSI problem allows us to control the essential spectrum and exclude its intersection with the imaginary axis.

As a consequence, the spectral bound of $-\cA$ is strictly negative. 
Combining this with the sectoriality of $\cA$, we conclude that $-\cA$ generates an analytic semigroup that is not only bounded but exponentially stable, i.e., there exist constants $M \ge 1$ and $\mu > 0$ such that
\[
    \| e^{-t\cA} \|_{\cL(\rX)} \le M e^{-\mu t}, \qquad t \ge 0.
\]

This exponential stability allows us to remove the shift $\omega$ introduced earlier, since the semigroup generated by $-\cA$ already exhibits decay. 
In particular, the original (unshifted) operator $\cA$ inherits the desired generation and stability properties.

With these results at hand, the regularized FPSI system can be interpreted as an abstract parabolic evolution equation on the state space $\rX$. 
The sectoriality of $\cA$, together with the Hilbert space setting, implies maximal $\rL^p$-regularity. 
We can therefore apply the general theory of parabolic evolution equations to obtain existence, uniqueness, and continuous dependence of strong solutions with optimal regularity. 
This completes the proof of well-posedness for the regularized coupled problem with homogeneous boundary data $q^+ = 0$.

{\bf Step 7.}
At the end of the section, we return to the original problem with inhomogeneous boundary data $q^+$. 
The presence of this term prevents a direct application of the abstract framework developed above, since the coupling conditions are no longer homogeneous and therefore do not fit into the operator-theoretic formulation of the Cauchy problem.

To overcome this difficulty, we employ a lifting argument. 
More precisely, we decompose the solution as
\[
    (w,v,\qbr,\qbar,\bu) = (w,v,\qbr,\qbar,\widetilde{\bu}) + (0,0,0,0,\bu_*),
\]
where the lifting function $\bu_*$ is chosen to solve a Stokes-type problem with boundary data corresponding to $q^+$. 
In particular, $\bu_*$ satisfies the same divergence-free and boundary conditions as the fluid velocity, but incorporates the inhomogeneous interface condition through a prescribed normal stress.

By construction, $\bu_*$ absorbs the boundary contribution associated with $q^+$, and the remaining unknown $(w,v,\qbr,\qbar,\widetilde{\bu})$ satisfies a system with homogeneous coupling conditions. 
However, this comes at the expense of introducing additional source terms in the equations for $\qbr$ and $\qbar$, which now depend on $q^+$ and the trace of $\bu_*$. 
These terms act as forcing terms in the abstract evolution equation.

The key point is that the lifting problem for $\bu_*$ is well-posed in the appropriate maximal regularity class, provided that $q^+$ satisfies suitable regularity assumptions. 
This follows from the optimal regularity theory for the Stokes problem with inhomogeneous boundary data, combined with a novel perturbation argument for the optimal data property. 
As a result, the lifting $\bu_*$ inherits the same regularity (and, if applicable, exponential decay) as the boundary data $q^+$.

Once this decomposition is established, the remaining system for $(w,v,\qbr,\qbar,\widetilde{\bu})$ can be treated using the homogeneous theory developed in the previous steps. 
In particular, the operator governing the homogeneous part is sectorial and generates an analytic semigroup, and the additional forcing terms satisfy the required regularity assumptions.

Combining the well-posedness of the homogeneous problem with the regularity properties of the lifting, we obtain the existence, uniqueness, and continuous dependence of solutions to the original problem with inhomogeneous boundary data. 
Moreover, the solution decays exponentially when assuming that the data decays exponentially.
In fact, the solution decays exponentially at the same rate as the data.
This completes the proof.

\medskip
\noindent{{\bfseries Notation.}}
Before specifying the functional framework, we introduce notation that will be used throughout this section. 
In particular, we make precise the notion of periodicity in the horizontal variables.

Let $m \in \N_0$. 
A smooth function $f \colon \oOmega_f \to \R$ is said to be \emph{periodic of order $m$} (with respect to the variables $x$ and $y$) if
\begin{equation*}
    \frac{\del^\alpha f}{\del x^\alpha}(0,y,z) = \frac{\del^\alpha f}{\del x^\alpha}(1,y,z) \tand
    \frac{\del^\alpha f}{\del y^\alpha}(x,0,z) = \frac{\del^\alpha f}{\del y^\alpha}(x,1,z),
\end{equation*}
for all multi-indices $\alpha$ with $0 \le |\alpha| \le m$. 
The notion of periodicity of order $m$ on $\del \Gamma$ is defined analogously for functions $f \colon \oGamma \to \R$.

We then introduce the spaces of smooth periodic functions
\begin{equation*}
    \begin{aligned}
        \rC_\per^\infty(\oOmega_f) 
        &\coloneqq \left\{ f \in \rC^\infty(\oOmega_f) : \text{$f$ is periodic of arbitrary order on $\Gamma_f^l$} \right\}, \\
        \rC_\per^\infty(\oGamma) 
        &\coloneqq \left\{ f \in \rC^\infty(\oGamma) : \text{$f$ is periodic of arbitrary order on $\del\Gamma$} \right\}.
    \end{aligned}
\end{equation*}

For $s \ge 0$, we define the periodic Sobolev spaces as the closures
\begin{equation}\label{eq:Sobolev spaces with per bc}
    \rH_\per^s(\Omega_f) \coloneqq \overline{\rC_\per^\infty(\oOmega_f)}^{\| \cdot \|_{\rH^s(\Omega_f)}}, \qquad \rH_\per^s(\Gamma) \coloneqq \overline{\rC_\per^\infty(\oGamma)}^{\| \cdot \|_{\rH^s(\Gamma)}}.
\end{equation}
In particular, $\rH_\per^0(\Omega_f)$ and $\rH_\per^0(\Gamma)$ are identified with $\rL^2(\Omega_f)$ and $\rL^2(\Gamma)$, respectively.

For $s \ge 0$ and $p \in (1,\infty)$, the corresponding periodic Besov spaces $\rB_{2p,\per}^s(\Omega_f)$ and $\rB_{2p,\per}^s(\Gamma)$ are defined analogously.

We now present the details of the analysis where we first assume the homogeneous boundary condition $q^+ = 0$.
We start with the fluid--pressure jump subproblem.

\subsection{The coupled  fluid -- pressure jump subproblem with $q^+ = 0$}
\ 

The aim of this subsection is to analyze the coupled system consisting of the Stokes fluid and the pore pressure variable $\qbr$, which will serve as a fundamental building block for the analysis of the full FSI problem.

Since this analysis plays a central role in the overall argument, we briefly outline the strategy. 
The construction proceeds in several steps, each addressing a different aspect of the coupled operator.

We start by introducing the so-called \emph{maximal} operator matrix $\bA_{f,\max}^{\rI}$, which captures the differential structure of the problem in the interior of the fluid domain, together with the boundary conditions on the non-interface part of the boundary. 
The superscript ``$\rI$'' indicates the first operator matrix capturing the fluid subproblem in the two-part formulation, where the second part will include the average pressure contribution. 
At this stage, the interface conditions are not yet fully imposed. 
A key ingredient here is the maximal Stokes operator $A_{\max}$, which encodes the fluid dynamics without incorporating the coupling at the interface.

We then introduce the operators that will incorporate the interface coupling by separating the highest-order and lower-order contributions in the boundary conditions. 
To this end, we introduce a boundary operator $L$ that encodes the principal (highest-order) part of the interface condition. 
More precisely, $L$ acts on the trace of the fluid variables and enforces the leading-order relation between the normal stress $\bsigma_f(\bu,\pi)\bn$ and the pressure jump $\qbr$, see \eqref{NewCoupling}.
This operator captures the dominant coupling mechanism at the interface and determines the principal part of the boundary dynamics.

The remaining terms in the interface condition \eqref{NewCoupling}, which involve lower-order contributions such as tangential components, are collected into a perturbation operator $\Phi$. 
By construction, $\Phi$ is of lower order relative to $L$ in the sense that it acts between weaker (fractional) spaces and satisfies appropriate boundedness or relative boundedness estimates. 
This decomposition is crucial, as it allows us to treat the full coupling as a perturbation of a problem with simpler (and analytically more tractable) boundary conditions.

We then turn to the analysis of sectoriality of the associated operator by first going back to the reference problem obtained by imposing homogeneous interface condition. 
More precisely, we consider the Stokes system subject to homogeneous Neumann boundary conditions at the interface $\Gamma$, namely,
\[
    \bsigma_f(\bu,\pi)\bn = 0 \ton \Gamma.
\]
This corresponds to removing the leading-order coupling at the interface and yields a problem that is purely fluid-dynamic, with no exchange of momentum across the interface.

The associated operator, denoted by $\bA_{f,0}^{\rI}$, plays the role of a reference operator. 
It captures the intrinsic parabolic structure of the Stokes system under homogeneous boundary conditions and is well understood analytically. 
In particular, we prove in \autoref{lem:sect of op with hom bc} that $\bA_{f,0}^{\rI}$ is sectorial on the underlying Hilbert space. 
This result provides the cornerstone for the subsequent analysis: once sectoriality is established for the reference operator, the full coupled operator can be treated as a perturbation via the operators $L$ and $\Phi$ introduced above.

Indeed, we next return to the Stokes problem with prescribed interface data, corresponding to the coupling condition \eqref{NewCoupling} with $q^+ = 0$, which can be written in general form as
\[
    \bsigma_f(\bu,\pi)\bn = \psi \ton \Gamma,
\]
for a given $\psi \in \rH_\per^{\frac{1}{2}}(\Gamma)^3$. 
The ultimate goal is to show that the operator corresponding to the problem with inhomogeneous coupling condition, denoted by $\bA_f^I$,  is sectorial.

This is done by constructing a \emph{lifting operator} that associates to each admissible boundary datum $\psi$ a divergence-free velocity field $\bu$ (and corresponding pressure $p$) solving the Stokes system in the fluid domain. 
More precisely, we aim to define a linear operator
\[
    \mathcal{L} : \psi \mapsto (\bu,\pi),
\]
which maps boundary data into solutions of the Stokes problem with the prescribed normal stress on $\Gamma$, while satisfying the remaining boundary conditions (such as periodicity and no-slip on $\Gamma_f^b$).

A key point is that this lifting operator enjoys optimal mapping properties: under the assumed regularity $\psi \in \rH_\per^{\frac{1}{2}}(\Gamma)^3$, the corresponding solution $(\bu,\pi)$ belongs to the natural maximal regularity class associated with the Stokes system. 
In particular, the map $\mathcal{L}$ is continuous between the relevant trace space on $\Gamma$ and the corresponding bulk spaces for $\bu$ and $\pi$.

This construction allows us to reinterpret the boundary condition as a volume contribution and is crucial for the subsequent analysis.
Combined with the homogeneous theory developed in the previous step, it enables us to treat the full coupled problem with inhomogeneous coupling condition, described by the operator $\bA_f^I$,  as a perturbation of the reference operator. 
In particular, the mapping properties of the lifting operator ensure that the resulting perturbation describing $\bA_f^I$ satisfies the assumptions required by the abstract perturbation results to conclude that $\bA_f^I$  is sectorial up to a shift (see \autoref{prop:sect of fluid & qbr}).

Details are presented next.

\medskip
\noindent{{\bfseries The introduction of the operators $\bA_{f,\max}^{\rI}$, $L$ and $Q$.}}
Let us start by introducing the setting for the maximal operator $\bA_{f,\max}^{\rI}$ capturing the incompressible free fluid flow in $\Omega_f$ coupled to the equation for the pressure jump across $\Gamma$. 
As mentioned earlier, the superscript ``$\rI$'' indicates the first operator matrix capturing the fluid subproblem in the two-part formulation, where the second part will include the average pressure contribution. 

Before proceeding, we introduce the space of solenoidal vector fields by recalling the Helmholtz projection $\bP$ onto the space of divergence-free vector fields on $\Omega_f$:
\begin{equation}\label{Projection}
    \bP \colon \rL^2(\Omega_f)^3 \to \rL_\sigma^2(\Omega_f), \twhere 
    \rL_\sigma^2(\Omega_f) \coloneqq \overline{\{\bu \in \rC_\per^\infty(\Omega_f)^3 : \mdiv \bu = 0 \tin \Omega_f\}}^{\| \cdot \|_{\rL^2(\Omega_f)}}.
\end{equation}
Here, the subscript ``$\sigma$'' is used to denote the subset of solenoidal vector fields.
Indeed, for the functions in~$\rL^2(\Omega_f)^3$, the Helmholtz decomposition exists for any open set $\Omega_f \subset \R^d$, see, e.g., \cite[Rem.~1]{HS:18}.
For the general case of the Helmholtz decomposition in $\rL^p(\Omega)$, assuming cylindrical domains $\Omega \subset \R^d$, see~\cite{Nau:15}.
The basic underlying Hilbert space in which the coupled fluid--pressure jump problem is formulated, i.e., the ground space, is then given by
\begin{equation}\label{eq:ground space pressure jump-fluid}
    \rX_{f,0}^{\rI} \coloneqq \rL^2(\Gamma) \times \rL_\sigma^2(\Omega_f),
\end{equation}
where $\rL_\sigma^2(\Omega_f)$ is defined in \eqref{Projection}.

To define the maximal operator $\bA_{f,\max}^{\rI}$ we must introduce two operators, one associated with the pressure jump subproblem defined on $\Gamma$ and the other one associated with the fluid subproblem defined on $\Omega_f$. 

For the operator associated with the pressure jump subproblem, we invoke the tangential Laplacian term $\Delta_{\btau}\qbr$ in equation \eqref{eq:FPSI Cauchy problem}$_3$, and introduce an $\rL^2$-realization of the tangential Laplacian operator with periodic boundary conditions, denoted by $\Delta_s$, as follows:
\begin{equation}\label{eq:Laplacian}
    \Delta_s f \coloneqq \Delta_{\btau} f, \tfor f \in \rD(\Delta_s) \coloneqq \rH_\per^2(\Gamma).
\end{equation}

For the operator associated with the free fluid subproblem (Stokes problem), we consider the viscous (Laplace) term in the Stokes equation \eqref{eq:FPSI Cauchy problem}$_5$ together with the divergence-free constraint \eqref{eq:FPSI Cauchy problem}$_6$. 
This motivates the introduction of the maximal Stokes operator $A_{\max}$, defined as the realization of the Laplacian on divergence-free vector fields subject to the non-interface boundary conditions.
(The term ``maximal'' is used here in a slightly nonstandard sense: 
it indicates that all equations and boundary conditions are incorporated, except for the coupling conditions imposed at the interface.)

Namely, we define the domain of the maximal Stokes operator $A_{\max}$ to be
\begin{equation*}
    \rD(A_{\max}) = \bigl\{\bu \in \rH_\per^2(\Omega_f)^3 \cap \rL_\sigma^2(\Omega_f) : \bu = 0 \ton \Gamma_f^b\bigr\},
\end{equation*}
and the operator is defined by
\begin{equation}\label{eq:max Stokes op}
    A_{\max} \bu \coloneqq \bP \Delta \bu, \tfor \bu \in \rD(A_{\max}).
\end{equation}

We are now in a position to define the maximal operator $\bA_{f,\max}^{\rI}$ associated with the fluid--pressure jump coupled problem. 

The domain of the operator $\bA_{f,\max}^{\rI}$ is given by
\begin{equation*}
    \rD(\bA_{f,\max}^{\rI}) \coloneqq \rH_\per^2(\Gamma) \times \rD(A_{\max})
\end{equation*}
and based on the full problem descriptions \eqref{eq:FPSI Cauchy problem}, the operator itself is defined by
\begin{equation}\label{bAmax}
    \bA_{f,\max}^{\rI} \coloneqq \begin{pmatrix}
        -\eps_2 \Delta_s + 48 & 0\\
        0 & -A_{\max}
    \end{pmatrix}
    = 
    \begin{pmatrix}
        -\eps_2 \Delta_s + 48 & 0\\
        0 & -\bP \Delta
    \end{pmatrix}.
\end{equation}

Next, we incorporate the coupling condition \eqref{NewCoupling}, restated here with $q^+ = 0$:
\begin{equation}\label{CoupleWithQ+=0}
    \bsigma_f(\bu,\pi) \bn = \qbr \bn - \beta \sum_{i=1}^2 (\bu \cdot \btau_i) \btau_i.
\end{equation}

We deal with this coupling condition in two steps.
First, we will consider the homogeneous condition $\bsigma_f(\bu,\pi) \bn = 0$ and show that the associated operator $\bA_{f,0}^{\rI}$ defined below in \eqref{OperatorA0}, is sectorial.
Then, we will use perturbation arguments to incorporate the right-hand side of \eqref{CoupleWithQ+=0}.

For this purpose, we introduce the boundary operator $L$, and the perturbation operator $\Phi$ as follows:
\begin{equation}\label{BoundaryOperators}
    L \binom{\qbr}{\bu} \coloneqq \binom{0}{\bsigma_f(\bu,\pi)\bn}, \qquad \Phi \binom{\qbr}{\bu} \coloneqq \binom{0}{\qbr \bn - \beta \sum_{i=1}^2 (\bu|_\Gamma \cdot \btau_i)\btau_i},
\end{equation}
where
\[
    L, \Phi \colon \rD(\bA_{f,\max}^{\rI}) \subset \rX_{f,0}^{\rI} \to \rX_{f,0}^{\rI},
\]
and $\rX_{f,0}^{\rI}$ is defined in \eqref{eq:ground space pressure jump-fluid}.

In this definition,  the first components of $L \binom{\qbr}{\bu}$ and $\Phi \binom{\qbr}{\bu}$ are set equal to zero.
This is merely a notational device and does not imply that the pressure jump $\qbr$ vanishes on the interface. 
Rather, the pressure jump enters the coupling through the dynamic interface condition \eqref{NewCoupling}, where it contributes to the normal stress balance.
Since $\qbr$ is defined only on the interface $\Gamma$, the first components in the above definitions play the role of auxiliary (or ``dummy'') entries, allowing us to embed the boundary operators into the product space $\rX_{f,0}^{\rI}$ and thereby fit the abstract operator-theoretic framework.

\medskip
\noindent{{\bfseries Homogeneous coupling condition.}}
We now introduce the operator $\bA_{f,0}^{\rI}$ to describe the fluid--pressure jump coupled problem subject to the {{homogeneous}} coupling condition $\bsigma_f(\bu,\pi)\bn = 0$ at the interface $\Gamma$:
\begin{equation}\label{OperatorA0}
    \bA_{f,0}^{\rI}
    \coloneqq \begin{pmatrix}
        -\eps_2 \Delta_s + 48 & 0\\
        0 & -A_0
    \end{pmatrix},
\end{equation}
where $\Delta_s$ is defined in \eqref{eq:Laplacian}, and
the Stokes operator $A_0$ is defined by:
\begin{equation}\label{HomoStokes}
    \begin{aligned}
        A_0 \bu 
        &\coloneqq \bP \Delta \bu,\\
        \rD(A_0) 
        &\coloneqq \left\{\bu \in \rH_\per^2(\Omega_f)^3 \cap \rL_\sigma^2(\Omega_f) : \bu = 0 \ton \Gamma_f^b \tand  \bsigma_f(\bu,\pi) \bn = 0 \ton \Gamma\right\}.
    \end{aligned}
\end{equation}

The domain of operator $\bA_{f,0}^{\rI}$ is given by:
\begin{equation}
    \begin{aligned}
        \rD(\bA_{f,0}^{\rI}) \coloneqq \rD(\Delta_s) \times \rD(A_0)
        &= \left\{\binom{\qbr}{\bu}\in \rH_\per^2(\Gamma) \times \rD(A_{\max}) : \bsigma_f(\bu,\pi)\bn = 0 \ton \Gamma \right\}\\
        &= \left\{\binom{\qbr}{\bu} \in \rD(\bA_{f,\max}^{\rI}) : L \binom{\qbr}{\bu} = 0\right\}.
    \end{aligned}
\end{equation}
We now show that the operator $\bA_{f,0}^{\rI}$ is sectorial on the ground space $\rX_{f,0}^{\rI}$ with the spectral angle less than~$\frac{\pi}{2}$, and that $0$ is in the resolvent set $\rho(\bA_{f,0}^{\rI})$. The spectral angle less than $\frac{\pi}{2}$ implies that $\bA_{f,0}^{\rI}$ generates an analytic semigroup.

More precisely, the following result holds true.

\begin{lem}\label{lem:sect of op with hom bc}
The operator $\bA_{f,0}^{\rI}$ is sectorial on $\rX_{f,0}^{\rI}$ with spectral angle $\phi_{\bA_{f,0}^{\rI}} < \frac{\pi}{2}$, and $0 \in \rho(\bA_{f,0}^{\rI})$.
\end{lem}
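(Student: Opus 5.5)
Since $\bA_{f,0}^{\rI}$ is block-diagonal with diagonal domain $\rD(\Delta_s)\times\rD(A_0)$, its resolvent is the diagonal matrix of the resolvents of the two blocks. Sectoriality, the angle bound and $0\in\rho(\cdot)$ therefore hold for $\bA_{f,0}^{\rI}$ once they hold for each diagonal entry. The spectral angle of the matrix is then the maximum of the two angles. I treat the two blocks separately, and in both cases I show the block is self-adjoint and strictly positive on a Hilbert space. This gives sectoriality of angle $0<\frac{\pi}{2}$ together with invertibility.

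For the first block, $-\eps_2\Delta_s+48$ on $\rL^2(\Gamma)$ with domain $\rH^2_\per(\Gamma)$, I expand in the Fourier basis $e^{2\pi i(k_1x+k_2y)}$ of the torus. The operator acts diagonally with symbol $4\pi^2\eps_2|k|^2+48\ge 48$. It is therefore self-adjoint with spectrum in $[48,\infty)$. The resolvent estimate $\|\lambda(\lambda-T)^{-1}\|\le C_\phi$ on $\C\setminus\Sigma_\phi$ follows from $\mathrm{dist}(\lambda,[48,\infty))\gtrsim_\phi|\lambda|$, and $0$ lies in the resolvent set.

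For the Stokes block $-A_0$ on $\rL^2_\sigma(\Omega_f)$, I introduce the closed form
\[
a(\bu,\bv)=2\int_{\Omega_f}\bD(\bu):\bD(\bv)\srd\bx,\qquad \rD(a)=V_0:=\{\bu\in\rH^1_\per(\Omega_f)^3\cap\rL^2_\sigma(\Omega_f):\bu=0 \ton \Gamma_f^b\}.
\]
This form is symmetric and continuous. By Korn's and Poincaré's inequalities, using the no-slip condition on $\Gamma_f^b$, it is coercive: $a(\bu,\bu)\ge c\|\bu\|_{\rH^1}^2$. The associated operator $\cA_a$ is therefore self-adjoint, positive definite and sectorial of angle $0$, with $0\in\rho(\cA_a)$, and this is the main thing to prove.

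The main step is to identify $\cA_a=-A_0$, including the domain $\rD(A_0)$ with the $\rH^2_\per$ regularity and the Neumann condition $\bsigma_f(\bu,\pi)\bn=0$ on $\Gamma$. The inclusion $-A_0\subset\cA_a$ follows from integration by parts. For $\bu\in\rD(A_0)$ and $\bv\in V_0$, the side boundary terms cancel by periodicity, the bottom term vanishes because $\bv=0$ on $\Gamma_f^b$, the top term vanishes by the Neumann condition, and the pressure term drops out because $\bv$ is solenoidal and has vanishing normal trace where needed.

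For the reverse inclusion I need elliptic regularity. If $\bu\in\rD(\cA_a)$ with $\cA_a\bu=\bff\in\rL^2_\sigma$, de Rham's theorem produces a pressure $\pi$, and the task is to show $(\bu,\pi)\in\rH^2\times\rH^1$ with the traction condition on $\Gamma$. I would obtain this from the periodic structure by tangential Fourier decomposition in $(x,y)$. Each mode gives an ODE boundary value problem in $z\in(-1,0)$, Dirichlet at $z=-1$ and traction at $z=0$. Alternatively, tangential difference quotients give $\del_x\bu,\del_y\bu\in\rH^1$, and the normal derivatives $\del_z^2\bu$ and $\del_z\pi$ are then recovered from the equations and the divergence constraint. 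The clean cylindrical geometry, periodic sides and flat top and bottom, avoids corner singularities, so this standard argument should go through.

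Combining the two blocks gives sectoriality of $\bA_{f,0}^{\rI}$ with angle $0<\frac{\pi}{2}$ and $0\in\rho(\bA_{f,0}^{\rI})$.
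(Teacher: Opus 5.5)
Your proof is correct, but it takes a different route from the paper. The paper works by citation within $\rL^p$-theory. It gets sectoriality of $-\eps_2\Delta_s+48$ from elliptic theory on cubes after rescaling. For the Stokes block it invokes the Pr\"uss--Simonett / Bothe--K\"ohne--Pr\"uss theory for Neumann-type Stokes problems, transferred to the periodic cylinder by reducing to an infinite layer and then to a half-space via extension, cut-off and localization. Invertibility is simply attributed to the no-slip condition on $\Gamma_f^b$.

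You stay in the Hilbert space. You use Fourier series on the torus for the surface block, and Kato's representation theorem for the symmetric coercive form $2(\bD(\bu),\bD(\bv))$ on $V_0$. The $\rH^2$-domain is then recovered by tangential difference quotients, reading off $\del_z^2\bu$ and $\del_z\pi$ from the equations and $\mdiv\bu=0$.

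What your route buys:
\begin{itemize}
\item It is self-contained.
\item It gives the sharper conclusion that both blocks are self-adjoint and positive definite, so the spectral angle is $0$ and $0\in\rho$ follows directly from Korn--Poincar\'e coercivity.
\item It yields for free that $[\rX_{f,0}^{\rI},\rX_{f,1}^{\rI}]_\theta=\rD((\bA_{f,0}^{\rI})^\theta)$. In particular, the fluid component of $\rX_{f,\frac12}^{\rI}$ is exactly your form domain $V_0$, which is the space the later perturbation argument works in.
\end{itemize}
The paper's route buys generality instead. It does not rely on symmetry or on the exponent $2$, and the cited theory gives $\cR$-sectoriality and maximal regularity in $\rL^q$, which the authors want for nonlinear extensions.

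Two conventions should be made explicit in your write-up.
\begin{enumerate}
\item Kato's theorem needs $V_0$ to be dense in the ground space. So $\rL^2_\sigma(\Omega_f)$ must be read as the solenoidal fields with $\bu\cdot\bn=0$ on $\Gamma_f^b$, i.e.\ the $\rL^2$-closure of $V_0$. With a definition imposing no boundary condition, neither $V_0$ nor $\rD(A_0)$ is dense. The paper's argument implicitly needs the same convention.
\item Your identification $\cA_a=-A_0$ uses $A_0\bu=\bP\mdiv\bsigma_f(\bu,\pi)$, with the pressure fixed on $\Gamma$ by $\pi=2\bD(\bu)\bn\cdot\bn$. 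Here $\bP$ is the orthogonal projection onto that space, whose complement is $\{\nabla p: p|_\Gamma=0\}$. This is the intended meaning, and your integration by parts proves exactly this. However, the literal formula $A_0\bu=\bP\Delta\bu$ differs from it by $\bP\nabla\pi$, which vanishes only when $\pi|_\Gamma$ is constant. State which operator you mean.
\end{enumerate}
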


\begin{proof}
We summarize the main steps of the proof, which relies on classical results for elliptic operators.
First, by standard theory for elliptic problems on the cube $(0,\pi)^n$ (see, e.g., \cite[Ch.~7]{Nau:12}), which extends to the present setting by a rescaling argument, the operator $-\eps_2 \Delta_s + 48$ is sectorial on $\rL^2(\Gamma)$ with spectral angle $\phi_{-\eps_2 \Delta_s + 48} < \frac{\pi}{2}$.

Next, the sectoriality of the Stokes operator $A_0$ adapted to the present setting can be established along the lines of \cite[Sec.~7.3]{PS:16}; see also \cite{BKP:13}. 
This follows by reducing the problem to the case of an infinite layer, and subsequently to the half-space, via a suitable extension and cutoff argument. 
{ More precisely, in \cite{BKP:13}, the case of the domain being a half-space is especially treated.
The half-space case can then be reduced to the situation of an infinite layer of the form $\Omega = \R^2 \times (-1,0)$ upon invoking the same localization procedure as for bounded domains, see \cite{BKP:13}.
We also refer to \cite[Ch.~8]{DHP:03} for localization procedures from a half-space to bounded domains for general parabolic boundary value problems.
The case of $\Omega_f = (0,1)^2 \times (-1,0)$ with horizontal periodicity can then be reduced to the case of an infinite layer $\R^2 \times (-1,0)$ by extending the horizontally periodic functions to the full space, see also \cite[p.~1082]{HK:16} for more details on such an extension procedure.}
Due to the homogeneous Dirichlet boundary conditions at the bottom boundary $\Gamma_f^b$ of the fluid domain, the Stokes operator $A_0$ is invertible.

Finally, since $\bA_{f,0}^{\rI}$ has a diagonal structure, the claimed result follows immediately.
\end{proof}

The sectoriality of $\bA_{f,0}^{\rI}$ allows us to invoke the associated analytic semigroup and fractional power spaces.
Indeed, it gives rise to a natural scale of fractional domain spaces between $\rX_{f,0}^{\rI}$ as well as $\rX_{f,1}^{\rI} \coloneqq \rD(\bA_{f,0}^{\rI})$, where we have denoted 
\begin{equation*}
    \rX_{f,1}^{\rI} \coloneqq \rD(\bA_{f,0}^{\rI}) = \rD(\Delta_s) \times \rD(A_0).
\end{equation*}
These intermediate spaces play a crucial role in the analysis of the coupled problem, as they provide the correct regularity framework for traces, interface conditions, and perturbation arguments.
In particular, they are essential for capturing the coupled fluid-pressure jump problem with inhomogeneous interface conditions using perturbation of $\bA_{f,0}^{\rI}$ arguments, as we shall see below. 

In the following lemma, we characterize the complex and real interpolation spaces associated with the pair $(\rX_{f,0}^{\rI},\rX_{f,1}^{\rI})$. 
Before we state the lemma, we recall the Besov spaces $\rB_{2p,\per}^{2 \theta}(\Omega_f)$ and $\rB_{2p,\per}^{2 \theta}(\Gamma)$ that have been defined after \eqref{eq:Sobolev spaces with per bc}.
Moreover, we introduce the Besov space $\rB_{2p,\per,\Gamma_{f}^b}^{2 \theta}(\Omega_f)$ consisting of all the functions in $\rB_{2p,\per}^{2 \theta}(\Omega_f)$ that satisfy a homogeneous Dirichlet boundary condition at the bottom boundary $\Gamma_{f}^b$ of the fluid domain $\Omega_f$.
Similarly, the subscript ``$\Gamma_f^b$'' in $\rH_{\per,\Gamma_f^b}^{2 \theta}(\Omega_f)$ indicates homogeneous Dirichlet boundary conditions.

\begin{lem}\label{lem:interpol spaces}
Let $[\cdot,\cdot]_\theta$ and $(\cdot,\cdot)_{\theta,p}$ denote the complex and real interpolation functors, respectively.
Then, for all $\theta \in (\nicefrac{1}{4},\nicefrac{3}{4})$ and $p \in (1,\infty)$, we have
\begin{equation*}
    \rX_{f,\theta}^{\rI} \coloneqq [\rX_{f,0}^{\rI},\rX_{f,1}^{\rI}]_\theta = \rH_\per^{2 \theta}(\Gamma) \times \rH_{\per,\Gamma_f^b}^{2 \theta}(\Omega_f),
\end{equation*}
and
\begin{equation*}
    \rX_{f,\theta,p}^{\rI} \coloneqq (\rX_{f,0}^{\rI},\rX_{f,1}^{\rI})_{\theta,p} = \rB_{2p,\per}^{2 \theta}(\Gamma) \times \rB_{2p,\per,\Gamma_f^b}^{2 \theta}(\Omega_f)^3 \cap \rL_\sigma^2(\Omega_f).
\end{equation*}
\end{lem}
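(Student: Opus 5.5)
Since $\bA_{f,0}^{\rI}$ is block-diagonal with domain $\rD(\Delta_s)\times\rD(A_0)$, and both complex and real interpolation commute with finite products, the statement splits into two independent identifications:
\begin{equation*}
    [\rL^2(\Gamma),\rH_\per^2(\Gamma)]_\theta,\quad (\rL^2(\Gamma),\rH_\per^2(\Gamma))_{\theta,p},
\end{equation*}
and the same pair of functors applied to $(\rL_\sigma^2(\Omega_f),\rD(A_0))$.

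For the interface component, I would expand in the Fourier basis $\mre^{2\pi i k\cdot x}$, $k\in\mathbb{Z}^2$. In this basis $\rH_\per^{s}(\Gamma)$ becomes a weighted $\ell^2$-space with weight $(1+|k|^2)^{s}$. Complex interpolation of weighted $\ell^2$-spaces, via Stein--Weiss, gives $\rH_\per^{2\theta}(\Gamma)$ for every $\theta\in(0,1)$. Real interpolation gives the periodic Besov space $\rB_{2p,\per}^{2\theta}(\Gamma)$, using a Littlewood--Paley characterization on the torus. Alternatively, both identities follow from the bounded $\rH^\infty$-calculus of $-\eps_2\Delta_s+48$ together with the standard torus results. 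No boundary condition arises at this level, so this part holds for all $\theta\in(0,1)$.

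For the fluid component, I would first drop the solenoidal constraint and consider the vector Laplacian on $\rL^2(\Omega_f)^3$. Its domain consists of $\rH_\per^2$ functions with Dirichlet data on $\Gamma_f^b$ and a Neumann-type (stress-free) condition on $\Gamma$. By Seeley's theorem (cf.\ \cite[Ch.~7]{Nau:12}, \cite{PS:16}), for such mixed first/zeroth order boundary conditions:
\begin{itemize}
    \item the Dirichlet condition survives in $[\cdot,\cdot]_\theta$ precisely when $2\theta>\nicefrac12$;
    \item the first-order condition on $\Gamma$ disappears precisely when $2\theta<\nicefrac32$.
\end{itemize}
This explains the window $\theta\in(\nicefrac14,\nicefrac34)$ and yields $\rH_{\per,\Gamma_f^b}^{2\theta}(\Omega_f)^3$, respectively $\rB_{2p,\per,\Gamma_f^b}^{2\theta}(\Omega_f)^3$. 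The horizontal periodicity is handled as in the proof of \autoref{lem:sect of op with hom bc}, by a partial Fourier series in $(x,y)$ (or the extension of \cite{HK:16}), which reduces the problem to a layer.

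The step I expect to be the main obstacle is transferring these identities to the solenoidal setting, i.e.\ showing
\begin{equation*}
    [\rL_\sigma^2,\rD(A_0)]_\theta=[\rL^2,\rD(-\Delta)]_\theta\cap\rL_\sigma^2 .
\end{equation*}
I would use the retraction--coretraction principle. The Helmholtz projection $\bP$ restricts to a bounded map on $\rL^2$ and on all intermediate Sobolev/Besov spaces with the stated boundary conditions, and is the identity on $\rL_\sigma^2$. The delicate point is that $\bP$ must also map $\rD(-\Delta)$ (with stress boundary condition) into $\rD(A_0)$, which is not literally true because the pressure enters $\bsigma_f$. To handle this, I would instead construct a retraction $R$ at the level of the domains by solving a stationary Stokes resolvent problem with Neumann data. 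Concretely, for $\bu\in\rH^2$ satisfying the boundary conditions, set $R\bu=(\lambda-A_0)^{-1}\bP(\lambda-\Delta)\bu$. Then $R$ is bounded on $\rL^2\to\rL_\sigma^2$ by duality and elliptic regularity, and on $\rD(-\Delta)\to\rD(A_0)$. Moreover $R=\mathrm{id}$ on $\rD(A_0)$, since there $(\lambda-\Delta)\bu$ differs from $(\lambda-A_0)\bu$ by a gradient annihilated by $\bP$. Interpolating $R$ then gives the inclusion "$\supseteq$", and the inclusion "$\subseteq$" is trivial.

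Finally, for $\theta<\nicefrac34$ the stress condition is invisible in the interpolation spaces, so no additional interface constraint appears. This yields exactly the claimed product spaces, with divergence-free intersection in the fluid factor.
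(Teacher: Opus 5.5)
\paragraph{The gap.} Your overall plan is what the paper's bare citation of \cite{Tri:78} and \cite{Ama:00} is meant to supply: split the product, use Fourier series on $\Gamma$, do Seeley-type bookkeeping of which boundary conditions survive, then treat the solenoidal constraint separately. However, the one step you carry out in detail fails as stated. With a stress-free ($\bD(\bu)\bn=0$) or Neumann ($\del_3\bu=0$) auxiliary condition on $\Gamma$, the operator $R=(\lambda-A_0)^{-1}\bP(\lambda-\Delta)$ does \emph{not} extend boundedly to $\rL^2(\Omega_f)^3$.

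To see this, take $\lambda>0$, real fields, $\bv\in\rL^2_\sigma(\Omega_f)$, and $\boldsymbol{w}=(\lambda-A_0)^{-1}\bv\in\rD(A_0)$, using that $A_0$ is self-adjoint. Integrate by parts twice and use $\del_3w_i=-\del_iw_3$ ($i=1,2$) and $\del_3w_3=-\del_1w_1-\del_2w_2$ on $\Gamma$. This gives
\[
    (R\bu,\bv)=(\bu,(\lambda-\Delta)\boldsymbol{w})+\int_\Gamma\bigl[(\del_1u_3-\del_3u_1)w_1+(\del_2u_3-\del_3u_2)w_2+(\del_1u_1+\del_2u_2-\del_3u_3)w_3\bigr]\,\mathrm{d}S .
\]
Only the volume term is controlled by $\|\bu\|_{\rL^2}$. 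Consider the boundary layer $u_3=\chi(x_3/\epsilon)\,g(x_1,x_2)$, $u_i=-\epsilon\,\eta(x_3/\epsilon)\,\del_i g$ ($i=1,2$), where $\chi,\eta$ are supported near $0$ with $\chi(0)=1$, $\chi'(0)=0$, $\eta(0)=0$, $\eta'(0)=1$. It satisfies $\bD(\bu)\bn=0$ on $\Gamma$ and $\|\bu\|_{\rL^2}=O(\epsilon^{1/2})$. Yet the boundary term equals $2\int_\Gamma(\del_1 g\,w_1+\del_2 g\,w_2)\,\mathrm{d}S$ for every $\epsilon$, which is nonzero for suitable $\bv$. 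For the Neumann reading, take $u_1=u_2=0$ instead.

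Hence $R$ is unbounded on $\rL^2$. You cannot interpolate it, the identity $R|_{\rL^2_\sigma}=\mathrm{id}$ is not available, and the inclusion $\supseteq$ remains unproved. Two further issues:
\begin{itemize}
\item Your ``trivial'' inclusion $\subseteq$ presupposes $\rD(A_0)\subset\rD(-\Delta)$. This also fails, because $\del_3 w_i=-\del_i w_3$ and $\del_3w_3=-\del_1w_1-\del_2w_2$ need not vanish.
\item Contrary to your remark, $\bP$ does not preserve the Dirichlet condition on $\Gamma_f^b$: its gradient part has a nonzero tangential trace there.
\end{itemize}

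\paragraph{A repair.} Choose the auxiliary condition so that the boundary integral vanishes identically. That is, work with
\[
\rX_1^\star\coloneqq\{\bu\in\rH^2_\per(\Omega_f)^3:\ \bu=0\ton\Gamma_f^b,\ \del_3u_i=\del_iu_3\ (i=1,2),\ \del_3u_3=\del_1u_1+\del_2u_2\ton\Gamma\}.
\]
This first-order system satisfies the Lopatinskii--Shapiro condition: for decaying modes $a\,\mre^{\omega x_3}$ with $\omega^2=\lambda+|\xi'|^2$, it reduces to $a_3(\lambda+2|\xi'|^2)=0$. So Seeley's theorem still gives $[\rL^2,\rX_1^\star]_\theta=\rH^{2\theta}_{\per,\Gamma_f^b}(\Omega_f)^3$, and the Besov analogue, for $\theta\in(\nicefrac14,\nicefrac34)$.

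With this domain, $(R\bu,\bv)=(\bu,(\lambda-\Delta)\boldsymbol{w})$, so $R$ extends to $\rL^2$. For $\bu\in\rL^2_\sigma$ the same identity reads $(R\bu,\bv)=(\bu,(\lambda-A_0)\boldsymbol{w})=(\bu,\bv)$, i.e., $R=\mathrm{id}$ on $\rL^2_\sigma$. For the inclusion $\subseteq$, use $\rD(A_0)\hookrightarrow\rH^2_{\per,\Gamma_f^b}(\Omega_f)^3$ with no condition on $\Gamma$; its interpolation spaces agree with those above in the same range of $\theta$.

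With these two changes your argument becomes a correct, self-contained replacement for the paper's appeal to \cite{Ama:00}. It also shows transparently where the window $\theta\in(\nicefrac14,\nicefrac34)$ comes from.
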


\begin{proof}
The result follows from standard interpolation theory; see, for instance, \cite{Tri:78}. 
For the interpolation of the divergence-free subspace $\rL_\sigma^2(\Omega_f)$, we also refer to \cite{Ama:00}.
\end{proof}

\medskip
\noindent{{\bfseries Inhomogeneous coupling condition and operator $\bA_f^I$.}}
In order to deal with the {{inhomogeneous}} coupling condition 
\begin{equation*}
    \bsigma_f(\bu,\pi) \bn = \qbr \bn - \beta \sum_{i=1}^2 (\bu \cdot \btau_i) \btau_i \ton \Gamma,
\end{equation*}
we introduce a new operator $\bA_f^{\rI}$ to be the same as the original operator $\bA_{f,\max}^{\rI}$ except for the interface condition, which is now inhomogeneous:
\begin{equation*}
    \bA_f^{\rI} \binom{\qbr}{\bu} = \bA_{f,\max}^{\rI} \binom{\qbr}{\bu},
\end{equation*}
where $\bA_{f,\max}^{\rI}$ is given in \eqref{bAmax}, but now the domain of $\bA_f^{\rI}$ includes the boundary conditions specified via the operators $L$ and $\Phi$ defined in \eqref{BoundaryOperators}:
\begin{equation*}
    \rD(\bA_f^{\rI}) = \left\{\binom{\qbr}{\bu} \in \rH_\per^2(\Gamma) \times \rH_{\per,\Gamma_f^b}^2(\Omega_f)^3 \cap \rL_\sigma^2(\Omega_f) : L \binom{\qbr}{\bu} = \Phi \binom{\qbr}{\bu}\right\}.
\end{equation*}

Our objective is to treat the fully coupled operator $\bA_f^{\rI}$ as a perturbation of the operator $\bA_{f,0}^{\rI}$, which corresponds to the problem with homogeneous interface conditions. 
This allows us to exploit the sectoriality of $\bA_{f,0}^{\rI}$ and apply abstract perturbation results to deduce well-posedness and regularity properties of the coupled problem.

A key difficulty in this approach is that the coupling conditions act at the interface and therefore naturally give rise to boundary terms, rather than operators defined on the state space $\rX_{f,0}^{\rI}$. 
To overcome this, we reformulate the coupling conditions as an operator acting on $\rX_{f,0}^{\rI}$. 
This is achieved in two steps: first, by lifting boundary data into the interior, and second, by interpreting the resulting operator within an appropriate extrapolation framework.

To this end, we consider the stationary Stokes problem: 
for $\lambda \in \C$ and $\psi \in \rH_\per^{\frac{1}{2}}(\Gamma)^3$, find $\bu$ such that
\begin{equation}\label{eq:Stokes lifting problem}
    \left\{
    \begin{aligned}
        (\lambda - A_{\max}) \bu 
        &= 0,\\
        \left.\bsigma_f(\bu,\pi)\bn \right|_\Gamma 
        &= \psi, &&\ton \Gamma,\\
        \bu 
        &= 0, &&\ton \Gamma_f^b.
    \end{aligned}
    \right.
\end{equation}
By classical results (see, e.g., \cite[Sec.~7.3]{PS:16}), this problem admits a unique solution 
\[
    \bu \in \rH_{\per,\Gamma_f^b}^2(\Omega_f)^3 \cap \rL_\sigma^2(\Omega_f)
\]
for all $\lambda \in \{z \in \C : \Rep z \ge 0\}$, and the result carries over to the present cylindrical setting.

We thus define the lifting operator
\[
    L_0 \colon \{0\} \times \rH_\per^{\frac{1}{2}}(\Gamma)^3 \to \{0\} \times \rH_{\per,\Gamma_f^b}^2(\Omega_f)^3 \cap \rL_\sigma^2(\Omega_f)
\]
by
\[
    L_0 \binom{0}{\psi} \coloneqq \binom{0}{\bu},
\]
where $\bu$ is the unique solution to \eqref{eq:Stokes lifting problem}. 
This operator is well-defined and bounded.

\medskip
\noindent{{\bfseries Reformulation as a perturbation.}}
Our goal is to show that the coupled operator $\bA_f^{\rI}$ can be represented as a perturbation of the operator $\bA_{f,0}^{\rI}$, which corresponds to homogeneous interface conditions. 

For this purpose we will introduce two operators, $Q_0$ and $Q$. 
The operator $Q_0$ will represent the effect of the coupling condition lifted into the interior via $L_0$, see \eqref{Q0Q}$_1$, while $Q$ will allow us to interpret the boundary-induced operator $L_0$ as a lower-order perturbation operator -- this will be achieved via the operator $(\bA_{f,0}^{\rI})_{-1}$, which is the extrapolation of $\bA_{f,0}^{\rI}$ to the lower-regularity space $\rX_{f,-1}^{\rI}$, see \eqref{Q0Q}$_2$.

We briefly recall that extrapolation spaces associated with $\bA_{f,0}^{\rI}$ extend the scale of interpolation spaces beyond $\rX_{f,0}^{\rI}$ to lower-regularity spaces $\rX_{f,-1}^{\rI}, \rX_{f,-\frac{1}{2}}^{\rI}, \ldots$. 
The extrapolated operator $(\bA_{f,0}^{\rI})_{-1}$ then acts continuously from $\rX_{f,0}^{\rI}$ into $\rX_{f,-1}^{\rI}$, allowing us to interpret our boundary-induced operator $L_0$ as a lower-order perturbation of $\bA_{f,0}^{\rI}$. 
This mechanism is essential for applying abstract perturbation results.

Therefore, for the boundary operator $\Phi$, we introduce
\begin{equation}\label{Q0Q}
    Q_0 \coloneqq L_0 \Phi, \qquad Q \coloneqq -(\bA_{f,0}^{\rI})_{-1} Q_0 = -(\bA_{f,0}^{\rI})_{-1} L_0 \Phi,
\end{equation}
where $(\bA_{f,0}^{\rI})_{-1}$ denotes the extrapolation of $\bA_{f,0}^{\rI}$ to the space $\rX_{f,-1}^{\rI}$.

\begin{lem}\label{lem:mapping props of Q}
Let $\gamma \in (0,\tfrac{3}{4})$. Then
\[
    Q_0 \in \cL(\rX_{f,\frac{1}{2}}^{\rI}, \rX_{f,\gamma}^{\rI}), \qquad Q \in \cL(\rX_{f,\frac{1}{2}}^{\rI}, \rX_{f,\gamma-1}^{\rI}).
\]
\end{lem}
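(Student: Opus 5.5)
We factor $Q_0 = L_0 \Phi$ and establish three mapping properties: $\Phi$ from $\rX_{f,\frac12}^{\rI}$ into $\{0\}\times \rH_\per^{\frac12}(\Gamma)^3$, $L_0$ from there into $\rX_{f,\gamma}^{\rI}$, and the extrapolated operator shifting the scale by exactly one. The first two give the claim for $Q_0$; the third gives the claim for $Q$.

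\emph{Step 1: mapping property of $\Phi$.} By \autoref{lem:interpol spaces} with $\theta=\frac12$ (which lies in $(\frac14,\frac34)$), we have $\rX_{f,\frac12}^{\rI} = \rH_\per^1(\Gamma)\times \rH_{\per,\Gamma_f^b}^1(\Omega_f)$, intersected with solenoidal fields in the second component. For $\binom{\qbr}{\bu}$ in this space, $\qbr\bn\in \rH_\per^1(\Gamma)^3\hookrightarrow \rH_\per^{\frac12}(\Gamma)^3$. By the trace theorem, $\bu|_\Gamma \in \rH_\per^{\frac12}(\Gamma)^3$, and hence $\sum_i(\bu|_\Gamma\cdot\btau_i)\btau_i\in \rH_\per^{\frac12}(\Gamma)^3$, since $\btau_i$ are constant vectors. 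Periodicity of the traces is inherited from the periodic Sobolev spaces. Thus $\Phi\in\cL(\rX_{f,\frac12}^{\rI},\{0\}\times\rH_\per^{\frac12}(\Gamma)^3)$.

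\emph{Step 2: mapping property of $L_0$.} By the solvability of \eqref{eq:Stokes lifting problem} with $\lambda=0$, $L_0\binom{0}{\psi} = \binom{0}{\bu}$ with $\bu \in \rH_{\per,\Gamma_f^b}^2(\Omega_f)^3\cap\rL_\sigma^2(\Omega_f)$, bounded in terms of $\|\psi\|_{\rH^{1/2}}$. This $\bu$ does not lie in $\rD(A_0)$, since the Neumann condition is violated, so we cannot use $\rX_{f,1}^{\rI}$. However, for $\gamma<\frac34$, \autoref{lem:interpol spaces} (for $\gamma\in(\frac14,\frac34)$, and by the embedding $\rX_{f,\gamma'}\hookrightarrow\rX_{f,\gamma}$ for smaller $\gamma$) identifies $\rX_{f,\gamma}^{\rI}$ with $\rH_\per^{2\gamma}(\Gamma)\times \rH_{\per,\Gamma_f^b}^{2\gamma}(\Omega_f)$, intersected with solenoidal fields. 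This space carries no Neumann condition because $2\gamma<\frac32$, so the normal-stress trace is not encoded there. The embedding $\rH^2_{\per,\Gamma_f^b}\cap\rL^2_\sigma\hookrightarrow\rH^{2\gamma}_{\per,\Gamma_f^b}\cap\rL^2_\sigma$ then gives $L_0\in\cL(\{0\}\times\rH_\per^{\frac12}(\Gamma)^3,\rX_{f,\gamma}^{\rI})$. Composing with Step 1 yields $Q_0\in\cL(\rX_{f,\frac12}^{\rI},\rX_{f,\gamma}^{\rI})$.

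\emph{Step 3: extrapolation shift for $Q$.} By \autoref{lem:sect of op with hom bc}, $\bA_{f,0}^{\rI}$ is sectorial with $0\in\rho(\bA_{f,0}^{\rI})$. Standard extrapolation theory, as in the interpolation–extrapolation scales of Amann, then gives that $(\bA_{f,0}^{\rI})_{\gamma-1}$ is an isomorphism $\rX_{f,\gamma}^{\rI}\to \rX_{f,\gamma-1}^{\rI}$ for every $\gamma$. On $\rX_{f,\gamma}^{\rI}\subset \rX_{f,0}^{\rI}$ this operator coincides with $(\bA_{f,0}^{\rI})_{-1}$. Hence $Q=-(\bA_{f,0}^{\rI})_{-1}Q_0\in\cL(\rX_{f,\frac12}^{\rI},\rX_{f,\gamma-1}^{\rI})$.

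The main obstacle is Step 2, specifically the correct identification of $\rX_{f,\gamma}^{\rI}$ for $\gamma$ near $\frac34$. We must ensure that the complex interpolation space between $\rL^2_\sigma$ and $\rD(A_0)$ contains no trace of the Neumann condition below regularity $\frac32$. This is exactly why the restriction $\gamma<\frac34$ appears, and we rely on \autoref{lem:interpol spaces} for it. A secondary concern is that in Step 1 the trace of $\bu\in\rH^1$ lands in $\rH^{\frac12}(\Gamma)$, which is precisely the datum class admissible for \eqref{eq:Stokes lifting problem}.
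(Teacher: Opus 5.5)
Your proposal is correct and follows essentially the same route as the paper: factor $Q_0 = L_0\Phi$, use the trace theorem for $\Phi$, use the $\rH^2$-lifting together with the identification of $\rX_{f,\gamma}^{\rI}$ for $\gamma<\frac34$ for $L_0$, and use the definition of the extrapolated operator for $Q$. Two small differences: composing bounded maps directly makes the paper's appeal to the closed graph theorem unnecessary, and your embedding remark covers $\gamma\le\frac14$, which the paper leaves implicit.
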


\begin{proof}
From \autoref{lem:interpol spaces}, we recall that $\rX_{f,\frac{1}{2}}^{\rI} = [\rX_{f,0}^{\rI},\rX_{f,1}^{\rI}]_{{\frac{1}{2}}} = \rH_\per^{1}(\Gamma) \times \rH_{\per,\Gamma_f^b}^{1}(\Omega_f)$.
The definition of~$\Phi$, together with classical trace results (see \cite{Tri:78}), yields
\[
    \Phi \in \cL\bigl(\rX_{f,\frac{1}{2}}^{\rI}, \{0\} \times \rH_\per^{\frac{1}{2}}(\Gamma)^3\bigr).
\]
On the other hand, by construction of the lifting operator,
\[
    L_0 \in \cL\bigl(\{0\} \times \rH_\per^{\frac{1}{2}}(\Gamma)^3,\, \{0\} \times \rH_{\per,\Gamma_f^b}^2(\Omega_f)^3 \cap \rL_\sigma^2(\Omega_f)\bigr).
\]
Using the characterization of the space $\rX_{f,\gamma}^{\rI}$ provided by \autoref{lem:interpol spaces}, we have
\[
    \{0\} \times \rH_{\per,\Gamma_f^b}^2(\Omega_f)^3 \cap \rL_\sigma^2(\Omega_f) \hookrightarrow \rX_{f,\gamma}^{\rI} \quad \text{for } \gamma < \tfrac{3}{4}.
\]
Thus, $Q_0 = L_0 \Phi$ maps $\rX_{f,\frac{1}{2}}^{\rI}$ into $\rX_{f,\gamma}^{\rI}$. 
The first assertion follows by the closed graph theorem.
The second assertion is a direct consequence of the definition of $(\bA_{f,0}^{\rI})_{-1}$.
\end{proof}

The diagram below illustrates the interpolation--extrapolation scale associated with $\bA_{f,0}^{\rI}$ and the construction of the perturbation operator. 
The operator $\Phi$ maps intermediate states in $\rX_{f,\frac{1}{2}}^{\rI}$ to boundary data, which is lifted into the interior via $L_0$, and subsequently mapped into a lower-regularity space by the extrapolated operator $(\bA_{f,0}^{\rI})_{-1}$. 
This yields the perturbation operator $Q = -(\bA_{f,0}^{\rI})_{-1} L_0 \Phi$, which acts as a lower-order term in the interpolation scale.

\begin{figure}[h!]
    \centering
    \begin{small}
        \begin{tikzcd}[column sep=large, row sep=large]
            \rX_{f,1}^{\rI} \arrow[r, hook] & \rX_{f,\frac{1}{2}}^{\rI} \arrow[r, hook] \arrow[d, "\Phi"] & \rX_{f,0}^{\rI} \arrow[r, hook] & \rX_{f,-1}^{\rI} \\
            & \{0\} \times \rH_\per^{\frac{1}{2}}(\Gamma)^3 \arrow[d, "L_0"] & & \\
            & \rX_{f,1}^{\rI} \arrow[d, "(\bA_{f,0}^{\rI})_{-1}"] & & \\
             & \rX_{f,\gamma-1}^{\rI} & &
        \end{tikzcd}
    \end{small}
    \caption{Interpolation--extrapolation scale associated with $\bA_{f,0}^{\rI}$ and the construction of the perturbation operator $Q = -(\bA_{f,0}^{\rI})_{-1} L_0 \Phi$.}
\end{figure}

\medskip

We now show that the operator $\bA_f^{\rI}$ can be realized as a perturbation of a fractional power of $\bA_{f,0}^{\rI}$.

\begin{lem}\label{lem:representation of the coupled op}
The operator $\bA_f^{\rI}$ admits the representation
\[
    \bA_f^{\rI} = \left.((\bA_{f,0}^{\rI})_{-\frac{1}{2}} + Q)\right|_{\rX_{f,0}^{\rI}},
\]
that is,
\[
    \bA_f^{\rI} \bv = ((\bA_{f,0}^{\rI})_{-\frac{1}{2}} + Q)\bv, \tfor \bv \in \rD(\bA_f^{\rI}) = \{ \bv \in \rX_{f,0}^{\rI} : ((\bA_{f,0}^{\rI})_{-\frac{1}{2}} + Q)\bv \in \rX_{f,0}^{\rI} \}.
\]
\end{lem}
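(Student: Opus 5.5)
We follow the standard Greiner-type argument for boundary perturbations: prove the two inclusions between $\rD(\bA_f^{\rI})$ and the set on the right, using the lifting $L_0$ to cancel the inhomogeneous Neumann datum. Since $0 \in \rho(\bA_{f,0}^{\rI})$ by \autoref{lem:sect of op with hom bc}, we may use the lifting with $\lambda = 0$ in \eqref{eq:Stokes lifting problem}. Then $\bA_{f,\max}^{\rI} L_0 \binom{0}{\psi} = 0$ and $L L_0 \binom{0}{\psi} = \binom{0}{\psi}$. Moreover, on $\rX_{f,\frac12}^{\rI}$ the extrapolated operator $(\bA_{f,0}^{\rI})_{-\frac12}$ is the unique continuous extension of $\bA_{f,0}^{\rI}$. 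Here $\rX_{f,-\frac12}^{\rI}$ is the extrapolation space, and we regard all extrapolated operators as restrictions of $(\bA_{f,0}^{\rI})_{-1}$.

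\emph{Step 1 ($\subseteq$).} Let $\bv \in \rD(\bA_f^{\rI})$, so $\bv \in \rH_\per^2 \times \rH^2$, $L\bv = \Phi\bv$, and in particular $\bv \in \rX_{f,\frac12}^{\rI}$ by \autoref{lem:interpol spaces}. Set $\bv_0 \coloneqq \bv - L_0\Phi\bv$. Since $L L_0 \Phi \bv = \Phi \bv = L\bv$, we have $L\bv_0 = 0$. Hence $\bv_0 \in \rD(\bA_{f,0}^{\rI})$, and
\[
\bA_{f,\max}^{\rI}\bv = \bA_{f,\max}^{\rI}\bv_0 + \bA_{f,\max}^{\rI}L_0\Phi\bv = \bA_{f,0}^{\rI}\bv_0 .
\]
On the other hand, $(\bA_{f,0}^{\rI})_{-1}\bv = \bA_{f,0}^{\rI}\bv_0 + (\bA_{f,0}^{\rI})_{-1}L_0\Phi\bv$, so
\[
\bA_{f,0}^{\rI}\bv_0 = (\bA_{f,0}^{\rI})_{-\frac12}\bv + Q\bv .
\]
Here $(\bA_{f,0}^{\rI})_{-1}$ and $(\bA_{f,0}^{\rI})_{-\frac12}$ agree on $\rX_{f,\frac12}^{\rI}$. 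Thus $((\bA_{f,0}^{\rI})_{-\frac12}+Q)\bv = \bA_f^{\rI}\bv \in \rX_{f,0}^{\rI}$. One point must be checked: the first component of $L_0\Phi\bv$ is $0$, and $\bA_{f,\max}^{\rI}$ kills $L_0\binom{0}{\psi}$ in the $\bu$-component with the pressure $\pi$ that the lifting produces.

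\emph{Step 2 ($\supseteq$).} Let $\bv$ lie in the natural domain of the perturbed operator, i.e.\ $\bv \in \rX_{f,\frac12}^{\rI}$ so that $Q\bv$ is defined by \autoref{lem:mapping props of Q}, with $f \coloneqq ((\bA_{f,0}^{\rI})_{-\frac12}+Q)\bv \in \rX_{f,0}^{\rI}$. Define $\bv_0 \coloneqq \bv - L_0\Phi\bv \in \rX_{f,\frac12}^{\rI}$. The identity above gives $(\bA_{f,0}^{\rI})_{-1}\bv_0 = f \in \rX_{f,0}^{\rI}$. Since the part of $(\bA_{f,0}^{\rI})_{-1}$ in $\rX_{f,0}^{\rI}$ is $\bA_{f,0}^{\rI}$, it follows that $\bv_0 \in \rD(\bA_{f,0}^{\rI})$. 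Consequently $\bv = \bv_0 + L_0\Phi\bv \in \rH_\per^2(\Gamma)\times\rH^2_{\per,\Gamma_f^b}(\Omega_f)^3\cap\rL^2_\sigma(\Omega_f)$, and $L\bv = 0 + \Phi\bv$. Hence $\bv \in \rD(\bA_f^{\rI})$, and by Step 1 the two operators coincide there.

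\emph{Main obstacle.} The delicate part is the bookkeeping of extrapolation scales and of the fluid pressure. For the scales, one must justify that $(\bA_{f,0}^{\rI})_{-\frac12}$ on $\rX_{f,\frac12}^{\rI}$ is the restriction of $(\bA_{f,0}^{\rI})_{-1}$. This holds because $\rX_{f,\frac12}^{\rI}$ coincides with the fractional domain, by the Hilbert space setting and \autoref{lem:interpol spaces}, so the extrapolation scales are consistent. For the pressure, the operator $\bA_{f,\max}^{\rI}$ uses $\bP\Delta$, while $L$ involves $\pi$. We would fix $\pi$ as the pressure determined by $\bu$: $\nabla\pi = (I-\bP)\Delta\bu$, normalized through the Neumann condition. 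Then $L$ and $L_0$ are defined consistently, and $\bP\Delta L_0\psi = 0$ corresponds to \eqref{eq:Stokes lifting problem} with $\lambda=0$. Finally, we would ensure that the domain of $\bA_f^{\rI}$ is stated relative to $\rX_{f,\frac12}^{\rI}$, since $Q$ is only defined there.
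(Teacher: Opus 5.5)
Your proof is correct and is the same Greiner-type argument the paper gives. You split $\bv = (\bv - L_0\Phi\bv) + L_0\Phi\bv$ and use $LL_0\Phi = \Phi$, $\bA_{f,\max}^{\rI}L_0 = 0$ and $\ker L = \rX_{f,1}^{\rI}$ for one inclusion, and the fact that the part of $(\bA_{f,0}^{\rI})_{-1}$ in $\rX_{f,0}^{\rI}$ is $\bA_{f,0}^{\rI}$ for the other; your pressure-normalization remark is not needed, since only these abstract properties of $L$ and $L_0$ enter. Your chain $\bA_f^{\rI}\bv = \bA_{f,0}^{\rI}(\bv - L_0\Phi\bv) = (\bA_{f,0}^{\rI})_{-1}\bv - (\bA_{f,0}^{\rI})_{-1}L_0\Phi\bv$ is in fact the correct version of the paper's final display, which drops $(\bA_{f,0}^{\rI})_{-1}Q_0\bv$ even though this term is nonzero whenever $Q_0\bv \neq 0$.
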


\begin{proof}
Let $\widetilde{\bA_f^{\rI}}$ denote the operator on the right-hand side. 
We first show that $\rD(\bA_f^{\rI}) = \rD(\widetilde{\bA_f^{\rI}})$.

Let $\bv \in \rD(\bA_f^{\rI})$, so that $\bv \in \rD(\bA_{f,\max}^{\rI})$ and $L\bv = \Phi\bv$. 
Using the identity $L L_0 \Phi = \Phi$, we obtain
\[
    L(1 - Q_0)\bv = L\bv - L L_0 \Phi \bv = 0,
\]
hence $(1 - Q_0)\bv \in \ker(L) = \rX_{f,1}^{\rI}$. 
Therefore,
\[
    (\bA_{f,0}^{\rI})_{-1}(1 - Q_0)\bv \in \rX_{f,0}^{\rI}.
\]
Since $\bv \in \rD(\bA_f^{\rI}) \subset \rD(\bA_{f,\max}^{\rI}) \hookrightarrow \rX_{f,\frac{1}{2}}^{\rI}$, we have 
\begin{equation*}
    (\bA_{f,0}^{\rI})_{-\frac{1}{2}}\bv = (\bA_{f,0}^{\rI})_{-1}\bv, \tfor \bv \in \rX_{f,\frac{1}{2}}^{\rI}.
\end{equation*} 
Thus, recalling the definition of $Q = - (\bA_{f,0}^{\rI})_{-1}Q_0$, we obtain
\begin{equation*}
    (\bA_{f,0}^{\rI})_{-\frac{1}{2}} \bv + Q \bv = (\bA_{f,0}^{\rI})_{-1} \bv - (\bA_{f,0}^{\rI})_{-1} Q_0 \bv = (\bA_{f,0}^{\rI})_{-1}(1-Q_0)\bv \in \rX_{f,0}^{\rI},
\end{equation*}
where we additionally used that $L_0  \Phi \bv \in \rX_{f,\gamma}^{\rI}$ for all $\gamma < \frac{3}{4}$.
This shows that $\bv \in \rD(\widetilde{\bA_f^{\rI}})$.

Conversely, let $\bv \in \rD(\widetilde{\bA_f^{\rI}})$, which means $\bv \in \rX_{f,\frac{1}{2}}^{\rI}$ with $(\bA_{f,0}^{\rI})_{-\frac{1}{2}} \bv + Q \bv \in \rX_{f,0}^{\rI}$.
In a similar way as above, we can show that
\begin{equation*}
    (\bA_{f,0}^{\rI})_{-1}(1-Q_0)\bv = (\bA_{f,0}^{\rI})_{-\frac{1}{2}} \bv + Q \bv\in \rX_{f,0}^{\rI}.
\end{equation*}
Recalling that
\begin{equation*}
    \bv = (1-Q_0)\bv + Q_0 \bv \in \rX_{f,1}^{\rI} \oplus \ker(\bA_{f,\max}^{\rI}),
\end{equation*}
we obtain $(1-Q_0) \bv \in \rX_{f,1}^{\rI} = \ker(L)$ by construction.
Thus, we may apply $L$ to $\bv$, and make use of the fact that $(1-Q_0)\bv \in \ker(L)$ and $L L_0 \Phi = \Phi$, to obtain
\begin{equation*}
    L \bv = L(1-Q_0)\bv + L L_0 \Phi \bv = \Phi \bv.
\end{equation*}
This show that $\bv \in \rD(\bA_f^{\rI})$ and therefore, $\rD(\bA_f^{\rI}) = \rD(\widetilde{\bA_f^{\rI}})$.

Finally, we show that $\widetilde{\bA_f^{\rI}} = \bA_f^{\rI}$ by employing that $Q_0 \bv \in \ker(\bA_{f,\max}^{\rI})$, and $\bv \in \rD(\bA_f^{\rI}) \subset \rX_{f,\frac{1}{2}}^{\rI}$ as well as $\rD(\bA_f^{\rI}) \subset \rD(\bA_{f,\max}^{\rI})$, to obtain
\begin{equation*}
    \widetilde{\bA_f^{\rI}} \bv = ((\bA_{f,0}^{\rI})_{-\frac{1}{2}} + Q)\bv = ((\bA_{f,0}^{\rI})_{-\frac{1}{2}} - (\bA_{f,0}^{\rI})_{-1}Q_0)\bv = (\bA_{f,0}^{\rI})_{-\frac{1}{2}} \bv= \bA_f^{\rI} \bv. \qedhere
\end{equation*}
\end{proof}

Using the perturbation representation of the coupled Stokes--pressure jump operator with inhomogeneous coupling condition discussed above, we now show that this operator is sectorial up to a shift.

\begin{lem}\label{prop:sect of fluid & qbr}
There exists $\omega \ge 0$ such that $\bA_f^{\rI} + \omega$ is sectorial on $\rX_{f,0}^{\rI}$ with spectral angle $\phi_{\bA_f^{\rI} + \omega} < \frac{\pi}{2}$.
\end{lem}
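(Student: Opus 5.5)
We combine the representation of \autoref{lem:representation of the coupled op} with an abstract perturbation theorem for sectorial operators on interpolation--extrapolation scales, of the type recalled in \autoref{sec:appendix}. Such a theorem says the following. Let $A_0$ be sectorial with angle $<\frac{\pi}{2}$ and $0 \in \rho(A_0)$, and take $\alpha \in (0,1)$ and $\gamma > \alpha$, or more generally a perturbation that is of strictly lower order on the scale. Let $B \in \cL(\rX_\alpha, \rX_{\gamma-1})$. Then the part of $(A_0)_{-1} + B$ in $\rX_0$, restricted to $\rX_0$, is sectorial after a shift $\omega \ge 0$, and its spectral angle is still $<\frac{\pi}{2}$.

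\textbf{Step 1: hypotheses on the reference operator.} By \autoref{lem:sect of op with hom bc}, $\bA_{f,0}^{\rI}$ is sectorial on $\rX_{f,0}^{\rI}$ with angle $<\frac{\pi}{2}$ and $0 \in \rho(\bA_{f,0}^{\rI})$. This provides the full scale $\rX_{f,\theta}^{\rI}$ for $\theta \in [-1,1]$ and the extrapolated operators $(\bA_{f,0}^{\rI})_{\theta-1}$. Each $(\bA_{f,0}^{\rI})_{\theta-1}$ is again sectorial on $\rX_{f,\theta-1}^{\rI}$ with the same angle, and it is an isomorphism $\rX_{f,\theta}^{\rI}\to \rX_{f,\theta-1}^{\rI}$.

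\textbf{Step 2: a perturbation of strictly lower order.} By \autoref{lem:mapping props of Q}, $Q \in \cL(\rX_{f,\frac12}^{\rI},\rX_{f,\gamma-1}^{\rI})$ for every $\gamma<\frac34$. We fix $\gamma \in (\frac12,\frac34)$. Then $Q$ gains a positive amount $\gamma-\frac12$ relative to the natural order of $(\bA_{f,0}^{\rI})_{-1}$ on $\rX_{f,\frac12}^{\rI}$. We show the resolvent estimate directly on the space $\rX_{f,-\frac12}^{\rI}$, where the operator $(\bA_{f,0}^{\rI})_{-\frac12}$ has domain $\rX_{f,\frac12}^{\rI}$. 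For $\lambda$ in a sector $\Sigma_{\pi-\phi}$ with $\phi>\phi_{\bA_{f,0}^{\rI}}$, write
\[
\lambda+(\bA_{f,0}^{\rI})_{-\frac12}+Q=\bigl(1+Q(\lambda+(\bA_{f,0}^{\rI})_{-\frac12})^{-1}\bigr)(\lambda+(\bA_{f,0}^{\rI})_{-\frac12}).
\]
Sign conventions are adjusted so that the sectorial operator here is $\bA_{f,0}^{\rI}$ itself. Since $Q$ maps into $\rX_{f,\gamma-1}^{\rI}\hookrightarrow \rX_{f,-\frac12}^{\rI}$, interpolation and moment inequalities give
\[
\|Q(\lambda+(\bA_{f,0}^{\rI})_{-\frac12})^{-1}\|_{\cL(\rX_{f,-\frac12}^{\rI})}\le C\|(\lambda+(\bA_{f,0}^{\rI})_{-\frac12})^{-1}\|_{\cL(\rX_{f,-\frac12}^{\rI},\rX_{f,\frac12}^{\rI})}\le C.
\]
This bound is not yet small, so we refine it using the gain. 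Because $Q$ is valued in $\rX_{f,\gamma-1}^{\rI}$, we instead estimate
\[
(\lambda+(\bA_{f,0}^{\rI})_{-1})^{-1}Q\in\cL(\rX_{f,\frac12}^{\rI},\rX_{f,\frac12}^{\rI}),
\]
whose norm is bounded by $C|\lambda|^{-(\gamma-\frac12)}$. This bound follows from the standard estimate $\|(\lambda+A)^{-1}\|_{\cL(\rX_{\gamma-1},\rX_{\frac12})}\le C|\lambda|^{\frac12-\gamma}$ for $\frac12-(\gamma-1)<1$. For $|\lambda|\ge\omega$ large the norm is therefore $<\frac12$, and a Neumann series inverts
\[
1+(\lambda+(\bA_{f,0}^{\rI})_{-1})^{-1}Q \quad\text{on } \rX_{f,\frac12}^{\rI}.
\]
The resolvent of the perturbed operator is then
\[
(1+(\lambda+(\bA_{f,0}^{\rI})_{-1})^{-1}Q)^{-1}(\lambda+(\bA_{f,0}^{\rI})_{-1})^{-1}.
\]

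\textbf{Step 3: back to $\rX_{f,0}^{\rI}$.} By \autoref{lem:representation of the coupled op}, the resolvent of $\bA_f^{\rI}$ on $\rX_{f,0}^{\rI}$ is the restriction of the resolvent above to $f\in\rX_{f,0}^{\rI}$. We need the bound
\[
|\lambda|\,\|(\lambda+\omega+\bA_f^{\rI})^{-1}f\|_{\rX_{f,0}^{\rI}}\lesssim\|f\|_{\rX_{f,0}^{\rI}}
\]
on $\Sigma_{\pi-\phi}$. To get it, write $u=(\lambda+\bA_{f,0}^{\rI})^{-1}f-(\lambda+(\bA_{f,0}^{\rI})_{-1})^{-1}Qu$. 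The second term is bounded in $\rX_{f,0}^{\rI}$ by $C|\lambda|^{-\gamma}\|u\|_{\rX_{f,\frac12}^{\rI}}$. The first term is bounded in $\rX_{f,\frac12}^{\rI}$ by $C|\lambda|^{-\frac12}\|f\|$. The remaining terms are absorbed for $|\lambda|$ large. Density of $\rD(\bA_f^{\rI})$ and closedness follow from the representation. Together these give sectoriality of $\bA_f^{\rI}+\omega$ with angle $\le\phi<\frac{\pi}{2}$.

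\textbf{Main obstacle.} The main obstacle is Step 3: controlling the resolvent uniformly in $\rX_{f,0}^{\rI}$, and not merely in the extrapolated space. The bookkeeping of the exponents $|\lambda|^{-\gamma}$ and $|\lambda|^{-\frac12}$ hinges on $\gamma>\frac12$, which is exactly where the restriction $\gamma<\frac34$ from \autoref{lem:mapping props of Q} leaves room. If the direct absorption fails, I would instead invoke the abstract result from \autoref{sec:appendix}, which treats perturbations $Q\in\cL(\rX_\alpha,\rX_{\gamma-1})$ with $\gamma>\alpha$. That result is cited in the Weis / Pr\"uss--Simonett style literature.
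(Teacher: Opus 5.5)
Your proposal is correct and follows the paper's route. The paper combines \autoref{lem:representation of the coupled op} and \autoref{lem:mapping props of Q} with the abstract perturbation result \autoref{lem:abstr pert result} (Kunstmann--Weis), taking $\alpha=\gamma=\tfrac12$ and $\beta\in(\tfrac12,\tfrac34)$, so your $\gamma$ plays the role of the paper's $\beta$. The only difference is that you prove the needed special case of that lemma by hand, via the Neumann series for $1+(\lambda+(\bA_{f,0}^{\rI})_{-1})^{-1}Q$ on $\rX_{f,\frac12}^{\rI}$ and the exponent count $\gamma+\tfrac12>1$ in the $\rX_{f,0}^{\rI}$ resolvent bound; this is a valid, self-contained substitute for the citation.
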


\begin{proof}
From \autoref{lem:representation of the coupled op}, we recall the representation of the operator 
\begin{equation*}
    \bA_f^{\rI} = \left.((\bA_{f,0}^{\rI})_{-\frac{1}{2}} + Q)\right|_{\rX_{f,0}^{\rI}}.
\end{equation*}
On the other hand, from \autoref{lem:mapping props of Q}, we recall that $Q \in \cL(\rX_{f,\frac{1}{2}}^{\rI},\rX_{f,\gamma-1}^{\rI})$ for $\gamma < \frac{3}{4}$.
Thus, in the setting of \autoref{lem:abstr pert result} in the appendix, choosing $\alpha = \gamma = \frac{1}{2}$, and $\beta \in (\frac{1}{2},\frac{3}{4})$, we deduce from this lemma together with the result for the operator with homogeneous boundary conditions, \autoref{lem:sect of op with hom bc}, the existence of a sufficiently large $\omega \ge 0$ such that $\bA_f^{\rI} + \omega$ is sectorial on $\rX_{f,0}^{\rI}$ with spectral angle $\phi_{\bA_f^{\rI} + \omega} < \frac{\pi}{2}$.
\end{proof}

\subsection{The addition of the average pressure $\bar{q}$ equation and the operator $\bA_f^{\rII}$}
\ 

To complete the operator description of the fluid component of the entire coupled problem, what is left is to incorporate the average pressure variable $\qbar$.

The introduction of $\qbar$ leads to a coupling structure that is not immediately compatible with the operator~$\bA_f^{\rI}$ studied above. 
In particular, the extended system does not directly have block structure. 
This prevents a direct application of the sectoriality result for $\bA_f^{\rI}$. 
To overcome this, we perform a similarity transformation, which amounts to a change of variables designed to reorganize the operator matrix into a form that separates the leading (sectorial) part involving the fluid velocity and the jump in the pressure from the average of the pressure.

This reformulation is crucial: 
it allows us to transfer the sectoriality of $\bA_f^{\rI}$ to the enlarged system by treating the additional terms involving $\qbar$ as a lower-order perturbation. 
The resulting formulation can then be analyzed using perturbation arguments for sectorial operators.

More precisely, based on the full problem formulation \eqref{eq:FPSI Cauchy problem}, we extend the fluid subsystem by incorporating the average pressure variable $\qbar$. 
This leads to the operator matrix $\bA_f^{\rII}$ given by
\begin{equation}\label{eq:op matrix A_q}
    \begin{aligned}
        \bA_f^{\rII} 
        &= \begin{pmatrix}
            -\eps_2 \Delta_s + 48 & 72 & 12\, \tr_\Gamma(\cdot)\,\bn\\
            6 & 12 & 0\\
            0 & 0 & -\bP \Delta
        \end{pmatrix}, \text{ with domain} \\
        \rD(\bA_f^{\rII})
        &= \left\{
        \begin{pmatrix}
            \qbr\\ \qbar\\ \bu
        \end{pmatrix} \in \rH_\per^2(\Gamma) \times \rL^2(\Gamma) \times \rH_{\per,\Gamma_f^b}^2(\Omega_f)^3 \cap \rL_\sigma^2(\Omega_f) :\; L \binom{\qbr}{\bu} = \Phi \binom{\qbr}{\bu}
        \right\}.
    \end{aligned}
\end{equation}

The operator $\bA_f^{\rII}$ is considered on the ground space
\[
    \rX_{f,0}^{\rII} \coloneqq \rL^2(\Gamma) \times \rL^2(\Gamma) \times \rL_\sigma^2(\Omega_f),
\]
which represents the natural energy space for the extended fluid variables $(\qbr,\qbar,\bu)$.
In particular, $\rX_{f,0}^{\rII}$ serves as the base space in which sectoriality and semigroup properties of $\bA_f^{\rII}$ will be analyzed, while the domain $\rD(\bA_f^{\rII})$ encodes the corresponding higher regularity and coupling conditions.

To reveal the underlying block structure of $\bA_f^{\rII}$, we perform a similarity transformation that interchanges the roles of $\qbr$ and $\qbar$:
\begin{equation*}
    S = S^{-1} = \begin{pmatrix}
        0 & \Id & 0\\
        \Id & 0 & 0\\
        0 & 0 & \Id
    \end{pmatrix}.
\end{equation*}
Since sectoriality and the associated spectral angle are invariant under similarity transformations (see, e.g., \cite[Prop.~1.3(vi)]{DHP:03}), it suffices to study the transformed operator
\[
    \widetilde{\bA_f^{\rII}} \coloneqq S \bA_f^{\rII} S^{-1}.
\]

A direct computation shows that $\widetilde{\bA_f^{\rII}}$ takes the form
\begin{equation*}
    \begin{aligned}
        \widetilde{\bA_f^{\rII}} 
        &= \begin{pmatrix}
            12 & 6 & 0\\
            72 & -\eps_2 \Delta_s + 48 & 12 \tr_\Gamma(\cdot) \bn\\
            0 & 0 & -\bP \Delta
        \end{pmatrix}
        = \begin{pmatrix}
            12 & \B^{\rI}\\
            \C^{\rI} & \bA_f^{\rI}
        \end{pmatrix} + \widetilde{\bA_f^\rp},
    \end{aligned}
\end{equation*}
with domain
\[
    \rD(\widetilde{\bA_f^{\rII}}) = \rL^2(\Gamma) \times \rD(\bA_f^{\rI}),
\]
where
\[
    \B^{\rI} = \begin{pmatrix} 6 & 0 \end{pmatrix}, \qquad  \C^{\rI} = \binom{72}{0}, \tand \widetilde{\bA_f^\rp} 
    = \begin{pmatrix}
        0 & 0 & 0\\
        0 & 0 & 12 \tr_\Gamma(\cdot)\bn\\
        0 & 0 & 0
    \end{pmatrix}.
\]

The operator $\widetilde{\bA_f^{\rII}}$ acts on the ground space
\[
    \widetilde{\rX_{f,0}^{\rII}} = \rL^2(\Gamma) \times \rX_{f,0}^{\rI},
\]
where we recall the space $\rX_{f,0}^{\rI}$ from \eqref{eq:ground space pressure jump-fluid}.
In this representation, the principal part of the operator is given by the block matrix $\begin{pmatrix} 12 & \B^{\rI} \\ \C^{\rI} & \bA_f^{\rI} \end{pmatrix}$, where $\bA_f^{\rI}$ is the sectorial operator identified in the previous section.
The remaining term $\widetilde{\bA_f^\rp}$ is of lower order and will be treated as a perturbation.

This decomposition makes the structure of $\widetilde{\bA_f^{\rII}}$ amenable to perturbation arguments and allows us to transfer the sectoriality of $\bA_f^{\rI}$ to the full operator matrix. 
The sectoriality of $\widetilde{\bA_f^{\rII}}$, and hence of $\bA_f^{\rII}$, is established in the lemma below.
In the rest of the manuscript, we will use $\cS(\rX)$ to denote all sectorial operators $A \colon \rD(A) \subset \rX \to \rX$ on a Banach space~$\rX$.

\begin{lem}\label{lem:sect of bA_q}
There exists $\omega \ge 0$ such that $\widetilde{\bA_f^{\rII}} + \omega \in \cS(\widetilde{\rX_{f,0}^{\rII}})$ with spectral angle $\phi_{\widetilde{\bA_f^{\rII}} + \omega} < \frac{\pi}{2}$.
Consequently, $\bA_f^{\rII} + \omega \in \cS(\rX_{f,0}^{\rII})$ with
\[
    \phi_{\bA_f^{\rII} + \omega} = \phi_{\widetilde{\bA_f^{\rII}} + \omega} < \frac{\pi}{2}.
\]
\end{lem}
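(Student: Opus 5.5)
The plan is to treat $\widetilde{\bA_f^{\rII}}$ as a bounded plus relatively small perturbation of the block-diagonal operator
\[
    \bD \coloneqq \begin{pmatrix} 12 & 0\\ 0 & \bA_f^{\rI}\end{pmatrix}, \qquad \rD(\bD) = \rL^2(\Gamma) \times \rD(\bA_f^{\rI}),
\]
on $\widetilde{\rX_{f,0}^{\rII}} = \rL^2(\Gamma) \times \rX_{f,0}^{\rI}$. By \autoref{prop:sect of fluid & qbr} there is $\omega_0 \ge 0$ with $\bA_f^{\rI} + \omega_0$ sectorial of angle $< \frac{\pi}{2}$. The scalar multiplication $12 + \omega_0$ is trivially sectorial with angle $0$. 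Since $\bD + \omega_0$ is diagonal, it is sectorial with angle equal to the maximum of the two angles, hence $< \frac{\pi}{2}$.

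Next I would deal with the off-diagonal entries $\B^{\rI} = (6\ 0)$ and $\C^{\rI} = (72\ 0)^T$. These are multiplications by constants between $\rL^2(\Gamma)$ components, so the operator $\begin{pmatrix} 0 & \B^{\rI}\\ \C^{\rI} & 0\end{pmatrix}$ is bounded on $\widetilde{\rX_{f,0}^{\rII}}$. Adding a bounded operator to a sectorial operator with angle $< \frac{\pi}{2}$ preserves sectoriality, with the same angle, after a further shift (standard, e.g.\ \cite{DHP:03}, or \autoref{lem:abstr pert result} with trivial exponents). Note that the domain is unchanged.

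The remaining term $\widetilde{\bA_f^\rp}$ maps $(\qbar,\qbr,\bu) \mapsto (0, 12\,\tr_\Gamma(\bu)\cdot\bn, 0)$; here I read the entry as $\bu \mapsto 12\,\bu|_\Gamma\cdot\bn$, as in \eqref{eq:FPSI Cauchy problem}$_3$. This is the step I expect to be the main obstacle, since it is unbounded on the ground space. The idea is to show it is relatively bounded with relative bound zero. By \autoref{lem:interpol spaces}, $\rD(\bA_f^{\rI}) \hookrightarrow \rH^2$ and $\rX_{f,\theta}^{\rI} = \rH_\per^{2\theta}(\Gamma) \times \rH^{2\theta}_{\per,\Gamma_f^b}(\Omega_f)$ for $\theta \in (\frac14,\frac34)$. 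Choose $\theta \in (\frac14,\frac12)$, so that $\tr_\Gamma \colon \rH^{2\theta}(\Omega_f) \to \rL^2(\Gamma)$ is bounded. I need $\rD(\bA_f^{\rI}) \hookrightarrow \rX_{f,\theta}^{\rI}$ together with the interpolation inequality
\[
    \|\bu|_\Gamma\|_{\rL^2(\Gamma)} \le C \|(\qbr,\bu)\|_{\rX_{f,\theta}^{\rI}} \le C\|(\qbr,\bu)\|_{\rX_{f,0}^{\rI}}^{1-\theta}\|(\qbr,\bu)\|_{\rH^2\times\rH^2}^{\theta}.
\]
The norm $\|(\qbr,\bu)\|_{\rH^2\times\rH^2}$ is equivalent to the graph norm of $\bA_f^{\rI}+\omega_0$. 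That equivalence follows from closedness of $\bA_f^{\rI}$ (it is sectorial) plus the closed graph theorem, since $\rD(\bA_f^{\rI})$ is a closed subspace of $\rH_\per^2 \times \rH^2$. Young's inequality then gives $\|\widetilde{\bA_f^\rp} x\| \le \eta \|(\bD+\omega_0)x\| + C_\eta\|x\|$ for every $\eta>0$.

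Relatively bounded perturbations with small relative bound preserve sectoriality of angle $< \frac{\pi}{2}$ after a sufficiently large shift, and the domain is unchanged. This yields $\omega \ge 0$ with $\widetilde{\bA_f^{\rII}} + \omega \in \cS(\widetilde{\rX_{f,0}^{\rII}})$ and angle $< \frac{\pi}{2}$. I would double-check that the resulting domain $\rL^2(\Gamma) \times \rD(\bA_f^{\rI})$ matches the stated $\rD(\widetilde{\bA_f^{\rII}})$. Finally, $S$ is an isometric involution and $\bA_f^{\rII} + \omega = S^{-1}(\widetilde{\bA_f^{\rII}}+\omega)S$, so the invariance of sectoriality and angle under similarity \cite[Prop.~1.3(vi)]{DHP:03} transfers the conclusion to $\bA_f^{\rII}+\omega$ on $\rX_{f,0}^{\rII}$ with equal angles.
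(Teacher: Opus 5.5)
Your proof is correct. It differs from the paper's only in how the constant couplings $\B^{\rI}=(6\ \ 0)$ and $\C^{\rI}=(72\ \ 0)^{T}$ are absorbed.

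The paper keeps these couplings in the principal part $\widetilde{\bA_f^\mrm}=\begin{pmatrix}12&\B^{\rI}\\ \C^{\rI}&\bA_f^{\rI}\end{pmatrix}$. It notes that $\B^{\rI}$ and $\C^{\rI}$ are bounded, so in particular $\C^{\rI}\in\cL(\rD((12\Id)^\alpha),\rX_{f,0}^{\rI})$. It then gets sectoriality of $\widetilde{\bA_f^\mrm}+\omega$ from the diagonally dominant block-matrix result \autoref{lem:sect of diag dom op matrices}(a) (Agresti--Hussein), and only afterwards adds the trace term $\widetilde{\bA_f^\rp}$ via \autoref{lem:rel bdd pert}.

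You start instead from $\diag(12,\bA_f^{\rI})$, which is sectorial after the shift from \autoref{prop:sect of fluid & qbr}. You absorb the couplings by the elementary bounded-perturbation (Neumann series) argument. Since the couplings are bounded, this is all that is needed, and it avoids the block-matrix machinery. That machinery only becomes indispensable later, for $\cA$, where $\B^{\rII}$ and $\C^{\rII}$ contain $\Delta_s$.

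Your remaining steps coincide with the paper's:
\begin{itemize}
\item relative bound zero for $\bu\mapsto 12\,\bu|_\Gamma\cdot\bn$, via the trace on $\rH^{2\theta}$ with $\theta>\frac14$, Sobolev interpolation between $\rL^2$ and $\rH^2$, and Young's inequality;
\item the transfer through the involution $S$.
\end{itemize}
Your closed-graph argument, that the graph norm of $\bA_f^{\rI}$ is equivalent to the $\rH^2$ norm on $\rD(\bA_f^{\rI})$, makes explicit a step the paper only asserts.

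One harmless imprecision: a bounded perturbation in general preserves the spectral angle only up to an arbitrarily small loss. For each $\phi>\phi_A$ there is a shift giving angle at most $\phi$. This suffices, since you only need an angle $<\frac{\pi}{2}$.
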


\begin{proof}
We decompose the operator $\widetilde{\bA_f^{\rII}}$ as
\[
    \widetilde{\bA_f^{\rII}}= \widetilde{\bA_f^\mrm} + \widetilde{\bA_f^\rp}, \twhere \widetilde{\bA_f^\mrm} \coloneqq 
    \begin{pmatrix}
        12 & \B^{\rI}\\
        \C^{\rI} & \bA_f^{\rI}
    \end{pmatrix}.
\]
Here, the superscript ``$\mrm$'' stands for ``main'', while the superscript ``$\rp$'' stands for ``perturbation''.
We first show that $\widetilde{\bA_f^\mrm}+\omega$ is sectorial on $\widetilde{\rX_{f,0}^{\rII}}$ for sufficiently large $\omega \ge 0$, and then treat $\widetilde{\bA_f^\rp}$ as a relatively bounded perturbation.

To this end, we invoke \autoref{lem:sect of diag dom op matrices}(a). The operator $12\Id$ is sectorial on $\rL^2(\Gamma)$ with spectral angle $0$, while by \autoref{prop:sect of fluid & qbr}, there exists $\omega_1 \ge 0$ such that $\bA_f^{\rI}+\omega_1$ is sectorial on $\rX_{f,0}^{\rI}$, where the latter space has been introduced in \eqref{eq:ground space pressure jump-fluid}, with spectral angle strictly less than $\frac{\pi}{2}$. 
Moreover,
\[
    \B^{\rI} \in \cL(\rX_{f,0}^{\rI},\rL^2(\Gamma)) \tand \C^{\rI} \in \cL(\rL^2(\Gamma),\rX_{f,0}^{\rI}).
\]
Since $12\Id$ is bounded, we have
\[
    \rD\bigl((12\Id)^\alpha\bigr)=\rL^2(\Gamma) \tforall \alpha \in (0,1),
\]
and therefore
\[
    \C^{\rI} \in \cL\bigl(\rD((12\Id)^\alpha),\rX_{f,0}^{\rI}\bigr) \tforall \alpha \in (0,1).
\]
Hence all assumptions of \autoref{lem:sect of diag dom op matrices}(a) are satisfied. 
Consequently, for every
\[
    \psi \in \Bigl(\max\{0,\phi_{\bA+\omega_1}\},\frac{\pi}{2}\Bigr)
\]
there exists $\omega_2 \ge 0$ such that, for all $\omega \ge \omega_2$, the operator
\[
    \widetilde{\bA_f^\mrm}+\omega =
    \begin{pmatrix}
        12 & \B^{\rI}\\
        \C^{\rI} & \bA_f^{\rI}
    \end{pmatrix} +\omega
\]
is sectorial on $\widetilde{\rX_{f,0}^{\rII}}$ with spectral angle bounded by $\psi$, and hence strictly less than $\frac{\pi}{2}$.

Next, we treat $\widetilde{\bA_f^\rp}$ as a perturbation of $\widetilde{\bA_f^\mrm}$. 
Let
\[
    \by=(\qbar,\qbr,\bu)\in \rD(\widetilde{\bA_f^{\rII}})=\rD(\widetilde{\bA_f^\mrm}).
\]
By the mapping properties of the trace, for $\delta > 0$ small, we first obtain
\[
    \|\widetilde{\bA_f^\rp}\by\|_{\widetilde{\rX_{f,0}^{\rII}}} = \|12\,\tr_\Gamma(\bu)\cdot \bn\|_{\rL^2(\Gamma)} \le C\|\bu\|_{\rH_\per^{\frac12+\delta}(\Omega_f)},
\]
interpolation as well as Young's inequality then yield
\[
    \|\bu\|_{\rH_\per^{\frac12+\delta}(\Omega_f)}
    \le C\|\bu\|_{\rL_\sigma^2(\Omega_f)}^\beta \|\bu\|_{\rH_\per^2(\Omega_f)\cap\rL_\sigma^2(\Omega_f)}^{1-\beta}
    \le C(\eps)\|\bu\|_{\rL_\sigma^2(\Omega_f)}+\eps \|\bu\|_{\rH_\per^2(\Omega_f)\cap\rL_\sigma^2(\Omega_f)}.
\]
Since $\rD(\widetilde{\bA_f^\mrm})=\rL^2(\Gamma)\times \rD(\bA_f^{\rI})$, the graph norm of $\widetilde{\bA_f^\mrm}$ controls the norm of the fluid component in
\[
    \rH_\per^2(\Omega_f)^3\cap \rL_\sigma^2(\Omega_f).
\]
Therefore, for every $\eps > 0$, there exists $C(\eps) > 0$ such that
\begin{equation*}
    \|\widetilde{\bA_f^\rp}\by\|_{\widetilde{\rX_{f,0}^{\rII}}} 
    \le C(\eps) \|\by\|_{\widetilde{\rX_{f,0}^{\rII}}} + \eps \|\widetilde{\bA_f^\mrm} \by\|_{\widetilde{\rX_{f,0}^{\rII}}}.
\end{equation*}
Thus, $\widetilde{\bA_f^\rp}$ is relatively $\widetilde{\bA_f^\mrm}$-bounded with relative bound zero. 
In particular, choosing $\eps>0$ sufficiently small, the assumptions of \autoref{lem:rel bdd pert} are satisfied.

Applying \autoref{lem:rel bdd pert} to $A=\widetilde{\bA_f^\mrm}+\omega$ and $B=\widetilde{\bA_f^\rp}$, we conclude that, after possibly increasing $\omega$, the operator
\[
    \widetilde{\bA_f^{\rII}}+\omega=\widetilde{\bA_f^\mrm}+\widetilde{\bA_f^\rp}+\omega
\]
is sectorial on $\widetilde{\rX_{f,0}^{\rII}}$ with spectral angle less than or equal to that of $\widetilde{\bA_f^\mrm}+\omega$, and hence still strictly less than $\frac{\pi}{2}$. This proves the first assertion.

Finally, the second assertion of the lemma follows from the fact that 
\[
    \widetilde{\bA_f^{\rII}} = S \bA_f^{\rII} S^{-1},
\]
and the fact that sectoriality together with the spectral angle are both preserved under similarity transformations.
\end{proof}

\subsection{The addition of the plate subproblem and the resulting full FPSI operator matrix $\cA$}\label{ssec:sect of the full op}
\ 

In this section, we establish the sectoriality of the operator matrix associated with the full regularized FPSI problem and, in addition, investigate the exponential stability of the semigroup it generates. 
These properties are essential for obtaining well-posedness and long-time behavior of solutions to the coupled system.

We introduce the operator matrix $\cA$ corresponding to the full problem, which includes the elastodynamics of the plate, the fluid velocity $\bu$, and both pressure variables -- the jump $\qbr$ and the average $\qbar$. 
The goal is to show that this fully coupled operator inherits the sectorial structure from its individual components.

To this end, we proceed in two steps.
First, we isolate the operator associated with the elastodynamics of the plate and show that it is sectorial. 
The viscoelastic plate subproblem is parabolic, and it should be compatible with the parabolic framework required for the coupled analysis.

Next, we combine this result with the sectoriality of the operator matrix describing the fluid subsystem, which now includes $\bu$, $\qbr$, and $\qbar$.
Using a perturbation argument, we show that the full operator matrix~$\cA$ is sectorial up to a shift. 
This reflects the fact that coupling terms can alter the spectral position but do not destroy the underlying analytic semigroup structure.

Finally, to remove the shift and show sectoriality of $\cA$, and to also establish exponential stability of the semigroup generated by $\cA$, we carry out a spectral analysis of the operator. 
To this end, we investigate the point spectrum, and we reduce the study of the essential spectrum to the situation of a diagonal operator matrix. 
This reduction allows us to isolate the dominant spectral contributions and to conclude stability of the full coupled system.

\medskip
\noindent{{\bfseries The plate elastodynamics problem.}}
We start by introducing the operator matrix $A_p$ associated with the viscoelastic plate dynamics.
This operator captures the evolution of the plate displacement and velocity and will serve as the structural component in the coupled system. 
Establishing its analytic properties is a key step toward understanding the full fluid--structure interaction problem.

To this end, we consider the $\rL^2$-realization of the bi-Laplacian subject to periodic boundary conditions. 
More precisely, we define
\begin{equation}\label{eq:bi-Laplacian}
    \Delta_s^2 f \coloneqq \Delta_{\btau}^2 f, 
    \tfor 
    f \in \rD(\Delta_s^2) \coloneqq \rH_\per^4(\Gamma).
\end{equation}

On the ground space
\[
    \rX_0^p = \rD(\Delta_s^2) \times \rL^2(\Gamma) = \rH_\per^4(\Gamma) \times \rL^2(\Gamma),
\]
which represents the natural energy space for the plate displacement and velocity, we define the operator matrix $A_p$ by
\begin{equation}\label{eq:plate op matrix}
    A_p \coloneqq 
    \begin{pmatrix}
        0 & -1\\
        \Delta_s^2 + \gamma^p & \eps_1 \Delta_s^2
    \end{pmatrix}, 
    \qquad 
    \rD(A_p) = \rH_\per^4(\Gamma) \times \rH_\per^4(\Gamma).
\end{equation}

\begin{lem}\label{lem:sect of A_p}
The operator $A_p$ is sectorial on $\rX_0^p$ with spectral angle $\phi_{A_p} < \frac{\pi}{2}$, and $0 \in \rho(A_p)$.
\end{lem}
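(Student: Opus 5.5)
The plan is to exploit periodicity and diagonalize $A_p$ by Fourier series on $\Gamma=(0,1)^2$. Write $w=\sum_k \hat w_k e^{2\pi i k\cdot x}$ and $v=\sum_k \hat v_k e^{2\pi i k\cdot x}$, and set $a_k\coloneqq |2\pi k|^4\ge 0$. By Plancherel, the norm on $\rX_0^p=\rH_\per^4(\Gamma)\times\rL^2(\Gamma)$ is equivalent to
\[
    \Bigl(\sum_k (1+a_k)^2|\hat w_k|^2+|\hat v_k|^2\Bigr)^{1/2}.
\]
The operator $A_p$ acts on the $k$-th mode as the $2\times 2$ matrix
\[
    M_k=\begin{pmatrix} 0 & -1\\ a_k+\gamma^p & \eps_1 a_k\end{pmatrix}.
\]
Introduce the diagonal scaling $D_k=\diag(1+a_k,1)$. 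Sectoriality of $A_p$ with angle $\phi<\frac{\pi}{2}$ is then equivalent to the following uniform statement: for some $\phi<\frac{\pi}{2}$ and every $\psi>\phi$, the matrices $\widetilde M_k\coloneqq D_kM_kD_k^{-1}$ satisfy
\[
    \sup_k\ \sup_{\lambda\notin\Sigma_\psi}\ \|\lambda(\lambda+\widetilde M_k)^{-1}\|<\infty .
\]
The sign convention has to be matched to the paper's definition of $\cS$. The domain $\rH_\per^4\times\rH_\per^4$ is exactly the set where $\sum_k|\widetilde M_k(D_k\hat w_k,\hat v_k)|^2<\infty$, so the resolvent identified mode-wise is the resolvent of $A_p$.

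Next I would compute the spectrum explicitly. The eigenvalues $\mu_\pm(k)$ of $M_k$ solve
\[
    \mu^2-\eps_1 a_k\mu+(a_k+\gamma^p)=0,
\]
which gives $\mu_+\mu_-=a_k+\gamma^p$ and $\mu_++\mu_-=\eps_1a_k$. For the finitely many $k$ with $\eps_1^2a_k^2<4(a_k+\gamma^p)$ the roots are complex, but their real part $\eps_1 a_k/2$ is nonnegative. It vanishes only for $k=0$, where the roots are $\pm i\sqrt{\gamma^p}$, so that $k=0$ mode already lies on the boundary of the half-plane. Consequently a spectral angle strictly below $\frac{\pi}{2}$ cannot be obtained uniformly in $k$ without a shift, and I would have to treat the $k=0$ mode (or all finitely many low modes) separately, for instance as a bounded perturbation. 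For all large $k$ the roots are real and positive, with $\mu_+\sim\eps_1a_k$ and $\mu_-\to 1/\eps_1$. In particular $0\in\rho(A_p)$ reduces to $\det M_k=a_k+\gamma^p\neq0$ for all $k$, which holds provided $\gamma^p>0$. If $\gamma^p=0$ the mode $k=0$ is in the kernel, so I would state the assumption $\gamma^p>0$ explicitly.

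The core step is the uniform resolvent bound. I would write
\[
    (\lambda+M_k)^{-1}=\frac{1}{p_k(\lambda)}\begin{pmatrix}\lambda+\eps_1a_k & 1\\ -(a_k+\gamma^p) & \lambda\end{pmatrix},\qquad p_k(\lambda)=(\lambda+\mu_+)(\lambda+\mu_-),
\]
and conjugate by $D_k$. The entries of $\lambda D_k(\lambda+M_k)^{-1}D_k^{-1}$ then become
\[
    \frac{\lambda(\lambda+\eps_1a_k)}{p_k},\qquad \frac{\lambda(1+a_k)}{p_k},\qquad \frac{\lambda(a_k+\gamma^p)}{(1+a_k)\,p_k},\qquad \frac{\lambda^2}{p_k}.
\]
For $\lambda$ outside a sector of angle $\psi>0$ around the negative axis, so that $|\lambda+\mu|\gtrsim_\psi|\lambda|+|\mu|$ for all $\mu>0$, I would bound each entry using $|p_k|\gtrsim(|\lambda|+\eps_1a_k)(|\lambda|+\eps_1^{-1})$ for large $k$. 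For example,
\[
    \frac{|\lambda|(1+a_k)}{(|\lambda|+a_k)(|\lambda|+1)}\lesssim 1 .
\]
The finitely many small $k$ with complex roots contribute a bounded finite-rank piece, which can be absorbed after a shift (via \autoref{lem:rel bdd pert}) or checked directly once $k=0$ is handled as above.

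I expect the main obstacle to be precisely this uniformity. The slow eigenvalue $\mu_-$ tends to $1/\eps_1$ rather than growing, so the lower bound on $|p_k|$ must be organized carefully, and the $(1+a_k)$-weights must cancel against the fast eigenvalue $\mu_+$. The $k=0$ mode, which sits on the imaginary axis, must be handled explicitly. Density of the domain and closedness are immediate from the Fourier description.
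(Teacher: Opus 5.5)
Your Fourier computation is right, and your large-$k$ resolvent bounds are correct. The gap is the step you leave open: ``treat the $k=0$ mode separately'' or ``absorb the low modes after a shift'' cannot be completed. What you found at $k=0$ is a counterexample to the lemma, not a technicality. The spatially constant pairs form a two-dimensional invariant subspace of $\rD(A_p)$, on which $A_p$ acts as $M_0=\begin{pmatrix}0&-1\\ \gamma^p&0\end{pmatrix}$. So $\pm i\sqrt{\gamma^p}$ are genuine eigenvalues of $A_p$, with eigenvectors $(1,\mp i\sqrt{\gamma^p})$; physically these are undamped spatially constant oscillations $w''+\gamma^p w=0$. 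For every $\phi<\frac{\pi}{2}$ the sector $\Sigma_{\pi-\phi}$ contains the imaginary axis minus the origin. Hence $\mp i\sqrt{\gamma^p}\in\sigma(-A_p)\cap\Sigma_{\pi-\phi}$, and $\phi_{A_p}<\frac{\pi}{2}$ is impossible when $\gamma^p>0$. If $\gamma^p=0$, then $M_0$ is nilpotent and $0\in\sigma(A_p)$, so the second claim fails instead. A shift proves something about $A_p+\omega$, and splitting off the $k=0$ block as a bounded perturbation proves something about a different operator; neither recovers the unshifted statement. What your argument actually establishes is the following:
\begin{itemize}
\item for every $\omega>0$, $A_p+\omega$ is sectorial on $\rX_0^p$ with angle $<\frac{\pi}{2}$;
\item $A_p$ itself is sectorial with $\phi_{A_p}=\frac{\pi}{2}$ exactly;
\item for $\gamma^p>0$, $0\in\rho(A_p)$, because then $D_kM_k^{-1}D_k^{-1}$ is uniformly bounded in $k$ (you need this uniformity, not just $\det M_k\neq0$).
\end{itemize}
The lemma as stated is false and should be weakened accordingly.

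The paper's proof breaks at the same point, less visibly. It obtains $A_p+\omega'$ sectorial from \autoref{lem:sect of diag dom op matrices}(b) and then asserts that invertibility of $A_p$ removes the shift; the explicit inverse it gives is correct for $\gamma^p>0$. But invertibility only excludes $\lambda=0$ from $\sigma(A_p)$. It says nothing about the eigenvalues $\pm i\sqrt{\gamma^p}$ in the strip $0\le\Rep\lambda<\omega'$. Removing a shift would require $\sigma(A_p)$ to lie in a closed sector of angle $<\frac{\pi}{2}$, with uniform resolvent bounds there. Your mode-by-mode route is more informative than the paper's abstract one, since it pinpoints the obstruction. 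The shifted version is all that \autoref{prop:sect of the op matrix} needs. The appeal to $s(-A_p)<0$ in Step~3 of \autoref{prop:spectral analysis} is likewise false, since $s(-A_p)=0$. That step would have to be replaced by an argument using only the essential spectrum of $-A_p$; the points $\mp i\sqrt{\gamma^p}$ are isolated eigenvalues of finite multiplicity and so do not belong to it.
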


\begin{proof}
The proof relies on viewing $A_p$ as a diagonally dominant block operator matrix and applying \autoref{lem:sect of diag dom op matrices}(b) presented in the appendix. 
To this end, we first verify that $A_p$ satisfies the required diagonal dominance condition.

We note that the operator $\Delta_s^2 + \gamma^p \colon \rD(\Delta_s^2) \to \rL^2(\Gamma)$ is bounded. 
Moreover, for every $f \in \rD(\Delta_s^2)$, since the graph norm of $\Delta_s^2 + \gamma^p$ is isomorphic with $\rH_\per^4(\Gamma)$, we have
\begin{equation*}
    \| f \|_{\rD(\Delta_s^2)} \le C\bigl( \| (\Delta_s^2 + \gamma^p) f \|_{\rL^2(\Gamma)} + \| f \|_{\rL^2(\Gamma)} \bigr).
\end{equation*}

In fact, this verifies diagonal dominance in the sense of \autoref{def:diag dom block op matrix} for the present setting. 
More precisely, we consider the spaces and operators
\[
    \rX_1 = \rH_\per^4(\Gamma), \quad \rX_2 = \rL^2(\Gamma), \quad A = 0, \quad B = -1, \quad C = \Delta_s^2 + \gamma^p, \quad D = \eps_1 \Delta_s^2,
\]
with domains $\rD(A) = \rD(B) = \rD(C) = \rD(D) = \rH_\per^4(\Gamma)$.

The estimate derived above shows that for every $f \in \rH_\per^4(\Gamma)$,
\begin{equation*}
    \begin{aligned}
        \| B f \|_{\rX_1} = \| -f \|_{\rH_\per^4(\Gamma)} 
        &\le c \Bigl( \| (\Delta_s^2 + \gamma^p) f \|_{\rL^2(\Gamma)} + \| f \|_{\rL^2(\Gamma)} \Bigr) \\
        &\le c \Bigl( \| \Delta_s^2 f \|_{\rL^2(\Gamma)} + \| f \|_{\rL^2(\Gamma)} \Bigr) = c_D \, \| D f \|_{\rX_2} + C_0 \, \| f \|_{\rX_2}.
    \end{aligned}
\end{equation*}
This establishes the required estimate for the operator $B$ in the definition of diagonal dominance.

Since $A = 0$, the corresponding estimate for the operator $C$ reduces to verifying that
\[
    C = \Delta_s^2 + \gamma^p \colon \rH_\per^4(\Gamma) \to \rL^2(\Gamma)
\]
is bounded, which follows immediately from the definition of the bi-Laplacian.
Hence, all conditions of \autoref{def:diag dom block op matrix} are satisfied, and the operator matrix under consideration is diagonally dominant.

Next, by classical elliptic theory on cylindrical domains (see, e.g., \cite[Ch.~7]{Nau:12}), the operator $\Delta_s^2 + \omega$ is sectorial on $\rL^2(\Gamma)$ for every $\omega > 0$, with spectral angle strictly less than $\frac{\pi}{2}$. 
Therefore, an application of \autoref{lem:sect of diag dom op matrices}(b) yields the existence of $\omega' \ge 0$ such that $A_p + \omega'$ is sectorial on $\rX_0^p$ with spectral angle less than $\frac{\pi}{2}$.

To remove the shift, it remains to show that $A_p$ is invertible. 
This follows from the explicit representation of the inverse
\begin{equation*}
    A_p^{-1} = 
    \begin{pmatrix}
        (\Delta_s^2 + \gamma^p)^{-1} \eps_1 \Delta_s^2 & (\Delta_s^2 + \gamma^p)^{-1}\\
        -1 & 0
    \end{pmatrix},
\end{equation*}
which is well-defined and bounded on $\rX_0^p$. 
Consequently, $0 \in \rho(A_p)$, and the sectoriality of $A_p$ (without shift) follows.
\end{proof}

\medskip
\noindent{{\bfseries The full fluid--structure coupled problem and operator $\cA$.}}
We are now in a position to combine the plate dynamics and the fluid subsystem into a single operator formulation of the fully coupled problem. 
The goal is to represent the entire fluid--structure interaction system as a block operator matrix, in which the previously analyzed components appear as principal parts and the coupling terms enter as perturbations.

To this end, we define the full operator matrix $\cA$ by
\begin{equation}\label{eq:op FPSI problem}
    \begin{aligned}
        \cA 
        &= \begin{pmatrix}
            0 & -1 & 0 & 0 & 0\\
            \Delta_s^2 + \gamma^p & \eps_1 \Delta_s^2 & \frac{1}{12} \Delta_s + 1 & 0 & 0\\
            0 & -\Delta_s - 12 & -\eps_2 \Delta_s + 48 & 72 & 12 \tr_\Gamma(\cdot)\bn\\
            0 & 0 & 6 & 12 & 0\\
            0 & 0 & 0 & 0 & -\bP \Delta
        \end{pmatrix}
        = \begin{pmatrix}
            A_p & \B^{\rII}\\
            \C^{\rII} & \bA_f^{\rII}
        \end{pmatrix}, \twith\\
        \rD(\cA) 
        &= \rD(A_p) \times \rD(\bA_f^{\rII}),
    \end{aligned}
\end{equation}
where the coupling operators are given by
\begin{equation}\label{BandC}
    \B^{\rII} 
    = \begin{pmatrix}
        0 & 0 & 0\\
        \frac{1}{12} \Delta_s + 1 & 0 & 0
    \end{pmatrix}, \qquad
    C^{\rII} 
    = \begin{pmatrix}
        0 & -\Delta_s - 12\\
        0 & 0\\
        0 & 0
    \end{pmatrix}.
\end{equation}
The operator $\cA$ acts on the ground space
\begin{equation}\label{eq:ground space}
    \rX_0 = \rD(\Delta_s^2) \times \rL^2(\Gamma) \times \rL^2(\Gamma) \times \rL^2(\Gamma) \times \rL_\sigma^2(\Omega_f).
\end{equation}

This block representation separates the structural part $A_p$ and the fluid part $\bA_f^{\rII}$ from the coupling operators $\B^{\rII}$ and $\C^{\rII}$.
It is precisely this structure that allows us to treat the full system using perturbation arguments.

We now establish the sectoriality of $\cA$ up to a shift.

\begin{prop}\label{prop:sect of the op matrix}
There exists $\omega \ge 0$ such that $\cA + \omega \in \cS(\rX_0)$ with spectral angle $\phi_{\cA + \omega} < \frac{\pi}{2}$.
\end{prop}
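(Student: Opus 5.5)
We treat $\cA$ as the block operator matrix $\begin{pmatrix} A_p & \B^{\rII}\\ \C^{\rII} & \bA_f^{\rII}\end{pmatrix}$ on $\rX_0 = \rX_0^p \times \rX_{f,0}^{\rII}$. The diagonal entries are already known to be sectorial: $A_p$ by \autoref{lem:sect of A_p}, and $\bA_f^{\rII}+\omega_1$ by \autoref{lem:sect of bA_q}, in both cases with angle $<\frac{\pi}{2}$. The plan is to check that the off-diagonal coupling operators are of lower order with respect to the diagonal ones, in the sense of \autoref{def:diag dom block op matrix}, and then apply \autoref{lem:sect of diag dom op matrices}. If the relative bounds are not small enough for that, we fall back on \autoref{lem:rel bdd pert}.

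\textbf{Step 1: the coupling operators.} The operator $\B^{\rII}$ maps $\rX_{f,0}^{\rII}\to\rX_0^p$. Its only nonzero entry sends $\qbr\mapsto \frac{1}{12}\Delta_s\qbr+\qbr$ into the velocity slot $\rL^2(\Gamma)$. For $\qbr\in\rH^2_\per(\Gamma)$, which is part of $\rD(\bA_f^{\rII})$, this gives
\[
\|\B^{\rII}(\qbr,\qbar,\bu)\|_{\rX_0^p}\lesssim \|\qbr\|_{\rH^2_\per(\Gamma)}.
\]
This is of the same order as $\bA_f^{\rII}$, so the bound is relative but not small. The remedy is to interpolate against the domain of $\bA_f^{\rII}$. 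Since $\qbr\in\rD(\bA_f^{\rI})$ carries $\rH^2$-regularity, we will try to use the fractional framework and show $\B^{\rII}\in\cL(\rD((\bA_f^{\rII}+\omega)^{\alpha}),\rX_0^p)$ for some $\alpha<1$. The operator $\C^{\rII}$ maps $\rX_0^p\to\rX_{f,0}^{\rII}$ via $v\mapsto -\Delta_s v-12v$, which is second order in $v$. Since $\rD(A_p)=\rH^4_\per\times\rH^4_\per$, interpolation gives $\rD(A_p^{\beta})\hookrightarrow \rH^4_\per\times\rH^{4\beta}_\per$ in the $v$-component. Hence $\C^{\rII}\in \cL(\rD(A_p^{\beta}),\rX_{f,0}^{\rII})$ for every $\beta\ge\frac12$, in particular for some $\beta<1$. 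This can be seen either from the explicit fractional powers of the periodic bi-Laplacian via Fourier series, or from the identification of the complex interpolation spaces between $\rX_0^p$ and $\rD(A_p)$.

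\textbf{Step 2: apply the abstract result.} With $\C^{\rII}$ of fractional order $\beta<1$ relative to $A_p$, the first half of the diagonal dominance condition holds. For $\B^{\rII}$, the simplest route is to notice that the entry $\frac{1}{12}\Delta_s$ is relatively bounded with respect to the diagonal entry $-\eps_2\Delta_s+48$ with bound $\frac{1}{12\eps_2}$. This bound may not be small, so we would first try the version of \autoref{lem:sect of diag dom op matrices} that allows an off-diagonal entry of full order in one corner, as in the plate case (b), where $B=-1$ was controlled by the graph norm of $D$.

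If that fails, the fallback is a similarity scaling. Conjugate with $\diag(1,1,\mu,\mu,\mu)$, which replaces $\B^{\rII}$ by $\mu\B^{\rII}$ and $\C^{\rII}$ by $\mu^{-1}\C^{\rII}$. Choosing $\mu$ small makes the relative bound of $\mu\B^{\rII}$ small. The price is a large but still fractional-order perturbation $\mu^{-1}\C^{\rII}$, which can be absorbed by a large shift $\omega$, since a fractional-order operator has relative bound zero. Sectoriality and the spectral angle are invariant under this similarity, by the same fact used in \autoref{lem:sect of bA_q}.

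\textbf{Main obstacle.} The hard step is handling the top-order coupling $\frac{1}{12}\Delta_s\qbr$ in the plate velocity equation, which has the same order as $\eps_2\Delta_s$. It is not covered by a naive relative-bound-zero argument, and the scaling trick (or the specific structure of \autoref{lem:sect of diag dom op matrices}) is needed to close the argument with spectral angle strictly below $\frac{\pi}{2}$.
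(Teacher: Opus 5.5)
Your proposal is correct, and once one misreading is fixed its first branch is exactly the paper's proof. The paper checks $\C^{\rII}\in\cL(\rD(A_p^{1/2}),\rX_{f,0}^{\rII})$ (your Step~1) and that $\B^{\rII}$ is relatively $\bA_f^{\rII}$-bounded, and then applies \autoref{lem:sect of diag dom op matrices}(a) directly.

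Your worry that the relative bound of $\B^{\rII}$ is not small is unfounded. \autoref{def:diag dom block op matrix} allows arbitrary constants $c_A,c_D$, and part~(a) places the lower-order requirement only on the lower-left entry, which here is $\C^{\rII}$. For the triangular part, $(\lambda+A_p)^{-1}\B^{\rII}(\lambda+\bA_f^{\rII}+\omega_1)^{-1}$ is $O(|\lambda|^{-1})$ whatever the size of $c_D$, so only $\C^{\rII}(\lambda+A_p)^{-1}\to0$ as $|\lambda|\to\infty$ is needed.

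Two of your side remarks should be dropped:
\begin{itemize}
\item Part~(b) does not apply, since it requires the upper-left block to vanish, and here it is $A_p$.
\item The claim $\B^{\rII}\in\cL(\rD((\bA_f^{\rII}+\omega)^{\alpha}),\rX_0^p)$ with $\alpha<1$ is false. By the moment inequality it would give $\B^{\rII}$ relative bound zero with respect to $\bA_f^{\rII}$, but $\tfrac1{12}\Delta_s\qbr$ is of the same order as the principal term $-\eps_2\Delta_s\qbr$.
\end{itemize}
Also, the bound for $\B^{\rII}$ must be relative to the coupled operator $\bA_f^{\rII}$, not merely to the entry $-\eps_2\Delta_s+48$, because the $\qbr$-row also contains $72\qbar+12\tr_\Gamma(\bu)\cdot\bn$. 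It follows from the equivalence of the graph norm with the $\rH^2_\per(\Gamma)\times\rL^2(\Gamma)\times\rH^2_\per(\Omega_f)^3$-norm on $\rD(\bA_f^{\rII})$.

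Your scaling fallback is a valid and genuinely different route. Conjugation by $\diag(1,1,\mu,\mu,\mu)$ preserves $\rD(\cA)$, since it is a scalar on each block, and turns the coupling entries into $\mu\B^{\rII}$ and $\mu^{-1}\C^{\rII}$. The first component of $\rX_0^p$ carries the $\rH^4$-norm, so $\|A_p(w,v)\|_{\rX_0^p}\ge\|v\|_{\rH^4_\per(\Gamma)}$. The inequality $\|v\|_{\rH^2}\le\delta\|v\|_{\rH^4}+C_\delta\|v\|_{\rL^2}$ then shows directly that $\C^{\rII}$ has $A_p$-bound zero. Hence the off-diagonal part has relative bound of order $\mu c_D+\delta$ with respect to $\diag(A_p,\bA_f^{\rII})+\omega_1$. \autoref{lem:rel bdd pert} together with similarity invariance then gives the claim with angle $<\frac{\pi}{2}$.

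Compared with the paper, this replaces the block-matrix theorem \cite[Cor.~4.9]{AH:23} by the elementary Kato-type lemma from the appendix. It also needs only relative bound zero of $\C^{\rII}$, rather than the identification $\rD(A_p^{1/2})=\rH^4_\per(\Gamma)\times\rH^2_\per(\Gamma)$. The paper's argument is shorter once that citation is granted.
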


\begin{proof}
The proof is based on \autoref{lem:sect of diag dom op matrices}(a), and proceeds by verifying that $\cA$ is a diagonally dominant block operator matrix.

First, by \autoref{lem:sect of A_p} and \autoref{lem:sect of bA_q}, the operators $A_p$ and $\bA_f^{\rII}$ are closed, densely defined, and sectorial on their respective spaces.

Next, we observe that
\[
    \rD(\bA_f^{\rII}) \subset \rD(\B^{\rII}) \tand \rD(A_p) \subset \rD(\C^{\rII}).
\]
Moreover, for the fractional power domain $\rD(A_p^{\nicefrac{1}{2}}) = \rH_\per^4(\Gamma) \times \rH_\per^2(\Gamma)$, the mapping properties of $\C^{\rII}$ imply that
\[
    \C^{\rII} \in \cL\bigl(\rD(A_p^{\nicefrac{1}{2}}), \rX_{f,0}^{\rII}\bigr).
\]
On the other hand, for $(\qbr,\qbar,\bu) \in \rD(\bA_f^{\rII})$, we have
\begin{equation*}
    \left\| \B^{\rII} 
    \begin{pmatrix}
        \qbr\\ \qbar\\ \bu
    \end{pmatrix} \right\|_{\rX_0^p}
    \le C \left(\left\| \bA_f^{\rII} 
    \begin{pmatrix}
        \qbr\\ \qbar\\ \bu
    \end{pmatrix} \right\|_{\rX_{f,0}^{\rII}} +
    \left\|
    \begin{pmatrix}
        \qbr\\ \qbar\\ \bu
    \end{pmatrix} \right\|_{\rX_{f,0}^{\rII}}\right),
\end{equation*}
which shows that $\B^{\rII}$ is relatively bounded with respect to $\bA_f^{\rII}$.

Thus, all assumptions of \autoref{lem:sect of diag dom op matrices}(a) are satisfied. 
Since both diagonal blocks $A_p$ and $\bA_f^{\rII}$ are sectorial with spectral angle strictly less than $\frac{\pi}{2}$, we conclude that there exists $\omega \ge 0$ such that $\cA + \omega$ is sectorial on $\rX_0$ with spectral angle $\phi_{\cA+\omega} < \frac{\pi}{2}$.
\end{proof}

The above result shows that the full fluid--structure interaction system (currently with boundary condition $q^+ = 0$) can be realized as a sectorial operator (up to a shift) on the natural energy space $\rX_0$.
In particular, $\cA + \omega$ generates an analytic semigroup on $\rX_0$, which provides a robust framework for studying the well-posedness of the coupled problem.

From a structural point of view, this result demonstrates that the coupling between the viscoelastic plate and the fluid subsystem does not destroy the parabolic character of the underlying dynamics. 
Instead, the interaction terms enter as lower-order perturbations of two sectorial operators, allowing the full system to inherit their analytic properties.

As a consequence, one obtains existence, uniqueness, and continuous dependence of solutions on initial data within the semigroup framework, which will be discussed in \autoref{sec:WellPosed}. 
Moreover, the sectoriality of $\cA + \omega$ will play a crucial role in establishing further qualitative properties of the system, such as long-time behavior.

Indeed, we show next that the semigroup $\mre^{-t\cA}$ generated by $-\cA$ is exponentially stable, i.e., there exist $\mu_0 > 0$ and $C > 0$ such that $\| \mre^{-t \cA} \bv \|_{\rX_0} \le C \mre^{-\mu_0 t} \cdot \| \bv \|_{\rX_0}$ for all $\bv \in \rX_0$.

\medskip
\noindent{{\bfseries Spectral properties of the coupled fluid--structure operator $\cA$.}}
We now turn to the spectral analysis of the coupled operator $\cA$. 
Understanding the structure of the spectrum $\sigma(\cA)$ is essential for characterizing the qualitative behavior of solutions, in particular their stability and long-time dynamics.

A key difficulty in this analysis stems from the fact that in our problem the embedding
\[
    \rD(\cA) \hookrightarrow \rX_0
\]
is not compact. 

The lack of compactness comes from the average pressure variable $\qbar$  as well as the lack or smoothing of the plate displacement $w$ -- only the plate velocity $v$ experiences regularization by the addition of the viscoelastic term in plate elastodynamics. 
Indeed, while the plate {velocity}, the pressure jump $\qbr$, and the fluid velocity $\bu$ gain spatial regularity in the operator domain, the variable $\qbar$ remains only in $\rL^2(\Gamma)$, and the the plate displacement $w$ remains in $\rH_\per^4(\Gamma)$.

Thus, the embedding $\rD(\cA)\hookrightarrow \rX_0$ especially contains the identity embedding $\rL^2(\Gamma)\hookrightarrow \rL^2(\Gamma)$ in the $\qbar$-component, {{and the identity embedding $\rH_\per^4(\Gamma)\hookrightarrow \rH_\per^4(\Gamma)$,}} which are not compact.

Consequently, the resolvent of $\cA$ is not expected to be compact, and the spectrum need not consist solely of isolated eigenvalues of finite multiplicity -- we must therefore account for the possible presence of essential spectrum.

To deal with this difficulty, we adopt a strategy that separates the analysis of the point spectrum and the essential spectrum. 
This approach is inspired by the methodology developed in \cite[Prop.~5.17]{BMR:26} for a multilayered FSI problem with a damped thick structure. 
Adapting this framework to the present setting allows us to isolate the contributions of the coupling and to systematically analyze the spectral components of $\cA$.

\begin{prop}\label{prop:spectral analysis}
For the operator $\cA$, it holds that
\[
    \sigma(-\cA) \subset \{\lambda \in \C : \Rep \lambda < 0\},
\]
that is, the spectral bound
\begin{equation}\label{SB}
    s(-\cA) \coloneqq \sup\{\Rep \lambda : \lambda \in \sigma(-\cA)\}
\end{equation}
satisfies $s(-\cA) < 0$.
\end{prop}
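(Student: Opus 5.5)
We split $\sigma(-\cA)$ into the part attained by eigenvalues and the remaining, essential part. By \autoref{prop:sect of the op matrix}, $\cA+\omega$ is sectorial with angle $<\frac{\pi}{2}$. Hence $\sigma(-\cA)$ is contained in a sector opening to the left, shifted by $\omega$, and the only region to exclude is the bounded set $\{\lambda : \Rep\lambda \ge 0\}\cap\sigma(-\cA)$. We aim to show that every $\lambda$ with $\Rep\lambda\ge 0$ lies in $\rho(-\cA)$. Equivalently, $\lambda+\cA$ is injective with closed range of finite codimension (Fredholm of index zero) and injective; index zero plus injectivity then gives bijectivity.

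\textbf{Step 1 (point spectrum).} Let $(\lambda+\cA)\by=0$ with $\by=(w,v,\qbr,\qbar,\bu)\in\rD(\cA)$ and $\Rep\lambda\ge 0$. Then $v=\lambda w$. We repeat the energy computation of \eqref{EnergyEquality} in complex form. Pair the second equation with $\overline v$, the third with $\frac{1}{12}\overline{\qbr}$, the fourth with $\frac{1}{72}\cdot\frac{1}{?}\overline{\qbar}$ (weights chosen as in the original scaling), and the fluid equation with $\overline{\bu}$. After integration by parts, using \eqref{NewCoupling} with $q^+=0$, the off-diagonal terms ($\Delta_s\qbr$ against $v$, $\qbr\bn$ against $\bu$, $\qbr$ against $v$) cancel up to purely imaginary contributions. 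Taking real parts gives
\[
\Rep\lambda\bigl(\|\Delta_s w\|^2+\gamma^p\|w\|^2+\|v\|^2+c\|\qbr\|^2+c\|\qbar\|^2+\|\bu\|^2\bigr)+\eps_1\|\Delta_s v\|^2+\eps_2 c\|\nabla\qbr\|^2+\mathcal D=0 .
\]
Here $\mathcal D$ is the nonnegative dissipation. Hence $v=0$, $\bD(\bu)=0$, and $\qbr=0$ with $\qbar+\frac12\qbr=0$. Korn's inequality and $\bu=0$ on $\Gamma_f^b$ give $\bu=0$, and $\qbar=0$. The plate equation then reduces to $(\Delta_s^2+\gamma^p)w=0$, which gives $w=0$ if $\gamma^p>0$. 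If $\lambda=0$ we use $0\in\rho(A_p)$. Special care is needed for the constant mode when $\gamma^p=0$, but we assume $\gamma^p>0$ here. So $\lambda$ is not an eigenvalue.

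\textbf{Step 2 (essential spectrum).} This is the main obstacle, because the resolvent is not compact: $\qbar$ and $w$ gain no regularity. Following \cite[Prop.~5.17]{BMR:26}, we reduce successively to diagonal operators modulo compact perturbations, which leave the essential spectrum invariant.

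First, in the $\qbar$-row the coupling $6\qbr$ maps $\rD(\cA)\ni\qbr\in\rH^2_\per(\Gamma)$ into $\rL^2(\Gamma)$ compactly. The entry $72\qbar$ in the $\qbr$-row is bounded, and it is relatively compact with respect to the diagonal $-\eps_2\Delta_s$ part, via a Schur complement argument. Likewise, the trace coupling $12\tr_\Gamma(\cdot)\bn$ and the BJS term are compact from $\rH^2$, and the coupling $(\frac1{12}\Delta_s+1)\qbr\in\rH^2_\per\to\rL^2$ is compact. These reductions remove the off-diagonal fluid/pressure entries. The remaining operator is $\mathrm{diag}(A_p,\,-\eps_2\Delta_s+48,\,12,\,-A)$, where $A$ is the Stokes operator with the BJS-type boundary condition. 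We must also handle the $-\Delta_s v$ coupling. Since $v\in\rH^4_\per$, this term is compact into $\rL^2$.

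The essential spectrum of $-\cA$ is therefore contained in the union of the spectra of the diagonal blocks: $\sigma(-A_p)$, $\{-12\}$, $\sigma(\eps_2\Delta_s-48)$, and $\sigma(A)$. Each lies in $\{\Rep\lambda<0\}$. For $A_p$, this follows from the energy identity with $\eps_1$-damping (the same real-part argument as above, applied to the plate alone). For the other blocks it follows from self-adjointness and coercivity. For the Stokes block in particular, the symmetric form $2\|\bD\bu\|^2+\beta\|\bu\cdot\btau\|^2$ is coercive by Korn's inequality.

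The delicate points are compactness of the Schur-complement-type coupling $\qbar\leftrightarrow\qbr$, which is not compact by itself, and the essential spectrum of $A_p$. For the first, I would treat $\{\qbar,\qbr\}$ first via a triangular factorization in which $\qbar$ is solved explicitly as $(\lambda+12)^{-1}(-6\qbr)$, valid for $\lambda\neq-12$.

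\textbf{Step 3 (conclusion).} Combining the steps, every $\lambda$ with $\Rep\lambda\ge0$ lies outside $\sigma_{\mathrm{ess}}(-\cA)$ and is not an eigenvalue, so $\lambda\in\rho(-\cA)$. Sectoriality of $\cA+\omega$ confines the unstable region to a compact set. The spectrum is closed, so its intersection with the closed half-plane is empty; it then remains to show that $s(-\cA)$ is strictly negative, not merely $\le 0$. Bounded-resolvent continuity together with sectoriality gives a uniform gap, and hence $s(-\cA)<0$.
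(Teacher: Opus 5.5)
\textbf{Verdict: genuine gap.} Your architecture matches the paper's: an energy identity for eigenvalues, then a \cite{BMR:26}-style reduction of the essential spectrum. Your Step~3 also makes explicit something the paper leaves implicit: sectoriality of $\cA+\omega$ confines $\sigma(-\cA)\cap\{\Rep\lambda\ge-1\}$ to a compact set, so an empty intersection with the closed half-plane yields $s(-\cA)<0$. But Step~2 does not work as written.

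\textbf{Non-compact entries.} The entry $(\tfrac1{12}\Delta_s+1)\qbr$ of $\B^{\rII}$ is \emph{not} compact from $\rH_\per^2(\Gamma)$ into $\rL^2(\Gamma)$. The sequence $|k|^{-2}\mre^{ik\cdot x}$ is bounded in $\rH^2$, while its Laplacians are orthonormal. Likewise $72\qbar$ is not relatively compact, since $\rD(\cA)$ gives $\qbar$ no regularity. So you cannot reach a diagonal operator by compact perturbations. What you can drop are $\C^{\rII}$ (compact because the Kelvin--Voigt term gives $v\in\rH_\per^4$) and the entry $6\qbr$. After that you must work with the resulting block-triangular structure, as the paper does.

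\textbf{Domain coupling.} The term $\qbr\bn$ in the interface condition is part of $\rD(\cA)$, not a matrix entry, so deleting it is not a relatively compact perturbation. It is also unnecessary. The $(\qbr,\bu)$-block with its coupled domain has compact resolvent, because its domain lies in $\rH_\per^2(\Gamma)\times\rH^2(\Omega_f)^3$. So the energy argument for that block alone locates its whole spectrum; this is the role of $\widehat{\bA_f^{\rI}}$ in the paper.

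\textbf{The plate block.} More seriously, your claim $\sigma(-A_p)\subset\{\Rep\lambda<0\}$ is false. Under periodic boundary conditions the damping $\eps_1\Delta_s^2 v$ does not act on constants. On constant $(w,v)$ the operator $A_p$ acts as $\begin{pmatrix}0&-1\\ \gamma^p&0\end{pmatrix}$, so $\pm i\sqrt{\gamma^p}$ are eigenvalues of $-A_p$. The plate energy identity alone only yields $\Delta_s v=0$.

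The same slip appears in your Step~1. For $\Rep\lambda=0$ you only get that $v$ is constant. It is the term $-12v$ in the $\qbr$-equation, once $\qbr=\qbar=0$ and $\bu=0$, that forces $v=0$, as in the paper.

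What the argument actually needs is only $\sigma_{\mathrm{ess}}(-A_p)\subset\{\Rep\lambda<0\}$, and this holds. For the Fourier mode $k\in 2\pi\mathbb{Z}^2$ the eigenvalues solve $\lambda^2+\eps_1|k|^4\lambda+|k|^4+\gamma^p=0$. These have negative real part for $k\neq0$ and accumulate only at $-1/\eps_1$. Hence $\lambda+A_p$ is Fredholm of index zero for $\Rep\lambda\ge0$.

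Now factor the triangular operator as
\[
\begin{pmatrix}\Id&\B^{\rII}(\lambda+\bA_f^{\rII})^{-1}\\0&\Id\end{pmatrix}\diag(\lambda+A_p,\lambda+\bA_f^{\rII}).
\]
This requires only that $\lambda+\bA_f^{\rII}$ be invertible, not $\lambda+A_p$. Then $\lambda+\cA$ is Fredholm of index zero on the closed half-plane, and injectivity from Step~1 gives invertibility.

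The paper's own proof invokes ``$s(-A_p)<0$'' from \autoref{lem:sect of A_p} at exactly this point. That step fails for the same reason and is repaired the same way.

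\textbf{Minor.} The weight on the $\qbar$-equation is $1$; with it the pressure terms combine into $\|\qbr\|^2+12\|\qbar+\tfrac12\qbr\|^2$.
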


\begin{proof}
The goal is to show that the spectrum of $-\cA$ lies strictly in the open left half-plane, which implies stability of the associated semigroup.

Since the resolvent of $\cA$ is not compact, the spectrum need not consist solely of eigenvalues. Therefore, we decompose the spectrum into its essential and point parts 
\[
    \sigma(-\cA) \subset \sigma_{\mathrm{ess}}(-\cA) \cup \sigma_{\mathrm{p}}(-\cA).
\]
We begin with the point spectrum.

\medskip

\noindent
\emph{\underline{Step 1}: Analysis of the point spectrum $\sigma_{\mathrm{p}}(-\cA)$.}
Let $\bv = (w,v,\qbr,\qbar,\bu)$ be an eigenvector corresponding to $\lambda \in \sigma_{\mathrm{p}}(-\cA)$, i.e.,
\[
    (\lambda + \cA)\bv = 0.
\]
This leads to the eigenvalue system
\begin{equation}\label{eq:eigenvalue probl}
    \left\{
    \begin{aligned}
        \lambda w - v 
        &= 0, &&\tin \Gamma,\\
        \lambda v + (\Delta_s^2 + \gamma^p) w + \eps_1 \Delta_s^2 v + \Bigl(\tfrac{1}{12}\Delta_s + 1\Bigr)\qbr
        &= 0, &&\tin \Gamma,\\
        \lambda \qbr - (\Delta_s + 12) v - \eps_2 \Delta_s \qbr + 48 \qbr + 72 \qbar + 12 \tr_\Gamma \bu \cdot \bn 
        &= 0, &&\tin \Gamma,\\
        \lambda \qbar + 6 \qbr + 12 \qbar 
        &= 0, &&\tin \Gamma,\\
        \lambda \bu - \mdiv \bsigma_f(\bu,\pi) 
        &= 0, &&\tin \Omega_f,
    \end{aligned}
    \right.
\end{equation}
supplemented with the coupling condition
\begin{equation*}
    \bsigma_f(\bu,\pi) \bn = \qbr \bn - \beta \sum_{i=1}^2 (\bu \cdot \btau_i) \btau_i, \ton \Gamma,
\end{equation*}
and the corresponding boundary conditions:
\begin{equation}
    \begin{aligned}
       w, v, \qbr, \qbar 
        \enspace &\text{periodic,} &&\ton \del\Gamma,\\
        \bu, \pi 
        \enspace &\text{periodic,} &&\ton \Gamma_f^l,\\
        \bu
        &= 0, &&\ton \Gamma_f^b.
    \end{aligned}
\end{equation}

To extract information about $\lambda$, we perform an energy estimate.
Testing the equations by the complex conjugates $v^*$, $\frac{1}{12}\qbr^*$, $\qbar^*$, and $\bu^*$, respectively, using the relation $v = \lambda w$, integrating by parts, and employing the boundary and coupling conditions, we obtain the identity
\begin{equation}\label{eq:tested eigenvalue probl}
    \begin{aligned}
        &\lambda \int_\Gamma |v|^2 \srd S + \lambda^* \int_\Gamma \bigl(|\Delta_s w|^2 + \gamma^p |w|^2\bigr) \srd S + \eps_1 \int_\Gamma |\Delta_s v|^2 \srd S \\
        &\quad + \tfrac{\lambda}{12} \int_\Gamma |\qbr|^2 \srd S + \tfrac{\eps_2}{12} \int_\Gamma |\nabla \qbr|^2 \srd S + \lambda \int_\Gamma |\qbar|^2 \srd S \\
        &\quad + \int_\Gamma |\qbr|^2 \srd S + 12 \int_\Gamma \Bigl|\qbar + \tfrac{1}{2}\qbr\Bigr|^2 \srd S + \lambda \int_{\Omega_f} |\bu|^2 \srd \bx + \int_{\Omega_f} |\bD(\bu)|^2 \srd \bx \\
        &\quad + \beta \int_\Gamma |\bu \cdot \btau|^2 \srd S = 0.
    \end{aligned}
\end{equation}

Taking real parts in \eqref{eq:tested eigenvalue probl}, and observing that all terms except those involving $\lambda$ are nonnegative, we deduce
\[
    \Rep \lambda \le 0.
\]
Thus,
\[
    \sigma_{\mathrm{p}}(-\cA) \subset \{\lambda \in \C : \Rep \lambda \le 0\}.
\]

\medskip

\noindent
\emph{\underline{Step 2}: Strict negativity of the real part.}
We now show that no eigenvalue satisfies $\Rep \lambda = 0$. 
Assume, for contradiction, that $\lambda \in \sigma_{\mathrm{p}}(-\cA)$ with $\Rep \lambda = 0$.

Taking real parts in \eqref{eq:tested eigenvalue probl}, we observe that all terms not multiplied by $\lambda$ are nonnegative. 
Hence, the identity implies that all dissipative contributions must vanish. In particular, we obtain
\[
    \Delta_s v = 0, \quad \nabla \qbr = 0, \quad \qbr = 0, \quad \qbar + \tfrac{1}{2}\qbr = 0, \quad \bu \cdot \btau = 0 \ton \Gamma,
\]
and
\[
    \bD(\bu) = 0 \tin \Omega_f.
\]
By the boundary conditions, this implies
\[
    \bu = 0 \tin \Omega_f, \qquad \qbar = 0 \ton \Gamma.
\]

Substituting these relations into the third equation of \eqref{eq:eigenvalue probl}, we deduce that $v = 0$. 
Using the first equation, this yields $\lambda w = 0$, and hence $v = 0$ implies either $\lambda = 0$ or $w = 0$.

We now treat the case $\lambda = 0$. 
In this case, testing the second equation in \eqref{eq:eigenvalue probl} by $w^*$ and integrating by parts, we obtain
\[
    \int_\Gamma |\Delta_s w|^2 \, dS + \gamma^p \int_\Gamma |w|^2 \, dS = 0,
\]
which implies $w = 0$. 
Consequently, all components of $\bv = (v,\qbr,\qbar,\bu)$ vanish, and thus $\bv = 0$.

Therefore, there are no nontrivial eigenvectors corresponding to $\Rep \lambda = 0$, and we conclude
\[
    \sigma_{\mathrm{p}}(-\cA) \subset \{\lambda \in \C : \Rep \lambda < 0\}.
\]

\medskip

\noindent
\emph{\underline{Step 3}: Analysis of the essential spectrum.}
It remains to investigate the essential spectrum $\sigma_{\mathrm{ess}}(-\cA)$. 
To this end, we exploit the block structure
\[
    \cA = 
    \begin{pmatrix}
        A_p & \B^{\rII}\\
        \C^{\rII} & \bA_f^{\rII}
    \end{pmatrix},
\]
where $\C^{\rII}$ and $\B^{\rII}$ are defined in \eqref{BandC}.

We first observe that, due to the Kelvin--Voigt damping in the plate component and the Rellich--Kondrachov theorem, the operator $\C^{\rII}$ is a relatively compact perturbation. 
Consequently, it does not affect the essential spectrum (see \cite[Thm.~5.35]{Kat:95}), and therefore
\[
    \sigma_{\mathrm{ess}}(-\cA) 
    = \sigma_{\mathrm{ess}}\!\left(
    -\begin{pmatrix}
        A_p & \B^{\rII}\\
        0 & \bA_f^{\rII}
    \end{pmatrix}
    \right).
\]

Next, we note the factorization
\[
    \lambda - 
    \begin{pmatrix}
        A_p & \B^{\rII}\\
        0 & \bA_f^{\rII}
    \end{pmatrix}
    =
    \begin{pmatrix}
        \lambda - A_p & 0\\
        0 & \lambda - \bA_f^{\rII}
    \end{pmatrix}
    \begin{pmatrix}
        \Id & -R(\lambda,A_p)\B^{\rII}\\
        0 & \Id
    \end{pmatrix},
\]
which implies
\[
    \sigma\!\left(
    \begin{pmatrix}
        A_p & \B^{\rII}\\
        0 & \bA_f^{\rII}
    \end{pmatrix}
    \right)
    = \sigma\bigl(\diag(A_p,\bA_f^{\rII})\bigr).
\]
In particular, we note here that $\B^{\rII}$ from \eqref{BandC} satisfies $\B^{\rII} \colon \rD(\bA_f^{\rII}) \to \rX_0^p$, since the $\qbr$-component in~$\rD(\bA_f^{\rII})$ especially enjoys $\rH^2$-regularity required for the Laplacian in $\B^{\rII}$.
As a result, the factorization is well-defined.
Hence,
\[
    \sigma_{\mathrm{ess}}(-\cA) \subset \sigma\bigl(-\diag(A_p,\bA_f^{\rII})\bigr).
\]
By \autoref{lem:sect of A_p}, we already know that $s(-A_p) < 0$. 
It therefore remains to analyze the spectrum of $\bA_f^{\rII}$.

\medskip

To simplify the structure, we invoke the similarity transform introduced earlier and consider the equivalent operator
\[
    \widetilde{\bA_f^{\rII}} 
    = \begin{pmatrix}
        12 & 6 & 0\\
        72 & -\eps_2 \Delta_s + 48 & 12 \tr_\Gamma(\cdot)\bn\\
        0 & 0 & -\bP \Delta
    \end{pmatrix}.
\]
Similarity transforms preserve the spectrum, so it suffices to study $\widetilde{\bA_f^{\rII}}$.

Proceeding as in Step~1, an energy estimate for the associated eigenvalue problem shows that
\[
    \sigma_{\mathrm{p}}(-\widetilde{\bA_f^{\rII}}) \subset \{\lambda \in \C : \Rep \lambda < 0\}.
\]

To handle the essential spectrum, we introduce the auxiliary operator
\[
    \widehat{\bA_f^{\rI}}
    = \bA_f^{\rI} + \begin{pmatrix}
        0 & 12 \tr_\Gamma(\cdot)\bn\\
        0 & 0
    \end{pmatrix},
\]
whose domain coincides with that of $\bA_f^{\rI}$. 
By elliptic regularity and the compact embedding of $\rD(\bA_f^{\rI})$ into $\rX_{f,0}^{\rI}$, this operator has compact resolvent.

Applying again an energy argument to the corresponding eigenvalue problem yields
\[
    \sigma(-\widehat{\bA_f^{\rI}}) \subset \{\lambda \in \C : \Rep \lambda < 0\}.
\]

Since the remaining coupling terms (such as $6\,\Id$) are relatively compact perturbations, we conclude that they do not affect the essential spectrum. 
Consequently,
\[
    \sigma_{\mathrm{ess}}(-\widetilde{\bA_f^{\rII}}) \subset \sigma\bigl(-\diag(12\,\Id,\widehat{\bA_f^{\rI}})\bigr) \subset \{\lambda \in \C : \Rep \lambda < 0\}.
\]

Combining these results, we obtain
\[
    \sigma(-\bA_f^{\rII}) = \sigma(-\widetilde{\bA_f^{\rII}}) \subset \{\lambda \in \C : \Rep \lambda < 0\}.
\]

Finally, recalling that
\[
    \sigma_{\mathrm{ess}}(-\cA) \subset \sigma\bigl(-\diag(A_p,\bA_f^{\rII})\bigr),
\]
and using $s(-A_p) < 0$, we conclude
\[
    \sigma_{\mathrm{ess}}(-\cA) \subset \{\lambda \in \C : \Rep \lambda < 0\}.
\]

\medskip

Combining this with the result from Steps~1--2 for the point spectrum, we arrive at
\[
    \sigma(-\cA) = \sigma_{\mathrm{ess}}(-\cA)\cup\sigma_{\mathrm{p}}(-\cA) \subset \{\lambda \in \C : \Rep \lambda < 0\},
\]
which completes the proof of the proposition.
\end{proof}

The following corollary establishes the sectoriality of the operator matrix $\cA$ without shift and, as a consequence, the exponential stability of the associated semigroup. 
The corollary is a consequence of \autoref{prop:sect of the op matrix} combined with the spectral analysis carried out in \autoref{prop:spectral analysis}.

\begin{cor} [{\bf{Sectoriality and exponential stability of $\cA$}}]\label{cor:exp decay}
For the operator $\cA$ defined in \eqref{eq:op FPSI problem}, it holds that $\cA \in \cS(\rX_0)$ with spectral angle $\phi_{\cA} < \frac{\pi}{2}$. 
Moreover, the semigroup $(\mre^{-t\cA})_{t \ge 0}$ generated by $-\cA$ is exponentially stable. 
That is, there exist constants $\mu_0 > 0$ and $C > 0$ such that
\[
    \| \mre^{-t \cA} \bv \|_{\rX_0} \le C \mre^{-\mu_0 t} \| \bv \|_{\rX_0} \tforall \bv \in \rX_0 \tand t \ge 0.
\]
\end{cor}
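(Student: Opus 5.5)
The plan is to combine \autoref{prop:sect of the op matrix} with \autoref{prop:spectral analysis} and remove the shift by a resolvent estimate on a bounded region. By \autoref{prop:sect of the op matrix}, there are $\omega \ge 0$ and an angle $\phi < \frac{\pi}{2}$ such that $\cA + \omega$ is sectorial with angle $\phi$. Equivalently, $-\cA$ is, up to the shift $\omega$, the generator of an analytic semigroup on $\rX_0$. I will fix some $\psi \in (\phi, \frac{\pi}{2})$.

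\textbf{Step 1: resolvent outside a large set.} Consider $\mu$ in the sector $\Sigma_{\pi-\psi} = \{\mu \neq 0 : |\arg \mu| < \pi - \psi\}$, the sector where the resolvent estimate of a sectorial operator of angle $\psi$ must hold. For all such $\mu$ with $|\mu| \ge R$, $R$ large, sectoriality of $\cA + \omega$ gives
\[
    \| \mu (\mu + \cA)^{-1} \|_{\cL(\rX_0)} \le M.
\]
The argument is that $\mu + \cA = (\mu - \omega) + (\cA + \omega)$. For $|\mu|$ large, $\mu - \omega$ lies in a slightly larger sector, still inside $\Sigma_{\pi - \phi'}$ for some $\phi' \in (\phi,\psi)$, and $|\mu - \omega| \simeq |\mu|$. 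The standard sectorial estimate therefore applies.

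\textbf{Step 2: the bounded part.} On the compact remainder $K = \overline{\Sigma_{\pi-\psi}} \cap \{|\mu| \le R\}$, including the point $\mu = 0$, I use \autoref{prop:spectral analysis}. It gives $\sigma(-\cA) \subset \{\Rep \lambda < 0\}$, and the sectoriality up to a shift confines $\sigma(-\cA)$ to a sector around the negative axis shifted by $\omega$. Hence $\sigma(-\cA) \cap \{\Rep\lambda \ge -\delta\}$ is empty for some $\delta > 0$.

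I then need to check that $K$ avoids $\sigma(-\cA)$, after shrinking $\psi$ toward $\frac{\pi}{2}$ if necessary. This is the main obstacle: the set $K$ reaches into the left half-plane, where spectrum could sit. My first attempt is to argue as follows. The spectrum of $-\cA$ lies in $\{\Rep \lambda \le -\delta\} \cap (-\omega - \Sigma_{\phi})$, which is a closed set. The set $\{\Rep\lambda \ge -\delta/2\} \cup \Sigma_{\pi-\psi}$ restricted to a bounded region stays away from it once the angle $\pi - \psi$ is close enough to $\frac{\pi}{2}$. This works because near the origin the sector is nearly the right half-plane, and far out Step 1 applies.

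Concretely, I would take $\psi' \in (\phi, \frac{\pi}{2})$ and consider the region
\[
    \{\Rep \mu > -\delta/2\} \cup \bigl(\Sigma_{\pi-\psi'} \cap \{|\mu| \ge R\}\bigr),
\]
and observe that it contains $\Sigma_{\pi - \psi''}$ for a suitable $\psi'' < \frac{\pi}{2}$ (in the sense of sectors about $0$, with the origin included). The resolvent is analytic, hence continuous, on the compact piece, so $\|\mu(\mu + \cA)^{-1}\|$ is bounded there. Together with Step 1, this yields sectoriality of $\cA$ with angle $\le \psi'' < \frac{\pi}{2}$. Since $0 \in \rho(\cA)$, the operator $\cA$ is invertible.

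\textbf{Step 3: exponential stability.} $-\cA$ generates an analytic semigroup, and analytic semigroups are eventually norm continuous. For them the growth bound equals the spectral bound, so $\omega_0(-\cA) = s(-\cA) < 0$ by \eqref{SB} and \autoref{prop:spectral analysis}. Choosing any $\mu_0 \in (0, -s(-\cA))$ yields the estimate $\|\mre^{-t\cA}\bv\| \le C\mre^{-\mu_0 t}\|\bv\|$. Alternatively, one can write $\cA - \mu_0$ and apply the argument of Step 2 to it, using that $\cA - \mu_0$ is still sectorial (hence generates a bounded analytic semigroup) for small $\mu_0$, since $s(-\cA) < -\mu_0$.
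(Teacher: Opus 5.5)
Your proposal is correct and follows the route the paper indicates. The paper only states that the corollary follows from \autoref{prop:sect of the op matrix} together with $s(-\cA)<0$ from \autoref{prop:spectral analysis}; your compactness argument on the bounded part of the sector and the identity ``growth bound $=$ spectral bound'' for analytic semigroups supply exactly the details it leaves implicit. One harmless slip: sectoriality of $\cA+\omega$ gives $\sigma(-\cA)\subset \omega-\overline{\Sigma_\phi}$, with vertex at $+\omega$, not $-\omega-\Sigma_\phi$; since $\phi<\frac{\pi}{2}$, this set still meets every half-plane $\{\Rep\lambda\ge -\delta\}$ in a bounded set, so your choice of $\delta>0$ goes through unchanged.
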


\subsection{Application to the Cauchy problem  with inhomogeneous condition $q^+$ and main result}\label{sec:WellPosed}
\

In this section, we establish the well-posedness of the Cauchy problem
\eqref{eq:FPSI Cauchy problem}--\eqref{init2}  with inhomogeneous boundary condition $q^+$ and derive the main result.

A key difficulty arises from the presence of the inhomogeneous boundary term $q^+ \bn$ in the coupling condition \eqref{NewCoupling}. 
This term prevents a direct application of the operator-theoretic framework developed in above, which relies on the homogeneous condition $q^+ = 0$.

To overcome this obstacle, we decompose the solution into two parts: one that accounts for the inhomogeneous boundary data, and a remainder that satisfies the coupling condition, but with homogeneous boundary data. 

\medskip

\noindent
\textbf{Decomposition of the solution.}
More precisely, we seek a solution of the form
\[
    (w,v,\qbr,\qbar,\bu) = (w,v,\qbr,\qbar,\widetilde{\bu}) + (0,0,0,0,\bu_*),
\]
where $\widetilde{\bu}$ corresponds to the solution with $q^+ = 0$, and the function $\bu_*$ is chosen to absorb the inhomogeneous boundary contribution $q^+ \ne 0$.

\medskip

{\bfseries The problem with $\bu_*$.}
We define $\bu_*$ to be the solution to the Stokes-type problem incorporating the inhomogeneous boundary condition $q^+ \bn$ as follows:
\begin{equation}\label{eq:lifting probl}
    \left\{
    \begin{aligned}
        \del_t \bu_* - \nabla \cdot \bsigma_f(\bu_*,\pi_*) 
        &= 0, && \tin \R_+ \times \Omega_f,\\
        \nabla \cdot \bu_* 
        &= 0, && \tin \R_+ \times \Omega_f,\\
        \bsigma_f(\bu_*,\pi_*) \bn 
        &= -q^+ \bn - \beta \sum_{i=1}^2 (\bu_* \cdot \btau_i)\btau_i, && \ton \R_+ \times \Gamma,\\
        \bu_* 
        &= 0, && \ton \R_+ \times \Gamma_f^b,\\
        \bu_*(0) 
        &= 0, && \tin \Omega_f,
    \end{aligned}
    \right.
\end{equation}
subject to the periodic boundary conditions \eqref{NewBC}.

\medskip
{\bfseries The problem with $\widetilde{\bu}$.}
A straightforward computation shows that if $(w,v,\qbr,\qbar,\bu)$ solves the original Cauchy problem \eqref{eq:FPSI Cauchy problem}--\eqref{init2}, then the remaining part $(w,v,\qbr,\qbar,\widetilde{\bu})$ of the solution satisfies 
\begin{equation}\label{eq:lifted probl}
    \left\{
    \begin{aligned}
        \del_t w 
        &= v, && \tin \R_+ \times \Gamma,\\
        \del_t v + \Delta_{\btau}^2 w + \gamma^p w + \eps_1\Delta_{\btau}^2 v + \tfrac{1}{12} \Delta_{\btau} \qbr + \qbr 
        &= 0, && \tin \R_+ \times \Gamma,\\
        \del_t \qbr - \Delta_{\btau} v - 12 v - \eps_2\Delta_{\btau} \qbr + 48 \qbr + 72 \qbar + 12 \widetilde{\bu}|_{\Gamma}\cdot \bn 
        &= 72 q^+ - 12 \bu_*|_{\Gamma}\cdot \bn, && \tin \R_+ \times \Gamma,\\
        \del_t \qbar + 6 \qbr + 12 \qbar 
        &= 12 q^+, && \tin \R_+ \times \Gamma,\\
        \del_t \widetilde{\bu} - \nabla \cdot \bsigma_f(\widetilde{\bu},\widetilde{\pi}) 
        &= 0, && \tin \R_+ \times \Omega_f,\\
        \nabla \cdot \widetilde{\bu} 
        &= 0, && \tin \R_+ \times \Omega_f,
\end{aligned}
\right.
\end{equation}
subject to the same periodic boundary conditions \eqref{NewBC}.

The interface condition now takes the homogeneous form:
\begin{equation}\label{eq:hom bc}
    \begin{aligned}
        \bsigma_f(\Tilde{\bu},\Tilde{\pi}) \bn 
        &= \bsigma_f(\bu,\pi) \bn - \bsigma_f(\bu_*,\pi_*) \bn\\
        &=  (\qbr - q^+) \bn - \beta \sum_{i=1}^2 (\bu \cdot \btau_i) \btau_i - \left(-q^+ \bn - \beta \sum_{i=1}^2 (\bu_* \cdot \btau_i) \btau_i\right)\\
        &= \qbr \bn - \beta \sum_{i=1}^2 (\Tilde{\bu} \cdot \btau_i) \btau_i.
    \end{aligned}
\end{equation}
This problem for $\tilde\bu$ can now be analyzed using the theory developed above for problems with homogeneous boundary conditions. 

\medskip
\noindent{{\bfseries Analysis of the lifting problem \eqref{eq:lifting probl} for $\bu^*$.}}
What is left is to analyze the problem \eqref{eq:lifting probl} for $\bu^*$.
For this purpose, we consider the following general Stokes problem with data $\bff$, $\bg$, and $\bu_0$, which will be specified later in \autoref{lem:opt data for lifting}:
\begin{equation}\label{eq:inhom Stokes probl}
    \left\{
    \begin{aligned}
        \del_t \bu - \nabla \cdot \bsigma(\bu,\pi)
        &= \bff, &&\tin \R_+ \times \Omega_f,\\
        \nabla \cdot \bu 
        &= 0, &&\tin \R_+ \times \Omega_f,\\
        \bsigma_f(\bu,\pi) \bn + \beta \sum_{i=1}^2 (\bu \cdot \btau_i) \btau_i
        &= \bg, &&\ton \R_+ \times \Gamma,\\
        \bu
        &= 0, &&\ton \R_+ \times \Gamma_f^b,\\
        \bu(0)
        &= \bu_0, &&\tin \Omega_f,
    \end{aligned}
    \right.
\end{equation}
where $\bu$ as well as $\pi$ additionally satisfy the periodic boundary conditions as made precise in \eqref{NewBC}.

The lemma below discusses the solvability of \eqref{eq:inhom Stokes probl}.
Before we state the lemma, we introduce the space of functions in $\rL_{\mathrm{loc}}^1(\Omega_f)$ whose gradient is in $\rL^2(\Omega_f)$, i.e.,
\begin{equation*}
    \Dot{\rH}^1(\Omega_f) \coloneqq \{ f \in \rL_\mathrm{loc}^1(\Omega_f) \ : \ \nabla f \in \rH^1(\Omega_f)\},
\end{equation*}
and the space of solenoidal vector fields in $\rH^1$, and with zero trace on the bottom boundary $\Gamma_f^b$, i.e.,
\begin{equation}\label{eq:init data fluid vel}
    \rX_{f,\sigma} \coloneqq \left\{ f \in \rH_{\per}^1(\Omega_f)^3 \ : \ \nabla \cdot f = 0 \tand \left. f \right|_{\Gamma_f^b} = 0\right\}.
\end{equation}
We note that the subscript ``$f$'' is used to designate function spaces corresponding to the fluid variables.

\begin{lem}\label{lem:opt data for lifting}
Consider the inhomogeneous Stokes problem \eqref{eq:inhom Stokes probl}. 
Then this problem admits a unique solution $(\bu,\pi)$ with optimal regularity
\begin{equation*}
    \begin{aligned}
        \bu 
        &\in \rH^1(\R_+;\rL^2(\Omega_f)^3) \cap \rL^2(\R_+;\rH_{\per}^2(\Omega_f)^3) \eqqcolon \E_{f,\bu},\\
        \pi 
        &\in \rL^2(\R_+;\Dot{\rH}_{\per}^1(\Omega_f)) \eqqcolon \E_{f,\pi}, \qquad \pi|_\Gamma \in \rH^{\frac{1}{4}}(\R_+;\rL^2(\Gamma)),
    \end{aligned}
\end{equation*}
if and only if the data $(\bff,\bg,\bu_0)$ satisfy
\begin{equation*}
    \begin{aligned}
        \bff 
        &\in \rL^2(\R_+;\rL^2(\Omega_f)^3) \eqqcolon \F_f,\\
        \bg 
        &\in \rH^{\frac{1}{4}}(\R_+;\rL^2(\Gamma)^3) \cap \rL^2(\R_+;\rH_{\per}^{\frac{1}{2}}(\Gamma)^3) \eqqcolon \G_f,\\
        \bu_0
        &\in  \rX_{f,\sigma}.
    \end{aligned}
\end{equation*}

Moreover, the solution depends continuously on the data, and there exists a constant $C>0$ such that
\begin{equation*}
    \|\bu\|_{\E_{f,\bu}} + \|\pi\|_{\E_{f,\pi}} \le C \Bigl( \|\bff\|_{\F_f} + \|\bg\|_{\G_f} + \|\bu_0\|_{\rX_{f,\sigma}} \Bigr).
\end{equation*}
\end{lem}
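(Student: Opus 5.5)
We prove the lemma in two stages. First we treat the pure Neumann problem with $\beta = 0$ on a finite interval. Then we add the Beavers--Joseph--Saffman term as a lower-order perturbation of the boundary operator. Finally we pass to the half-line $\R_+$, using the invertibility of the associated Stokes operator.

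\emph{Necessity.} This follows from trace theory. If $\bu \in \E_{f,\bu}$, then $\bu(0) \in (\rL^2,\rH^2)_{\frac12,2} = \rH^1$. The bottom trace vanishes and $\nabla\cdot \bu = 0$, so $\bu_0 \in \rX_{f,\sigma}$. The time--space trace of $\nabla \bu$ on $\Gamma$ lies in $\rH^{\frac14}(\R_+;\rL^2(\Gamma)) \cap \rL^2(\R_+;\rH^{\frac12}(\Gamma))$, by the mixed-derivative theorem and the standard anisotropic trace results. This, together with the stated regularity of $\pi|_\Gamma$, puts $\bsigma_f(\bu,\pi)\bn$ in $\G_f$. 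The term $\bu\cdot\btau_i$ is even smoother, since it lies in $\rH^{\frac34}(\R_+;\rL^2) \cap \rL^2(\R_+;\rH^{\frac32})$. Hence $\bg \in \G_f$. Finally $\bff = \del_t\bu - \nabla\cdot\bsigma_f \in \F_f$.

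\emph{Sufficiency for $\beta = 0$ on $(0,T)$.} Here we invoke the maximal $\rL^p$-regularity theory for the Stokes system with inhomogeneous Neumann (free-boundary-type) conditions in \cite[Sec.~7.3]{PS:16} and \cite{BKP:13}, specialised to $p = 2$. Their setting is the half-space or a bounded domain. We transfer it to the horizontally periodic layer $\Omega_f$ by the same localization and extension procedure used in the proof of \autoref{lem:sect of op with hom bc}: extend periodically to $\R^2 \times (-1,0)$, cut off near $\Gamma$ and near $\Gamma_f^b$, and use the half-space results there. Compatibility conditions at $t = 0$ only appear when the Neumann data have a time trace, that is, for regularity indices above $\frac34$ and $\frac12$ in this scale. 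Our index is $\frac14 < \frac12$, so no compatibility between $\bg(0)$ and $\bu_0$ is needed. This is consistent with the iff statement.

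\emph{The BJS term.} We write the boundary condition as $\bsigma_f(\bu,\pi)\bn = \bg - \beta\sum_i(\bu\cdot\btau_i)\btau_i$. The map $\bu \mapsto \beta\sum_i(\bu\cdot\btau_i)\btau_i$ sends ${}_0\E_{f,\bu}(0,T)$ into ${}_0\rH^{\frac34}(0,T;\rL^2) \cap \rL^2(0,T;\rH^{\frac32})$. For functions vanishing at $t = 0$, the $\G_f(0,T)$-norm of this trace is bounded by $CT^{\delta}\|\bu\|_{\E_{f,\bu}}$ for some $\delta > 0$, using embeddings with $T$-independent constants for zero initial value. A Neumann series on a short interval, followed by iteration over subintervals, then gives solvability on every finite $(0,T)$.

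\emph{Passage to $\R_+$: the main obstacle.} We need uniform-in-time estimates. The homogeneous problem is governed by the Stokes operator with Robin (BJS) boundary condition. This operator is sectorial, which follows by the same perturbation argument as in \autoref{prop:sect of fluid & qbr}, and it is invertible. Invertibility comes from the energy identity $\lambda\|\bu\|^2 + 2\|\bD\bu\|^2 + \beta\|\bu\cdot\btau\|^2_{\rL^2(\Gamma)} = 0$ together with Korn's inequality and the no-slip condition on $\Gamma_f^b$. Since the resolvent is compact, its spectrum lies in $\{\Rep\lambda > 0\}$. Hence the semigroup is exponentially stable, so maximal regularity holds on $\R_+$ with a time-independent constant.

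To handle the inhomogeneous $\bg$ on $\R_+$, we solve a shifted problem $(\omega + \del_t)$ globally on $\R_+$, which is possible by the half-line version of the previous step. We then correct the remainder with the exponentially stable homogeneous problem, carrying the forcing $\omega\bu$. Uniqueness follows from the energy identity, and the continuity estimate follows from the closed graph theorem. The delicate point we expect is verifying the $T$-independent smallness of the lower-order BJS perturbation in the $\rH^{\frac14}$-in-time norm. If that fails, we will fall back on the global shifted-problem argument above, which avoids the need for such smallness.
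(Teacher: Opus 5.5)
Your proposal is correct in substance but takes a partly different route from the paper. The paper never works on finite intervals. After reducing to the half-space Neumann problem (your $\beta=0$ step), it treats $\cB_2(\bu)=\beta\sum_i(\bu\cdot\btau_i)\btau_i$ directly on $\R_+$ through the interpolation bound $\|\cB_2(\bu)\|_{\G_f}\le\eta\|\bu\|_{\E_{f,\bu}}+C\|\bu\|_{\E_{f,\bu}}^{\gamma}\|\bu\|_{\F_f}^{1-\gamma}$. It absorbs this term by a Neumann series after a large shift $\omega$, and then removes the shift.

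Your short-time Neumann series with $T^\delta$-smallness is a valid alternative on each $(0,T)$, but it does not feed into the $\R_+$ claim. The global shifted problem in your last step must already contain the BJS term; otherwise the remainder does not solve the homogeneous Robin problem. Its solvability on $\R_+$ then requires precisely the paper's large-shift smallness estimate. So your fallback does not ``avoid the need for such smallness''; it trades smallness in $T$ for smallness in $\omega^{-1}$. That estimate follows from the trace map $\E_{f,\bu}\to\rH^{\frac34}(\R_+;\rL^2(\Gamma))\cap\rL^2(\R_+;\rH^{\frac32}(\Gamma))$, interpolation and Young's inequality. You should state it in place of the $T^\delta$ one.

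Your route is better in how it removes the shift. You work with the operator that actually governs the homogeneous problem, namely the Stokes operator with the Robin (BJS) condition. You prove its exponential stability via the energy identity, Korn's inequality and compactness of the resolvent. You then correct the shifted solution $\bu_1$ by solving the homogeneous Robin problem with forcing $\omega\bu_1\in\rL^2(\R_+;\rL^2_\sigma(\Omega_f))$ and zero initial value.

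The paper's Step~4 instead appeals to the spectral bound of the Stokes operator with the pure Neumann condition $\bsigma_f(\bu,\pi)\bn=0$. That is not the operator of the perturbed problem, so your argument supplies what the paper only sketches. You also prove the necessity direction explicitly, via trace theory and the observation that no compatibility condition arises since $\frac14<\frac12$. The paper leaves this implicit in calling $\Psi_f$ an isomorphism.
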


\begin{proof}
The main point of the proof is to extend the classical optimal data result for the Stokes system defined on the half-space with Neumann boundary condition $\bsigma_f(\bu,\pi)\bn = \bg$ to the present setting, which is a Stokes problem defined on a cylindrical domain with an additional lower-order boundary term $\beta \sum_{i=1}^2 (\bu \cdot \btau_i)\btau_i$. 

\medskip

\noindent
\emph{\underline{Step 1}: Reduction to the classical Neumann problem.}
As in the proof of \autoref{lem:sect of op with hom bc}, one first reduces the problem posed on the cylindrical domain with horizontal periodicity to the corresponding problem on the half-space with the boundary condition $\bsigma_f(\bu,\pi)\bn = \bg$. 
This reduction is standard and relies on localization and extension arguments.

In the half-space setting, it is known (see \cite[Sec.~7.2]{PS:16}) that the Stokes problem with Neumann-type boundary condition
\[
    \bsigma_f(\bu,\pi)\bn = \bg
\]
admits the optimal data property. 
In operator-theoretic terms, this means that the map
\[
    \widetilde{\Psi_f} \colon \E_{f,\bu} \to \F_f \times \G_f, \qquad \widetilde{\Psi_f}(\bu) = \bigl(\del_t \bu + \omega \bu + A_{\max}\bu,\ \bsigma_f(\bu,\pi)\bn\bigr),
\]
is an isomorphism for a suitable shift $\omega \ge 0$.

\medskip

\noindent
\emph{\underline{Step 2}: Incorporation of the lower-order boundary term.}
In the present problem, the boundary condition contains an additional lower-order term
\[
    \cB_2(\bu) = \beta \sum_{i=1}^2 (\bu \cdot \btau_i)\btau_i.
\]
Accordingly, we define the full boundary operator
\[
    \Psi_f(\bu) = \bigl(\del_t \bu + \omega \bu + A_{\max}\bu,\ \cB_1(\bu,\pi) + \cB_2(\bu)\bigr), \twhere \cB_1(\bu,\pi) = \bsigma_f(\bu,\pi)\bn.
\]

The key observation is that $\cB_2$ is of lower order compared to $\cB_1$. 
More precisely, following \cite[Sec.~7.3.1]{PS:16}, one shows that for every $\eta > 0$ there exist constants $C>0$ and $\gamma \in (0,1]$ such that
\[
    \| \cB_2(\bu) \|_{\G_f} \le \eta \|\bu\|_{\E_{f,\bu}} + C \|\bu\|_{\E_{f,\bu}}^\gamma \|\bu\|_{\F_f}^{1-\gamma}.
\]
This estimate shows that $\cB_2$ is a relatively bounded perturbation of $\cB_1$ with arbitrarily small relative bound.

\medskip

\noindent
\emph{\underline{Step 3}: Perturbation argument.}
Since $\widetilde{\Psi_f}$ is an isomorphism, the above estimate allows us to treat $\cB_2$ as a small perturbation. 
By choosing $\eta$ sufficiently small, a Neumann series argument implies that the full operator $\Psi_f$ remains an isomorphism, at least after introducing a shift $\omega \ge 0$ sufficiently large.

\medskip

\noindent
\emph{\underline{Step 4}: Removal of the shift.}
Finally, as in \cite[Sec.~6.3.4]{PS:16}, the shift $\omega$ can be chosen larger than the spectral bound of the Stokes operator associated with the highest-order part of the boundary condition. 
In the present setting, due to the homogeneous Dirichlet boundary condition on $\Gamma_f^b$, the Stokes operator has strictly negative spectral bound. 
Consequently, no shift is needed, and one may take $\omega = 0$.

\medskip

This proves that the operator $\Psi_f$ is an isomorphism, which is equivalent to the optimal data property stated in the lemma. 
The assertion follows.
\end{proof}

In the following, we apply \autoref{lem:opt data for lifting} to the lifting problem \eqref{eq:lifting probl} in order to obtain precise regularity and stability properties of the lifting function $(\bu_*,\pi_*)$. 
In particular, we show that the lifting inherits the temporal behavior of the boundary data $q^+$ and, moreover, exhibits exponential decay provided the data decays at an appropriate rate.

The key idea is to interpret the lifting problem as an inhomogeneous Stokes system with boundary data given by $q^+$. 
The optimal data result from \autoref{lem:opt data for lifting} then allows us to transfer regularity and decay properties of the boundary data directly to the solution. 
The admissible decay rate is determined by the spectral bound $s(A_0)$ of the Stokes operator associated with homogeneous boundary conditions.

\begin{prop} [{\bf{Well-posedness and exponential decay for the lifting problem}}] \label{prop:concrete lifting}
Let $\mu_1$ denote the spectral bound $\mu_1 \coloneqq -s(A_0) > 0$, where $A_0$ denotes the Stokes operator introduced in \eqref{HomoStokes}, corresponding to homogeneous Neumann boundary conditions on $\Gamma$ and homogeneous Dirichlet boundary conditions on $\Gamma_f^b$. 

Let $\mu \in [0,\mu_1)$, and assume that the boundary data satisfies
\[
    \mre^{\mu t} q^+ \in \rH^{\frac{1}{4}}(\R_+;\rL^2(\Gamma)) \cap \rL^2(\R_+;\rH_{\per}^{\frac{1}{2}}(\Gamma)) = \G_f.
\]
Then the lifting problem \eqref{eq:lifting probl} admits a unique solution $(\bu_*,\pi_*)$ such that
\[
    \mre^{\mu t}\bu_* \in \rH^1(\R_+;\rL^2(\Omega_f)^3) \cap \rL^2(\R_+;\rH_{\per}^2(\Omega_f)^3) = \E_{f,\bu},
\]
and
\[
    \mre^{\mu t}\pi_* \in \rL^2(\R_+;\Dot{\rH}_{\per}^1(\Omega_f)) = \E_{f,\pi}, \qquad \pi_*|_\Gamma \in \rH^{\frac{1}{4}}(\R_+;\rL^2(\Gamma)).
\]

Moreover, there exists a constant $C>0$, independent of $q^+$, such that
\[
    \| \mre^{\mu t}\bu_* \|_{\E_{f,\bu}} + \| \mre^{\mu t}\pi_* \|_{\E_{f,\pi}} \le C \, \| \mre^{\mu t} q^+ \|_{\G_f}.
\]
\end{prop}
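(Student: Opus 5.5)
The plan is to reduce to \autoref{lem:opt data for lifting} by an exponential weight transformation. Set $\bu_\mu \coloneqq \mre^{\mu t}\bu_*$ and $\pi_\mu \coloneqq \mre^{\mu t}\pi_*$. If $(\bu_*,\pi_*)$ solves \eqref{eq:lifting probl}, a direct computation shows that $(\bu_\mu,\pi_\mu)$ solves
\begin{equation*}
    \del_t \bu_\mu - \mu \bu_\mu - \nabla \cdot \bsigma_f(\bu_\mu,\pi_\mu) = 0, \qquad \nabla \cdot \bu_\mu = 0 \tin \R_+ \times \Omega_f.
\end{equation*}
The boundary conditions become
\begin{equation*}
    \bsigma_f(\bu_\mu,\pi_\mu)\bn + \beta \sum_{i=1}^2 (\bu_\mu \cdot \btau_i)\btau_i = -\mre^{\mu t} q^+ \bn \ton \Gamma, \qquad \bu_\mu = 0 \ton \Gamma_f^b.
\end{equation*}
We also have $\bu_\mu(0) = 0$, and the periodic conditions \eqref{NewBC} are unchanged. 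The boundary datum $\bg \coloneqq -\mre^{\mu t}q^+\bn$ lies in $\G_f$ by assumption, since multiplication by the constant vector $\bn$ is harmless. The initial datum $0$ trivially lies in $\rX_{f,\sigma}$ and is compatible. Conversely, any solution of the shifted problem yields a solution of \eqref{eq:lifting probl} by multiplying with $\mre^{-\mu t}$. Hence existence, uniqueness and the estimate are all equivalent to the optimal data statement for the shifted problem.

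The only difference from \autoref{lem:opt data for lifting} is the term $-\mu\bu_\mu$, i.e.\ the problem is posed with $A_{\max}$ replaced by $A_{\max}+\mu$. I will rerun Steps~1--4 of that proof with the shift $\omega$ replaced by $-\mu$. Steps~1--3 produce an isomorphism for the shifted map $\Psi_f$ with $\omega$ large. Step~4 removes the shift provided $-\mu$ exceeds the (negative of the) spectral bound of the Stokes operator with the principal boundary condition, i.e.\ provided $\mu<\mu_1=-s(A_0)$. This is exactly the hypothesis.

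More concretely, I would split $\bu_\mu = \bu^{(1)} + \bu^{(2)}$. First choose $\omega_0$ large and take $\bu^{(1)}$ solving the problem with $\del_t + \omega_0$ and data $(0,\bg,0)$. This is available from Steps~1--3, giving $\|\bu^{(1)}\|_{\E_{f,\bu}} \le C\|\bg\|_{\G_f}$. The remainder $\bu^{(2)}$ then solves an equation with homogeneous boundary condition, zero initial value and right-hand side $(\omega_0+\mu)\bu^{(1)} \in \rL^2(\R_+;\rL^2_\sigma(\Omega_f))$, after applying $\bP$. Its generator is $-(A+\dots)-\mu$ with the full BJS condition. Maximal $\rL^2$-regularity on the half-line $\R_+$ holds in the Hilbert space setting as soon as this generator is sectorial with exponentially stable semigroup. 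That reduces to $s(-A_{0}+\mu)<0$, after the BJS term is treated as in \autoref{prop:sect of fluid & qbr}. The estimate then follows by summing the two contributions and inverting the weight. The trace statement for $\pi_*|_\Gamma$ follows by reading off $\pi = \bg\cdot\bn + 2\mu_f \bD(\bu)\bn\cdot\bn$ on $\Gamma$, where both terms lie in $\rH^{1/4}(\R_+;\rL^2(\Gamma))$.

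The main obstacle is this last spectral point. The hypothesis is phrased through $s(A_0)$ with homogeneous Neumann condition, whereas the relevant operator carries the Beavers--Joseph--Saffman term. I would first note that the BJS term is dissipative: testing the resolvent equation gives an extra $\beta\|\bu\cdot\btau\|^2_{\rL^2(\Gamma)}\ge0$ on the left. So the numerical range, and hence the spectral bound of the self-adjoint BJS Stokes operator, can only move further left than that of $A_0$. Both operators are self-adjoint with compact resolvent, so the bound is the bottom of the Rayleigh quotient. Monotonicity of the quadratic forms, with the BJS form at least the Neumann form, then gives the needed bound $s\le s(A_0)=-\mu_1<-\mu$ rigorously.
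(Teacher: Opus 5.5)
Your proposal is correct and follows essentially the same route as the paper: you pass to $\mre^{\mu t}\bu_*$, view the result as the optimal-data problem of \autoref{lem:opt data for lifting} for the shifted operator, and use $\mu<\mu_1$ to remove the shift. Your explicit splitting into a large-shift part plus a homogeneous-BJS part, together with the form comparison showing that the spectral bound of the Beavers--Joseph--Saffman Stokes operator is at most $s(A_0)$, makes rigorous a step the paper only asserts; the remaining issues are harmless sign slips (the shifted generator carries $+\mu$, and on $\Gamma$ one has $\pi = 2\mu_f\bD(\bu)\bn\cdot\bn - \bg\cdot\bn$).
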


\begin{proof}
We begin with the case $\mu = 0$. 
In this case, the lifting problem \eqref{eq:lifting probl} is an instance of the inhomogeneous Stokes problem \eqref{eq:inhom Stokes probl} with data
\[
    \bff = 0, \enspace \bu_0 = 0, \tand \bg = q^+ \bn.
\]
Since $q^+$ satisfies the regularity assumptions of the lemma, it follows that $\bg \in \G_f$. 
Thus, \autoref{lem:opt data for lifting} applies directly and yields the existence, uniqueness, and the stated regularity of the solution $(\bu_*,\pi_*)$, together with the corresponding estimate.

\medskip

We now turn to the case $\mu \in (0,\mu_1)$. 
To capture exponential decay, we consider the exponentially weighted variables
\[
    \widetilde{\bu}_*(t) \coloneqq \mre^{\mu t}\bu_*(t), \qquad \widetilde{\pi}_*(t) \coloneqq \mre^{\mu t}\pi_*(t).
\]
A straightforward computation shows that $(\widetilde{\bu}_*,\widetilde{\pi}_*)$ satisfies a Stokes-type system of the same form as~\eqref{eq:lifting probl}, but with the modified operator
\[
    \del_t \widetilde{\bu}_* + (A_{\max} + \mu)\widetilde{\bu}_*,
\]
and boundary data $\mre^{\mu t} q^+ \bn$.

By the discussion in the proof of \autoref{lem:opt data for lifting}, the optimal data property remains valid for the shifted operator $A_{\max} + \mu$, provided $\mu$ is smaller than the spectral gap of the Stokes operator. 
More precisely, since $\mu_1 = -s(A_0) > 0$, the operator $A_{\max} + \mu$ retains the same mapping properties for all $\mu \in [0,\mu_1)$.

Therefore, an application of \autoref{lem:opt data for lifting} to the weighted problem yields
\[
    \|\widetilde{\bu}_*\|_{\E_{f,\bu}} + \|\widetilde{\pi}_*\|_{\E_{f,\pi}} \le C \|\mre^{\mu t} q^+\|_{\G_f},
\]
which is exactly the desired estimate for $(\bu_*,\pi_*)$.

\medskip

This proves the assertion of the lemma.
\end{proof}

\subsection{Main result: global-in-time strong well-posedness of the Cauchy problem.}
\ 

We are now in a position to state and prove the main result of this section, which establishes global-in-time strong well-posedness of the Cauchy problem \eqref{eq:FPSI Cauchy problem}--\eqref{init2} with inhomogeneous boundary data $q^+\bn$. 
In addition, we show that the solution inherits exponential decay from the boundary data $q^+$, provided the latter decays at an admissible rate.

\begin{thm}[{\bf{Global strong well-posedness and exponential decay}}]\label{thm:solvability Cauchy problem}
Let 
\[
    w_0 \in \rH_\per^4(\Gamma), \quad v_0 \in \rH_\per^2(\Gamma), \quad \qbr_0 \in \rH_\per^1(\Gamma), \quad \qbar_0 \in \rL^2(\Gamma),
\]
and $\bu_0 \in \rX_{f,\sigma}$, where $\rX_{f,\sigma}$ has been introduced in \eqref{eq:init data fluid vel}.

Let $\mu_* \coloneqq \min\{\mu_0,\mu_1\}$, where $\mu_0$ is given in \autoref{cor:exp decay} and $\mu_1 = -s(A_0)$ as in \autoref{prop:concrete lifting}. 
For $\mu \in [0,\mu_*)$, assume that the boundary data satisfies
\[
    \mre^{\mu t} q^+ \in \rH^{\frac{1}{4}}(\R_+;\rL^2(\Gamma)) \cap \rL^2(\R_+;\rH_{\per}^{\frac{1}{2}}(\Gamma)) = \G_f.
\]

Then the Cauchy problem \eqref{eq:FPSI Cauchy problem}--\eqref{init2} admits a unique global solution 
\[
    (w,v,\qbr,\qbar,\bu) \ton [0,\infty),
\]
satisfying
\begin{equation*}
    \begin{aligned}
        \mre^{\mu t} w 
        &\in \rH^1(0,\infty;\rH_\per^4(\Gamma)) \eqqcolon \E_w,\\
        \mre^{\mu t} v 
        &\in \rH^1(0,\infty;\rL^2(\Gamma)) \cap \rL^2(0,\infty;\rH_\per^4(\Gamma)) \eqqcolon \E_v,\\
        \mre^{\mu t} \qbr 
        &\in \rH^1(0,\infty;\rL^2(\Gamma)) \cap \rL^2(0,\infty;\rH_\per^2(\Gamma)) \eqqcolon \E_{\qbr},\\
        \mre^{\mu t} \qbar 
        &\in \rH^1(0,\infty;\rL^2(\Gamma)) \eqqcolon \E_{\qbar},\\
        \mre^{\mu t} \bu 
        &\in \rH^1(0,\infty;\rL_\sigma^2(\Omega_f)) \cap \rL^2(0,\infty;\rH_\per^2(\Omega_f)^3 \cap \rL_\sigma^2(\Omega_f)) = \E_{f,\bu}.
    \end{aligned}
\end{equation*}

Moreover, the solution is continuous in time,
\begin{equation*}
    (w,v,\qbr,\qbar,\bu) \in \rC\bigl([0,\infty); \rH_\per^4(\Gamma) \times \rH_\per^2(\Gamma) \times \rH_\per^1(\Gamma) \times \rL^2(\Gamma) \times \rX_{f,\sigma}\bigr),
\end{equation*}
and satisfies the estimate
\begin{equation*}
    \begin{aligned}
        &\quad \|\mre^{\mu (\cdot)} w\|_{\E_w} + \|\mre^{\mu (\cdot)} v\|_{\E_v} + \|\mre^{\mu (\cdot)} \qbr\|_{\E_{\qbr}} + \|\mre^{\mu (\cdot)} \qbar\|_{\E_{\qbar}} + \|\mre^{\mu (\cdot)} \bu\|_{\E_{f,\bu}}\\
        &\le C \Bigl(\|\mre^{\mu (\cdot)} q^+\|_{\G_f} + \|w_0\|_{\rH_\per^4(\Gamma)} + \|v_0\|_{\rH_\per^2(\Gamma)} + \|\qbr_0\|_{\rH_\per^1(\Gamma)} + \|\qbar_0\|_{\rL^2(\Gamma)} + \|\bu_0\|_{\rX_{f,\sigma}}\Bigr).
    \end{aligned}
\end{equation*}
\end{thm}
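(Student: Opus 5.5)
The plan is to reduce the theorem to maximal $\rL^2$-regularity on the half-line for the abstract Cauchy problem \eqref{A system} governed by $\cA$, using the decomposition $\bu=\widetilde{\bu}+\bu_*$ introduced above. First, \autoref{prop:concrete lifting} with the given $\mu<\mu_*\le\mu_1$ provides $(\bu_*,\pi_*)$ with $\mre^{\mu t}\bu_*\in\E_{f,\bu}$, $\mre^{\mu t}\pi_*\in \E_{f,\pi}$, and norm bounded by $C\|\mre^{\mu t}q^+\|_{\G_f}$. Since $\bu_*(0)=0$, the remaining unknown $\bv=(w,v,\qbr,\qbar,\widetilde{\bu})$ satisfies \eqref{eq:lifted probl} with the homogeneous interface condition \eqref{eq:hom bc}, and has initial value $\bv(0)=(w_0,v_0,\qbr_0,\qbar_0,\bu_0)$. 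This is exactly $\bv'+\cA\bv=f$ on $\rX_0$ with
\[
    f=\bigl(0,\,0,\,72q^+-12\,\bu_*|_\Gamma\cdot\bn,\,12q^+,\,0\bigr).
\]
Because $\bu_*$ satisfies its own Stokes system and boundary condition, no volume source appears in the fluid component.

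Second, I will verify the regularity of the data. The relevant quantities are $\mre^{\mu t}q^+\in\rL^2(\R_+;\rL^2(\Gamma))$, which follows from $\G_f$, and $\mre^{\mu t}\bu_*|_\Gamma\cdot\bn\in \rL^2(\R_+;\rH^{3/2}(\Gamma))\subset\rL^2(\R_+;\rL^2(\Gamma))$, which follows from the trace theorem and $\mre^{\mu t}\bu_*\in\rL^2(\R_+;\rH^2)$. Hence $\mre^{\mu t}f\in\rL^2(\R_+;\rX_0)$, with norm at most $C\|\mre^{\mu t}q^+\|_{\G_f}$.

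Third comes the weighted maximal regularity. Setting $\bv_\mu=\mre^{\mu t}\bv$ gives
\[
    \bv_\mu'+(\cA-\mu)\bv_\mu=\mre^{\mu t}f .
\]
By \autoref{cor:exp decay}, $\cA$ is sectorial with angle $<\pi/2$ and the semigroup decays with rate $\mu_0$. Consequently $s(-\cA)\le-\mu_0$, so $\cA-\mu$ is invertible for $\mu<\mu_0$, and it remains sectorial with angle $<\pi/2$ (perhaps with a slightly larger angle, obtained by combining the resolvent bounds for large $|\lambda|$ with compactness of the remaining region of the closed right half-plane). On the Hilbert space $\rX_0$, sectoriality with angle $<\pi/2$ gives maximal $\rL^2$-regularity on bounded intervals. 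Invertibility of $\cA-\mu$ upgrades this to $\R_+$. This yields a unique
\[
    \bv_\mu\in\rH^1(\R_+;\rX_0)\cap\rL^2(\R_+;\rD(\cA)),
\]
provided the initial value lies in the trace space $(\rX_0,\rD(\cA))_{1/2,2}$. The estimate follows, and continuity in time into the trace space follows from the embedding $\rH^1\cap\rL^2(\rD(\cA))\hookrightarrow \rC_b(\R_+;(\rX_0,\rD(\cA))_{1/2,2})$. Reading off the components gives the spaces $\E_w,\E_v,\E_{\qbr},\E_{\qbar}$ and $\E_{f,\bu}$ for $\widetilde{\bu}$. Adding back $\bu_*$ gives the claim for $\bu$. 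Uniqueness follows from uniqueness for both subproblems: the difference of two solutions solves the homogeneous problem with zero data.

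The main obstacle is identifying the trace space
\[
    (\rX_0,\rD(\cA))_{1/2,2}\supseteq \rH_\per^4(\Gamma)\times\rH_\per^2(\Gamma)\times\rH_\per^1(\Gamma)\times\rL^2(\Gamma)\times\rX_{f,\sigma},
\]
since $\rD(\cA)$ is not diagonal. I would argue blockwise.

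\emph{Plate block.} The space $\rD(A_p^{1/2})=\rH^4_\per\times\rH^2_\per$ is given in the proof of \autoref{prop:sect of the op matrix}. The $\qbar$ component is $\rL^2$ because $\rD(\cA)$ imposes no regularity on it.

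\emph{Fluid--pressure-jump block.} Here the domain $\rD(\bA_f^{\rI})$ differs from $\rD(\bA_{f,0}^{\rI})$ only through the Neumann-type interface condition $L=\Phi$, whose order is $1$. For the $\tfrac12$-trace space of this second-order problem that condition is not seen, since the threshold for a first-order condition is $\theta>3/4$. So I expect
\[
    (\rX_{f,0}^{\rI},\rD(\bA_f^{\rI}))_{1/2,2}=(\rX_{f,0}^{\rI},\rX_{f,1}^{\rI})_{1/2,2}=\rH^1_\per(\Gamma)\times\rX_{f,\sigma},
\]
using \autoref{lem:interpol spaces} (with $\rB^1_{22}=\rH^1$). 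To justify the first equality, I would use the representation of \autoref{lem:representation of the coupled op} together with the fact that $Q_0$ maps $\rX^{\rI}_{f,1/2}$ into $\rX^{\rI}_{f,\gamma}$ for $\gamma<3/4$, so the two domains agree modulo the range of $Q_0$, which is absorbed at level $\tfrac12$. Alternatively, I would construct directly a bounded right inverse via the lifting $L_0$.

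\emph{Off-diagonal terms.} Finally, the off-diagonal couplings in $\cA$ are relatively bounded with $\rD(\cA)=\rD(A_p)\times\rD(\bA_f^{\rII})$, so the trace space of the product is the product of the trace spaces. This completes the identification, and with it the proof.
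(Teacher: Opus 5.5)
Your proposal is correct and follows the same route as the paper. Both lift the data $q^+$ with $\bu_*$ from \autoref{prop:concrete lifting}, rewrite the remainder as $\ba'+\cA\ba=\cF$ with $\cF\in\rL^2(\R_+;\rX_0)$, and conclude via maximal $\rL^2$-regularity of the invertible sectorial operator $\cA$, the trace-space embedding for continuity, and the lifting estimate. You also make explicit two points the paper only sketches: the weighted case via the shifted operator $\cA-\mu$, where the paper just says it ``follows analogously'', and the identification of $(\rX_0,\rD(\cA))_{1/2,2}$ with the initial-data space despite the non-diagonal domain, where the paper simply cites Amann.
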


\begin{proof}
We first treat the case $\mu = 0$, and then extend the argument to $\mu > 0$.

\medskip

\noindent
\emph{\underline{Step 1}: Reduction via lifting.}
We begin by considering the lifting problem \eqref{eq:lifting probl}. 
By \autoref{prop:concrete lifting}, there exists a unique solution $(\bu_*,\pi_*) \in \E_{f,\bu} \times \E_{f,\pi}$ on $(0,\infty)$. 

Defining $\Tilde{\bu} \coloneqq \bu - \bu_*$, we reduce the original problem \eqref{eq:FPSI Cauchy problem}--\eqref{init2} to the lifted problem \eqref{eq:lifted probl}, subject to the boundary conditions \eqref{NewBC} and the homogeneous interface condition \eqref{eq:hom bc}. 

Introducing the principal variable
\[
    \ba = (w,v,\qbr,\qbar,\Tilde{\bu}),
\]
we can rewrite the system in abstract form as
\[
    \ba'(t) + \cA \ba(t) = \cF(t), \tfor t \in (0,\infty), \qquad \ba(0) = \ba_0,
\]
where
\[
    \cF(t) 
    = \begin{pmatrix}
        0\\
        0\\
        72 q^+ - 12 \left.\bu_* \right|_{\Gamma} \cdot \bn\\
        12 q^+\\
        0
    \end{pmatrix}.
\]

\medskip

\noindent
\emph{\underline{Step 2}: Regularity of the right-hand side.}
To apply maximal regularity for $\cA$, we must verify that $\cF \in \rL^2(\R_+;\rX_0)$, where we recall the ground space $\rX_0$ from  \eqref{eq:ground space}.

First, the assumptions on $q^+$ imply
\[
    q^+ \in \rL^2(\R_+;\rL^2(\Gamma)).
\]

Next, we estimate the trace term involving $\bu_*$. 
From \autoref{prop:concrete lifting}, we have
\[
    \bu_* \in \rL^2(\R_+;\rH_{\per}^2(\Omega_f)^3).
\]
By continuity of the trace operator, we get
\[
    \left.\bu_* \right|_{\Gamma} \in \rL^2(\R_+;\rH^{\frac{3}{2}}(\Gamma)^3).
\]

Moreover, arguing as in \cite[Prop.~6.4]{DHP:07}, we obtain the time regularity
\[
    \left.\bu_* \right|_{\Gamma} \in \rH^{\frac{3}{4}}(\R_+;\rL^2(\Gamma)^3).
\]

Thus,
\[
    \left.\bu_* \right|_{\Gamma} \in \rH^{\frac{3}{4}}(\R_+;\rL^2(\Gamma)^3) \cap \rL^2(\R_+;\rH^{\frac{3}{2}}(\Gamma)^3) \hookrightarrow \rL^2(\R_+;\rL^2(\Gamma)^3).
\]

In particular,
\[
    \|12 \left.\bu_* \right|_{\Gamma} \cdot \bn\|_{\rL^2(\R_+;\rL^2(\Gamma))}\le C \|\bu_*\|_{\E_{f,\bu}}.
\]
Hence, $\cF \in \rL^2(\R_+;\rX_0)$.

\medskip

\noindent
\emph{\underline{Step 3}: Maximal regularity and solution construction.}
By the sectoriality of $\cA$, see \autoref{cor:exp decay}, and the resulting maximal regularity in the present Hilbert space case, see \autoref{lem:sect & max reg on Hilbert spaces}, the abstract Cauchy problem admits a unique solution $\ba$ with the asserted regularity.

More precisely, the regularity of $(w,v,\qbr,\qbar)$ follows directly from maximal regularity. 
For the fluid velocity, we use the decomposition
\[
    \bu = \Tilde{\bu} + \bu_*,
\]
and combine the regularity of $\Tilde{\bu}$ with that of $\bu_*$ from \autoref{prop:concrete lifting}.

\medskip

\noindent
\emph{\underline{Step 4}: Continuity in time.}
The continuity of the solution in time with values in the trace space follows from the standard embedding of maximal regularity spaces into the corresponding trace space, which coincides with the space of initial data; see \cite[Thm.~III.4.5.10]{Ama:95}.

\medskip

\noindent
\emph{\underline{Step 5}: A priori estimate.}
Using the decomposition $\bu = \Tilde{\bu} + \bu_*$, we estimate
\[
    \|\bu\|_{\E_{f,\bu}} \le \|\Tilde{\bu}\|_{\E_{f,\bu}} + \|\bu_*\|_{\E_{f,\bu}}.
\]
All components except $\bu_*$ are controlled by applying maximal regularity to the abstract problem. Together with the estimate for $\cF$, this yields the estimate
\begin{equation} \label{EXinit}
    \|\ba\|_{\E} \le C\bigl(\|q^+\|_{\G_f} + \|\ba_0\|_{\rX_{\mathrm{init}}} + \|\bu_*\|_{\E_{f,\bu}}\bigr),
\end{equation} 
where the spaces are defined as
\begin{equation*}
    \E \coloneqq \E_w \times \E_v \times \E_{\qbr} \times \E_{\qbar} \times \E_{f,\bu} \quad \text{and} \quad \rX_{\mathrm{init}} \coloneqq \rH_\per^4(\Gamma) \times \rH_\per^2(\Gamma) \times \rH_\per^1(\Gamma) \times \rL^2(\Gamma) \times \rX_{f,\sigma}.
\end{equation*}
The norm on the left-hand side of \eqref{EXinit} is typically referred to as the maximal regularity norm.
Written in detail, this norm and the estimate read:
\begin{equation*}
    \begin{aligned}
        &\quad \| w \|_{\E_w} + \| v \|_{\E_v} + \| \qbr \|_{\E_{\qbr}} + \| \qbar \|_{\E_{\qbar}} + \| \Tilde{\bu}\|_{\E_{f,\bu}}\\
        &\le C\bigl(\| q^+ \|_{\G_f} + \| w_0 \|_{\rH_\per^4(\Gamma)} + \| v_0 \|_{\rH_\per^2(\Gamma)} + \| \qbr_0 \|_{\rH_\per^1(\Gamma)} + \| \qbar_0 \|_{\rL^2(\Gamma)} + \| \bu_0 \|_{\rX_{f,\sigma}} + \| \bu_*\|_{\E_{f,\bu}}\bigr).
    \end{aligned}
\end{equation*}

To bound $\bu_*$, we invoke \autoref{prop:concrete lifting}, which gives
\[
    \|\bu_*\|_{\E_{f,\bu}} \le C \|q^+\|_{\G_f}.
\]

Combining these estimates yields the desired bound:
\begin{equation*}
    \begin{aligned}
        &\quad \| w \|_{\E_w} + \| v \|_{\E_v} + \| \qbr \|_{\E_{\qbr}} + \| \qbar \|_{\E_{\qbar}} + \| \bu \|_{\E_{f,\bu}}\\
        &\le C\bigl(\| q^+ \|_{\G_f} + \| w_0 \|_{\rH_\per^4(\Gamma)} + \| v_0 \|_{\rH_\per^2(\Gamma)} + \| \qbr_0 \|_{\rH_\per^1(\Gamma)} + \| \qbar_0 \|_{\rL^2(\Gamma)} + \| \bu_0 \|_{\rX_{f,\sigma}}\bigr).
    \end{aligned}
\end{equation*}

\medskip

\noindent
\emph{\underline{Step 6}: Exponential weights.}
For $\mu > 0$, the weighted estimate follows analogously. 
By \autoref{prop:concrete lifting},
\[
    \| \mre^{\mu t} \bu_* \|_{\E_{f,\bu}} \le C \| \mre^{\mu t} q^+ \|_{\G_f}.
\]
The trace estimate carries over to the weighted setting:
\begin{equation*}
    \| \mre^{\mu t}\left.12\bu_* \right|_{\Gamma} \cdot \bn \|_{\rL^2(\R_+;\rL^2(\Gamma))} \le C \cdot \| \mre^{\mu t} \bu_* \|_{\E_{f,\bu}},
\end{equation*}
and repeating the above argument yields the desired exponential estimate.

\medskip

This completes the proof.
\end{proof}

The above theorem shows that the regularized FPSI system exhibits the characteristic behavior of a parabolic evolution problem. 
In particular, the system is globally well-posed, it regularizes the solution over time, and it dissipates energy at an exponential rate.

From a physical and analytical perspective, this behavior can be understood as follows. 
The viscosity of the fluid and the damping in the viscoelastic plate contribute to the dissipation of energy, while the elliptic nature of the spatial operators provides smoothing effects. 
Moreover, the coupling between the fluid and the structure enters as a lower-order contribution and is therefore controlled within the perturbative framework. 
Taken together, these mechanisms yield a stable evolution with solutions that decay exponentially in time.

\section{Numerics}\label{sec:numerics}

To numerically validate our model, we compare simulation results for two fluid--poroelastic structure interaction (FPSI) configurations involving poroelastic media. 
In both problems the free fluid domain is set in $\mathbb{R}^2$:

\begin{enumerate}[label=\Roman*.]
    \item \textbf{Stokes flow coupled with a fully averaged Kirchhoff poroelastic plate:}
    This is the model we introduced in the present manuscript -- it consists of a $2D$ incompressible, viscous fluid governed by the time-dependent Stokes equations interacting with a $1D$ poroelastic Kirchhoff plate described by the fully averaged plate model \eqref{eq:fully averaged plate model}. The corresponding coupled problem is given by \eqref{eq:FPSI probl} and \eqref{DynCouplingTraction}.

    \item \textbf{Stokes flow coupled with a bulk Biot medium:}
    We consider the interaction between a $2D$ time-dependent Stokes fluid and a $2D$ bulk poroelastic medium modeled by the classical Biot equations. The associated coupled formulation is specified in \eqref{Biot_sys}--\eqref{BC_BiotPlate1}.
\end{enumerate}
In both models, the free fluid flow -- governed by the Stokes equations -- is driven by a time-dependent horizontal pressure drop in a two-dimensional cylindrical domain whose upper (lateral) boundary is poroelastic. 
See \autoref{fig:2d_model_comparison_aligned}.
The fluid flow is coupled to the poroelastic medium through a set of kinematic and dynamic interface conditions. 
A symmetry boundary condition is imposed at the bottom of the fluid domain $\Gamma_{\textrm{sym}}$, see  \eqref{BC_BiotPlate1} right.
The poroelastic medium is fixed at the left and right boundaries (clamped in case of the plate), while a drained boundary condition (i.e., zero pore pressure) is prescribed on its top surface.

We compare the results of the two models for two different thicknesses of the poroelastic layer and observe that:
\begin{itemize}
    \item The numerical results (pressure, displacement, and pressure jump) of the two models are in excellent agreement.
    \item As the thickness of the poroelastic medium decreases, the differences between the models vanish, and their results become indistinguishable.
    \item The wave speed of the traveling wave generated by the inlet/outlet pressure data and supported by the poroelastic structural walls is in excellent agreement with the Moens--Korteweg pulse wave velocity, given in \eqref{eq:MK} below.
\end{itemize}

\begin{figure}[h!]
\centering
    \begin{subfigure}[b]{0.48\textwidth}
        \centering
        \begin{tikzpicture}[>=Stealth, line join=round, line cap=round, scale=0.85]
            \def\L{4.5}
            \def\H{0.0}
            \def\Rf{2.5}
            \def\arrL{0.6}

            \path[use as bounding box] (-2.2,-3.6) rectangle (\L+2.2,\H+1.6);

            \draw[gray!60, fill=gray!5] (0,-\Rf) rectangle (\L,\H);
            \node[black] at (\L/2, {(\H-\Rf)/2}) {\Large $\Omega_f$};

            \draw[black, line width=1.8pt] (0,\H) -- (\L,\H);

            \node (GammaLabel) at (\L+1.25,\H+0.12) {\Large $\Gamma$};
            \draw[thick, ->] (GammaLabel) -- (\L-0.05,\H);

            \node (GplusLabel) at (1.1,\H+0.95) {\Large $\Gamma_+$ };
            \draw[thick, ->] (GplusLabel) -- (1.1,\H+0.05);

            \node (GminusLabel) at (1.1,\H-1.00) {\Large $\Gamma_-$};
            \draw[thick, ->] (GminusLabel) -- (1.1,\H-0.05);

            \node (GsymLabel) at (\L/2,-\Rf-0.95) {\Large $\Gamma_{\mathrm{sym}}$};
            \draw[thick, ->] (GsymLabel) -- (\L/2,-\Rf+0.05);
            
            \node (GinLabel) at (-1.45,-0.45) {\Large $\Gamma_{\mathrm{in}}$};
            \draw[thick, ->] (GinLabel) -- (0.05,-0.45);

            \node (GoutLabel) at (\L+1.45,-0.45) {\Large $\Gamma_{\mathrm{out}}$};
            \draw[thick, ->] (GoutLabel) -- (\L-0.05,-0.45);

            \draw[thick] (-0.1,\H) -- (0.1,\H) node[left=5pt] {};
            \draw[thick] (-0.1,0) -- (0.1,0) node[left=5pt] {$0$};
            \draw[thick] (-0.1,-\Rf) -- (0.1,-\Rf) node[left=5pt] {$-R_f$};

            \foreach \y in {-2.0,-1.0} {
                \draw[thick, ->] (-\arrL-0.2,\y) -- (-0.2,\y);
                \draw[thick, ->] (\L+0.2,\y) -- (\L+\arrL+0.2,\y);
            }

            \begin{scope}[shift={(-2.5,-2)}]
                \draw[thick, ->] (0,0) -- (0.5,0) node[right] {$\boldsymbol{\tau}$};
                \draw[thick, ->] (0,0) -- (0,0.5) node[above] {$\boldsymbol{n}$};
            \end{scope}
        \end{tikzpicture}
    \end{subfigure}
    \hfill
    \begin{subfigure}[b]{0.48\textwidth}
        \centering
        \begin{tikzpicture}[>=Stealth, line join=round, line cap=round, scale=0.85]
            \def\L{4.5}
            \def\H{0.8}
            \def\Rf{2.5}
            \def\arrL{0.6}

            \path[use as bounding box] (-2.2,-3.6) rectangle (\L+2.2,\H+1.6);

            \draw[gray!60, fill=gray!5] (0,-\Rf) rectangle (\L, 0);
            \node[black] at (\L/2, -\Rf/2) {\Large $\Omega_f$};

            \draw[black, fill=gray!30, line width=1.2pt] (0,0) rectangle (\L, \H);
            \node[black] at (\L/2, \H/2) {\Large $\Omega_p$};

            \node (GplusR) at (1.1,\H+0.95) {\Large $\Gamma_+$};
            \draw[thick, ->] (GplusR) -- (1.1,\H-0.02);

            \node (GminusR) at (1.1,-0.95) {\Large $\Gamma_-$};
            \draw[thick, ->] (GminusR) -- (1.1,0.02);

            \node (GsymR) at (\L/2,-\Rf-0.95) {\Large $\Gamma_{\mathrm{sym}}$};
            \draw[thick, ->] (GsymR) -- (\L/2,-\Rf+0.05);

            \node (GinR) at (-1.45,-0.45) {\Large $\Gamma_{\mathrm{in}}$};
            \draw[thick, ->] (GinR) -- (0.05,-0.45);

            \node (GoutR) at (\L+1.45,-0.45) {\Large $\Gamma_{\mathrm{out}}$};
            \draw[thick, ->] (GoutR) -- (\L-0.05,-0.45);

            \node (GinR) at (-1.45,-0.45+\H) {\Large $\Gamma^{\mathrm{L}}$};
            \draw[thick, ->] (GinR) -- (0.05,-0.45+\H);

            \node (GoutR) at (\L+1.45,-0.45+\H) {\Large $\Gamma^{\mathrm{R}}$};
            \draw[thick, ->] (GoutR) -- (\L-0.05,-0.45+\H);

            \draw[thick] (-0.1,\H) -- (0.1,\H) node[left=5pt] {$H$};
            \draw[thick] (-0.1,0) -- (0.1,0) node[left=5pt] {$0$};
            \draw[thick] (-0.1,-\Rf) -- (0.1,-\Rf) node[left=5pt] {$-R_f$};

            \foreach \y in {-2.0,-1.0} {
                \draw[thick, ->] (-\arrL-0.2,\y) -- (-0.2,\y);
                \draw[thick, ->] (\L+0.2,\y) -- (\L+\arrL+0.2,\y);
            }
        \end{tikzpicture}
    \end{subfigure}

\caption{The $2D$ computational domains for the Stokes-plate problem (left) and the Stokes--Biot problem (right).  
While the Kirchhoff poroelastic plate equations are defined on the one-dimensional domain $\Gamma$, the equations incorporate the information about the physical thickness of the domain $H$, which is equal to the thickness of the Biot domain~$\Omega_p$. 
In both cases the flow is driven by the pressure gradient between $\Gamma_{\textrm{in}}$ and $\Gamma_{\textrm{out}}$.}
\label{fig:2d_model_comparison_aligned}
\end{figure}

To present those results we start by specifying the Stokes--Biot coupled problem, i.e., Problem II above. 

\medskip
\noindent{{\bfseries Problem II: Stokes flow coupled with a bulk Biot medium.}}

In this coupled problem, both the fluid domain $\Omega_f$ and the domain of the equations of the Biot poroelastic medium $\Omega_p$ are of the same dimension.
The coupled problem combines the Biot equations of poroelasticity~\eqref{Biot_sys} with the Stokes equations \eqref{Stokes_sub2}, specified as follows:
\begin{equation}\label{Biot_sys}
    \left\{
    \begin{aligned}
        \rho_{b} \partial_{t}^2 \boldsymbol{\eta}-\nabla \cdot \boldsymbol{\sigma}_{b}(\boldsymbol \eta,q) + \gamma \boldsymbol\eta 
        &= \boldsymbol{F}_b, &&\tin (0,T) \times \Omega_f ,\\
        \partial_{t}(c_{0}  q +\alpha \nabla \cdot  \boldsymbol{\eta}) + \nabla \cdot \boldsymbol u_b 
        &= G_b, &&\tin (0,T) \times \Omega_p,\\
        \boldsymbol{u}_b 
        &= - \boldsymbol\kappa \nabla q,
        &&\tin (0,T) \times \Omega_p, 
    \end{aligned}
    \right.
\end{equation}
\begin{equation}\label{Stokes_sub2}
    \left\{
    \begin{aligned}
        \rho_f \del_t \bu - \nabla \cdot \bsigma_f(\bu,\pi)
        &= 0, &&\tin (0,T) \times \Omega_f ,\\
        \nabla \cdot \bu 
        &=0,  &&\tin (0,T) \times \Omega_f.
    \end{aligned}
    \right.
\end{equation}

This problem is supplemented with the following coupling conditions, assumed of the fluid-structure interface, denoted by $\Gamma^-$ in \autoref{fig:2d_model_comparison_aligned}:

\medskip
\noindent{{\bfseries Kinematic coupling conditions:}}
\begin{itemize}
    \item Continuity of normal components of fluid velocities relative to the poroelastic matrix  motion:
    \begin{equation*}
        \left.  \bu_p \cdot \bn \right|_{\Gamma^-} = \left. \bu \right|_{\Gamma^-} \cdot \bn - \left. \bv\right|_{\Gamma^-} \cdot \bn,
    \end{equation*}
    where $\bv = \partial_t \boldsymbol\eta$, and $\bn$ is the normal to the interface $\Gamma$ between $\Omega_f$ and $\Omega_p$. 
    \item Slip in the tangential component of fluid velocity (Beavers--Joseph--Saffman):
    \begin{equation*}
        \left.\bsigma_f(\bu,\pi)\right|_{\Gamma^-} \bn  \cdot \btau 
        = - \beta \left. \left(\bu - \bv \right) \right|_{\Gamma^-} \cdot \btau, 
    \end{equation*}
    where $\beta > 0$ is friction parameter, and $\btau$ is the tangent vector to $\Gamma$, aligned with the $x$-axis.
\end{itemize}
\medskip
\noindent{{\bfseries Dynamic coupling conditions:}}
\begin{itemize}
    \item Continuity of normal components of normal fluid stress (continuity of ``pressure''):
    \begin{equation*}
        \left.\bsigma_f(\bu,\pi)\right|_{\Gamma^-}  \bn  \cdot \bn = - \left. q \right|_{\Gamma^-},
    \end{equation*}
    where $\left. q \right|_{\Gamma^-}$ is  the trace of $q$ on $\Gamma^-$.

    \item Balance of normal stress at the fluid-structure interface: 
    \begin{equation*}
        \left. \bsigma_f(\bu,\pi)\right|_{\Gamma^-} \bn  
     -  \left. \bsigma_b(\bu,q)\right|_{\Gamma^-} \bn = 0.
    \end{equation*}
\end{itemize}

Consistent with Problems I and II, the following boundary conditions are prescribed for this problem:

\medskip
\noindent{{\bfseries Boundary Conditions:}}
\begin{itemize}
    \item {Biot medium:} We prescribe fluid pressure on the top boundary of $\Omega_p$ denoted by $\Gamma^+$, and zero Biot medium displacement at the left and right boundaries of $\Omega_p$ denoted by $\Gamma^{\mathrm{L}}$ and $\Gamma^{\textrm{R}}$:
    \begin{equation*}
    \begin{array}{lll}
        \left. q \right|_{\Gamma^+} = q^+ = 0 \ &{\textrm{and}} \ 
        &\left. \boldsymbol{\sigma}_b(\boldsymbol{\eta},q)\right|_{\Gamma^+} \bn= 0,
        \\
        \left. \boldsymbol\eta \right|_{\Gamma^{\textrm{L}}}= \left. \boldsymbol\eta \right|_{\Gamma^{\textrm{R}}} =0\ \ &{\textrm{and}} \ 
        &\left. \bu_b\right|_{\Gamma^{\textrm{L}}} \cdot (- \boldsymbol{\nu}) = 
        \left. \bu_b\right|_{\Gamma^{\textrm{R}}} \cdot \boldsymbol{\nu} = 0.
    \end{array}
    \end{equation*}
    Here $\boldsymbol\nu = \pm\boldsymbol{e}_1$ is used to denote the unit vector aligned with the horizontal axis, which is normal to the boundary $\Gamma^{\textrm{R}}$.
    \item Free fluid: {We prescribe the inlet and outlet normal stress boundary conditions, and at the bottom boundary, we prescribe the symmetry boundary conditions:}
    \begin{equation}\label{BC_BiotPlate1}
        \left\{
        \begin{aligned}
            \bsigma_f(\boldsymbol u,\pi)\boldsymbol n
            &= P_{\mathrm{in}}(t)\boldsymbol n, &&\ton \Gamma_{\mathrm{in}},\\
            \bsigma_f(\boldsymbol u,\pi)\boldsymbol n
            &= 0, &&\ton \Gamma_{\mathrm{out}},
        \end{aligned}
        \right.
        \tand
        \left\{
        \begin{aligned}
            \bsigma_f(\boldsymbol u,\pi)\boldsymbol n\cdot \boldsymbol \tau
            &= 0,\\
            \boldsymbol u\cdot\boldsymbol n
            &= 0,
        \end{aligned}
        \right. \ton \Gamma_\mathrm{sym}.
\end{equation}
\end{itemize}

\medskip
\noindent
{\bf{Initial Conditions.}}
The initial conditions for both problems assume zero structural displacement (and velocity), zero fluid velocities, and zero initial pressure. 

The values of the parameters used in the simulations are presented in \autoref{tab:param}.
\begin{table}[h]
    \centering
    \begin{tabular}{|l l|l l|}
        \hline
        \footnotesize Parameters &\footnotesize Values &\footnotesize Parameters &\footnotesize Values \\
        \hline
        \footnotesize Fluid radius $R_f$ (cm) &\footnotesize 0.5 &\footnotesize Length $L$ (cm) &\footnotesize 5 \\
        \footnotesize PE wall thickness $H$ (cm) &\footnotesize .01 and  .001  &  \footnotesize Fluid viscosity $\mu_f$ (g/cm s) &\footnotesize  0.035 \\
        \footnotesize Fluid density $\rho_f$ (g/cm$^3$) &\footnotesize 1  & \footnotesize PE wall density $\rho_b$ (g/cm$^3$) &\footnotesize 1.1  \\
        \footnotesize Lam\'{e} coeff.~$\mu_b$ (dyne/cm$^2$) &\footnotesize $5.58 \times 10^5$ &\footnotesize Lam\'{e} coeff.~$\lambda_b$ (dyne/cm$^2$) &\footnotesize $1.7 \times 10^6$ \\
        \footnotesize Permeability $\kappa$ (cm$^3$ s/g) & \footnotesize $1 \times 10^{-8}$ & \footnotesize Storage coeff.~$c_0$ (cm$^2$/dyne) &\footnotesize $1 \times 10^{-3}$ \\
       \footnotesize Biot--Willis constant $\alpha$ & \footnotesize 1 &\footnotesize Spring coeff.~$\gamma$ (dyne/cm$^4$) &\footnotesize $4 \times 10^6$ \\
\footnotesize Slip coeff.~$\beta$ & \footnotesize 1 & \footnotesize Bending stiffness $D$ (dyne/cm$^2$)& \footnotesize $1.5 \times 10^5$ \\
        \hline
    \end{tabular}
\caption{Geometry, fluid, and structure parameters used in the coupled FPSI problem}
\label{tab:param}
\end{table}

\medskip
\noindent{{\bfseries The example.}}
We consider a biologically relevant example in which the model parameters are drawn from a benchmark problem in FPSI describing blood flow in an artery with poroelastic walls;
see, for example, \cite{SBC:25} and the references therein.

We model a segment of an artery with poroelastic walls by the rectangular domain $\Omega = \Omega_f \cup \Omega_p \subset \mathbb{R}^2$, where
\begin{equation}\label{Domains}
    \Omega = [0,L]\times[-R_f,H], \quad \Omega_p = [0,L]\times[0,H], \tand \Omega_f = [0,L]\times[-R_f,0].
\end{equation}
See \autoref{fig:2d_model_comparison_aligned}.
Here, $L$ denotes the length of the arterial segment, and $R_f$ represents its radius. The fluid occupies the lower subdomain $\Omega_f$, with symmetric boundary conditions imposed at the bottom.
The interface $\Gamma$ is located at the upper boundary of $\Omega_f$, corresponding to $y = 0$.

The poroelastic medium is modeled as a thin layer of thickness $H$ occupying the region $\Omega_p$, situated above the fluid domain.
In the fully averaged Kirchhoff plate, the poroelastic region is denoted by $\Gamma$.

As mentioned earlier in \eqref{BC_BiotPlate1}, we consider a pressure-driven flow through the fluid domain with the outlet normal stress/pressure equal to zero, and the inlet normal component of the normal stress given by the following time-dependent  ``pressure'' pulse, with maximum pressure $P_{\max}$ on the time interval $[0,T_{\mathrm{pulse}}]$:
\begin{equation*}
    P_{\mathrm{in}}(t) = \frac{1}{2}P_{\max}\left(1 - \cos\left(2\pi t/T_{\mathrm{pulse}}\right)\right)1_{t < T_{\mathrm{pulse}}}(t),\ {\textrm{where}} \ P_{\max} = 13333 \, {\textrm{dyne/cm}}^2, T_{\mathrm{pulse}} = 3 \,\textrm{sec}.
\end{equation*}
The pressure $q$ on the top boundary of the poroelastic domain is prescribed to be zero, corresponding to the ``drained boundary condition'' $q^+ = 0$.

We highlight an important feature of the model: 
the viscoelastic parameters and the storage coefficient~$c_0$ in the bulk Biot model and the corresponding Biot plate model are linked through the following relations:
\[ 
    \kappa^p = \kappa, \quad \rho_p = \rho_b, \quad c_0^p = c_0 + \frac{\alpha^2}{\lambda + 2\mu}, \quad \alpha^p = \frac{2\alpha \mu}{\lambda + 2\mu}, \quad  D = \frac{\mu(\lambda + \mu)}{3(\lambda + 2\mu)}, \quad \gamma_p = \gamma, 
\]
where the quantities without a superscript denote the bulk Biot model parameters.

\medskip
\noindent{{\bfseries Discretization.}}
We discretize in space using finite elements. In particular:
\begin{itemize}
    \item In the fluid problem and full Biot problem, we use Taylor--Hood ($\mathbb P_2 -\mathbb P_1$) elements for the pairs $(\boldsymbol u, \pi)$ and $ (\boldsymbol \xi, q)$;
    \item For the plate filtration velocity, we use discontinuous $DG_0$ elements;
    \item For everywhere else, we use piecewise linear $\mathbb P_1$ elements.
\end{itemize}

The computational domain for the fluid subproblem is discretized by means of a conforming triangulation generated from a structured $301 \times 26$ point grid. 
For the full linear Biot model (Problem II), the poroelastic structure is discretized  using a mesh based on a $301 \times 4$ point grid.

Time integration in both configurations (Problems I and II) is carried out using a fully implicit, monolithic backward Euler scheme with a uniform time step size $\Delta t = 5 \times 10^{-5}$. 
At each time step, the coupled system is solved in a monolithic fashion, ensuring strong coupling between the fluid and poroelastic subproblems.

In Problem II, which involves a bulk Biot poroelastic medium, the kinematic coupling condition at the fluid--structure interface is additionally enforced using Nitsche's penalty method as in \cite{BYZZ:15}. 
To this end, additional penalty terms are incorporated into the variational formulation, following the approach in \cite{BYZZ:15}. 

The resulting discrete Problem II is implemented in \texttt{FreeFEM} \cite{FreeFem}, which is an open-source finite element software package designed for solving partial differential equations and multiphysics problems. 

The discrete reduced problem (Problem I) involving the fully averaged poroelastic Kirchhoff plate model, is solved in Python using the \texttt{FEniCS} finite element framework \cite{Fenics}, which provides automated variational formulation and assembly.

\begin{figure}[htp]
\centering

\begin{subfigure}{0.48\textwidth}
    \centering
    \includegraphics[width=\linewidth]{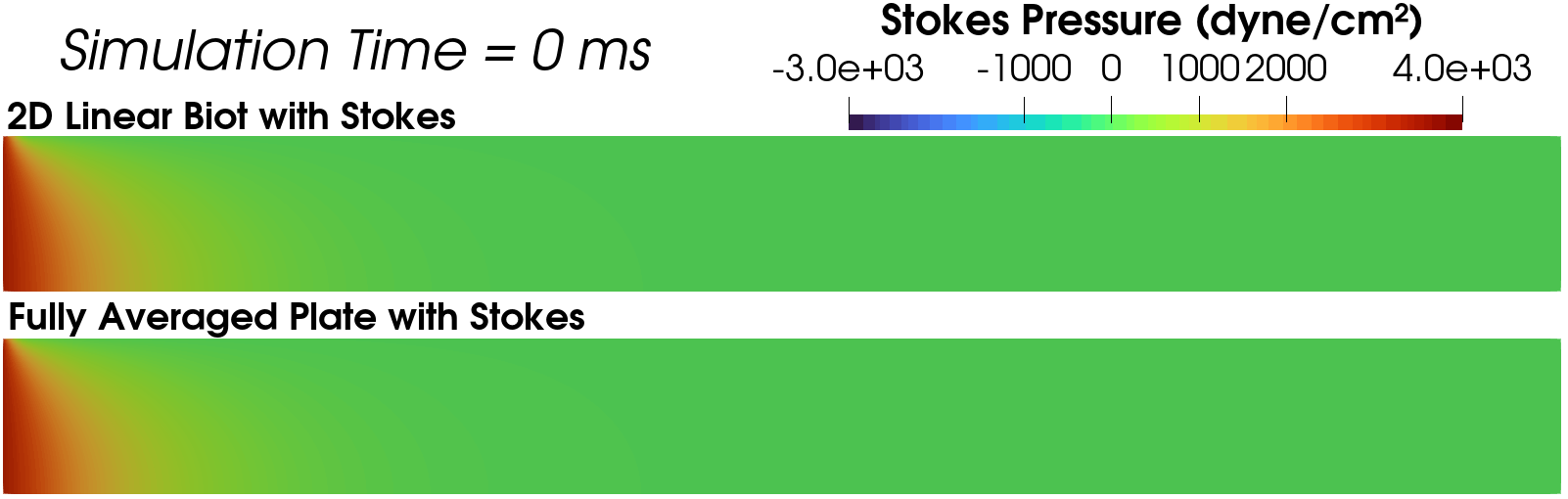}
\end{subfigure}
\hfill
\begin{subfigure}{0.48\textwidth}
    \centering
    \includegraphics[width=\linewidth]{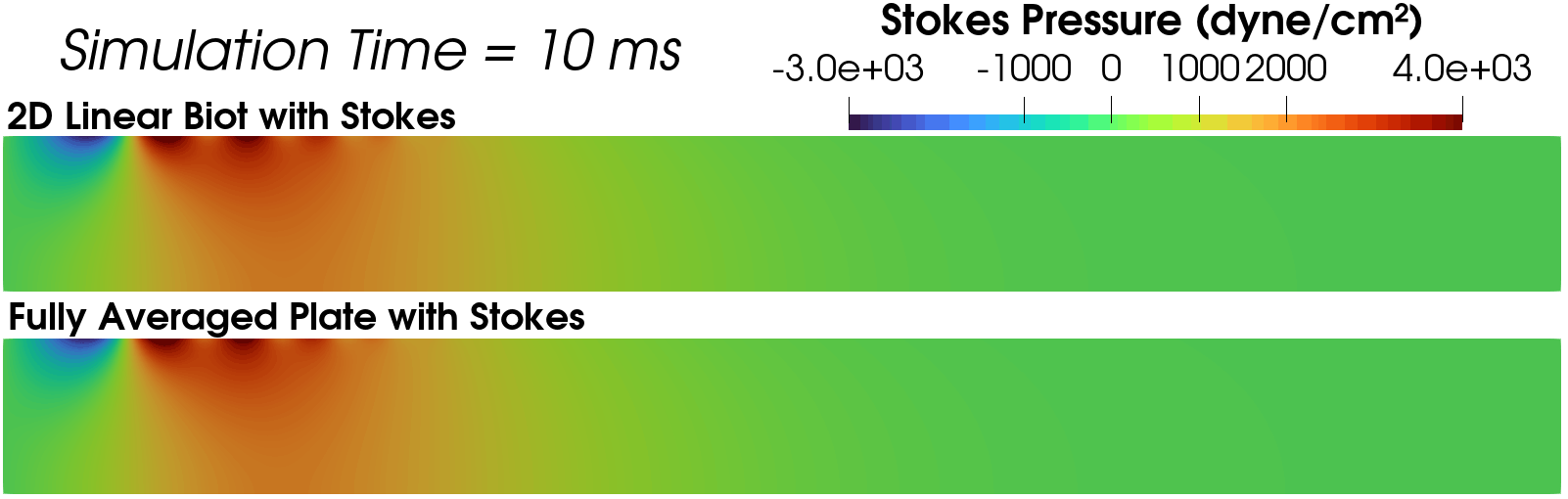}
\end{subfigure}

\begin{subfigure}{0.48\textwidth}
    \centering
    \includegraphics[width=\linewidth]{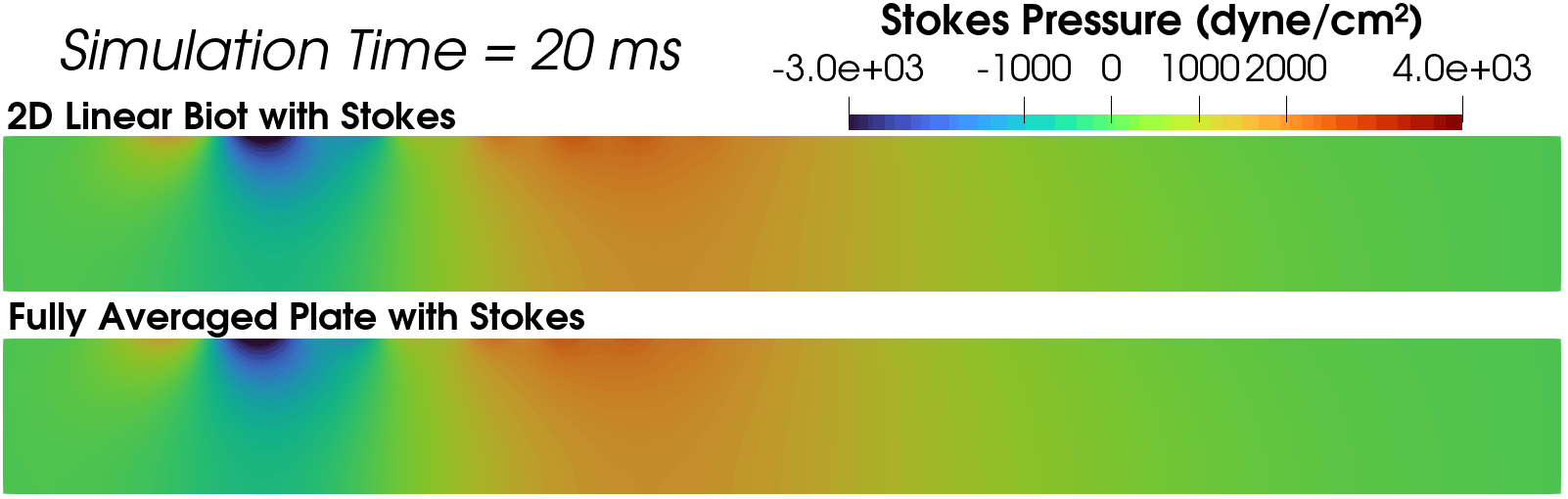}
\end{subfigure}
\hfill
\begin{subfigure}{0.48\textwidth}
    \centering
    \includegraphics[width=\linewidth]{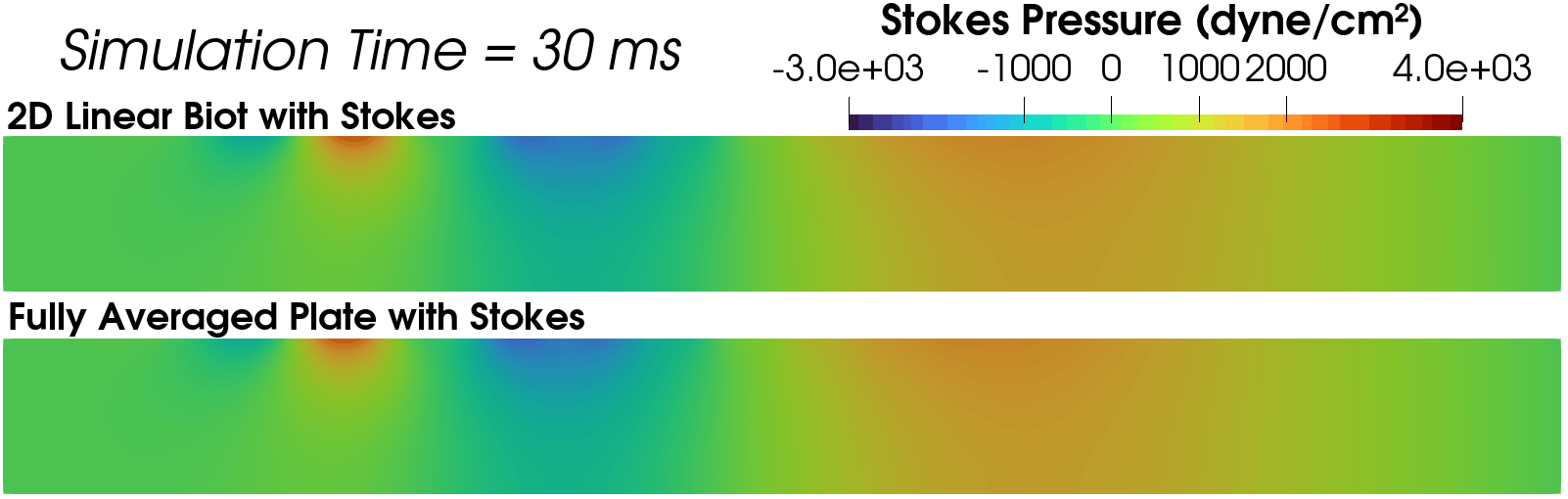}
\end{subfigure}

\caption{$H = 0.01$: Comparison of the Stokes pressure wave for the two models at four different time instances. 
The top panel corresponds to the bulk Biot model (Problem II), and the bottom panel to the fully averaged plate model (Problem I). 
In both cases, the poroelastic layer thickness is $H=0.01$.}
\label{fig:StokesP01}
\end{figure}

The simulation results are presented in Figures~\ref{fig:StokesP01}, \ref{fig:Displ01}, and \ref{fig:Jump01}. 
In particular, \autoref{fig:StokesP01} illustrates the propagation of the pressure wave at four time instances, \(t = 0,\;10,\;20,\) and \(30\) ms, for the two models (Problems~I and II) described above. 
At \(t = 30\) ms, the wave reaches the downstream end of the fluid domain.

This numerically observed speed of propagation is consistent with the classical Moens--Korteweg relation from biomechanics, which predicts the pulse wave velocity as
\begin{equation}\label{eq:MK}
    c = \sqrt{\frac{E H}{2\rho_f R_f}},
\end{equation}
where \(E = \lambda_b + 2\mu_b\) denotes the effective elastic modulus of the wall; see, e.g., \cite{Moens1878,Korteweg1878,Fung1997}.

For the parameter values used in our simulations, the time required for the wave to traverse the fluid domain (vessel) is given by
\[
    t = \frac{L}{c} = 5 \sqrt{\frac{2 \cdot 1 \cdot 0.5}{(1.7 \times 10^6 + 2 \cdot 5.58 \times 10^5)\,H}} = \frac{2.98}{\sqrt{H}} \;\text{ms}.
\]

We observe excellent agreement between the two models in terms of pressure wave propagation, indicating that the fully averaged poroelastic plate model provides an accurate approximation of the FPSI problem, capturing the essential dynamics of  the bulk Biot model with small thickness $H$ interacting with Stokes flow.

Indeed, Figures~\ref{fig:Displ01} and \ref{fig:Jump01} further compare the plate displacement (\autoref{fig:Displ01}) and the pressure jump (\autoref{fig:Jump01}) across the plate interface, for the two models, shown at four different time instances corresponding to those in \autoref{fig:StokesP01}. Both figures show results for the structure thickness of $H = 0.01$.

\begin{figure}[h]
\centering

\begin{subfigure}{0.48\textwidth}
    \centering
    \includegraphics[width=\linewidth]{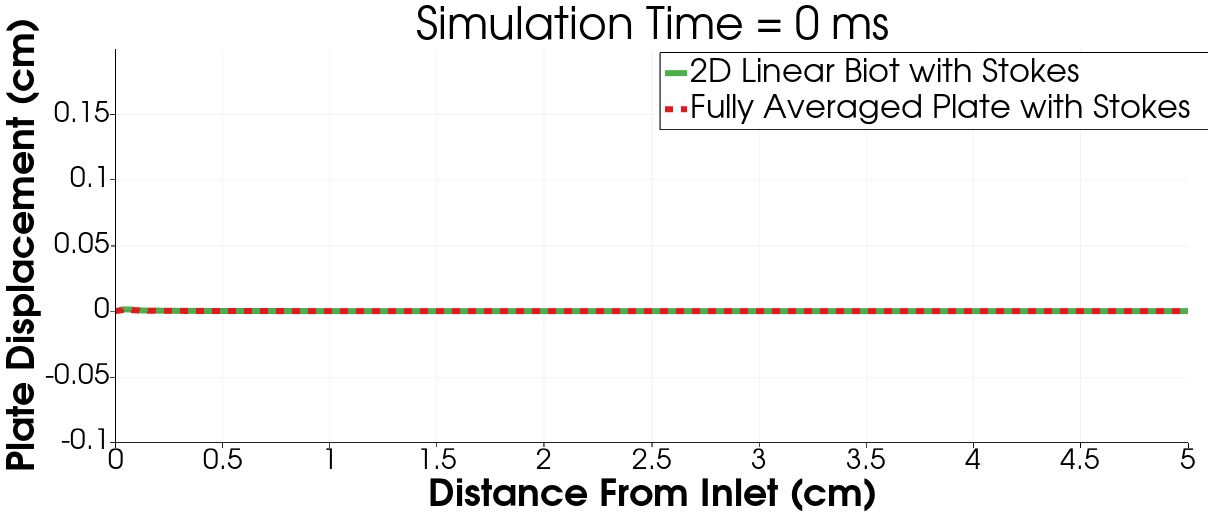}
\end{subfigure}
\hfill
\begin{subfigure}{0.48\textwidth}
    \centering
    \includegraphics[width=\linewidth]{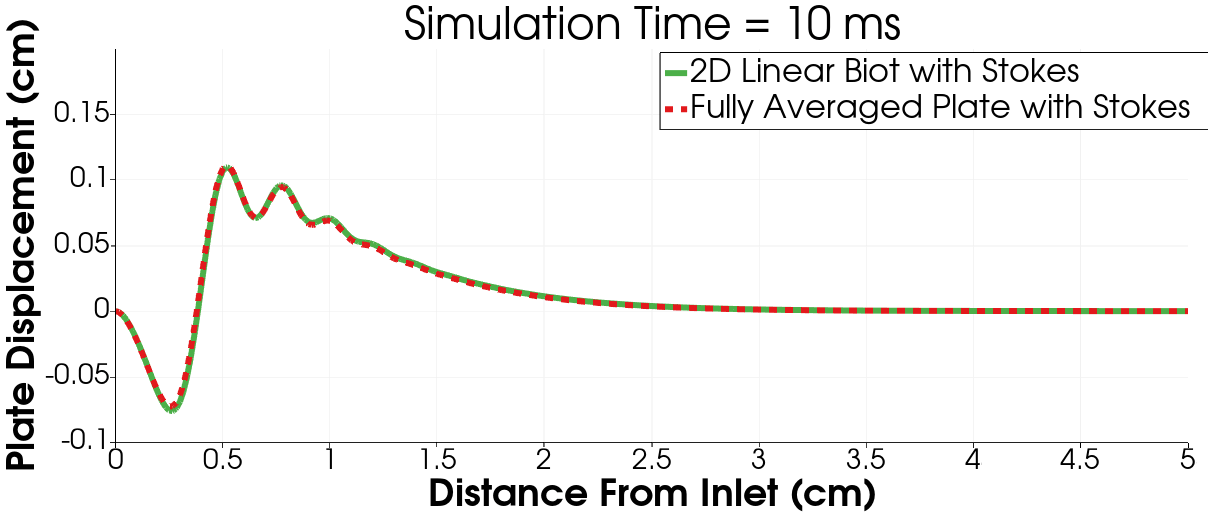}
\end{subfigure}

\begin{subfigure}{0.48\textwidth}
    \centering
    \includegraphics[width=\linewidth]{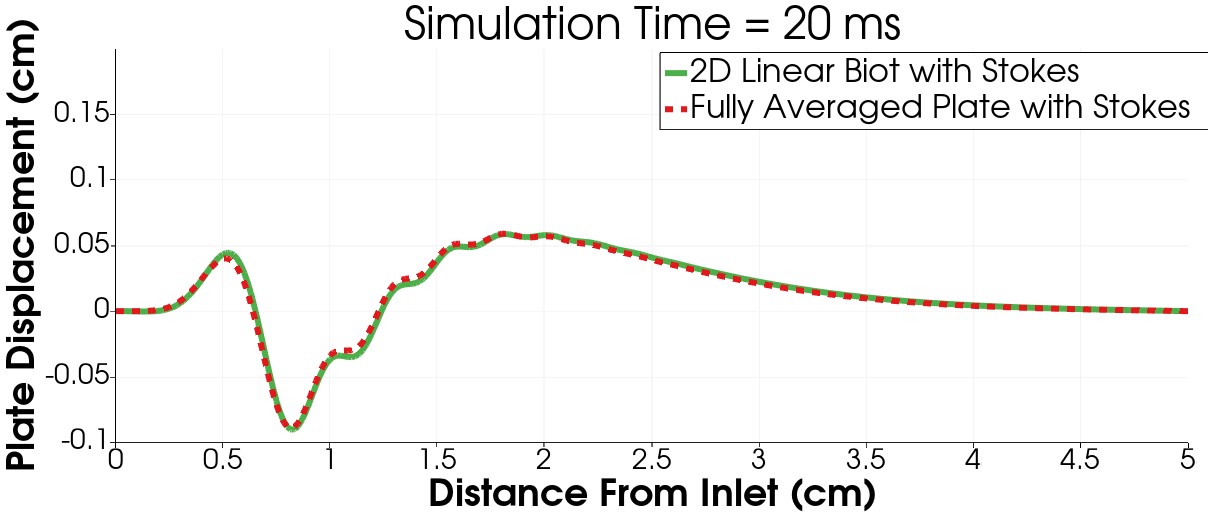}
\end{subfigure}
\hfill
\begin{subfigure}{0.48\textwidth}
    \centering
    \includegraphics[width=\linewidth]{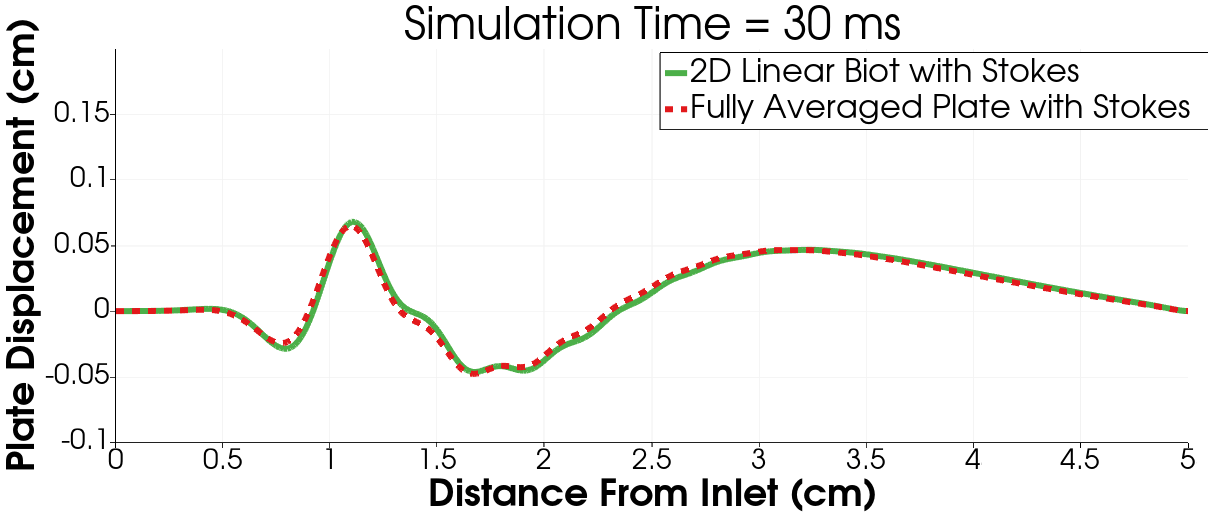}
\end{subfigure}

\caption{Comparison of plate middle surface displacement for thickness $H=0.01$.}
\label{fig:Displ01}
\end{figure}

\begin{figure}[h]
\centering

\begin{subfigure}{0.48\textwidth}
    \centering
    \includegraphics[width=\linewidth]{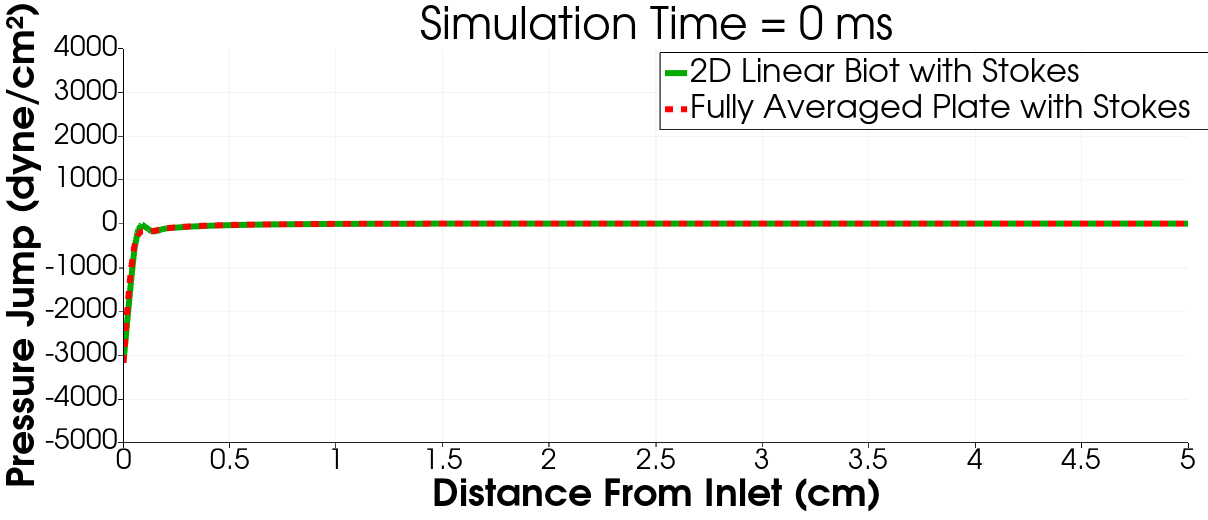}
\end{subfigure}
\hfill
\begin{subfigure}{0.48\textwidth}
    \centering
    \includegraphics[width=\linewidth]{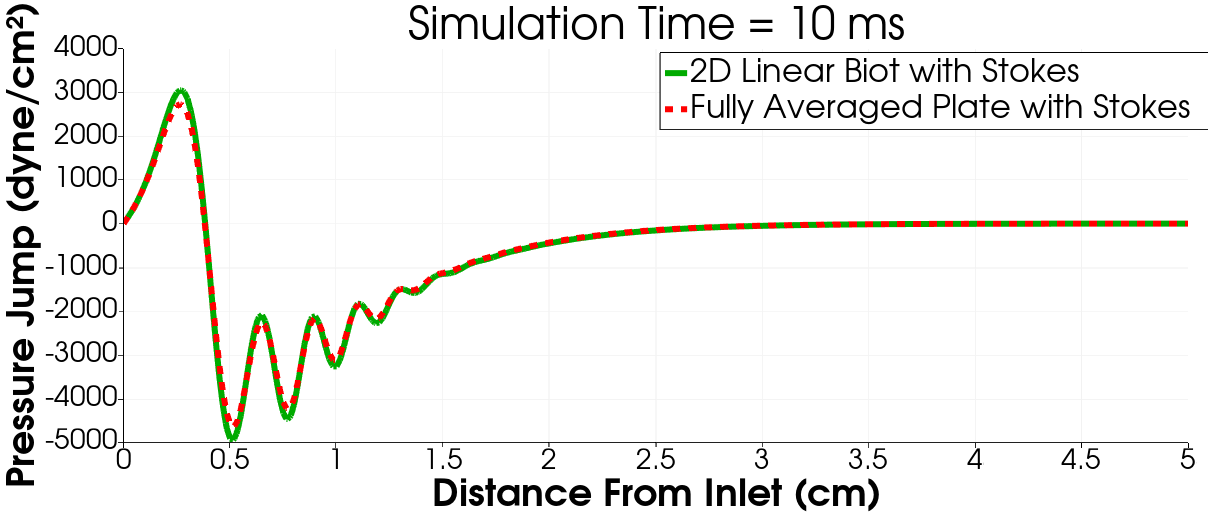}
\end{subfigure}

\begin{subfigure}{0.48\textwidth}
    \centering
    \includegraphics[width=\linewidth]{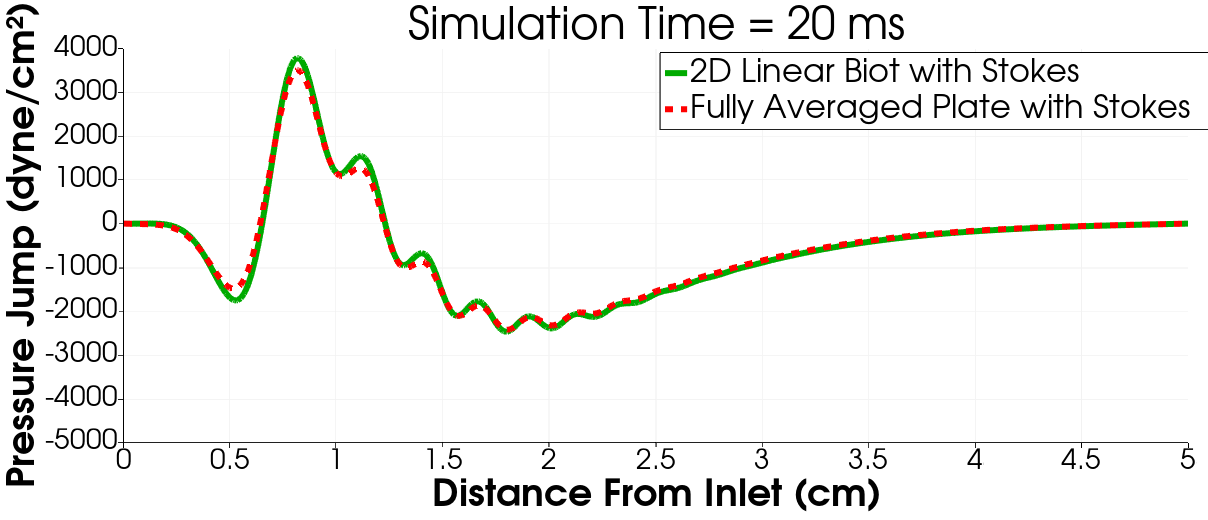}
\end{subfigure}
\hfill
\begin{subfigure}{0.48\textwidth}
    \centering
    \includegraphics[width=\linewidth]{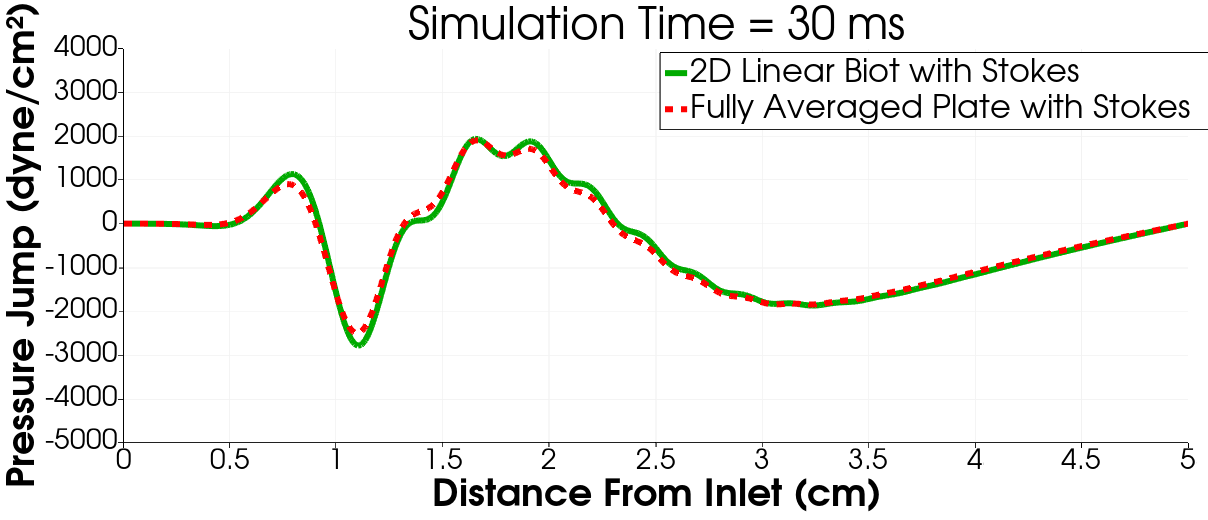}
\end{subfigure}

\caption{Comparison of pressure jump $[q]$ across the poroelastic plate for thickness $H=0.01$.}
\label{fig:Jump01}
\end{figure}

We observe very good agreement between the two models across all four snapshots. 
These simulations correspond to an aspect ratio 
\begin{equation*}
    \frac{H}{L} = 2 \times 10^{-3}.
\end{equation*}

For smaller values of the aspect ratio \(H/L\), we expect the error of the reduced (fully averaged) model to vanish as \(H/L \to 0\). 
To investigate this behavior, we consider ten times smaller value of $H$, namely $H = 0.001$, while keeping the tube length \(L\) fixed, and examine how this affects the accuracy of the approximate model relative to the full Stokes--Biot model. 
The corresponding results are presented in Figures~\ref{fig:StokesP001}, \ref{fig:Displ001}, and \ref{fig:Jump001}.

\begin{figure}[h]
\centering

\begin{subfigure}{0.48\textwidth}
    \centering
    \includegraphics[width=\linewidth]{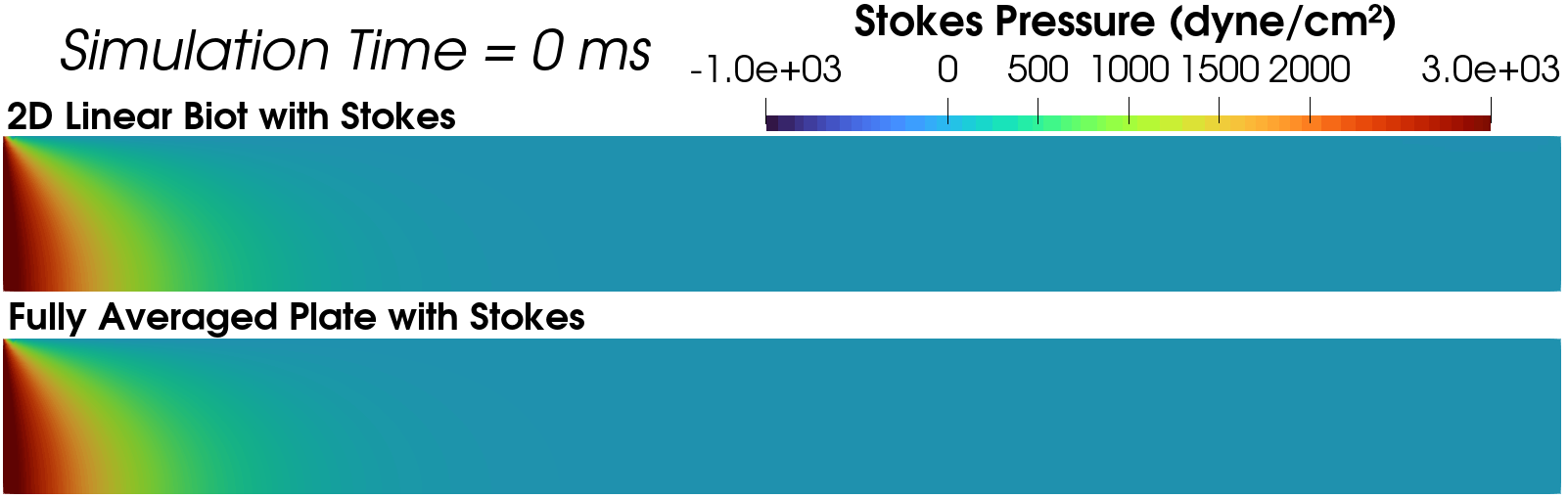}
\end{subfigure}
\hfill
\begin{subfigure}{0.48\textwidth}
    \centering
    \includegraphics[width=\linewidth]{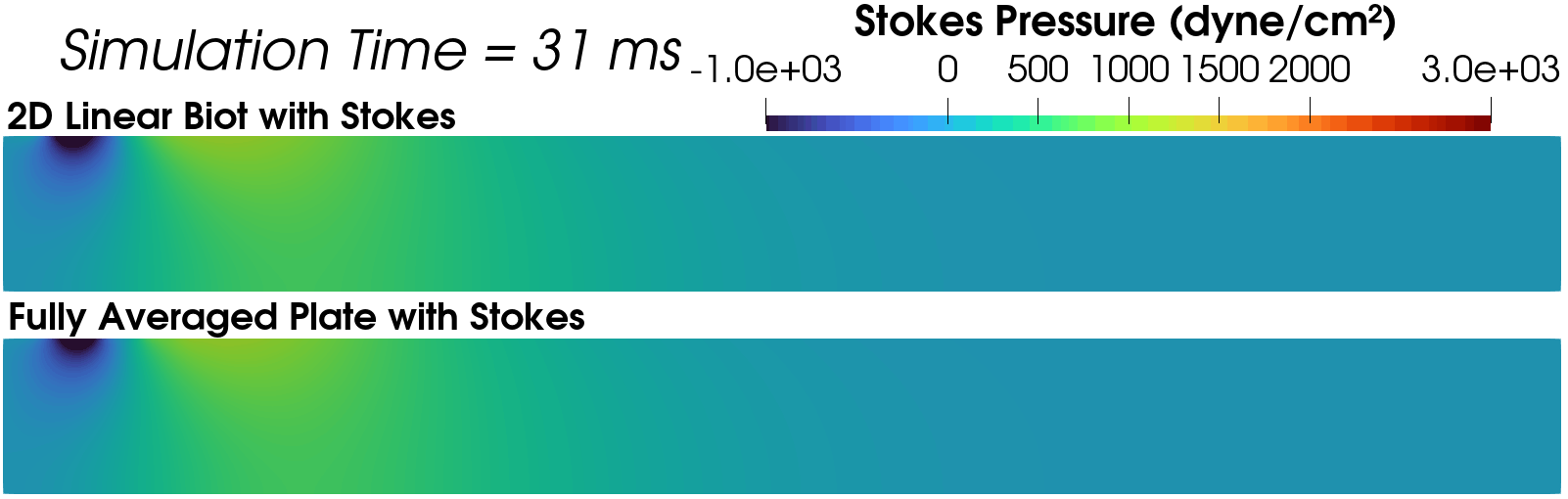}
\end{subfigure}

\begin{subfigure}{0.48\textwidth}
    \centering
    \includegraphics[width=\linewidth]{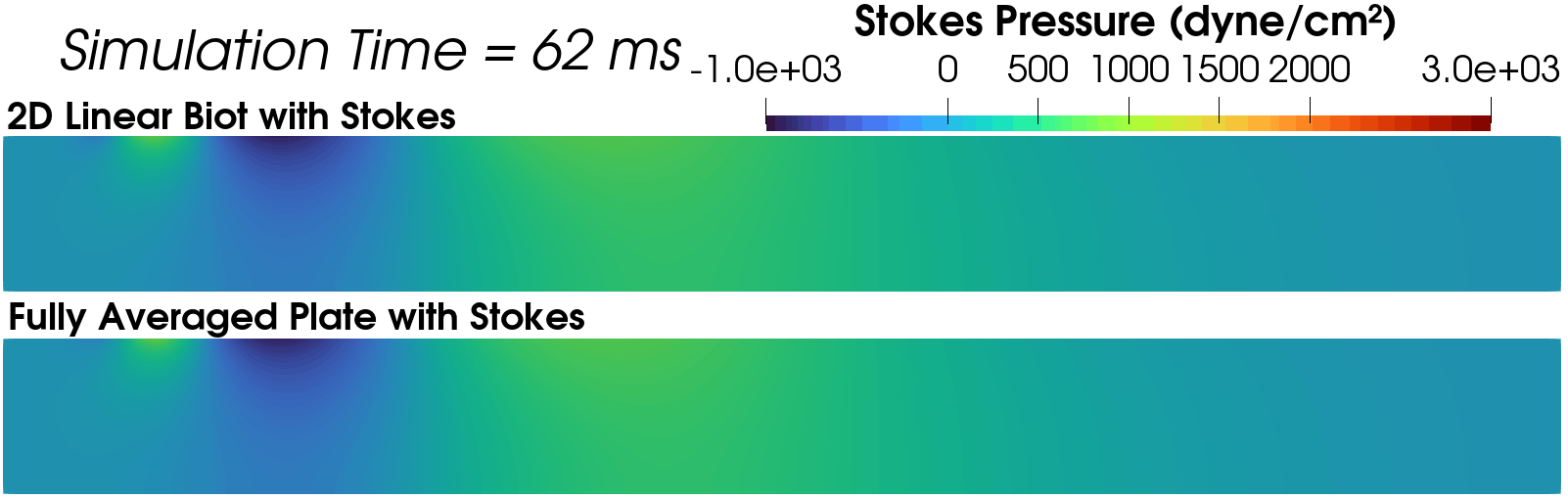}
\end{subfigure}
\hfill
\begin{subfigure}{0.48\textwidth}
    \centering
    \includegraphics[width=\linewidth]{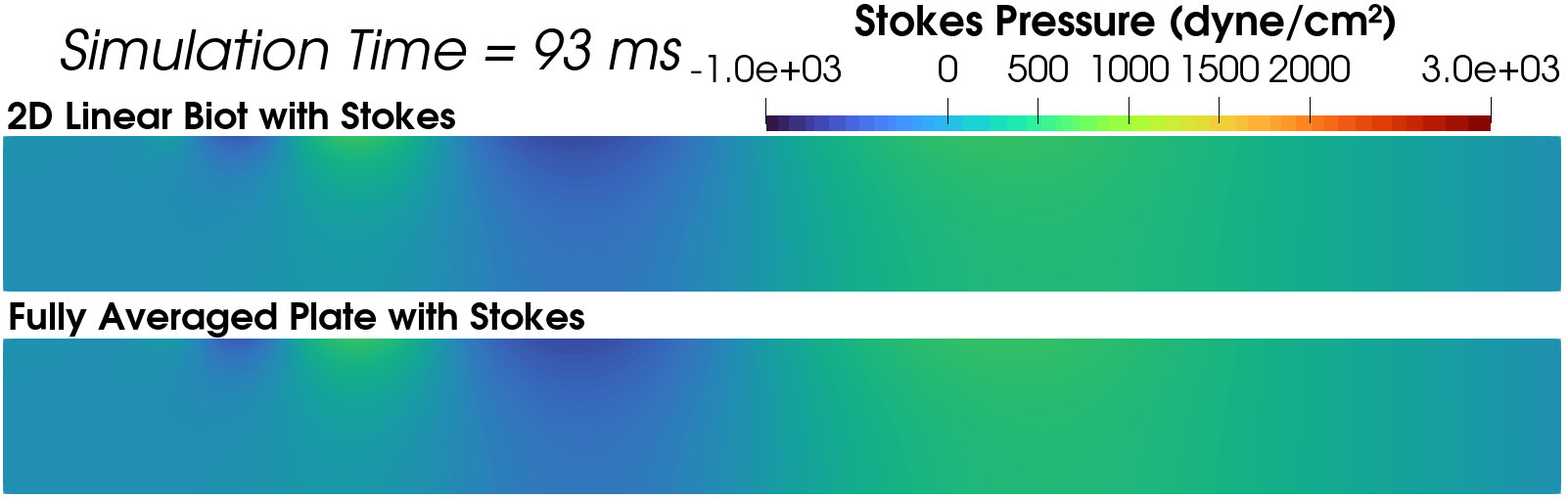}
\end{subfigure}

\caption{$H=0.001$: Comparison of the Stokes pressure wave for the two models at four different time instances. 
The top panel corresponds to the bulk Biot model (Problem II), and the bottom panel to the fully averaged plate model (Problem I). 
In both cases, the poroelastic layer thickness is $H=0.001$.}
\label{fig:StokesP001}
\end{figure}

\begin{figure}[h]
\centering

\begin{subfigure}{0.48\textwidth}
    \centering
    \includegraphics[width=\linewidth]{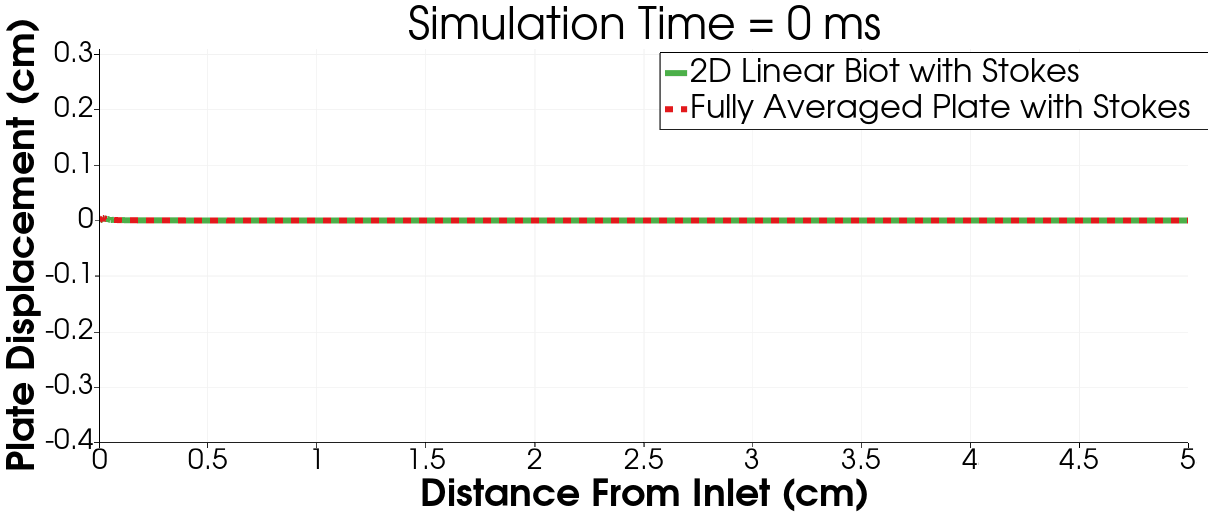}
\end{subfigure}
\hfill
\begin{subfigure}{0.48\textwidth}
    \centering
    \includegraphics[width=\linewidth]{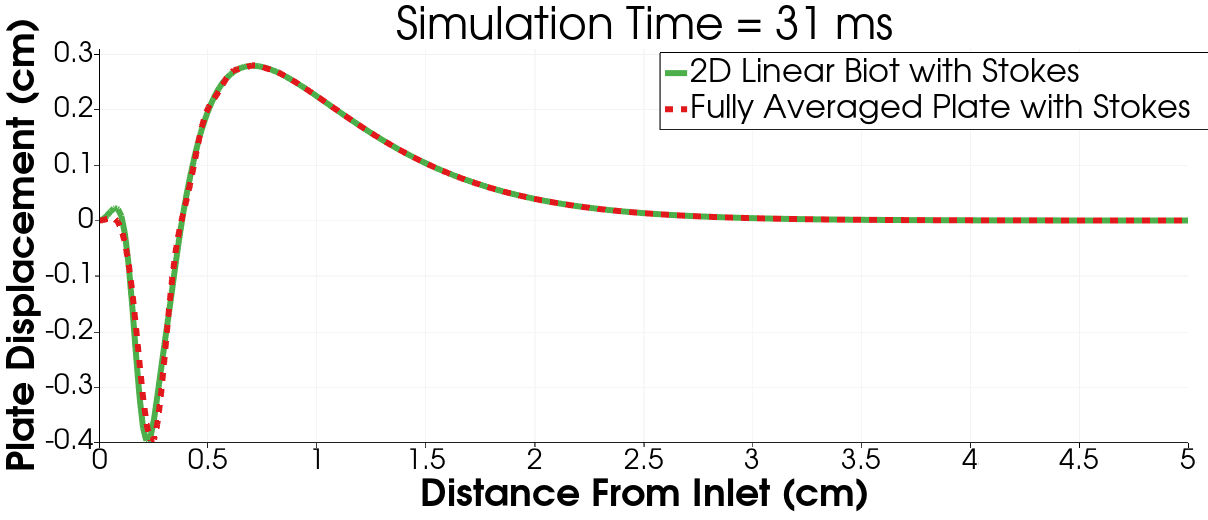}
\end{subfigure}

\begin{subfigure}{0.48\textwidth}
    \centering
    \includegraphics[width=\linewidth]{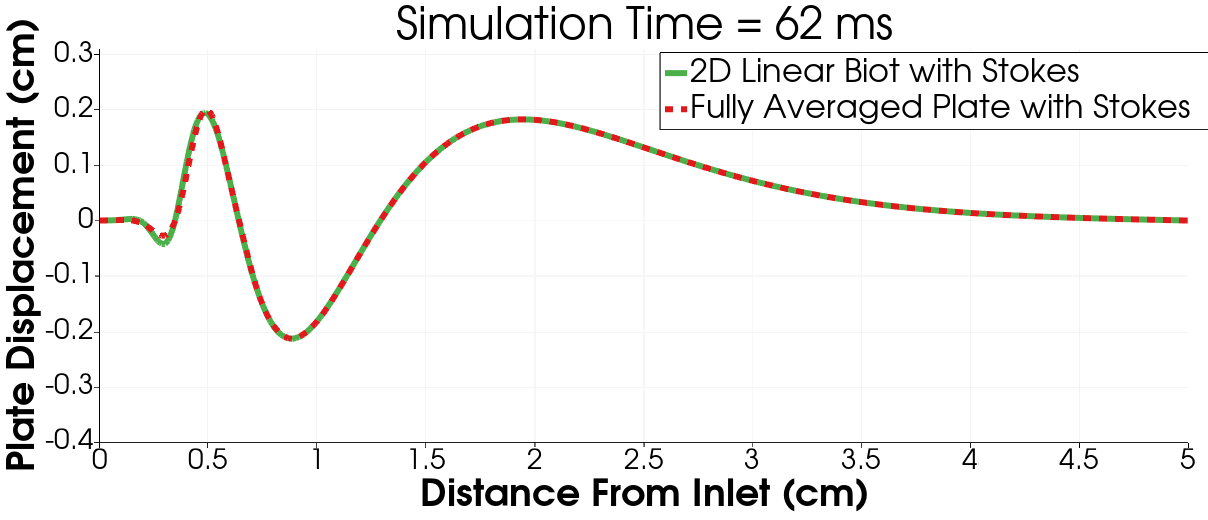}
\end{subfigure}
\hfill
\begin{subfigure}{0.48\textwidth}
    \centering
    \includegraphics[width=\linewidth]{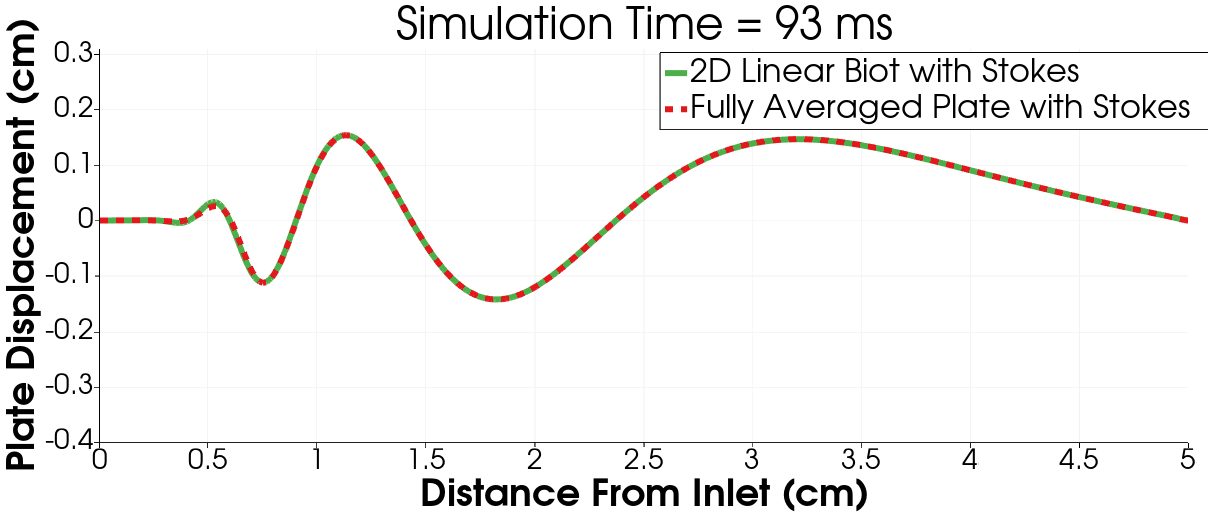}
\end{subfigure}

\caption{Comparison of plate middle surface displacement for thickness $H=0.001$.}
\label{fig:Displ001}
\end{figure}

\begin{figure}[h]
\centering

\begin{subfigure}{0.48\textwidth}
    \centering
    \includegraphics[width=\linewidth]{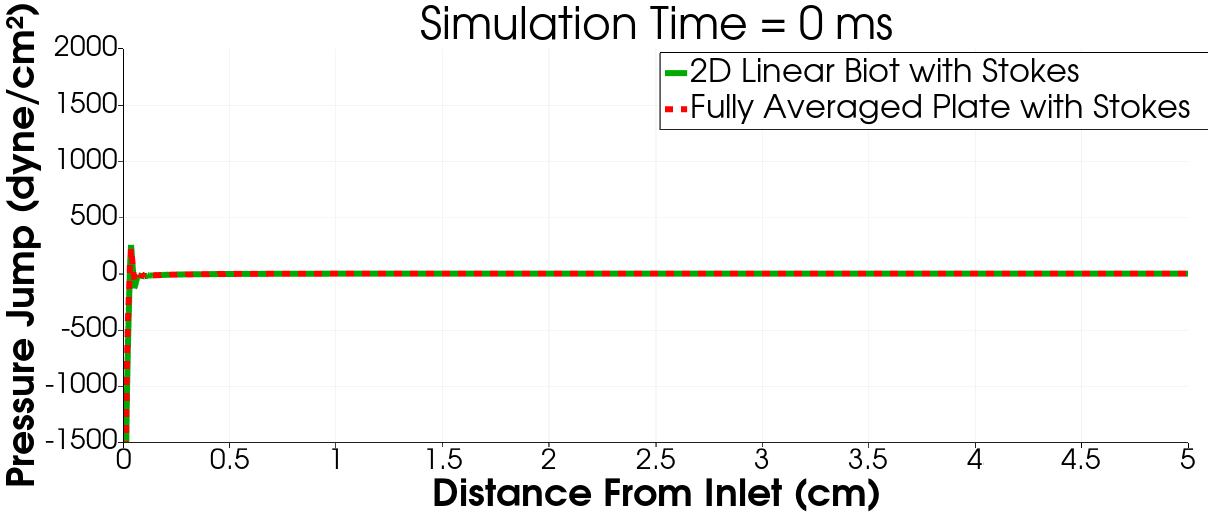}
\end{subfigure}
\hfill
\begin{subfigure}{0.48\textwidth}
    \centering
    \includegraphics[width=\linewidth]{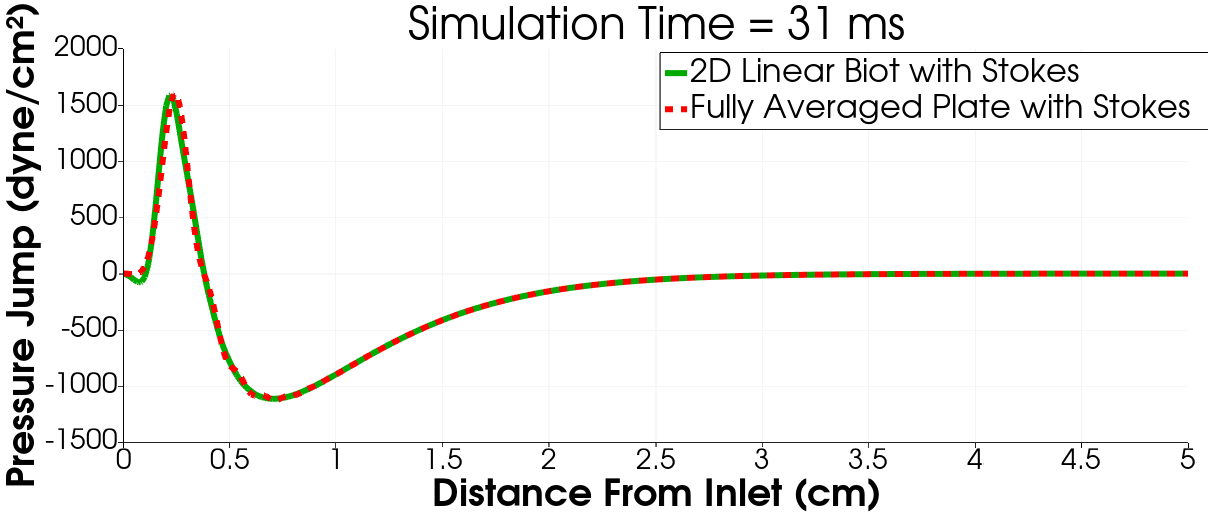}
\end{subfigure}

\begin{subfigure}{0.48\textwidth}
    \centering
    \includegraphics[width=\linewidth]{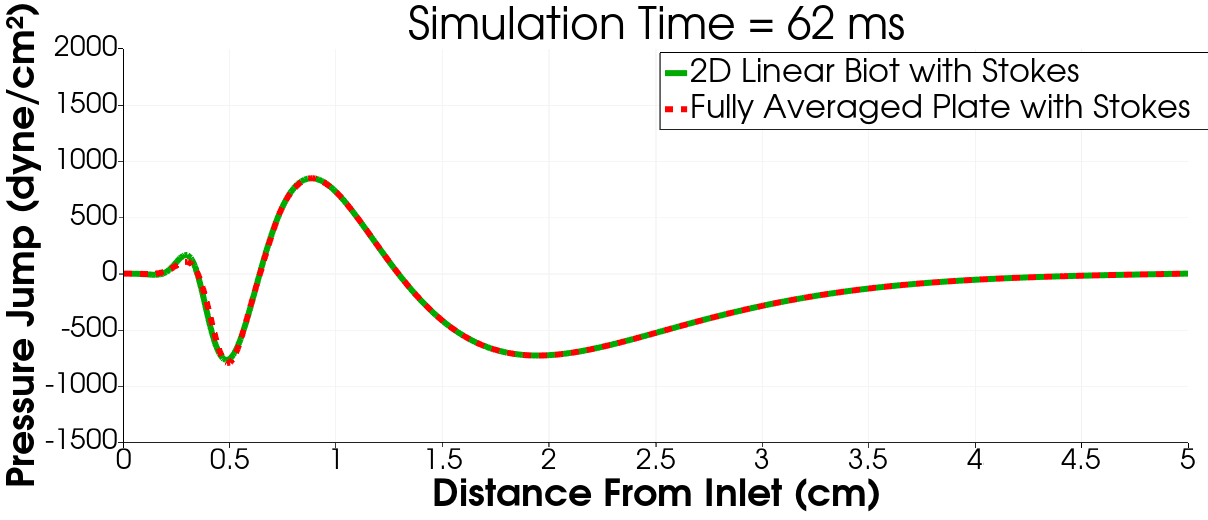}
\end{subfigure}
\hfill
\begin{subfigure}{0.48\textwidth}
    \centering
    \includegraphics[width=\linewidth]{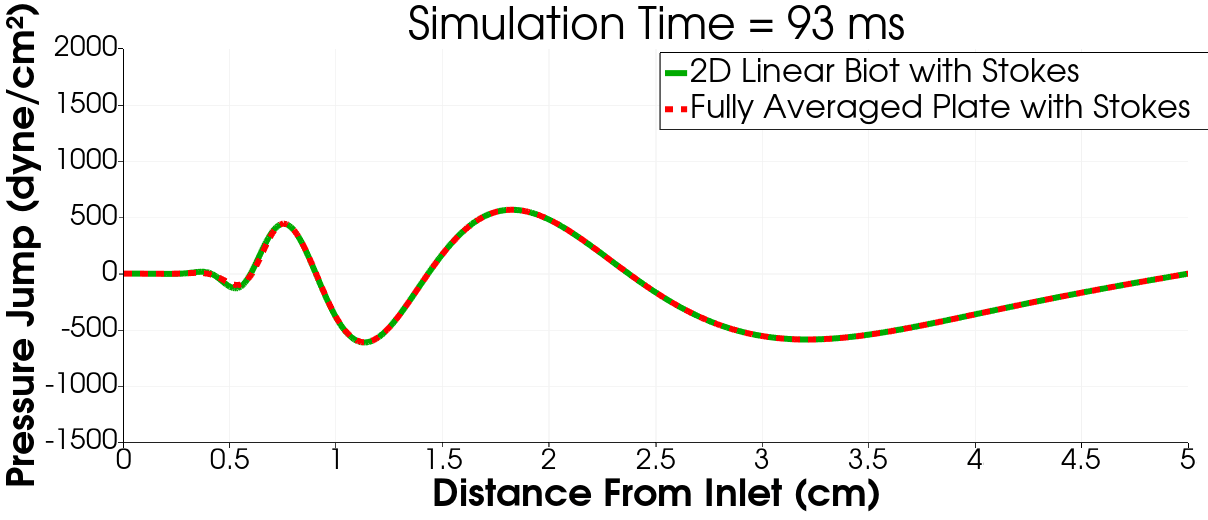}
\end{subfigure}

\caption{Comparison of pressure jump $[q]$ across the poroelastic plate for thickness $H=0.001$.}
\label{fig:Jump001}
\end{figure}

\autoref{fig:StokesP001} illustrates the corresponding Stokes pressure wave propagation, while Figures~\ref{fig:Displ001} and \ref{fig:Jump001} display the structural displacement and the pressure jump across the interface, respectively. 
We observe that for \(H = 0.001\), the displacement and pressure jump profiles for the two models are visually indistinguishable, indicating excellent agreement in this regime.

We conclude this section by highlighting the differences in the solutions corresponding to the two cases \(H = 0.01\) and \(H = 0.001\). 
Three main physical effects can be observed:
\begin{enumerate}
    \item The pressure wave in the case \(H = 0.001\) propagates more slowly, in agreement with the Moens--Korteweg relation~\eqref{eq:MK}. 
    In particular, it reaches the downstream end of the tube at approximately \(93\) ms, compared to about \(30\) ms in the case \(H = 0.01\).

    \item The displacement magnitude is larger for \(H = 0.001\) (with a maximum displacement of approximately \(0.3\) cm), compared to about \(0.1\) cm for \(H = 0.01\). 
    This is consistent with the fact that increasing the wall thickness enhances the overall structural stiffness of the tube. 
    Consequently, for a given pressure loading, a thicker wall undergoes smaller deformations, while a thinner, more compliant wall exhibits larger displacements.

    \item The magnitude of the pressure jump shown in \autoref{fig:Jump001} for \(H = 0.001\) is smaller than in the case \(H = 0.01\). 
    Since \(q^+ = 0\), the pressure jump corresponds to the trace of the fluid pressure at the interface. 
    The larger pressure amplitude observed in the case \(H = 0.01\) can be attributed to the reduced compliance of the thicker wall, which limits its ability to absorb and store elastic energy. 
    In contrast, a thinner, more compliant wall transfers a larger portion of the fluid energy into structural deformation, resulting in larger displacements but reduced pressure amplitudes in the fluid.
\end{enumerate}

This trade-off between wall deformation and pressure amplification is consistent with classical wave propagation theory in elastic tubes, where increased stiffness leads to higher wave speeds and reduced wall motion, while simultaneously promoting stronger pressure responses in the fluid.

\section{Conclusion}

In this work, we introduced and analyzed a new fluid--poroelastic structure interaction (FPSI) model describing the interaction between an incompressible, viscous fluid and a thin fully averaged Kirchhoff poroelastic plate. 
The model is based on the recently derived fully averaged poroelastic plate formulation from \cite{SCTB:26}, which reduces the classical three-dimensional Biot system to a codimension-one problem while preserving the essential coupling mechanisms and energy structure of the original system.

A principal advantage of the fully averaged formulation is the elimination of the mixed-dimensional structure present in the classical Biot poroelastic plate model. 
In the present model, both the plate elastodynamics and the pressure variables are defined on the same codimension-one manifold corresponding to the middle surface of the plate.
This dimensional consistency leads to a formulation that is significantly more tractable for both mathematical analysis and numerical simulation, while remaining energetically consistent with the underlying three-dimensional Biot system.

The coupled FPSI problem considered in this manuscript incorporates physically relevant interface conditions, including continuity of the normal velocity, balance of forces, and tangential slip described by the Beavers--Joseph--Saffman condition. These coupling conditions lead to a nontrivial interaction between the fluid velocity, the plate displacement, the pressure jump across the plate, and the averaged pressure variable.

For the resulting coupled system, we established the existence of finite-energy weak solutions by means of a constructive semi-discretization procedure combined with uniform energy estimates. 
We then proved one of the main analytical results of this manuscript:
the existence and uniqueness of a global-in-time strong solution to a regularized version of the FPSI problem.
The proof relied on a reformulation of the coupled system as an abstract Cauchy problem and on the establishment of sectoriality of the associated operator. 
In particular, we developed a framework based on extrapolation spaces, Neumann operators associated with stationary lifting problems, and the theory of diagonally dominant operator matrices, which allowed us to overcome the substantial analytical difficulties introduced by the lower-order interface coupling terms and the Beavers--Joseph--Saffman condition. 
As a consequence, we obtained maximal $\rL^p$-regularity for the regularized FPSI system, as well as exponential decay of solutions when the boundary condition $q^+$ decays exponentially.

Finally, we developed a monolithic finite element solver for the nonregularized FPSI problem and used it to compare the proposed reduced model with a benchmark Stokes--Biot coupled problem.
The numerical experiments demonstrated that the fully averaged Kirchhoff poroelastic plate model provides an excellent approximation of the full Stokes--Biot system while significantly reducing computational cost.

The present work opens several directions for future research.
From the modeling perspective, an important next step is to introduce a model with nonlinear coupling between the fully averaged Kirchhoff plate and the Navier--Stokes equations.
From the analytical perspective, an important next step is the study of well-posedness of the resulting nonlinear FPSI problem in the context of both weak and strong solutions. 
From the applications perspective, the fully averaged formulation provides a promising foundation for studying thin poroelastic interfaces arising in biological and engineering applications, including bioartificial organs, vascular devices, and filtration membranes. 
Finally, from the computational viewpoint, the reduced structure of the model makes it particularly attractive for efficient large-scale simulations and multiphysics applications involving thin poroelastic media interacting with viscous fluids.

\medskip 

\subsection*{Acknowledgements.}
F.~Brandt's research has been supported by the German National Academy of Sciences Leopoldina through the Leopoldina Fellowship Program with grant number~LPDS 2024-07. 
S.~\v{C}ani\'{c}'s research has been supported in part by the National Science Foundation under grants DMS-2408928, DMS-2247000 and by the  U.S. Department of Energy, Office of Science, Office of Advanced Scientific Computing Research's Applied Mathematics Competitive Portfolios program under Contract No.~AC02-05CH11231. 
A.~Scharf's research has been supported by the National Science Foundation under grant DMS-2247000, and by the Mathematics Department at UC Berkeley. 
J.~Tamba\v{c}a's research has been supported by the Croatian Science Foundation under the project number HRZZ-IP-2022-10-1091, by the project ``Implementation of cutting-edge research and its application as part of the Scientific Center of Excellence for Quantum and Complex Systems, and Representations of Lie Algebras'', Grant No.~PK.1.1.10.0004,  Croatia-USA project ``A comprehensive mathematical and computational framework for next generation stent design'', by the European Union -- NextGenerationEU through the National Recovery and Resilience Plan 2021-2026, institutional grant of University of Zagreb Faculty of Science (IK IA 1.1.3. Impact4Math).

\appendix

\section{Supplemental Material}\label{sec:appendix}

In this section, we collect several analytical tools that form the foundation of our approach, which include sectorial operators, their connection to maximal $\rL^p$-regularity, and the behavior of block operator matrices under perturbations. 

We begin by recalling the notion of sectoriality, which plays a central role in the analysis of parabolic-type problems. 
Indeed, sectorial operators are precisely those that generate analytic semigroups, and therefore give rise to well-posed evolution equations with strong regularity properties. 
Moreover, in Hilbert spaces, sectoriality is closely related to maximal ($\rL^p$-)regularity, which is a key ingredient in the study of nonlinear and coupled systems.

Let $\rX$ be a Banach space. 
A closed linear operator $A \colon \rD(A) \subset \rX \to \rX$ is called \emph{sectorial} if the following conditions are satisfied:
\begin{enumerate}[(a)]
    \item $\overline{\rD(A)} = \overline{\mathrm{R}(A)} = \rX$, i.e., $A$ is densely defined and has dense range,
    \item $(-\infty,0) \subset \rho(A)$, and
    \item there exists a constant $M > 0$ such that
    \[
    \| t(t+A)^{-1} \|_{\cL(\rX)} \le M \quad \text{for all } t > 0.
    \]
\end{enumerate}
We denote by $\cS(\rX)$ the class of sectorial operators on $\rX$.

Let $\Sigma_\theta \coloneqq \{ z \in \C \setminus \{0\} : |\arg z| < \theta \}$ denote the open sector of angle $\theta$ in the complex plane. 
The \emph{spectral angle} of a sectorial operator $A$ is defined by
\[
    \phi_A \coloneqq \inf \left\{ \phi \in (0,\pi) : \rho(-A) \supset \Sigma_{\pi-\phi} \tand \sup_{\lambda \in \Sigma_{\pi-\phi}} \| \lambda (\lambda + A)^{-1} \| < \infty \right\}.
\]

The importance of this notion lies in the fact that sectoriality provides a precise characterization of generators of analytic semigroups. 
In particular, if $A$ is sectorial with $\phi_A < \frac{\pi}{2}$, then the operator $-A$ generates a bounded analytic semigroup on $\rX$ with angle $\frac{\pi}{2} - \phi_A$. 
This property is crucial in our analysis, as it allows us to interpret the coupled fluid--structure system as a parabolic evolution problem and to exploit the associated regularity and stability theory.

We now recall the notion of maximal regularity.
For $J = (0,T)$, $T > 0$, or $J = \R_+$, we say that $A$, or the associated Cauchy problem, admits {\em maximal $\rL^p$-regularity} if for every $f \in \rL^p(J;\rX)$, there exists a unique solution $u \in \rW^{1,p}(J;\rX) \cap \rL^p(J;\rD(A))$ to the Cauchy problem
\begin{equation*}
    \left\{
    \begin{aligned}
        u'(t) - A u(t)
        &= f(t), \enspace \tfor t \in J,\\
        u(0)
        &= 0.
    \end{aligned}
    \right.
\end{equation*}
It is a classical result that maximal $\rL^p$-regularity implies that $A$ is a sectorial operator with spectral angle $\phi_A < \frac{\pi}{2}$, see for instance \cite[Prop.~3.5.2]{PS:16}.
Note also that maximal $\rL^p(\R_+)$-regularity already implies that $0 \in \rho(A)$.

For the converse, i.e., to deduce maximal $\rL^p$-regularity for a sectorial operator, on so-called {\em unconditional martingale differences spaces} $\rX$, one generally requires the stronger property of $\cR$-boundedness of the resolvents, or $\cR$-sectoriality, see also the celebrated result of Weis \cite[Thm.~4.2]{Wei:01} or \cite[Sec.~4.2]{DHP:03}.
However, if $\rX$ is a Hilbert space, then sectoriality of $A$ is sufficient for maximal $\rL^p$-regularity, see \cite{dSi:64}.

If the sectoriality of $A$ is only known to hold up to a shift, then one generally only obtains the maximal $\rL^p$-regularity on finite time intervals by means of a rescaling argument.
We also observe that the case of non-homogeneous initial data $u_0$ can be included by means of interpolation theory see \cite[Sec.~3.5]{PS:16}, and the closed graph theorem yields that the solution can be estimated by the data.
We summarize these observations in the lemma below.

\begin{lem}\label{lem:sect & max reg on Hilbert spaces}
Let $\rX$ be a Hilbert space, consider $J = (0,T)$ for some $T \in (0,\infty)$, and assume that there exists $\omega \ge 0$ such that $A + \omega \in \cS(\rX)$ with $\phi_{A + \omega} < \frac{\pi}{2}$. 
Then $A$ has maximal $\rL^p(J)$-regularity on~$\rX$, i.e., for every $u_0 \in \rX_\gamma \coloneqq (\rX,\rD(A))_{1-\nicefrac{1}{p},p}$ and $f \in \F_f \coloneqq \rL^p(J;\rX)$, there exists a unique solution $u \in \E \coloneqq \rW^{1,p}(J;\rX) \cap \rL^p(J;\rD(A))$ to the abstract Cauchy problem
\begin{equation*}
    \left\{
    \begin{aligned}
        u'(t) - A u(t)
        &= f(t), \enspace \tfor t \in J,\\
        u(0)
        &= u_0.
    \end{aligned}
    \right.
\end{equation*}
Moreover, there exists a constant $C > 0$ such that
\begin{equation*}
    \| u \|_{\E} \le C\bigl(\| f \|_{\F_f} + \| u_0 \|_{\rX_\gamma}\bigr).
\end{equation*}
If $0 \in \rho(A)$, then assertion of the lemma extends to the case $J = \R_+$.
\end{lem}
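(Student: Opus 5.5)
The plan is to assemble the lemma from three classical ingredients, treating the homogeneous problem first and then adding nonzero initial data. Throughout, $\rX$ is a Hilbert space and $A+\omega$ is sectorial with $\phi_{A+\omega}<\frac{\pi}{2}$.

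\textbf{Shifted problem with zero initial data.} Since $\rX$ is Hilbert, sectoriality is equivalent to $\cR$-sectoriality with the same angle. This holds because in Hilbert spaces $\cR$-boundedness coincides with uniform boundedness, which is de Simon's theorem \cite{dSi:64}. Applying Weis' theorem \cite[Thm.~4.2]{Wei:01} to $A+\omega$ then gives maximal $\rL^p(\R_+)$-regularity for the shifted problem
\[
v' + (A+\omega)v = g, \qquad v(0)=0,
\]
for every $p\in(1,\infty)$. (I am using the sign convention implicit in the spectral analysis, where $-A$ is the generator.)

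\textbf{Removing the shift on a finite interval.} On $J=(0,T)$, I will use the substitution $u(t)=\mre^{\omega t}v(t)$ and $g=\mre^{-\omega t}f$. Multiplication by $\mre^{\pm\omega t}$ is an isomorphism of $\rL^p(J;\rX)$, with constants depending on $T$. It is also an isomorphism of $\rW^{1,p}(J;\rX)\cap\rL^p(J;\rD(A))$, since $(\mre^{\omega t}v)'=\omega \mre^{\omega t}v+\mre^{\omega t}v'$. This transfers existence, uniqueness and the estimate from the shifted problem to the original one.

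\textbf{Nonzero initial data.} Take $u_0\in\rX_\gamma=(\rX,\rD(A))_{1-1/p,p}$. I will use the trace characterization \cite[Sec.~3.5]{PS:16}: the trace map $\E\to\rX_\gamma$, $u\mapsto u(0)$, is bounded and surjective, with a bounded right inverse. A convenient explicit choice is $u_1(t)=\mre^{-t(A+\omega)}u_0$. Its membership in $\E$ with $\|u_1\|_\E\lesssim\|u_0\|_{\rX_\gamma}$ is exactly the real-interpolation characterization of $\rX_\gamma$ via the analytic semigroup. I then solve for $u-u_1$, which has zero initial data and right-hand side $f$ plus a term from $u_1$ lying in $\rL^p(J;\rX)$.

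The estimate $\|u\|_\E\le C(\|f\|_{\F_f}+\|u_0\|_{\rX_\gamma})$ follows from the bounds above. Alternatively, it follows from the closed graph theorem applied to the bijective solution map.

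\textbf{Half-line case when $0\in\rho(A)$.} Here I expect the only genuine obstacle, which is avoiding the shift altogether. Sectoriality of $A+\omega$ gives resolvent bounds only on the sector $\Sigma_{\pi-\phi}-\omega$. Combined with $0\in\rho(A)$, I want sectoriality of $A$ itself, which means controlling the resolvent on a compact neighbourhood of the origin. The step I would try first is to show that there is no spectrum in the region between the shifted sector and the origin; in the paper this is supplied by spectral information such as \autoref{prop:spectral analysis}, and the resolvent is continuous on the resulting compact set. In general this region must be checked, so I would state the extension under the hypothesis that $A$ itself is sectorial, which is the situation of \autoref{cor:exp decay}.

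Once $A\in\cS(\rX)$ with $\phi_A<\frac{\pi}{2}$ and $0\in\rho(A)$, the argument of the first step applies with $\omega=0$ on $\R_+$. Invertibility yields the $\rL^p(\R_+;\rD(A))$ bound rather than only a homogeneous-space bound.
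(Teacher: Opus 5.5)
Your proposal follows the route the paper indicates. The paper does not prove the lemma; it assembles it from de Simon's theorem, rescaling $u=\mre^{\omega t}v$ on bounded intervals, the trace/interpolation description of $\rX_\gamma$ from \cite[Sec.~3.5]{PS:16}, and the closed graph theorem, which is exactly your argument. Two small corrections on that part:
\begin{itemize}
\item In Hilbert spaces, $\cR$-bounds coincide with uniform bounds because the Rademacher functions are orthonormal; this is not the content of \cite{dSi:64}. De Simon's theorem is itself the Hilbert-space maximal regularity result, so you can cite it directly and drop Weis.
\item To get the full $\rL^p(\R_+;\rD(A))$ bound for the shifted problem, $A+\omega$ must be invertible. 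Take the shift strictly positive, which costs nothing on bounded $J$.
\end{itemize}

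Your two departures from the printed statement are corrections, not gaps.

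First, the lemma writes $u'-Au=f$, which with $A+\omega$ sectorial is a backward parabolic problem. Your convention $u'+Au=f$ is the one actually used in the proof of \autoref{thm:solvability Cauchy problem}.

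Second, you are right not to derive the half-line case from $0\in\rho(A)$ alone: that clause is false as stated. Take $\rX=\C$ and $A=-1$. Then $A+2$ is sectorial with angle $0$ and $0\in\rho(A)$. Yet for $f=1_{[0,1]}$, the unique solution of $u'-u=f$, $u(0)=0$, equals $(\mre-1)\mre^{t-1}$ for $t\ge 1$, so it does not lie in $\rL^p(\R_+)$.

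Invertibility is necessary for maximal regularity on $\R_+$, as the paper remarks before the lemma, but it is not sufficient. What is needed is $A\in\cS(\rX)$ with $\phi_A<\frac{\pi}{2}$ and $0\in\rho(A)$. Under the standing shifted-sectoriality hypothesis, this is equivalent to $\sigma(A)\subset\{\lambda\in\C:\Rep\lambda>0\}$, i.e.\ $s(-A)<0$. That is exactly what \autoref{prop:spectral analysis} and \autoref{cor:exp decay} supply, so the application in \autoref{thm:solvability Cauchy problem} is unaffected.
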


We now introduce the concept of \emph{diagonally dominant} block operator matrices, which  plays a central role in our analysis of the coupled system. 
The key idea is to treat the full operator as a perturbation of its principal (diagonal) parts. 
In this framework, the diagonal operators encode the main dynamics of the uncoupled subsystems (e.g., fluid and structure), while the off-diagonal operators represent coupling effects. 
Diagonal dominance ensures that these coupling terms are of lower order relative to the principal operators, allowing one to transfer analytical properties---such as sectoriality---from the diagonal blocks to the full operator matrix.

\begin{defn}\label{def:diag dom block op matrix}
Let $\rX_1$ and $\rX_2$ be Banach spaces, and consider linear operators
\begin{equation*}
    \begin{aligned}
        A \colon \rD(A) \subset \rX_1 \to \rX_1, \quad
        D \colon \rD(D) \subset \rX_2 \to \rX_2, \quad
        B \colon \rD(B) \subset \rX_2 \to \rX_1, \quad
        C \colon \rD(C) \subset \rX_1 \to \rX_2.
    \end{aligned}
\end{equation*}
Assume that
\begin{enumerate}[(a)]
    \item $A$ and $D$ are closed operators with dense domains,
    \item $\rD(D) \subset \rD(B)$ and $\rD(A) \subset \rD(C)$, and there exist constants $c_A, c_D \ge 0$ and $C_0 \ge 0$ such that
    \begin{equation*}
        \begin{aligned}
            \| C x \|_{\rX_2} 
            &\le c_A \| A x \|_{\rX_1} + C_0 \| x \|_{\rX_1}, &&\tforall x \in \rD(A),\\
            \| B y \|_{\rX_1} 
            &\le c_D \| D y \|_{\rX_2} + C_0 \| y \|_{\rX_2}, &&\tforall y \in \rD(D).
        \end{aligned}
    \end{equation*}
\end{enumerate}
Setting $\rX \coloneqq \rX_1 \times \rX_2$ and $\rD(\cA) \coloneqq \rD(A) \times \rD(D)$, we define the block operator matrix
\begin{equation}\label{eq:block op matrix}
    \cA \colon \rD(\cA) \subset \rX \to \rX, \qquad \cA \coloneqq 
    \begin{pmatrix}
        A & B\\
        C & D
    \end{pmatrix}.
\end{equation}
The operator $\cA$ is called \emph{diagonally dominant}.
\end{defn}

The importance of this notion lies in the fact that diagonal dominance provides a robust perturbative framework: 
if the diagonal operators $A$ and $D$ possess favorable analytical properties, and the coupling operators $B$ and $C$ are sufficiently controlled in the above sense, then these properties can be extended to the full operator matrix $\cA$. 
In particular, diagonal dominance allows one to establish sectoriality of $\cA$ from that of its diagonal blocks.

We now recall the corresponding results for such operator matrices. 

\begin{lem}\label{lem:sect of diag dom op matrices}
Let $\cA$ be a diagonally dominant block operator matrix as in \autoref{def:diag dom block op matrix}.
\begin{enumerate}[(a)]
    \item Suppose that $A$ and $D$ are sectorial operators on $\rX_1$ and $\rX_2$ with spectral angles $\phi_A$ and $\phi_D$, respectively. 
    If, in addition, there exists $\alpha \in (0,1)$ such that $C \in \cL(\rD(A^\alpha),\rX_2)$, then for every $\psi \in (\max\{\phi_A,\phi_D\},\pi)$ there exists $\omega_0 > 0$ such that, for all $\omega \ge \omega_0$, the shifted operator $\cA + \omega$ is sectorial on $\rX$ with spectral angle $\phi_{\cA} \le \psi$.

    \item In the special case $A = 0$, i.e.,
    \[
        \cA = 
        \begin{pmatrix}
            0 & B\\
            C & D
        \end{pmatrix},
    \]
    if $D$ is sectorial on $\rX_2$ with spectral angle $\phi_D$, then for every $\psi \in (\phi_D,\pi)$ there exists $\omega \ge 0$ such that $\cA + \omega$ is sectorial on $\rX$ with spectral angle $\phi_{\cA} \le \psi$.
\end{enumerate}
\end{lem}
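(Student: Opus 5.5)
The plan is to reduce both parts to the resolvent equation and to Schur complements. For $\lambda$ in a shifted sector $\omega + \Sigma_{\pi-\psi}$ we want to solve $(\lambda + \cA)(x,y) = (f,g)$. The natural factorization is
\[
    \lambda + \cA = \begin{pmatrix} \Id & B(\lambda+D)^{-1}\\ 0 & \Id \end{pmatrix}
    \begin{pmatrix} S(\lambda) & 0\\ 0 & \lambda + D \end{pmatrix}
    \begin{pmatrix} \Id & 0\\ (\lambda+D)^{-1}C & \Id \end{pmatrix},
\]
where $S(\lambda) \coloneqq \lambda + A - B(\lambda+D)^{-1}C$ is the Schur complement. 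Sectoriality of $\cA + \omega$ with angle at most $\psi$ then comes down to two facts: $S(\lambda)$ is invertible with $\|\lambda S(\lambda)^{-1}\| \le M$, and the outer triangular factors, together with their conjugates by the diagonal resolvents, stay uniformly bounded in $\lambda$. Density of $\rD(\cA) = \rD(A) \times \rD(D)$ is immediate. Dense range follows from bijectivity of $\omega + \cA$, which in turn follows from the resolvent bound.

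For part (a) I will write the Schur complement as $S(\lambda) = (\lambda + A)\bigl(\Id - (\lambda+A)^{-1}B(\lambda+D)^{-1}C\bigr)$ and show that the correction term has norm at most $\tfrac12$ for $|\lambda|$ large in the sector.
\begin{itemize}
    \item From the relative bound on $B$ and sectoriality of $D$ we get $\|B(\lambda+D)^{-1}\| \le c_D\|D(\lambda+D)^{-1}\| + C_0|\lambda|^{-1} \le c$. This is only uniformly bounded, not small.
    \item The hypothesis $C \in \cL(\rD(A^\alpha),\rX_2)$, combined with the moment inequality, gives $\|C(\lambda+A)^{-1}\| \lesssim \|A^\alpha(\lambda+A)^{-1}\| + |\lambda|^{-1} \lesssim |\lambda|^{\alpha-1}$, which does decay.
\end{itemize}
To use the decay, I will instead work with the similar correction $(\lambda+A)^{-1}B(\lambda+D)^{-1}C$ conjugated as $C(\lambda+A)^{-1}B(\lambda+D)^{-1}$ acting on $\rX_2$. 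Its norm is $\lesssim |\lambda|^{\alpha-1} \to 0$, so a Neumann series in $\rX_2$ inverts $\Id - C(\lambda+A)^{-1}B(\lambda+D)^{-1}$ for $|\lambda| \ge \omega_0$. Equivalently, I use the factorization with the roles of the blocks swapped, so that the Schur complement is taken with respect to $D$: $T(\lambda) = \lambda + D - C(\lambda+A)^{-1}B$.

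Once $T(\lambda)$ is inverted, each entry of $(\lambda+\cA)^{-1}$ can be written explicitly:
\begin{itemize}
    \item the $(2,2)$ entry is $T(\lambda)^{-1}$;
    \item the $(1,1)$ entry is $(\lambda+A)^{-1} + (\lambda+A)^{-1}BT(\lambda)^{-1}C(\lambda+A)^{-1}$;
    \item the off-diagonal entries are $-(\lambda+A)^{-1}BT(\lambda)^{-1}$ and $-T(\lambda)^{-1}C(\lambda+A)^{-1}$.
\end{itemize}
Each of these should be $O(|\lambda|^{-1})$. Here $BT(\lambda)^{-1} = B(\lambda+D)^{-1}\bigl(\Id - \cdots\bigr)^{-1}$ is bounded, and $C(\lambda+A)^{-1} = O(|\lambda|^{\alpha-1})$. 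The one that needs care is $(\lambda+A)^{-1}BT^{-1}$: it is $O(|\lambda|^{-1})$ because $(\lambda+A)^{-1}$ contributes $|\lambda|^{-1}$ and $BT^{-1}$ is bounded. The shift $\omega \ge \omega_0$ then moves the sector $\Sigma_{\pi-\psi}$ into the region $|\lambda| \ge \omega_0$ where these estimates hold, which gives the claim with angle at most $\psi$.

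For part (b), with $A = 0$, the Schur complement with respect to $D$ is $T(\lambda) = \lambda + D - \lambda^{-1}CB$. The relative bounds from the diagonal dominance definition say that $B$ is $D$-bounded and $C$ is bounded from $\rX_1$ into $\rX_2$, so $CB$ is $D$-bounded. Hence $\|\lambda^{-1}CB(\lambda+D)^{-1}\| \lesssim |\lambda|^{-1} \to 0$, and a Neumann series again inverts $T(\lambda)$ for $|\lambda|$ large. The entries $\lambda^{-1}$, $\lambda^{-1}BT^{-1}$, and $\lambda^{-2}BT^{-1}C$ are then all $O(|\lambda|^{-1})$, and the same shift argument concludes.

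The main obstacle is part (a): controlling the $(1,1)$ and $(1,2)$ entries uniformly when $B$ is only $D$-bounded, with no decay of its own. All the decay has to come from the fractional assumption on $C$, so the order of factorization must be chosen to pair $C$ with $(\lambda+A)^{-1}$. If the Neumann-series step in that order fails, I would fall back on the perturbation theorem for diagonally dominant matrices in \cite{Tre:08} and \cite{AH:23}.
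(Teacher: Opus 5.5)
Your proof is correct, but it follows a different route from the paper. The paper gives no argument of its own: it imports part (a) from \cite[Cor.~4.9]{AH:23} and part (b) from \cite[Cor.~7.2]{AH:23}.

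You instead verify sectoriality directly from the resolvent. Pivoting on the $(1,1)$ block gives $T(\lambda)=\lambda+D-C(\lambda+A)^{-1}B=(\Id-K(\lambda))(\lambda+D)$ on $\rD(D)$, with $K(\lambda)=C(\lambda+A)^{-1}\,B(\lambda+D)^{-1}\in\cL(\rX_2)$. The first factor is $O(|\lambda|^{\alpha-1})$ by the moment inequality, and the second is merely bounded, so no smallness of $c_D$ is needed. The four explicit entries of $(\lambda+\cA)^{-1}$ are then $O(|\lambda|^{-1})$ on $\Sigma_{\pi-\psi}\cap\{|\lambda|\ge R\}$. Part (b) is the degenerate case $(\lambda+A)^{-1}=\lambda^{-1}$, $\alpha=0$, where $C$ is bounded because $c_A\|Ax\|=0$.

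Your route is elementary and self-contained, and it delivers exactly plain sectoriality. That is all the paper actually uses, since on Hilbert spaces sectoriality already gives maximal $\rL^p$-regularity by \cite{dSi:64}. The citation to \cite{AH:23} buys more: a framework for transferring $\cR$-sectoriality and bounded $H^\infty$-calculus to the block matrix, which is what one would need for the $\rL^q$ extensions the authors have in mind.

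When you write it up, fix three points:
\begin{enumerate}
\item Discard the opening $S(\lambda)$ factorization. The operator $(\lambda+A)^{-1}B(\lambda+D)^{-1}C$ is only defined on $\rD(C)$ and need not be bounded on $\rX_1$. It is related to $K(\lambda)$ by the $\Id-XY$ versus $\Id-YX$ swap, not by similarity.
\item Check injectivity of $\lambda+\cA$ on $\rD(A)\times\rD(D)$, so that your formula is a genuine two-sided inverse.
\item Make the shift quantitative via $|\mu+\omega|\ge\sin\psi\,\max\{|\mu|,\omega\}$ for $\mu\in\Sigma_{\pi-\psi}$ and $\omega>0$. Then $\omega_0\ge R/\sin\psi$ suffices and $\sup_{\mu\in\Sigma_{\pi-\psi}}\|\mu(\mu+\omega+\cA)^{-1}\|<\infty$.
\end{enumerate}
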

For part~(a), we refer to \cite[Cor.~4.9]{AH:23}, while part~(b) can be found in \cite[Cor.~7.2]{AH:23}.

In the sequel, we formulate the abstract perturbation framework that will be used to establish sectoriality of the coupled operator. 
The main idea is to treat the coupling terms as lower-order perturbations acting between appropriate interpolation and extrapolation spaces. 
This requires a refined functional-analytic setting, in which the operator can be extended beyond its natural domain and analyzed on a scale of intermediate spaces.

To this end, we briefly recall the fractional scale associated with a sectorial operator; we refer to \cite[Sec.~V.1]{Ama:95} for further details. 
Let $A_0 \colon \rX_1 \subset \rX_0 \to \rX_0$ be the generator of a $\rC_0$-semigroup $(T_0(t))_{t \ge 0}$ on a Banach space $\rX_0$. 
In order to accommodate perturbations that are not necessarily defined on $\rX_0$, we introduce the extrapolation space $\rX_{-1}$ as the completion of $\rX_0$ with respect to the norm induced by the resolvent, namely,
\[
    \rX_{-1} \coloneqq \bigl(\rX_0, \| R(\lambda,A_0)\cdot \|_{\rX_0} \bigr)^\sim, \qquad \lambda \in \rho(A_0).
\]
This construction allows us to extend the operator and the associated semigroup to a larger space.

For $\beta \in (-1,1)$, we then define the intermediate spaces via complex interpolation,
\[
    \rX_\beta \coloneqq [\rX_{-1}, \rX_1]_{\frac{\beta+1}{2}},
\]
which yields a scale of spaces connecting $\rX_{-1}$, $\rX_0$, and $\rX_1$. 
These spaces provide a natural setting for measuring the regularity of solutions and for quantifying the size of perturbations.

The semigroup $(T_0(t))_{t \ge 0}$ admits a continuous extension $(T_{-1}(t))_{t \ge 0}$ to $\rX_{-1}$, called the extrapolated semigroup. 
Its generator is denoted by $A_{-1} \colon \rX_0 \subset \rX_{-1} \to \rX_{-1}$. 
By restriction, one obtains semigroups $(T_\beta(t))_{t \ge 0}$ on each space $\rX_\beta$, whose generators are given by the $\rX_\beta$-realizations of $A_{-1}$, namely,
\[
    A_\beta u \coloneqq A_{-1} u, \qquad \rD(A_\beta) = \{ u \in \rX_0 : A_{-1} u \in \rX_\beta \}.
\]

We are now in a position to state an abstract perturbation result that is crucial for handling the coupling terms in our problem. 
We refer to \cite[Cor.~12]{KW:01}, which in fact covers the more general $\cR$-sectorial setting.

\begin{lem}\label{lem:abstr pert result}
Let $A \colon \rX_1 \subset \rX_0 \to \rX_0$ be a sectorial operator on a Hilbert space $\rX_0$ with spectral angle $\phi_A < \pi$, and for $s \in [0,1]$ let $\rX_s \coloneqq [\rX_0,\rX_1]_s$. 
Suppose that there exist $0 \le \alpha < \beta \le 1$ and a bounded operator $B \colon \rX_\alpha \to \rX_{\beta - 1}$. 
Then there exists $\omega \ge 0$ such that, for every $\gamma \in [\alpha,\beta]$, the operator
\[
    \left.(A_{\gamma-1} + B)\right|_{\rX_0} + \omega
\]
is sectorial on $\rX_0$ with spectral angle less than or equal to $\phi_A$.
\end{lem}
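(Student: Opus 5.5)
Since this lemma is quoted from \cite[Cor.~12]{KW:01}, I will not reprove it from scratch; instead I will specialize the argument to the Hilbert setting, where $\cR$-boundedness reduces to uniform boundedness. The plan is to write the resolvent of the perturbed operator as a Neumann series around the extrapolated resolvent of $A$. For $\lambda$ in a sector $\Sigma_{\pi-\phi}$ with $\phi>\phi_A$, shifted by $\omega$, I will use the factorization
\[
    \lambda+\omega+A_{\gamma-1}+B=\bigl(\Id+B(\lambda+\omega+A_{\gamma-1})^{-1}\bigr)(\lambda+\omega+A_{\gamma-1}),
\]
but I will consider it on the space $\rX_{\gamma-1}$, where $A_{\gamma-1}\colon\rX_\gamma\to\rX_{\gamma-1}$ is sectorial with the same angle. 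This holds because the fractional scale is built by complex interpolation, and on Hilbert spaces the extrapolated operators inherit the resolvent estimates of $A$.

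The key estimate is a resolvent bound in the scale,
\[
    \|(\lambda+A_{\gamma-1})^{-1}\|_{\cL(\rX_{\beta-1},\rX_\alpha)}\le C|\lambda|^{-(1-(\alpha-\beta+1))}=C|\lambda|^{\alpha-\beta},
\]
which I obtain from the moment/interpolation inequality $\|x\|_{\rX_\alpha}\lesssim\|x\|_{\rX_{\beta-1}}^{1-\theta}\|x\|_{\rX_\beta}^{\theta}$ together with the sectorial bounds on $\rX_{\beta-1}$. Since $\alpha<\beta$, the exponent is negative. Hence $\|B(\lambda+\omega+A_{\gamma-1})^{-1}\|_{\cL(\rX_{\beta-1})}\le C\|B\|\,|\lambda+\omega|^{\alpha-\beta}\le\tfrac12$ for $\omega$ large, uniformly in the sector. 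Here I use $\rX_{\beta-1}\hookrightarrow\rX_{\gamma-1}$ for $\gamma\le\beta$ and $\rX_\gamma\hookrightarrow\rX_\alpha$ for $\gamma\ge\alpha$, so that the composition makes sense. The Neumann series then inverts the first factor, and the resolvent of the perturbed operator on $\rX_{\beta-1}$ satisfies $\|\lambda(\lambda+\omega+A_{\gamma-1}+B)^{-1}\|\le C$.

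The main obstacle is the next step: passing from this bound to the part in $\rX_0$, i.e.\ showing that for $f\in\rX_0$ the solution $x$ lies in $\rD\bigl((A_{\gamma-1}+B)|_{\rX_0}\bigr)$ with $\|\lambda x\|_{\rX_0}\le C\|f\|_{\rX_0}$. I would handle this by the same Neumann-series argument on the space $\rX_{\gamma'-1}$ with $\gamma'$ chosen so that $\rX_0$ sits in the right place. Alternatively, I would write $x=(\lambda+\omega+A)^{-1}f-(\lambda+\omega+A_{\gamma-1})^{-1}Bx$ and use $Bx\in\rX_{\beta-1}$ with the smoothing bound into $\rX_\alpha$ in order to bootstrap the $\rX_0$-norm estimate. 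Density of the domain and of the range, together with the condition $(-\infty,0)\subset\rho$, then follow from the resolvent bound and the reflexivity of Hilbert spaces. Finally, letting $\phi\downarrow\phi_A$ gives spectral angle at most $\phi_A$.
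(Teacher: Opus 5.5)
\textbf{The step that fails is your last sentence.} Every constant in your argument is controlled by $M_\phi:=\sup_{\lambda\in\Sigma_{\pi-\phi}}\|\lambda(\lambda+A)^{-1}\|$, and $M_\phi$ may blow up as $\phi\downarrow\phi_A$. So the shift you obtain is $\omega=\omega(\phi)$, and you cannot hold it fixed while letting $\phi\downarrow\phi_A$.

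This cannot be repaired, because the claim with one $\omega$ and angle $\le\phi_A$ is false already for bounded $B$ ($\alpha=0$, $\beta=1$). On $\ell^2(\N;\C^2)$ let $A=\bigoplus_n A_n$ with $A_n=\begin{pmatrix} z_n & c_n\\ 0 & z_n\end{pmatrix}$, where $z_n=ne^{i\phi_A}$ and $c_n=-ne^{2i\phi_A}$, and let $B=\bigoplus_n\begin{pmatrix}0&0\\ \eps&0\end{pmatrix}$.
\begin{itemize}
\item For $\phi>\phi_A$ one has $|\lambda+z_n|\ge c_\phi(|\lambda|+n)$ on $\Sigma_{\pi-\phi}$, and $|c_n|=n$. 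Hence $\|\lambda(\lambda+A_n)^{-1}\|\le c_\phi^{-1}+\tfrac14 c_\phi^{-2}$ uniformly in $n$. Together with $0\in\rho(A)$, this makes $A$ sectorial with angle exactly $\phi_A$.
\item However, $A+B$ has the eigenvalues $e^{i\phi_A}(n+i\sqrt{\eps n})$. For every fixed $\omega\ge0$ the points $\omega+e^{i\phi_A}(n+i\sqrt{\eps n})$ have argument $>\phi_A$ for all large $n$. Hence $\phi_{A+B+\omega}>\phi_A$ for every $\omega\ge0$.
\end{itemize}
What your argument actually proves is the following: for every $\phi\in(\phi_A,\pi)$ there is $\omega_\phi\ge0$ such that $(A_{\gamma-1}+B)|_{\rX_0}+\omega_\phi$ is sectorial with angle $\le\phi$, for all $\gamma\in[\alpha,\beta]$. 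This is also all the paper uses, since downstream only an angle $<\frac{\pi}{2}$ is needed.

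\textbf{The rest of your mechanism is right.} The Neumann series in $\rX_{\beta-1}$, driven by $\|(\lambda+A_{\beta-1})^{-1}\|_{\cL(\rX_{\beta-1},\rX_\alpha)}\lesssim|\lambda|^{\alpha-\beta}$, is what the cited \cite[Cor.~12]{KW:01} reduces to on a Hilbert space; the paper gives no proof beyond that citation. Two middle steps need to be made precise.
\begin{itemize}
\item \emph{Uniqueness for $\gamma<\beta$.} A solution is a priori only in $\rX_\gamma$. With $\mu=\lambda+\omega$, use $(\mu+A_{\gamma-1})x=f-Bx\in\rX_{\beta-1}$ to get $x\in\rX_\beta$. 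This reduces uniqueness to your Neumann series in $\rX_{\beta-1}$.
\item \emph{The $\rX_0$-bound.} Your first route cannot work: any factor $\Id+BR$ acts on a space no smaller than $\rX_{\beta-1}$. The second route works, but only with sharp exponents. With $R=(\mu+A_{\beta-1})^{-1}$, read $x=Rf-RBx$ in $\rX_\alpha$ and absorb to get $\|x\|_{\rX_\alpha}\lesssim|\mu|^{\alpha-1}\|f\|_{\rX_0}$. Then use $\|R\|_{\cL(\rX_{\beta-1},\rX_0)}\lesssim|\mu|^{-\beta}$ to get $\|\mu RBx\|_{\rX_0}\lesssim|\mu|^{\alpha-\beta}\|f\|_{\rX_0}$.
\end{itemize}
By contrast, estimating $RBx$ in $\rX_\alpha$ and then embedding into $\rX_0$ only gives $\|\mu RBx\|_{\rX_0}\lesssim|\mu|^{2\alpha-\beta}\|f\|_{\rX_0}$, which is useless when $2\alpha>\beta$.
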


This result provides a powerful mechanism for treating perturbations that act between different levels of regularity, and allows us to incorporate the coupling operators in a natural way.

Another important perturbation principle is that of relative boundedness, which applies when the perturbation is controlled directly in terms of the principal operator. 
This situation frequently arises in practice when lower-order terms are present.

\begin{lem}\label{lem:rel bdd pert}
Let $A \colon \rX_1 \to \rX_0$ be a sectorial operator with spectral angle $\phi_A < \pi$, and let $B \colon \rD(B) \to \rX_0$ be a linear operator such that $\rD(A) \subset \rD(B)$. 
Assume that there exist constants $a,b \ge 0$ such that
\[
    \| B x \| \le a \| x \| + b \| A x \| \tforall x \in \rD(A).
\]
Then there exist constants $b_0 > 0$ and $\mu_0 \ge 0$ such that, whenever $b < b_0$ and $\omega > \omega_0$, the operator $A + B + \omega$ is sectorial on $\rX_0$ with spectral angle $\phi_{A + B + \omega} \le \phi_A$.
\end{lem}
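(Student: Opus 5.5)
The plan is a Neumann series argument for the resolvent of $A+B+\omega$, built on the resolvent of $A+\omega$ on sectors. We then identify the spectral angle through its definition as an infimum.

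First step. Fix an auxiliary angle $\phi_1\in(\phi_A,\pi)$ once and for all and set $M_1\coloneqq\sup_{\lambda\in\Sigma_{\pi-\phi_1}}\|\lambda(\lambda+A)^{-1}\|<\infty$. For $\omega\ge0$ and $\lambda\in\Sigma_{\pi-\phi_1}$ we also have $\lambda+\omega\in\Sigma_{\pi-\phi_1}$, with $|\lambda+\omega|\ge c(\phi_1)\max\{|\lambda|,\omega\}$. For $x=(\lambda+\omega+A)^{-1}y$ the relative bound gives
\[
\|B(\lambda+\omega+A)^{-1}\|\le \frac{aM_1}{|\lambda+\omega|}+b(1+M_1)\le \frac{aM_1}{c(\phi_1)\,\omega}+b(1+M_1).
\]
We choose $b_0\coloneqq\frac{1}{4(1+M_1)}$ and $\omega_0\coloneqq\frac{4aM_1}{c(\phi_1)}$. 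Then $\|B(\lambda+\omega+A)^{-1}\|\le\frac12$ for all $b<b_0$ and $\omega>\omega_0$. The identity
\[
\lambda+\omega+A+B=\bigl(1+B(\lambda+\omega+A)^{-1}\bigr)(\lambda+\omega+A)
\]
then gives invertibility on $\Sigma_{\pi-\phi_1}$ together with the bound $\|\lambda(\lambda+A+B+\omega)^{-1}\|\le 2M_1'$. Since $\rD(A+B)=\rD(A)$, closedness follows from the relative bound with constant $<1$. Density of the range follows because $0\in\rho(A+B+\omega)$. Hence $A+B+\omega$ is sectorial with angle $\le\phi_1$, and the constants $b_0,\omega_0$ depend only on $\phi_1$, $a$ and $A$.

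Second step: from $\phi_1$ to $\phi_A$. Here I would use that the spectral angle is defined as an infimum, so it suffices to show that for every $\phi\in(\phi_A,\phi_1]$ the sector $\Sigma_{\pi-\phi}$ lies in $\rho(-(A+B+\omega))$ with a uniform bound on $\lambda(\lambda+A+B+\omega)^{-1}$. The bounded part of $\Sigma_{\pi-\phi}\setminus\Sigma_{\pi-\phi_1}$ is harmless after enlarging the shift: there $|\lambda+\omega|\gtrsim\omega$, so the $a$-term is small. The problematic part is the term $b\|A(\lambda+\omega+A)^{-1}\|\le b(1+M_\phi)$, where $M_\phi\to\infty$ as $\phi\downarrow\phi_A$. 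The fixed $b_0$ from the first step does not control it.

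I expect this to be the main obstacle. What I would try first is to redo the Neumann series in the form $(\lambda+\omega+A)^{-1}B$ and to use a moment inequality for the resolvent near the boundary rays. If that fails to give uniformity in $\phi$, I would fall back on the way the lemma is used in the paper, namely with relative bound arbitrarily small (relative bound zero in \autoref{lem:sect of bA_q}). There one may take $b<b_0(\phi)\coloneqq(4(1+M_\phi))^{-1}$ for each $\phi>\phi_A$. Taking the infimum over $\phi$ then yields the angle $\le\phi_A$ after possibly increasing $\omega$.
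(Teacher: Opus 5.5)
Your first step is correct. It is the standard Neumann-series argument: with $\phi_1\in(\phi_A,\pi)$ fixed, $b<(4(1+M_1))^{-1}$ and $\omega>4aM_1/c(\phi_1)$, it gives $\|B(\lambda+\omega+A)^{-1}\|\le\frac12$ for $\lambda\in\Sigma_{\pi-\phi_1}\cup\{0\}$. Hence $A+B+\omega\in\cS(\rX_0)$ with $\phi_{A+B+\omega}\le\phi_1$. (The paper only recalls this lemma in the appendix and gives no proof to compare with.)

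The gap is your second step, and sharper estimates cannot close it. The conclusion $\phi_{A+B+\omega}\le\phi_A$, with a threshold $b_0$ that does not depend on the target angle, is false. Take $\rX_0=\rL^2(0,\infty)$ and $\theta\in(0,\pi)$, and let $A$ be multiplication by $e^{i\theta}s$; this is a normal sectorial operator with $\phi_A=\theta$. Put $B\coloneqq(e^{i\delta}-1)A$ with $\delta>0$ and $\theta+\delta<\pi$. Then $a=0$ and $b=2\sin(\delta/2)$, which is below any prescribed $b_0$ once $\delta$ is small. But $A+B+\omega$ is multiplication by $\omega+e^{i(\theta+\delta)}s$, whose spectrum contains points of argument arbitrarily close to $\theta+\delta$. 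So $\phi_{A+B+\omega}=\theta+\delta>\phi_A$ for every $\omega\ge0$. This is exactly the blow-up of $M_\phi$ as $\phi\downarrow\phi_A$ that you ran into, and a moment inequality cannot remove it.

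Your fallback for relative bound zero fails for a related reason. The shift needed to reach angle $\le\phi$ is of size $a(\eps)$ with $\eps<b_0(\phi)$, and it generally tends to infinity as $\phi\downarrow\phi_A$. So ``taking the infimum over $\phi$'' is incompatible with any fixed shift. Concretely, keep $A$ as above and let $B$ be multiplication by $ie^{i\theta}\sqrt{s}$. Then $\|Bx\|\le\eps\|Ax\|+(4\eps)^{-1}\|x\|$ for every $\eps>0$. Yet for fixed $\omega$ and $s\to\infty$ one has $\arg\bigl(\omega+e^{i\theta}(s+i\sqrt{s})\bigr)=\theta+s^{-1/2}+O(s^{-1})>\theta$, so $\phi_{A+B+\omega}\le\phi_A$ fails for every $\omega\ge0$.

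What your first step does establish, with $\phi_1$ arbitrary, is the correct version of the lemma. For every $\phi\in(\phi_A,\pi)$ there exist $b_0=b_0(\phi)>0$ and $\omega_0\ge0$ such that $\phi_{A+B+\omega}\le\phi$ whenever $b<b_0$ and $\omega>\omega_0$. That is all the paper actually uses. In \autoref{lem:sect of bA_q} one only needs the angle to stay below $\frac{\pi}{2}$, so one picks $\phi$ strictly between $\phi_{\widetilde{\bA_f^\mrm}+\omega}$ and $\frac{\pi}{2}$ and uses the relative bound zero to get $b<b_0(\phi)$. The right repair is therefore to weaken the lemma's conclusion to $\phi_{A+B+\omega}\le\phi$ for a prescribed $\phi>\phi_A$, not to look for a sharper estimate.
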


In particular, the assumptions of \autoref{lem:rel bdd pert} are satisfied if the perturbation $B$ is relatively $A$-bounded with relative bound zero. 
This situation  arises naturally in our setting, where the coupling terms can be viewed as lower-order contributions relative to the principal operators.


\begin{thebibliography}{99}

\bibitem{AH:23}
A.~Agresti and A.~Hussein, \emph{Maximal $L^p$-regularity and $H^\infty$-calculus for block operator matrices and applications}, J.~Funct.~Anal., 285 (2023), Paper No.~110146.

\bibitem{Ama:95}
H.~Amann, \emph{Linear and Quasilinear Parabolic Problems. Vol.~I. Abstract Linear Theory}, Monographs in Mathematics, vol.~89, Birkh{\"a}user Boston, 1995.

\bibitem{Ama:00}
H.~Amann, \emph{On the strong solvability of the Navier--Stokes equations}, J.~Math.~Fluid Mech., 2 (2000), 16--98.

\bibitem{AENY:19}
I.~Ambartsumyan, V.~J.~Ervin, T.~Nguyen, and I.~Yotov, \emph{A nonlinear Stokes--Biot model for the interaction of a non-Newtonian fluid with poroelastic media}, ESAIM Math.~Model.~Numer.~Anal., 53 (2019), 1915--1955.

\bibitem{AKYZ:18}
I.~Ambartsumyan, E.~Khattatov, I.~Yotov, and P.~Zunino, \emph{A Lagrange multiplier method for a Stokes--Biot fluid-poroelastic structure interaction problem}, Numer.~Math., 140 (2018), 513--553.

\bibitem{Aur:97}
J.-L.~Auriault, \emph{Poroelastic Media. Homogenization and Porous Media}, Interdiscip.~Appl.~Math., vol.~6, Springer, New York, 1997.

\bibitem{AGW:25}
G.~Avalos, E.~Gurvich, and J.~T. Webster, \emph{Weak and strong solutions for a fluid-poroelastic structure interaction via a semigroup approach}, Math.~Methods Appl.~Sci., 48 (2025), 4057--4089.

\bibitem{AW:25}
G.~Avalos and J.~T.~Webster, \emph{Uniqueness of weak solutions for Biot-Stokes interactions}, Pure Appl.~Anal., 7 (2025), 1111--1139.

\bibitem{BQQ:09}
S.~Badia, A.~Quaini, and A.~Quarteroni, \emph{Coupling Biot and Navier--Stokes equations for modelling fluid-poroelastic media interaction}, J.~Comput.~Phys., 228 (2009), 7986--8014.

\bibitem{BBMBNG:19}
H.~T.~Banks, K.~Bekele-Maxwell, L.~Bociu, M.~Noorman, and G.~Guidoboni, \emph{Local sensitivity via the complex-step derivative approximation for 1D poro-elastic and poro-visco-elastic models}, Math.~Control Relat.~Fields, 9 (2019), 623--642.

\bibitem{BGM23}
M.~Barr\'e, C.~Grandmont, and P.~Moireau.
\emph{Analysis of a linearized poromechanics model for incompressible and nearly incompressible materials},
\newblock { Evol.~Equ.~Control Theory}, 12 (2023), 846--906.

\bibitem{Biotwell1}
H.~Barucq, M.~Madaune-Tort, and P.~Saint-Macary.
\emph{Theoretical aspects of wave propagation for {B}iot's consolidation problem}, Monograf\'{\i}as del Seminario Matem{\'a}tico Garc\'{\i}a de Galdeano, 31 (2004), 449--458.

\bibitem{Biotwell2}
H.~Barucq, M.~Madaune-Tort, and P.~Saint-Macary.
\emph{On nonlinear {B}iot's consolidation models}, Nonlinear Anal., 63 (2005), e985--e995.

\bibitem{BMTSM:05}
H.~Barucq, M.~Madaune-Tort, and P.~Saint-Macary. 
\emph{Some existence-uniqueness results for a class of one-dimensional nonlinear Biot models}, Nonlinear Anal., 61 (2005), 591--612.

\bibitem{BHR:25} 
T.~Binz, M.~Hieber, and A.~Roy.
\emph{fluid--structure interaction with porous media: the Beaver-Joseph condition in the strong sense}, J.~Differential Equations, 426 (2025), 660--689.

\bibitem{Biot:55}
M.~A. Biot.
\emph{Theory of elasticity and consolidation for a porous anisotropic solid}, J.~Appl.~Phys., 26 (1955), 182--185.

\bibitem{Biot1941}
M.~A. Biot.
\emph{General theory of three-dimensional consolidation}, J.~Appl.~Phys., 12 (1941), 155--164.

\bibitem{BCMW:21}
L.~Bociu, S.~\v{C}ani\'c, B.~Muha, and J.~T. Webster.
\emph{Multilayered poroelasticity interacting with Stokes flow}, SIAM J.~Math.~Anal., 53 (2021), 6243--6279.

\bibitem{Bociu2016}
L. Bociu, G. Gie, Z. Gruji{\'c}, and J. T. Webster.
\emph{Existence and uniqueness for a quasistatic poroelastic system coupled to a fluid}, SIAM J.~Math.~Anal., 48 (2016), 2117--2149.

\bibitem{BGSW:16}
L.~Bociu, G.~Guidoboni, R.~Sacco, and J.~T. Webster.
\emph{Analysis of nonlinear poro-elastic and poro-visco-elastic models}, Arch.~Ration.~Mech.~Anal., 222 (2016), 1445--1519.

\bibitem{BMW:22}
L.~Bociu, B.~Muha, and J.~T. Webster.
\emph{Weak solutions in nonlinear poroelasticity with incompressible constituents}, Nonlinear Anal.~Real World Appl., 67 (2022), Paper No.~103563.

\bibitem{BMW:23}
L.~Bociu, B.~Muha, and J.~T. Webster.
\emph{Mathematical effects of linear visco-elasticity in quasi-static Biot models}, J.~Math.~Anal.~Appl., 528 (2023), Paper No.~127462.

\bibitem{Bociu2021}
L. Bociu and M. Verri.
\emph{Analysis of a coupled fluid--poroelastic structure interaction model}, SIAM J.~Appl.~Math., 81 (2021), 2350--2375.

\bibitem{BW:21}
L.~Bociu and J.~T. Webster.
\emph{Nonlinear quasi-static poroelasticity}, J.~Differential Equations, 296 (2021), 242--278.

\bibitem{BKP:13}
D.~Bothe, M.~K{\"o}hne, and J.~Pr{\"u}ss.
\emph{On a class of energy preserving boundary conditions for incompressible Newtonian flows}, SIAM J.~Math.~Anal., 45 (2013), 3768--3822.

\bibitem{BCM:25}
F.~Brandt, S.~\v{C}ani\'c, and B.~Muha.
\emph{Three-dimensional Navier--Stokes--Biot coupling via a moving reticular plate interface: existence of weak solutions}, arXiv:2508.14310.

\bibitem{BMR:26}
F.~Brandt, C.~M\^indril\u{a}, and A.~Roy.
\emph{Strong time-periodic solutions for a multilayered fluid--structure interaction system with nonlinear coupling}, Nonlinearity, 39 (2026), Paper No.~035018.

\bibitem{Bukac2016}
M.~Buka{\v{c}}.
\emph{Analysis of a fluid--structure interaction problem involving a poroelastic structure}, Phys.~Rev.~E, 94 (2016), Paper No.~033118.

\bibitem{BYZZ:15}
M.~Bukac, I.~Yotov, R.~Zakerzadeh, P.~Zunino.  
\emph{Partitioning strategies for the interaction of a fluid with a poroelastic material based on a Nitsche's coupling approach.} Comput.~Methods Appl.~Mech.~Engrg., 292 (2015), 138--170.

\bibitem{BYZ:15}
M.~Buka\v{c}, I.~Yotov, and P.~Zunino.
\emph{An operator splitting approach for the interaction between a fluid and a multilayered poroelastic structure}, Numer.~Methods Partial Differential Equations, 31 (2015), 1054--1100.

\bibitem{BYZ:17}
M.~Buka\v{c}, I.~Yotov, and P.~Zunino.
\emph{Dimensional model reduction for flow through fractures in poroelastic media}, ESAIM Math.~Model.~Numer.~Anal., 51 (2017), 1429--1471.

\bibitem{CTGMHR:06}
S.~\v{C}ani\'c, J.~Tamba\v{c}a, G.~Guidoboni, A.~Mikeli\'c, C.~J.~Hartley, and D.~Rosenstrauch.
\emph{Modeling viscoelastic behavior of arterial walls and their interaction with pulsatile blood flow}, SIAM J.~Appl.~Math., 67 (2006), 164--193.

\bibitem{YifanDES}
S.~\v{C}ani\'c, Y.~Wang, and M.~Buka\v{c}.
\emph{A next-generation mathematical model for drug eluting stents},
{SIAM} J.~Appl.~Math., 81 (2021), 1503--1529.


\bibitem{CGHPST:14}
P.~Causin, G.~Guidoboni, A.~Harris, D.~Prada, R.~Sacco, and S.~Terragni.
\emph{A poroelastic model for the perfusion of the lamina cribrosa in the optic nerve head}, Math.~Biosci., 257 (2014), 33--41.

\bibitem{Ces:17}
A.~Cesmelioglu.
\emph{Analysis of the coupled Navier--Stokes/Biot problem}, J.~Math.~Anal.~Appl., 456 (2017), 970--991.

\bibitem{Cou:04}
O.~Coussy.
\emph{Poromechanics}, John Wiley \& Sons, New York, 2004.

\bibitem{Ruiz2026}
F.~Dassi, R.~Khot, A.~E.~Rubiano, and R.~Ruiz-Baier. 
\emph{Analysis and virtual element discretisation of a Stokes/Biot-Kirchhoff bulk-surface model}, 
Comput.~Methods Appl.~Mech.~Engrg., 449 (2026), Paper No.~118545.

\bibitem{DHP:03}
R.~Denk, M.~Hieber, and J.~Pr{\"u}ss.
\emph{$\mathcal{R}$-boundedness, Fourier multipliers and problems of elliptic and parabolic type}, Mem.~Amer.~Math.~Soc., 166 (2003), no.~788.

\bibitem{DHP:07}
R.~Denk, M.~Hieber, and J.~Pr{\"u}ss.
\emph{Optimal $\rL^p$-$\rL^q$-estimates for parabolic boundary value problems with inhomogeneous data}, Math.~Z., 257 (2007), 193--224.

\bibitem{dSi:64}
L.~de Simon.
\emph{Un'applicazione della teoria degli integrali singolari allo studio delle equazioni differenziali lineari astratte del prime ordine}, Rend.~Sem.~Mat.~Univ.~Padova, 34 (1964), 205--223.

\bibitem{Fenics}
{\emph{The FEniCS computing platform.}}
https://fenicsproject.org/

\bibitem{FM:03}
J.~L. Ferr{\'i}n and A.~Mikeli{\'c}.
\emph{Homogenizing the acoustic properties of a porous matrix containing an incompressible inviscid fluid}, Math.~Methods Appl.~Sci., 26 (2003), 831--859.

\bibitem{FreeFem}
{\emph{The FreeFem computing platform.}}
https://freefem.org/

\bibitem{Fung1997}
Y.~C. Fung.
\emph{Biomechanics: Circulation},
\newblock Springer, New York, 1997.

\bibitem{GHL:19}
C.~Grandmont, M.~Hillairet, and J.~Lequeurre.
\emph{Existence of local strong solutions to fluid-beam and fluid-rod interaction systems}, Ann.~Inst.~H.~Poincar{\'e} C Anal.~Non Lin{\'e}aire, 36 (2019), 1105--1149.

\bibitem{HK:16}
M.~Hieber and T.~Kashiwabara.
\emph{Global strong well-posedness of the three-dimensional primitive equations in $\rL^p$-spaces}, Arch.~Ration.~Mech.~Anal., 221 (2016), 1077--1115.

\bibitem{HS:18}
M.~Hieber and J.~Saal.
\emph{The Stokes equation in the $L^p$-setting: well-posedness and regularity properties}, In: Handbook of Mathematical Analysis in Mechanics of Viscous Fluids, Y.~Giga and A.~Novotn{\'y}, eds., Springer, Cham, 2018, 117--206.

\bibitem{HAvCR:92}
J.~M.~Huyghe, T.~Arts, D.~H.~van Campen, and R.~S.~Reneman.
\emph{Porous medium finite element model of the beating left ventricle}, Amer.~J.~Physiol., 262 (1992), 1256--1267.

\bibitem{Iliev2016}
O.~P.~Iliev, A.~E.~Kolesov, and P.~N.~Vabishchevich. 
\emph{Numerical solution of plate poroelasticity problems}, Transport in Porous Media, 115 (2016), 563--580.

\bibitem{Kat:95}
T.~Kato.
\emph{Perturbation Theory for Linear Operators}, Reprint of the 1980 edition, Classics in Mathematics, Springer-Verlag, Berlin, 1995.

\bibitem{KRB:25}
R.~Khot and R.~Ruiz Baier. 
\emph{Virtual element methods for Biot-Kirchhoff poroelasticity}, Math.~Comp., 94 (2025), 1101--1146.

\bibitem{Korteweg1878}
D.~J.~Korteweg.
\emph{\"Uber die Fortpflanzungsgeschwindigkeit des Schalles in elastischen R\"ohren}, Annalen der Physik, 241 (1878), 525--542.

\bibitem{KCM:24}
J.~Kuan, S.~\v{C}ani\'c, and B.~Muha.
\emph{Fluid-poroviscoelastic structure interaction problem with nonlinear geometric coupling}, J.~Math.~Pures Appl., 188 (2024), 345--445.

\bibitem{NonlinearFPSI_CRM23}
J.~Kuan, S.~\v{C}ani\'{c}, and B.~Muha.
\emph{Existence of a weak solution to a regularized moving boundary fluid--structure interaction problem with poroelastic media}. Comptes Rendus M\'{e}canique, 351 (2023), 1--30.

\bibitem{SingularLimit}
J.~Kuan, S.~\v{C}ani\'{c}, and B.~Muha.
\emph{A regularized interface method for fluid-poroelastic structure interaction problems with nonlinear geometric coupling}, arXiv:2508.18065.

\bibitem{KBM:25}
P.~Kun{\v{s}}tek, M.~Buka{\v{c}}, and B.~Muha.
\emph{Mass conservation in the validation of fluid-poroelastic structure interaction solvers}, Appl.~Math. Comput., 487 (2025), Paper No.~129081.

\bibitem{KW:01}
P.~C. Kunstmann and L.~Weis.
\emph{Perturbation theorem for maximal $L^p$-regularity}, Ann.~Scuola Norm.~Sup.~Pisa Cl.~Sci., 30 (2001), 415--435.

\bibitem{MCM:15}
A.~Marciniak-Czochra and A.~Mikeli{\'c}.
\emph{A rigorous derivation of the equations for the clamped Biot-Kirchhoff-Love poroelastic plate}, Arch.~Ration.~Mech.~Anal., 215 (2015), 1035--1062.

\bibitem{MT:16}
A.~Mikeli{\'c} and J.~Tamba\v{c}a. 
\emph{Derivation of a poroelastic flexural shell model.} Multiscale Model.~Simul., 14 (2016), 364--397.

\bibitem{MW:12}
A.~Mikeli{\'c} and M.~F. Wheeler.
\emph{On the interface law between a deformable porous medium containing a viscous fluid and an elastic body}, Math.~Models Methods Appl.~Sci., 22 (2012), Paper No.~1250031.

\bibitem{Miotto2021}
P.~Miotto and M.~Buka{\v{c}}.
\emph{A fluid--structure interaction problem with a semi-permeable poroelastic material}, Comput.~Methods Appl.~Mech.~Engrg., 384 (2021), 113941.

\bibitem{Moens1878}
A.~I. Moens.
\emph{Die Pulskurve}, E.~J. Brill, Leiden, 1878.

\bibitem{Nau:12}
T.~Nau.
\emph{$L^p$-Theory of Cylindrical Boundary Value Problems}, PhD thesis, University of Konstanz, Springer Spektrum, 2012.

\bibitem{Nau:15}
T.~Nau.
\emph{The $L^p$-Helmholtz projection in finite cylinders}, Czechoslovak Math.~J., 65 (2015), 119--134.

\bibitem{Biotwell3}
S.~Owczarek.
\emph{A {G}alerkin method for {B}iot consolidation model}, Math.~Mech.~Solids, 15 (2010), 42--56.

\bibitem{OB:20}
O.~Oyekola and M.~Buka{\v{c}}.
\emph{Second-order, loosely coupled methods for fluid-poroelastic material interaction}, Numer.~Methods Partial Differential Equations, 36 (2020), 800--822.

\bibitem{PB:25}
C.~Parrow and M.~Buka{\v{c}}.
\emph{A Robin-Robin strongly coupled partitioned method for fluid-poroelastic structure interaction}, J.~Numer.~Math., 33 (2025), 289--312.

\bibitem{PS:16}
J.~Pr{\"u}ss and G.~Simonett.
\emph{Moving Interfaces and Quasilinear Parabolic Evolution Equations}, Monographs in Mathematics, vol.~105, Birkh{\"a}user, 2016.

\bibitem{RN:20}
E.~Rohan and S.~Naili.
\emph{Homogenization of the fluid--structure interaction in acoustics of porous media perfused by viscous fluid}, Z.~Angew.~Math.~Phys., 71 (2020), Paper No.~137.

\bibitem{SBC:25}
A.~Scharf, M.~Buka{\v{c}}, and S.~\v{C}ani\'c.
\emph{Splitting method for a multilayered poroelastic solid interacting with Stokes flow}, SIAM J.~Sci.~Comput., (accepted 2026), arXiv:2507.10538.

\bibitem{SCTB:26}
A.~Scharf, S.~\v{C}ani\'c, J.~Tamba{\v{c}}a, and F.~Brandt.
\emph{A new fully averaged poroelastic plate model}, in draft form, 2026.

\bibitem{Sho:00}
R.~E.~Showalter.
\emph{Diffusion in poro-elastic media}, J.~Math.~Anal.~Appl., 251 (2000), 310--340.

\bibitem{Sho:05}
R.~E.~Showalter.
\emph{Poroelastic filtration coupled to Stokes flow}, In: Lect.~Notes Pure Appl.~Math., Vol.~242, O.~Imanuvilov et al., eds., Chapman \& Hall/CRC, 2005, 229--241.

\bibitem{Biotwell5}
R.~E.~Showalter and N.~Su.
\emph{Partially saturated flow in a poroelastic medium}, Discrete Contin.~Dyn.~Syst.~Ser.~B, 1 (2001), 403--420.

\bibitem{Terzaghi1943}
K. Terzaghi.
\emph{Theoretical Soil Mechanics}, John Wiley \& Sons, 1943.

\bibitem{Tol:18}
P.~Tolksdorf.
\emph{$\mathcal{R}$-sectoriality of higher-order elliptic systems on general bounded domains}, J.~Evol.~Equ., 18 (2018), 323--349.

\bibitem{Tre:08}
C.~Tretter.
\emph{Spectral Theory of Block Operator Matrices and Applications}, Imperial College Press, 2008.

\bibitem{Tri:78}
H.~Triebel.
\emph{Interpolation Theory, Function Spaces, Differential Operators}, North-Holland, 1978.

\bibitem{VGBS:18}
M.~Verri, G.~Guidoboni, L.~Bociu, and R.~Sacco.
\emph{The role of structural viscoelasticity in deformable porous media with incompressible constituents: applications in biomechanics}, Math.~Biosci.~Eng., 15 (2018), 933--959.

\bibitem{FluidsCanic}
Y.~Wang, S.~\v{C}ani\'{c}, M.~Buka\v{c}, C.~Blaha, and S.~Roy.
\emph{Mathematical and computational modeling of a poroelastic cell scaffold in a bioartificial pancreas}, Fluids, 7 (2022), Paper No.~222.

\bibitem{Wei:01}
L.~Weis.
\emph{Operator-valued Fourier multiplier theorems and maximal $L^p$-regularity}, Math.~Ann., 319 (2001), 735--758.

\bibitem{Biotwell6}
A.~\v{Z}en\'{i}\v{s}ek.
\emph{The existence and uniqueness theorem in {B}iot's consolidation theory}, Aplikace Matematiky, 29 (1984), 194--211.

\end{thebibliography}
\end{document}